\numberwithin{equation}{subsection} 
\newcommand{\cache}[1]{}
\newtheorem{theo}{Theorem}[section]
\def\choixcompteur{theo}
\newtheorem{prop}[\choixcompteur]{Proposition}
\newtheorem{lemm}[\choixcompteur]{Lemma}
\newtheorem{coro}[\choixcompteur]{Corollary}
\theoremstyle{definition}
\newtheorem{rema}[\choixcompteur]{Remark}
\newtheorem{remas}[\choixcompteur]{Remarks}
\newtheorem*{exem*}{Example}
\newtheorem*{exems*}{Examples}
\newtheorem*{exam*}{Example}
\newtheorem*{exams*}{Examples}
\newtheorem*{rema*}{Remark}
\newtheorem*{remas*}{Remarks}
\newtheorem*{NB}{N.B}
\theoremstyle{definition}
\newtheorem*{defi*}{Definition}
\newtheorem*{defiprop*}{Definition-Proposition}
\theoremstyle{plain}
\newtheorem*{prop*}{Proposition}
\newtheorem*{lemm*}{Lemma}
\newtheorem*{coro*}{Corollary}
\newtheorem*{theo*}{Theorem}
\newtheorem*{conj*}{Conjecture}
 \def\cdr@enoncedef{%
 \newenvironment{enonce*}[2][plain]%
 {\let\cdrenonce\relax \theoremstyle{##1}%
 \newtheorem*{cdrenonce}{##2}%
 \begin{cdrenonce}}%
 {\end{cdrenonce}}   }%
\def\cf{{\it cf.\/}\ }
\def\ie{{\it i.e.\/}\ }
\def\eg{{\it e.g.\/}\ }
\def\etc{{\it etc.\/}\ }
\def\lc{{\it l.c.\/}\ }
\def\resp{{\it resp.,\/}\ }
\def\truc{\unskip\kern 3pt\penalty 500
\hbox{\vrule\vbox to 5pt{\hrule width 4pt\vfill\hrule}\vrule}\kern 3pt}
\def\qed{\nobreak\hfill $\truc$\par\goodbreak}
\def\into{\hookrightarrow}
\def\vect{\overrightarrow}
\def\un{\underline}
\def\ov{\overline}
\newcommand{\set}[1]{\{ #1\} } 
\def\parni{\par\noindent}
\def\eds{ editors}
\def\N{{\mathbb N}}    
\def\Z{{\mathbb Z}}
\def\Q{{\mathbb Q}}
\def\R{{\mathbb R}}
\def\C{{\mathbb C}}
\def\F{{\mathbb F}}
\def\A{{\mathbb A}}
\def\PP{{\mathbb P}}
\def\k{{\Bbbk}}
\newcommand{\g}[1]{\mathfrak{#1}} 
\def\qa{\alpha}     
\def\qb{\beta}
\def\qd{\delta}
\def\qe{\varepsilon}
\def\qf{\varphi}
\def\qg{\gamma}
\def\qi{\iota}
 \def\ql{\lambda}
\def\qm{\mu}
\def\qn{\nu}
\def\qo{\omega}
\def\qp{\varpi}
\def\qr{\rho}
\def\qs {\sigma}
\def\qt{\tau}
 \def\qth{\theta}
\def\qx{\xi}
 \def\qz{\zeta}
\def\QD{\Delta}
\def\QF{\Phi}
\def\QG{\Gamma}
\def\QO{\Omega}
\def\sha{{\mathcal A}}   
\def\shb{{\mathcal B}}
\def\shc{{\mathcal C}}
\def\she{{\mathcal E}}
\def\shi{{\SHI}}
\def\shk{{\mathcal K}}
\def\sho{{\mathcal O}}
\def\shr{{\mathcal R}}
\def\shs{{\mathcal S}}
\def\sht{{\mathcal T}}
\def\SHC{{\mathscr C}}
\def\SHI{{\mathscr I}}
\def\SHT{{\mathscr T}}
\def\zm{^{\mathrm{m}}}
\def\zu{^{\mathrm{u}}}
\def\zv{^{\mathrm{v}}}
\def\zw{^{\mathrm{w}}}
\def\zzv{\mathrm{v}}
\def\zzw{\mathrm{w}}
\def\pr{\mathrm{proj}}
\def\loo{\mathrm{loop}}
\newenvironment{psmallmatrix}
  {\left(\begin{smallmatrix}}
  {\end{smallmatrix}\right)}
\begin{document}

\title{Twin masures associated with Kac-Moody groups over Laurent polynomials}
\author{Nicole Bardy-Panse, Auguste H\'ebert \& Guy Rousseau}

\maketitle

%





\begin{abstract} 
Let $\g G$ be a split reductive group, $\k$ be a field and $\qp$ be an indeterminate. In order to study $\g G(\k[\qp,\qp^{-1}])$ and $\g G(\k(\qp))$, one can make them act on their twin building $\shi=\shi_\oplus\times \shi_{\ominus}$, where $\shi_{\oplus}$ and $\shi_{\ominus}$ are related via a ``codistance''.

 Masures are generalizations of Bruhat-Tits buildings adapted to the study of Kac-Moody groups over valued fields. Motivated by the work of Dinakar Muthiah on Kazhdan-Lusztig polynomials associated with Kac-Moody groups, we study the action of $\g G(\k[\qp,\qp^{-1}])$ and $\g G(\k(\qp))$ on their ``twin masure'', when $\g G$ is a split Kac-Moody group instead of a reductive group. 

\end{abstract}


\tableofcontents

\setcounter{tocdepth}{1}    
\section{Introduction}

\subsection{Context}

\paragraph{Split reductive groups over valued fields and Bruhat-Tits buildings}

Let $\g G$ be a split reductive group with maximal split torus $\g T$. Let $\shk$ be a field,  $G=\g G(\shk)$ and $T=\g T(\shk)$.  If $\omega: \shk\rightarrow \R\cup\{\infty\}$ is a nontrivial valuation of $\shk$, one can  construct a Bruhat-Tits building $\shi_\omega=\shi(\g G,\shk,\omega)$ on which $G$ acts, and study $G$ via its action on $\shi_\omega$. This building is a union of apartments, which are all translates by an element of $G$ of a standard apartment $\A_\omega$. 

The action of $G$ on  $\shi_\omega$ takes into account the valuation $\omega$. More precisely, let $\Phi$ be the root system of $(G,T)$, which can be regarded as a subset of the dual $\A_\omega^*$ of  the real vector space $\A_\omega$. 
Then $G=\langle T, x_\alpha(u),\alpha\in \Phi, u\in \shk\rangle$, where for each $\alpha\in \Phi$, $x_\alpha:(\shk,+)\hookrightarrow (G,.)$ is an algebraic group morphism.  Let $N$ be the normalizer of $T$ in $G$. Then $N$ is the stabilizer of $\A_\omega$ in $G$ and $T$ acts by translation on $\A_\omega$. If $t\in T$, then $t$ acts by translation on $\A_\omega$ by a vector depending on the values of $\omega(\chi(t))$, where $\chi$ runs over the characters of  $T$.
 If $\alpha\in \Phi$ and $u\in \shk$, $x_\alpha(u)$ fixes the half-apartment (or half-space) $\A_\omega\cap x_\alpha(u).\A_\omega=\{a\in \A_\omega\mid\alpha(a)+\omega(u)\geq 0\}$. 

\paragraph{Twin building of $\g G(\k[\qp,\qp^{-1}])$}

Suppose now that  $\shk=\k(\qp)$, where $\k$ is a field and $\qp$ is an indeterminate. Let $\omega_\oplus,\omega_\ominus$ be the valuations on $\shk$, trivial over $\k$ and such that $\omega_\oplus(\qp)=1=\omega_\ominus(\qp^{-1})$. Let $\sho=\k[\qp,\qp^{-1}]$. In order to study $G=\g G(\shk)$ and $G_{\sho}=\g G(\sho)$, it is natural to make them act on $\shi=\shi_{\oplus}\times \shi_{\ominus}$, where $\shi_\oplus=\shi(\g G,\shk,\omega_\oplus)$ and $\shi_\ominus=\shi(\g G,\shk,\omega_\ominus)$. The buildings $\shi_\oplus$ and $\shi_\ominus$ are related by a $G_\sho$-invariant codistance $d^*:\shc(\shi_\oplus)\times \shc(\shi_\ominus)\rightarrow W$, where $\shc(\shi_\oplus), \shc(\shi_{\ominus})$ are the sets of local chambers of $\shi_\oplus$ and $\shi_\ominus$ and $W$ is the affine Weyl group of $\A_\oplus:=\A_{\omega_\oplus}$ (which is isomorphic to the affine Weyl group of $\A_{\ominus}:=\A_{\omega_\ominus}$).  Equipped with this codistance, $\shi_\oplus\times \shi_\ominus$ is called a twin building (see \cite{RT94} for the case of $\g G=\mathrm{SL}_2$ and \cite{AB08} for a general study of twin buildings).   

This codistance is also called a twinning and it is deduced from some Birkhoff decomposition in $G$.
We may describe it slightly differently.
Let $C_\infty$ be the ``fundamental local chamber of $\A_\ominus$'', $C_0^+$ be the ``fundamental local chamber'' $C_0^+$ of $\A_\oplus$,  $I_\infty$ be the fixator of $C_\infty$ in $G_\sho$ and  $I$ be the fixator of $C_0^+$ in $G$. Then using the Birkhoff decomposition $G=I_\infty N I$, one can prove that there exists a unique $I_\infty$-invariant retraction $\rho_{C_\infty}:\shi_{\oplus}\twoheadrightarrow \A_{\oplus}$ (see \ref{n3.15.1}). We can then recover $d^*$ from $\rho_{C_\infty}$.

\paragraph{Kazhdan-Lusztig polynomials}

Let $(W',S')$ be a Coxeter group. In their fundamental paper \cite{KL79}, Kazhdan and Lusztig associated to this data a family $(P_{\mathbf{v},\mathbf{w}})_{\mathbf{v},\mathbf{w}\in W'}$ of polynomials of $\Z[\mathbf{q}]$, where $\mathbf{q}$ is an indeterminate. These polynomials are now known as  the Kazhdan-Lusztig polynomials. 
In order to define them, they began by defining  auxiliary polynomials -~called ``$R$-polynomials''~- $R_{\mathbf{v},\mathbf{w}}\in \Z[\mathbf{q}]$, for $\mathbf{v},\mathbf{w}\in W'$.  When $W'=W$, these polynomials are defined by the following equation (see \cite[(1.2)]{Mu19b})
 \begin{equation}\label{eq_df_Rpol} R_{\mathbf{v},\mathbf{w}}(q)= |(I\overset{\cdot}{\mathbf{w}} I\cap  I_\infty \overset{\cdot}{\mathbf{v}} I)/I|,\text{ for } \mathbf{v},\mathbf{w}\in W, \text{ for all prime power } q,
\end{equation} with $I=I(q)$ and $I_\infty=I_\infty(q)$   defined as above  in  $G=G_q=\g G(\F_q(\qp))$, with $\F_q$ the field of cardinality $q$, where $\overset{\cdot}{\mathbf{v}}, \overset{\cdot}{\mathbf{w}}$ are liftings   of $\mathbf{v},\mathbf{w}$ in $N\subset G$. 
This formula, is implicitly used by D. Kazhdan and G. Lusztig in
\cite{KL79}, and was proven by Z. Haddad (\cite{haddad1985coxeter}). 

\paragraph{Split Kac-Moody groups over valued field and masures}

Split Kac-Moody groups are infinite dimensional generalizations of split reductive groups. There are many possible definitions of such groups but in this paper, we are mainly interested in  the minimal one defined in \cite{T87} (although we also use  Mathieu's completion).  Let  $\g G$ be such a group, $\shk$ be a field equipped with a nontrivial valuation $\omega:\shk \rightarrow \R\cup\{\infty\}$ and $G=\g G(\shk)$. In \cite{R16}, generalizing results of  \cite{GR08}, Rousseau defined a ``masure''  $\shi_\omega=\shi(\g G,\shk,\omega)$ on which $G$ acts. This masure is a kind of Bruhat-Tits building adapted to the Kac-Moody framework. We still have $\shi_\omega=\bigcup_{g\in G} g.\A_\omega$, where $\A=\A_\omega$ is the ``fundamental apartment''. This apartment is an affine space of the same dimension as $\g T$ equipped with an arrangement of hyperplanes.   Using $\shi_\omega$, one can define the Iwahori subgroup $I=I_\omega$ of $G$, which is the fixator of the fundamental local chamber $C_0^+$ of $\A$. 
The Borel subgroup $B^\pm=T.U^\pm$ is well known (\cf  \ref{ss_m_KM_grp}).
In the following, a Bruhat or Birkhoff decomposition will be called more precisely a Bruhat-Borel or Birkhoff-Borel (\resp Bruhat-Iwahori or Birkhoff-Iwahori) decomposition, when it involves $B^\pm$ (\resp $I$).
As the Iwahori case is frequently used, we often omit this name Iwahori.

Let $Y$ be the cocharacter lattice and  $W\zv$ be the vectorial Weyl group of  $(\g G,\g T)$. 
Then,  $W:=N/T=W\zv\ltimes Y$ and the Bruhat decomposition does not hold in $G$: $IW I\subsetneq G$ (where we regard $W$ as a subset of $N$ by choosing for each element  of $W$ a lifting in $N$). Because of this, one often restricts attention to a subsemi-group $G^+=G^+_\omega$ of $G$ defined as follows. Let $C^v_f$ be the fundamental vectorial chamber of $\A$, $\sht:=\bigcup_{w\in W^v} w.\overline{C^v_f}$ be the Tits cone, $Y^+=Y\cap \sht$ and $W^+=W\zv\ltimes Y^+$. Then $G^+:=IW^+I$ is  a set of elements of $G$ admitting a Bruhat decomposition. An equivalent definition of $G^+$ is as follows. If $x,y\in \A$, we write $x\leq y$ if $y-x\in\sht$. Then $\leq$ extends to a $G$-invariant preorder $\leq$ on $\shi$ and we have  $G^+=\{g\in G\mid g.0\geq 0\}$ (where $0$ is the vertex of $C_0^+$).

\paragraph{Kazhdan-Lusztig polynomials in the Kac-Moody setting}

In general, neither $W$ nor $W^+$, which is not even a group (except if $\g G$ is reductive), is a Coxeter group. 
In \cite{Mu19b}, Muthiah suggests to take \eqref{eq_df_Rpol}, for $\mathbf{v},\mathbf{w}\in W^+$, as a definition of the $R$-polynomials associated with $\g G$ and then to define the Kazhdan-Lusztig polynomials. With this approach, two questions naturally arise: are the cardinalities in \eqref{eq_df_Rpol} finite and how to compute them if they are? 

In \cite{Mu19b}, Muthiah partially solves these questions, when $\g G$ is untwisted affine of type A, D or E, under the assumption that the retraction $\rho_{C_\infty}:\shi_{\oplus}\twoheadrightarrow \A_{\oplus}$  is well-defined (for every prime power $q$, where $\shi_\oplus=\shi\left(\g G,\F_q\left(\qp\right),\omega_{\oplus}\right)$), or at least that it is well-defined on a sufficiently large subset of $\shi_{\oplus}$. 
These works are generalized to general Kac-Moody groups in   \cite{hebert2024kazhdan}, under the same assumption on the retraction $\rho_{C_\infty}$, with similar techniques. Muthiah's method is as follows. Let $\mathbf{v},\mathbf{w}\in W^+$. Then the set involved in \eqref{eq_df_Rpol} is in bijection with a set $E_{\mathbf{v},\mathbf{w}}$  of local chambers of $\shi_\oplus$, which are in some ``sphere'', and whose image by $\rho_{C_\infty}$ is in $\mathbf{v}.C_0^+$. 
He proves that the image by $\rho_{C_\infty}$ of a line segment of $\shi_{\oplus}$ (satisfying certain conditions) is an $I_\infty$-Hecke path of $\A_{\oplus}$, \ie it is a piecewise linear path satisfying certain precise conditions. He proves finiteness results for the number of these $I_\infty$-Hecke paths in $\A_\oplus$ (in the untwisted affine case of type A, D or E) and proves that for a given $I_\infty$-path, the number of line segments of $\shi_{\oplus}$ retracting on it is finite and polynomial in $q$ (in the general case, not necessarily affine). However, he does not study the existence of $\rho_{C_\infty}$.

\subsection{Content of this paper}

Let  $\k$ be  a field (not necessarily finite) and $\g G$ be a split Kac-Moody group.
 In this paper, we study the action of $G=\g G(\k(\qp))$ and $G_{twin}:=G_\sho=\g G(\sho)$ on $\shi_\oplus\times \shi_\ominus$.  As $\sho$ is not a field, the meaning  of $\g G(\sho)$ is not clear, but we give a definition of it as a subgroup of $G$ in \ref{ss_m_KM_grp}.  We begin by studying the action of $G_\sho$ on a single masure $\shi_\oplus$ or $\shi_{\ominus}$. 
 We actually study  the slightly more general situation where $\sho$ is replaced by $\shr$,  a dense  subring of a field $\shk$ equipped with a discrete valuation (satisfying the additional assumption \eqref{eq_Assumption_R}, i.e,  such that $\omega(\shr^*)=\omega(\shk^*)=\Z$).  We prove that $G_\shr$ admits Bruhat and Iwasawa decompositions, using the corresponding decompositions of $\g G(\shk)$. For $\epsilon\in \{-,+\}$, set $U^{\epsilon\epsilon}_\shr=\langle x_\alpha(\shr)\mid\alpha\in \Phi^\epsilon\rangle\subset G_\shr$ (where $\Phi^+$ and $\Phi^-$ are the sets of positive and negative real roots respectively).
 Note that greater groups $U^{\epsilon}_\shr$ will be defined in \ref{ss_m_KM_grp}.
 Set $I_\shr=I\cap G_\shr$ and $N_\shr=N\cap G_\shr$. Then we prove the following theorem (see Corollary~\ref{cor_Bruhat} and Corollary~\ref{cor_Iwasawa}):

\begin{theo*}
We have  
\begin{enumerate}
\item $G_\shr=U^{\epsilon\epsilon}_\shr N_\shr I_\shr$, for both $\epsilon\in \{-,+\}$,

\item $G_\shr\cap G^+=I_\shr W^+ I_\shr$.
\end{enumerate}
\end{theo*}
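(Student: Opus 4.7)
My plan is to derive both decompositions from the corresponding ones in $G=\g G(\shk)$, namely $G=U^{\epsilon\epsilon}NI$ and $G^+=IW^+I$, by showing that each factor can be chosen in the appropriate $\shr$-subgroup. Two ingredients will drive the argument. First, density of $\shr$ in $\shk$ gives density of $U^{\epsilon\epsilon}_\shr$ in $U^{\epsilon\epsilon}$ for the topology coming from the root-subgroup valuations. Second, the hypothesis $\omega(\shr^*)=\Z$ provides a uniformizer inside $\shr^*$, so that $T_\shr$ realises every translation $t_\lambda$, $\lambda\in Y$, and it also puts every reflection $s_\alpha=x_\alpha(1)x_{-\alpha}(-1)x_\alpha(1)$ into $N_\shr$ (since $1\in\shr$). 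Combined, these imply that $N_\shr$ surjects onto $N/(T\cap I)$, hence provides lifts in $N_\shr$ for every element of the Weyl group $W=W\zv\ltimes Y$.

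For (1), let $g\in G_\shr$ and apply Iwasawa in $G$ to write $g=u_0 n_0 k_0$. The $(T\cap I)$-coset of $n_0$ is determined by $g$: it can be read off from the vertex $g\cdot 0\in\shi$ via the retraction onto $\A$ centred at infinity. By the previous paragraph I pick $n\in N_\shr$ with $n_0=n\cdot t_0$, $t_0\in T\cap I$, and absorbing $t_0$ into the right-hand factor gives $g=u_0 n k$ with $k\in I$. The subgroup $nIn^{-1}\cap U^{\epsilon\epsilon}$ is open in $U^{\epsilon\epsilon}$, being the fixator in $U^{\epsilon\epsilon}$ of the local chamber $n\cdot C_0^+$; by density I choose $u\in U^{\epsilon\epsilon}_\shr$ with $u^{-1}u_0\in nIn^{-1}$. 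Then $n^{-1}u^{-1}g\in I$, and since $g,u,n\in G_\shr$ it actually lies in $I\cap G_\shr=I_\shr$. This yields $g\in U^{\epsilon\epsilon}_\shr N_\shr I_\shr$.

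For (2), let $g\in G_\shr\cap G^+$. Applying Bruhat in $G^+$ write $g=i_1\dot w i_2$ with $\dot w\in N$ a lift of $w\in W^+$, and choose $\dot w\in N_\shr$ as in the first paragraph. The cell $I\dot wI$ refines as $U_w\cdot\dot w\cdot I$, where $U_w\subset I$ is the finite-dimensional subgroup obtained as a product of the root subgroups attached to the positive real roots sent to negative ones by $\dot w^{-1}$. Factor $i_1\dot w=u\dot w\cdot i_1'$ with $u\in U_w$ and $i_1'\in I$; use density of $\shr$ in $\shk$ inside the finite-dimensional group $U_w$ to approximate $u$ by $u'\in U_w\cap G_\shr\subset I_\shr$ with $u'^{-1}u\in \dot w I\dot w^{-1}\cap U_w$. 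Then $(u')^{-1}g\in\dot w\cdot I$, and collecting $G_\shr$-factors exactly as in (1) places this into $\dot w\cdot I_\shr$, giving $g\in I_\shr\dot w I_\shr$.

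The main obstacle I foresee is the approximation step in the infinite-dimensional Kac--Moody setting. To make ``$U^{\epsilon\epsilon}_\shr$ dense in $U^{\epsilon\epsilon}$'' into a usable statement, the topology on $U^{\epsilon\epsilon}$ must be assembled via the root-height filtration (or Mathieu's pro-unipotent completion), and one must check that density of $\shr$ in $\shk$ transfers simultaneously across the infinite family of positive real roots appearing in a normal-form expression. Dually, the openness of $nIn^{-1}\cap U^{\epsilon\epsilon}$ and of $\dot w I\dot w^{-1}\cap U_w$ must be quantitatively effective enough to survive that simultaneous approximation, at a finite depth of the filtration. These filtration and normal-form lemmas, which should follow from the local structure of $I$ developed earlier in the paper, are where I expect the bulk of the actual work to sit.
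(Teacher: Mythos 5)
Your overall plan coincides with the paper's: write an element of $G_\shr$ in a normal form coming from the corresponding decomposition of $G=\g G(\shk)$, use $\nu(N_\shr)=W$ to normalize the Weyl factor into $N_\shr$, and then approximate the unipotent factor by elements of $\shr$ so that the error falls into the appropriate parahoric. Where you diverge from the paper is in how the two decompositions are finished off after the approximation.

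For the Iwasawa part, your retraction-invariance shortcut is both correct and a genuine simplification of the paper's bookkeeping. Once you have $u\in U^{\epsilon\epsilon}_\shr$ with $u^{-1}u_0\cdot n.C_0^+\subset\A$, the fact that $u^{-1}u_0\in U^\epsilon$ fixes the sector germ $\g Q_{\epsilon\infty}$ forces (via $\rho_{\epsilon\infty}$-invariance) that $u^{-1}u_0$ fixes $n.C_0^+$ pointwise, i.e.\ lies in $nIn^{-1}$. The paper instead routes through a separate statement (its Proposition~\ref{2.11}: any two $G_\shr$-apartments are conjugate by an element of $G_\shr$ fixing their intersection), which they need anyway for later sections but which your argument avoids. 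For the Bruhat part, you take a genuinely different tack: you invoke the Iwahori--Matsumoto refinement $I\dot wI=U_w\dot wI$ for $w\in W^+$, with $U_w$ finite-dimensional, and do a purely finite-dimensional approximation, whereas the paper reformulates the Bruhat decomposition geometrically (via \cite[Proposition 5.17]{Heb20}: $C_0^+$ and $h.C_0^+$ lie in a common $G$-apartment when $h.0\geq 0$) and then applies the same approximation theorem used for Iwasawa. Your route has the advantage of making the approximation trivially effective in the Bruhat case, at the cost of importing the Iwahori--Matsumoto cell structure for $W^+$, which the paper never needs; the paper's route keeps both decompositions running through a single geometric engine, Theorem~\ref{thm_Bounded_sets_R_friendly}.

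As you rightly flag, the real work is the simultaneous-approximation statement itself — for the Iwasawa part the infinite family of positive real roots does appear, since $u_0$ is an arbitrary finite word in root subgroups and the error $u^{-1}u_0$ must be controlled across an a priori unbounded set of weight spaces when pushed into the pro-unipotent completion. The paper proves exactly this in Theorem~\ref{thm_Bounded_sets_R_friendly}, using Lemma~\ref{lem_Ua++_fix} (for every $x\in\shi$ some $U_a^{pm+}$ fixes $x$), which in turn rests on the commutator estimate of Proposition~\ref{2.2} and the height lower bound of Lemma~\ref{lem_LB}; the mechanism is to approximate the factors of $u_0$ one at a time and observe that each correction $x_{\beta}(a-a')$ with $\omega(a-a')\gg 0$ already lies in a sufficiently deep fixator. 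So your sketch is sound, but the ``filtration and normal-form lemmas'' you defer are precisely Lemmas~\ref{lem_LB}--\ref{lem_Conj_unp_2} and Theorem~\ref{thm_Bounded_sets_R_friendly}, and they are where the content of the section actually lives.
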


We then go back to the situation where $\shr=\sho=\k[\qp,\qp^{-1}]$ and  study the action of $G_{\sho}$ on $\shi_\oplus\times \shi_{\ominus}$.   
We do not prove the existence of $\rho_{C_\infty}$, but we prove that if $(G_\sho)_{\oplus}^+:=\{g\in G_\sho\mid g.0_{\oplus}\geq 0_{\oplus}\}$ admits a Birkhoff decomposition (see \S \ref{4.0} for the precise meaning), then $\rho_{C_\infty}$ is well-defined on $\shi_{\oplus}^{\geq {0_\oplus}}=\{x\in \shi_\oplus\mid x\geq 0_\oplus\}$ (see \S\ref{n3.15.1}). 
Following the ideas of Muthiah, we conjecture that this decomposition holds (see \S \ref{4.4}) and that the same decompositions with $(G_\sho)_{\oplus}^+$ replaced by $(G_\sho)_{\oplus}^-:=\{g\in G_\sho\mid g.0_{\oplus}\leq 0_{\oplus}\}$  hold, which would be sufficient for applying Muthiah's method.
With such Birkhoff decompositions, we might really say that $\shi_\oplus$ and $ \shi_{\ominus}$ are twin masures.
Unfortunately the decompositions proved by M. Patnaik in \cite{patnaik2024local} concern a completion of $\g G(\sho)$, see \ref{4.4}.

We then study the image by $\rho_{C_\infty}$ of a   line segment $[x,y]$, with $x\leq y$ or $y\leq x$ and  such that $\rho_{C_\infty}(z)$ is well-defined for every $z\in [x,y]$ (the second condition is always satisfied  if our conjecture is true).
 We prove that they are $C_\infty-$Hecke paths.
 We then obtain a formula counting the number of liftings of a given $C_\infty-$Hecke path, and proving that it is polynomial in the cardinality of $\k$ (see Theorem~\ref{5.7}). 
 
 
 \par To get this number of liftings of a $C_\infty-$Hecke path as a line segment, we first  prove that, after choosing some superdecorations, it is the product of the numbers of local liftings around a finite number of points  (the points where the path crosses a wall in some specific way).
 Then we compute each of these numbers of local liftings.
 We get a precise formula, which seems more explicit than Muthiah's formula in \cite{Mu19b} (where our paths are called $I_\infty$-Hecke paths).   

Eventually, we study the case where $\g G$ is affine $\mathrm{SL}_2$. We prove that $G \supsetneq I_\infty N I$: the Birkhoff decomposition does not hold on the entire $G$. This was expected since this is already the case for the Bruhat decomposition. We give an example of an element $g\in  G\setminus I_\infty N I$. As $g\notin G_{\oplus}^+\cup G_{\oplus}^-$,  this does not contradict our conjecture. We also study explicit examples of $C_\infty-$Hecke paths.

\begin{remas*}
\begin{enumerate}
\item Our conventions differ from the one of \cite{Mu19b}. Our Tits cone is the opposite of the Tits cone for Muthiah, and thus what Muthiah denotes $G^+$ corresponds to $G^-$ for us. For this reason, our definition of $C_\infty-$Hecke path and our formulas slightly differ from the one of \cite{Mu19b}.

\item The fixators of objects in the masure (like $I$ or $I_\infty$) are   subgroups of $G$ or $G_\sho$  defined by  sets of generators. Even in the affine case, it is a delicate issue to describe them explicitly. For example, if $\g G(\shk)=\mathrm{SL}_2(\k(\qp)[u,u^{-1}])$, where $u$ is an indeterminate, then the fixator of $0_\oplus$ in $G$ is $\mathrm{SL}_2(\sho_{\oplus}[u,u^{-1}])$, where $\sho_{\oplus}=\{a\in \k(\qp)\mid\omega_\oplus(a)\geq 0\}$ (see Lemma~\ref{lemDescriptionK}). However, for $I_\infty$, we prove that \[I_\infty\subset \begin{psmallmatrix}
\qp^{-1}\k[\qp^{-1}][u,u^{-1}]+\k[u^{-1}]&\  & \qp^{-1}\k[\qp^{-1}][u,u^{-1}]+u^{-1}\k[u^{-1}] \\
\qp^{-1}\k[\qp^{-1}][u,u^{-1}]+\k[u^{-1}]&\  & \qp^{-1}\k[\qp^{-1}][u,u^{-1}]+\k[u^{-1}] \end{psmallmatrix},\] (see Lemma~\ref{lemDescription_I_infty}), but we do not know if it is an equality.
\end{enumerate} 
\end{remas*}

\medskip

The paper is organized as follows.

In section~\ref{s1}, we introduce the general framework, in particular Kac-Moody groups and masures.

In section~\ref{s2}, we study $G_{\shr}$ for $\shr$ a dense subring of a valued field $\shk$ (satisfying Assumption~\eqref{eq_Assumption_R}). We prove the Bruhat and  Iwasawa decompositions of $G_\shr$.

In section~\ref{s3}, we study the action of  $G_{twin}:=G_{\sho}$, where $\sho=\k[\qp,\qp^{-1}]$  on $\shi_\oplus\times \shi_{\ominus}$. We define $\rho_{C_\infty}$ under some conjecture.

In section~\ref{s5}, we study $C_\infty-$Hecke paths and their liftings in $\shi_\oplus$.

In section~\ref{s6}, we study the case where $\g G$ is affine $\mathrm{SL}_2$.

\medskip

\textbf{Acknowledgements}

We are especially grateful to Dinakar Muthiah for his suggestion to look at these problems. We also thank St\'ephane Gaussent and Manish Patnaik for interesting discussions on this subject. 
We also thank the anonymous referees for their remarks, questions and suggestions.

\section{Split Kac-Moody groups over valued fields and masures}\label{s1}

\subsection{Standard apartment of a masure}\label{subRootGenSyst}

\subsubsection{Root generating system}\label{ss_Root_generating}

A \textbf{ Kac-Moody matrix} (or { generalized Cartan matrix}) is a square matrix $A=(a_{i,j})_{i,j\in I}$ indexed by a finite set $I$, with integral coefficients, and such that :
\begin{enumerate}
\item[\tt $(i)$] $\forall \ i\in I,\ a_{i,i}=2$;

\item[\tt $(ii)$] $\forall \ (i,j)\in I^2, (i \neq j) \Rightarrow (a_{i,j}\leq 0)$;

\item[\tt $(iii)$] $\forall \ (i,j)\in I^2,\ (a_{i,j}=0) \Leftrightarrow (a_{j,i}=0$).
\end{enumerate}
A \textbf{root generating system} is a $5$-tuple $\mathcal{S}=(A,X,Y,(\alpha_i)_{i\in I},(\alpha_i^\vee)_{i\in I})$\index{s@$\mathcal{S}$}\index{Y@$Y$}\index{a@$\alpha_i$, $\alpha_i^\vee$}\index{X@$X$} made of a Kac-Moody matrix $A$ indexed by the finite set $I$, of two dual free $\Z$-modules $X$ and $Y$ of finite rank, and of a free family $(\alpha_i)_{i\in I}$ (resp.  a free family $(\alpha_i^\vee)_{i\in I}$) of elements in $X$ (resp. $Y$) called \textbf{simple roots} (resp. \textbf{simple coroots}) that satisfy $a_{i,j}=\alpha_j(\alpha_i^\vee)$ for all $i,j$ in $I$. Elements of $X$ (respectively of $Y$) are called \textbf{characters} (resp. \textbf{cocharacters}).

Fix such a root generating system $\mathcal{S}=(A,X,Y,(\alpha_i)_{i\in I},(\alpha_i^\vee)_{i\in I})$ and set $\A:=Y\otimes \R$\index{A@$\A$}. Each element of $X$ induces a linear form on $\A$, hence $X$ can be seen as a subset of the dual $\A^*$. In particular, the $\alpha_{i}$'s (with $i \in I$) will be seen as linear forms on $\A$. This allows us to define, for any $i \in I$, an involution $r_{i}$\index{r@$r_i$} of $\A$ by setting $r_{i}(v) := v-\alpha_i(v)\alpha_i^\vee$ for any $v \in \A$. One defines the \textbf{Weyl group of $\mathcal{S}$} as the subgroup $W\zv$\index{W@$W\zv$} of $\mathrm{GL}(\A)$ generated by $\{r_i\mid i\in I\}$. The pair $(W\zv, \{r_i\mid i\in I\})$ is a Coxeter system.

The following formula defines an action of the Weyl group $W\zv$ on $\A^{*}$:  
\[\displaystyle \forall \ x \in \A , w \in W\zv , \alpha \in \A^{*} , \ (w.\alpha)(x):= \alpha(w^{-1}.x).\]
Let $\Phi:= \{w.\alpha_i\mid(w,i)\in W\zv\times I\}$\index{P@$\Phi,\Phi^\vee,\Phi^\pm$} (resp. $\Phi^\vee=\{w.\alpha_i^\vee\mid(w,i)\in W\zv\times I\}$) be the set of \textbf{real roots} (resp. \textbf{real coroots}): then $\Phi$ (resp. $\Phi^\vee$) is a subset of the \textbf{root lattice} $Q := \displaystyle \bigoplus_{i\in I}\Z\alpha_i$ (resp. \textbf{coroot lattice} $Q^\vee=\bigoplus_{i\in I}\Z\alpha_i^\vee$). If $\alpha\in \Phi$, there exist $i\in I$, $w\in W\zv$ such that $\alpha=w.\alpha_i$. One sets $\alpha^\vee=w.\alpha_i^\vee$ and $r_\alpha=r_{\alpha^\vee}=wr_iw^{-1}\in W\zv$. This does not depend on the choice of $i$ and $w$. By \cite[1.2.2 (2)]{Kum02}, one has $\R \alpha^\vee\cap \Phi^\vee=\{\pm \alpha^\vee\}$ and $\R \alpha\cap \Phi=\{\pm \alpha\}$ for all $\alpha^\vee\in \Phi^\vee$ and $\alpha\in \Phi$.  We set $Q^+=\bigoplus_{i\in I} \N \alpha_i$, $Q^{\vee,+}=\bigoplus_{i\in I} \N \alpha_i^\vee$\index{Q@$Q^+$, $Q^{\vee,+}$},   $\Phi^+=\Phi\cap Q^+$ and $\Phi^-=\Phi\cap -Q^+=-\Phi^+$. We define $\mathrm{ht}:Q\otimes \R\rightarrow \R$ by $\mathrm{ht}(\sum_{i\in I} n_i\alpha_i)=\sum_{i\in I} n_i$ for $(n_i)\in \R^I$ and we call $\mathrm{ht}$ the height. 

\subsubsection{Local chambers, sectors, chimneys}  
\label{2.2b}

\par\quad\; {\bf(1)} Vectorial facets. Let $(\qa_{i})_{1\leq i\leq\ell}$ be the above basis of the system $\QF$ of roots.
Then $C\zv_{f}=\set{v\in \A \mid \qa_{i}(v)>0,\forall i}$ is the canonical vectorial chamber.
Its facets are the cones $F\zv(J)=\set{v\in \A \mid \qa_{i}(v)=0,\forall i\in J,\qa_{i}(v)>0,\forall i\not\in J}$ for $J\subset\set{1,\ldots,\ell}=I$.
{The facet $F\zv(J)$ and $J$ are said spherical if the group $W\zv(J)$ generated by the reflections $r_{i}=r_{\qa_{i}}$ for $i\in J$ is finite.} 
 
\par A positive (\resp negative) vectorial facet of type $J$ is a conjugate   by $W\zv$ of $F\zv(J)$ { (\resp $-F\zv(J)$)}.
It is a chamber if $J=\emptyset$ and a panel if $\vert J\vert =1$.

\par The Tits cone $\sht$ {(\resp its interior $\sht^\circ$)}\index{T@$\sht,\sht^\circ$} is the union of all positive {(\resp and spherical)} vectorial facets. It is a convex cone.

\medskip
\par{\bf(2)} Local facets and segment germs. A local facet in $\A$ is the germ $F(x,F\zv)=germ_{x}(x+F\zv)$ where $x\in\A$ and $F\zv$ is a vectorial facet (\ie $F(x,F\zv)$ is the filter of all neighbourhoods of $x$ in $x+F\zv$).
It is a local chamber, a local panel, positive, or negative 
 if $F\zv$ has this property, it is of type $0$\index{type $0$} if $x\in Y\subset \A$.
We denote by $C_0^+$\index{C@$C_0^+$} the  fundamental local chamber, \ie $C_0^+=germ_0(C\zv_f)$.
 
Let $x,y$ in $\A$ be such that $x\neq y$. The germ of $[x,y]$ at $x$ is the filter $[x,y)=germ_x([x,y])$\index{$[x,y)$} consisting of the subsets of the form $\Omega\cap [x,y]$, where $\Omega$ is a neighbourhood of $x$ in $\A$. It is said to be  preordered if $y-x\in \pm \sht$.

\medskip
\par{\bf(3)} Sectors and sector germs.  A sector in $\A$ is a subset $\g q=x+C\zv$, for $x$ a point in $\A$ and $C\zv$ a vectorial chamber.
Its sector germ is the filter $\g Q=germ_{\infty}(\g q)$ of subsets of $\A$ containing another sector $x+y+C\zv$, with $y\in C\zv$.
It is entirely determined by its direction $C\zv$.
This sector or sector germ is said positive (\resp negative) if $C\zv$ has this property.

\par For example, we consider  $\g Q_{\pm\infty}=germ_{\infty}(\pm C\zv_{f})$\index{Q@$\g Q_{\pm \infty}$}.

\medskip
\par{\bf(4)}  A half-apartment (\resp an open-half-apartment, a  wall) of $\A$ is a set of form $D(\qa-k)= \alpha^{-1}([k,+\infty[)$\index{D@$D(\qa+k),D^\circ(\qa+k)$} (\resp $D^\circ(\qa-k)=\alpha^{-1}(]k,+\infty[)$, $M(\qa-k)= \alpha^{-1}(\set{k})$)\index{M@$M(\qa+k)$}, where $k\in \Z$ and $\alpha\in  \Phi$.

 A subset $E$ of $\A$ is said to be enclosed if it is the intersection of a finite number of half-apartments.
The enclosure $cl(E)$ of a subset (or filter) $E$ of $\A$ is the filter consisting of the subsets containing an enclosed set containing $E$.
\medskip
\par{\bf(5)} Chimneys. Let $F = F(x, F_{1}\zv)$ be a local facet and $F\zv$ be a vectorial facet. 
The chimney $\g r(F, F\zv)=cl(F+F\zv)$ is the filter consisting of the sets containing an enclosed set containing $F + F\zv$. 
A shortening of a chimney $\g r(F, F\zv)$, with $F = F(x, F_{1}\zv)$ is a chimney of the form
$\g r(F(x + \qx, F_{1}\zv), F\zv)$ for some $\qx \in F\zv$. 
The germ $\g R=germ_{\infty}(\g r)$ of a chimney $\g r$ is the filter of subsets of
$\A$ containing a shortening of $\g r$.
{The chimney $\g r(F, F\zv)$ or its germ $\g R$ is said splayed of sign $\qe$ if its direction $F\zv$ is a spherical facet of sign $\qe$. A sector is a splayed chimney.}

\subsection{Split Kac-Moody groups over valued fields}

\subsubsection{Minimal split Kac-Moody groups}\label{ss_m_KM_grp}

 Let  $\g G=\g G_{\mathcal{S}}$\index{G@$\g G$} be the group functor associated in \cite{T87} with  the  root generating system $\mathcal{S}$, see also \cite[8]{Re02}. Let $(\shk,\omega)$\index{K@$\shk$}\index{o@$\omega$} be a valued field where $\omega:\shk\twoheadrightarrow \Z\cup\{+\infty\}$ is a normalized, discrete valuation. 
 Let $G=\g G(\shk)$ be the \textbf{split Kac-Moody group over $\shk$ associated with $\mathcal{S}$}.  The group $G$ is generated by the following subgroups:\begin{itemize}
\item the fundamental torus $T=\g T(\shk)$\index{T@$T$}, where $\g T=\mathrm{Spec}(\Z[X])$,

\item the root subgroups $U_\alpha=\g U_\alpha(\shk)$\index{U@$U_\alpha$}, each isomorphic to $(\shk,+)$ by an isomorphism $x_\alpha$\index{x@$x_\alpha$}.
\end{itemize}

The groups $X$ and $Y$ correspond to the character lattice and cocharacter lattice of $\g T$ respectively. One writes $\g U^{\pm}$\index{U@$\g U^{\pm},U^{\pm}$} the subgroup of $\g G$ generated by the $\g U_{\alpha}$, for $\alpha\in \Phi^{\pm}$ and $U^{\pm}=\g U^{\pm}(\shk)$.     

Let $\shr$\index{R@$\shr$} be a subring of $\shk$ (with $1\in \shr$). In this paper, we are interested in the  group of $\shr$-points of $\g G$.  It seems that there is currently no consensus on what this should mean. We mainly study the case where $\shr=\sho=\k[\qp,\qp^{-1}]\subset \shk=\k(\qp)$, for $\k$ a field and $\qp$ an indeterminate. 
When $\g G$ is a split reductive group over $\k$,  one knows that $\g G(\sho)$ is given by some well known generators.
 This is a consequence of $\sho$ being a principal ideal domain by \cite{T85} top of page 205.
 So in this paper, we take the same kind of generators and set \index{G@$G_{\shr}$}
  \[G_\shr:=\langle x_\alpha(\shr),\g T(\shr)\mid\alpha\in \Phi\rangle \subset \g G(\shk)=G.\]

\par For $\epsilon\in \{-,+\}$, we set $U^\epsilon_\shr=G_\shr\cap U^\epsilon= G_\shr\cap  \langle x_\alpha(\shk)\mid \alpha\in \Phi^\epsilon\rangle$\index{U@$U^\epsilon_\shr,U^{\epsilon\epsilon}_\shr$}. Let  $U^{\epsilon \epsilon}_\shr=\langle x_\alpha(u)\mid u\in \shr,\alpha\in \Phi^\epsilon\rangle$.
 We have $U^{\epsilon \epsilon}_\shr\subset U^\epsilon_\shr$. However, this inclusion is strict in general, see  \cite{T87} 3.10.d page 555 for a counter-example.
 
 \par  Timoth\'ee Marquis \cite[Def. 8.126]{Mar18} defines a minimal Kac-Moody group functor $\g G_{\shs}^ {min}$ and proves [\lc proof of Prop. 8.128] that the morphism $\g G^ {min}_{\shs}(k_{1}) \to \g G^ {min}_{\shs}(k_{2})$ is injective when $k_{1}\into k_{2}$ is an injective morphism of rings. Moreover when 
 $\shr$ is a Euclidean ring (\eg $\shr=\sho=\k[\qp,\qp^ {-1}]$), we know that $\mathrm{SL}_{2}(\shr)$ is generated by its torus and root subgroups \cite[Exer 7.2 (3)]{Mar18}.
 So our $G_{\shr}$ is equal to the group $\g G_{\shs}^ {min}(\shr)$ defined by Timoth\'ee Marquis.
  It is perhaps not equal to $\g G(\shr)$ as the morphisms $\qi(\shr):\g G(\shr) \to \g G^{pma}(\shr)$ (see below in \S\ref{2.1}) and $\g G(\shr)\to\g G(\shk)$ might be non injective.

Note that general Kac-Moody groups over rings are defined and studied in \cite{allcock2016presentation}, \cite{allcock2016steinberg}, and \cite{allcock-carbone2016presentation}. 
It seems more difficult to relate them with the group we study.

\begin{rema}
We chose to work with any discretely valued field $(\shk,\qo)$. For our main purpose, which is to develop a Kazhdan-Lusztig theory in the Kac-Moody setting, we only need the case where the residual cardinality of $\shk$ is finite, and even where $\shk=\k(\qp)$, where $\k$ is a finite field. However, as  it would not really simplify our proofs to impose these restrictions on $\shk$, we work in this more general frameworks.

\end{rema}

\subsubsection{Subgroups $N$ and $N_\shr$}\label{ss_N}
Let $\g N$\index{N@$\g N$} be the group functor on rings such that if $\shr'$ is a ring, $\g N(\shr')$ is the subgroup of $\g G(\shr')$ generated by $\g T(\shr')$ and the $\tilde{s}_{\alpha_i}$, for $i\in I$, where $\tilde{s}_{\alpha_i}$ is defined in \cite[1.6]{R16}. Then if $\shr'$ is a field with at least $4$ elements, $\g N(\shr')$ is the normalizer of $\g T(\shr')$ in $\g G(\shr')$. 

Let $N=\g N(\shk)$\index{N@$\g N$} and $\mathrm{Aut}(\A)$ be the group of affine automorphisms of $\A$. Then by \cite[4.2]{R16}, there exists a group morphism $\nu:N\rightarrow \mathrm{Aut}(\A)$\index{n@$\nu$} such that:\begin{enumerate}
\item for $i\in I$, $\nu(\tilde{s}_{\alpha_i})$ is the simple reflection $r_i\in W\zv$, it fixes $0$,

\item for $t\in \g T(\shk)$, $\nu(t)$ is the translation on $\A$ by the vector $\nu(t)$ defined by $\chi(\nu(t))=-\omega(\chi(t))$, for all $\chi\in X$. This action  is compatible with the action of $W\zv$ on $\A$,

\item we have $\nu(N)=W\zv\ltimes Y:=W$\index{W@$W$}.

\end{enumerate}

Let $\shr$ be a dense subring of $\shk$.  We often  assume: \begin{equation}\label{eq_Assumption_R}
\exists \qp\in \shr^*\mid \omega(\qp)=1.
\end{equation}

 This assumption is in particular satisfied by $\shr=\k[\qp,\qp^{-1}]$, $\shk =\k(\qp)$ or $\k(\!(\qp)\!)$, for $\k$ a field and $\qp$ an indeterminate or by $\shr=\Z[\frac{1}{p}]$,  $\shk=\Q$ or $\Q_p$, for $p$ a prime number.
 
 Let $N_\shr=\g N(\shr)\subset N$\index{N@$N_\shr$}. Then $N_\shr$ normalizes $T_\shr:=\g T(\shr)$\index{T@$\g T_\shr$}.   For $\ql\in Y= Hom(\g{Mult},\g T)$, we set $\qp^\ql:=\ql(\qp){\in} \g T(\shr)$\index{p@$\qp^\ql$}. Then $\nu(\qp^\ql)$ is the translation on $\A$ by the vector $-\lambda$. Moreover, $\tilde{s}_{\alpha_i}\in N_\shr$. In particular, we have: \begin{equation}\label{eq_Nu(NR)}
 \nu(N_\shr)=W\zv\ltimes Y=W.
 \end{equation}

\subsubsection{The completion $\g G^ {pma}$ of the Kac-Moody group $\g G$} 
\label{2.1}
\par In order to study the group $G=\g G(\shk)$ (for $\shk$ a field), we consider the group-functor homomorphism $\qi:\g G \to \g G^{pma}$    from $\g G$ to the (positive) completion $\g G^{pma}$\index{G@$G^{pma}$} of $\g G$ (we shall also use the negative completion $\g G^{nma}$).
We know that $\qi(\shk): \g G(\shk) \to \g G^{pma}(\shk)$ is injective for any field $\shk$ \cite[Prop. 3.13]{R16}, so we consider $G$ as a subgroup of $ \g G^{pma}(\shk)$.
 Actually $ \g G^{pma}$ is the Kac-Moody group defined by Olivier Mathieu in \cite{Ma89}  as a functor on the category of rings; we refer here to \cite[\S 3]{R16}.
This group is hard to define. However the following important subgroups have simpler definitions.

\par One starts with the split Kac-Moody algebra $\g g_{\Z}$\index{g@$\g g_\Z$}  over $\Z$ (see \cite[Definition 7.5]{Mar18} for the definition of $\g g_\Z$),   with system of (real or imaginary) roots $\QD=\QD^+\sqcup\QD^-\subset Q$ (see \cite[1.2.2]{Kum02} for the definition of $\Delta$). We have $\Phi\subset \Delta$. The elements of $\Phi=\Delta_{re}$ are called real roots and the elements of $\Delta_{im}= \Delta\setminus \Phi$\index{D@$\Delta$, $\Delta_{re}$, $\Delta_{im}$} are called imaginary roots.
To each $\qa\in\QD$ is associated a subgroup $\g U_{\qa}$.

\par Let  $\Psi\subset \Delta^+$. We say that $\Psi$ is closed if for all $\alpha,\beta\in \Psi$, for all $p,q\in \N^*$, $p\alpha+q\beta\in \Delta^+$ implies $p\alpha+q\beta\in \Psi$. Let $\Psi$ be a closed  subset of $\QD^+$ and $R$ a ring (commutative with unit), then a pro-unipotent group scheme $\g U^ {ma}_{\Psi}$ is described as follows in \cite[Prop 3.2 + 3.4]{R16}:

\begin{equation}
\g U^ {ma}_{\Psi}(R)= \prod_{\qa\in \Psi}\,X_{\qa}(\g g_{\qa,\Z}\otimes R).
\end{equation}

\par One chooses an order on $\Psi$, \eg such that the height of $\qa$ is increasing.

\par $\g g_{\qa,\Z}$\index{g@$\g g_{\qa,\Z}$} is the eigenspace associated to $\qa$ in $\g g_{\Z}$ and $X_{\qa}: \g g_{\qa,\Z}\otimes R \to \g U^ {ma}_{\Psi}(R),\ \sum_{x\in\shb_{\qa}}\, \ql_{x}.x \mapsto \prod_{x\in\shb_{\qa}}\, [exp]\ql_{x}.x$\index{X@${X_\alpha}$} is one to one (where $\shb_\alpha$ is a $\Z$-basis of $\g g_{\alpha,\Z}$).  

\par When $\qa$ is real (\ie $\qa\in\QF=\QD_{re}$), then $\g U_{\qa}(R)=X_{\qa}(\g g_{\qa,\Z}\otimes R)$.
One chooses $e_{\qa}$ (one of the two bases of $\g g_{\qa,\Z}$) and one writes $x_{\qa}(a) =X_{\qa}(a.e_{\qa})$ for $a\in R$.
One gets thus an isomorphism $x_{\qa}: (R,+)\to \g U_{\qa}(R), a\mapsto x_{\qa}(a)$ and $\g x_{\qa}:\g{Add}\to \g U_{\qa}$.

\par When $\qa$ is imaginary (\ie $\qa\in \QD_{im}$), then $\g U_{\qa}(R)=\prod_{n\geq1}\,X_{n\qa}(\g g_{n\qa,\Z}\otimes R)$.

\medskip
\par $\g U^ {ma}_{\Psi}$ may be seen as ``topologically generated'' by the $\g U_{\qa}$ for $\qa\in\Psi$.
\par One writes $\g U^ {ma+}=\g U^ {ma}_{\QD^+}$. It contains $\g U^+$.
The positive Borel subgroup of $\g G^{pma}$  is $\g T\ltimes \g U^ {ma+}=\g B^ {ma+}$.

\subsubsection{Parahoric subgroups}\label{3.1}

In \cite{R16} and \cite{GR08}, the masure $\shi$ of $G$ is constructed as follows. To each $x\in \A$ is associated a group $\hat{P}_x=G_x$. Then $\shi$ is defined  in such a way that $G_x$ is the fixator of $x$ in $G$ for the action on $\shi$ (see \ref{sub_Masure}).  We actually associate to each filter $\Omega$ on $\A$ a subgroup $G_\Omega\subset G$ (with $G_{\{x\}}=G_x$ for $x\in \A$).  Even though the masure is not yet defined, we use the terminology ``fixator'' to speak of $G_\Omega$, as this will be the fixator of $\Omega$ in $G$. The definition of $G_\Omega$ involves the completed groups $G^{pma}$ and $G^{nma}$.   

\parni{\bf 1)} Let $\QO\subset\A$ be a non empty set or filter.
One defines a function $f_{\QO}:\QD\to \Z\cup \{+\infty\}$, $f_{\QO}(\qa)=\inf\set{r\in\Z \mid \QO\subset D(\qa+r)}=\inf\set{r\in\Z \mid \qa(\QO)+r\subset[0,+\infty[}$\index{f@$f_\Omega$} and, for $r\in\Z$, $\shk_{\qo\geq r}=\set{x\in\shk \mid \qo(x)\geq r}$, $\shk_{\qo= r}=\set{x\in\shk \mid \qo(x) = r}$.  The filter $\Omega$ is said almost open (resp. narrow) if for all $\alpha\in \Phi$, $f_\Omega(\alpha)+f_\Omega(-\alpha) \geq 1$ (resp. $f_\Omega(\alpha)+f_\Omega(-\alpha)\leq 1$). For example, segment germs and local facets are narrow and local chambers and sectors are almost open.

\medskip
\parni{\bf 2)} If $\Omega$ is a set, we  define the subgroup $U^ {ma+}_{\QO}=\prod_{\qa\in\QD^+}\, X_{\qa}(\g g_{\qa,\Z}\otimes \shk_{\qo\geq f_{\QO}(\qa)})$\index{U@$U^{ma+}_{\QO}$}, see \ref{2.1}.
Actually, for $\qa\in\QF^+=\QD_{re}^+$, $X_{\qa}(\g g_{\qa,\Z}\otimes \shk_{\qo\geq f_{\QO}(\qa)})=x_{\qa}(\shk_{\qo\geq f_{\QO}(\qa)})=:U_{\qa,\QO}$\index{U@$U_{\alpha,\Omega}$}. 
We then define $U_{\QO}^ {pm+}=U_{\QO}^ {ma+}\cap G=U_{\QO}^ {ma+}\cap U^+$, see \cite[4.5.2, 4.5.3 and 4.5.7]{R16}. When $\Omega$ is a filter, we set $U^ {ma+}_{\QO}:=\cup_{S\in\QO}\, U^ {ma+}_{S}$ and $U_{\QO}^ {pm+}:=U_{\QO}^ {ma+}\cap G$

\par We may also consider the negative completion $G^ {nma}=\g G^ {nma}(\shk)$ of $G$, and define the subgroup $U^ {ma-}_{\QO}=\prod_{\qa\in\QD^-}\, X_{\qa}(\g g_{\qa,\Z}\otimes \shk_{\qo\geq f_{\QO}(\qa)})$.
For $\qa\in\QF^-=\QD_{re}^-$, $X_{\qa}(\g g_{\qa,\Z}\otimes \shk_{\qo\geq f_{\QO}(\qa)})=x_{\qa}(\shk_{\qo\geq f_{\QO}(\qa)})=:U_{\qa,\QO}$.
We then define $U_{\QO}^ {nm-}=U_{\QO}^ {ma-}\cap G=U_{\QO}^ {ma-}\cap U^-$.

\medskip
\parni{\bf 3)} Let $\Omega$ be a filter on $\A$. We denote by $N_\Omega$ the fixator of $\Omega$ in $N$ (for the action of $N$ on $\A$). If $\Omega$ is not a set, we have $N_\Omega=\bigcup_{S\in \Omega} N_S$. Note that we drop  the hats used in \cite{R16} to avoid confusions with the hats related to the completion  $\hat{\shk}_\omega$ of $\shk$, that we shall consider in section \ref{s3}. When $\QO$ is almost open  one has $N_{\QO}=N_{\A}=T_{0}:=\g T(\shk_{\qo\geq0})=\g T(\shk_{\qo=0})$\index{T@$T_0$} (written $H$ in \lc~, but we avoid this here), see [\lc 4.3.1].

  If $x\in \A$, we set $G_x=U_x^{pm+}.U_x^{nm-}.N_x$. This is a subgroup of $G$. If $\Omega\subset \A$ is a set, we set $G_\Omega=\bigcap_{x\in \Omega} G_x$\index{G@ $G_x$, $G_\Omega$} and if $\Omega$ is a filter, we set $G_\Omega=\bigcup_{S\in \Omega} G_S$. Note that in \cite{R16}, the definition of   $G_x$ is much more complicated (see \cite[D\'efinition 4.13]{R16}). However it is equivalent to this one by \cite[Proposition 4.14]{R16}.

A filter is said to have a ``good fixator'' if it satisfies \cite[D\'efinition 5.3]{R16}. There are many examples of filters with good fixators [\lc 5.7]: points, preordered segment germs, local facets, sectors, sector germs, $\A$, walls, half apartments, \ldots For such a filter $\Omega$, we have: \[G_{\QO}=U_{\QO}^ {pm+}.U_{\QO}^ {nm-}.N_{\QO}=U_{\QO}^ {nm-}.U_{\QO}^ {pm+}.N_{\QO}.\]

\par We then have: $U_{\QO}^ {pm+}=G_{\QO}\cap U^+=: U^+(\QO)$ and $U_{\QO}^ {nm-}=G_{\QO}\cap U^-=: U^-(\QO)$, as $U^-\cap U^+.N=U^+\cap N=\set1$, by [\lc Remarque 3.17]  and \cite[1.2.1 (RT3)]{Re02}.

\par Note that for the sector germ $\Omega=\g Q_{+\infty}$, $U_\Omega^{nm-}=\set{1}$, $N_\Omega=N_\A=T_0$ and $U_{\Omega}^{pm+}=U^+$. So $G_{\g Q_{+\infty}}=T_0 U^+$. Similarly, $G_{\g Q_{-\infty}}=T_0 U^-$. 

When $\Omega$ is a   local facet, $G_\Omega$ is called a parahoric subgroup (this is a little more general than in \cite{BrT72}). 

When $\Omega=C_0^+=germ_0(C^v_f)$ is the (fundamental) positive local chamber in $\A$, $I=G_\Omega$ is called the (fundamental) Iwahori subgroup. \index{I@$I$}

\par 

\medskip
\parni{\bf 4)} For $\QO$ a set or a filter, one defines:

\par  $U_{\QO}=\langle U_{\qa,\QO} \mid \qa\in\QF \rangle$\quad, \quad $U^\pm_{\QO}=U_{\QO}\cap U^\pm$\quad and \quad$U_{\QO}^ {\pm\pm}=\langle U_{\qa,\QO} \mid \qa\in\QF^\pm \rangle$.

\parni Then one has $U_{\QO}=U_{\QO}^-.U_{\QO}^+.N_{\QO}\zu=U_{\QO}^+.U_{\QO}^-.N_{\QO}\zu$, where $N_{\QO}\zu=U_{\QO}\cap N\subset N_{\QO}$, see [\lc 4.6.1].
And also $U_{\QO}^ {+}\subset U_{\QO}^ {pm+}$, $U_{\QO}^ {-}\subset U_{\QO}^ {nm-}$, see [\lc 4.3.2 and 4.5.3].

\par The inclusion $U_{\QO}^ {\pm\pm}\subset U_{\QO}^ {\pm}$ is clear, but it is not always an equality, see [\lc 4.3.2 and 4.12.3.a].

 When $\QO$ is narrow and has a good fixator, then $G_{\QO}=U_{\QO}^ {pm+}.U_{\QO}^ {-}.N_{\QO}=U_{\QO}^ {nm-}.U_{\QO}^ {+}.N_{\QO}$, see [\lc 4.13.4 and 5.3]. 
 
 \subsection{Masure associated with $G$}\label{sub_Masure}
 
 \subsubsection{Masure}\label{n1.3.1}
 
 We now define the masure  $\shi=\shi(\g G,\shk,\omega)$. As a set, $\shi=G\times \A/\sim$, where $\sim$ is defined as follows: \[\forall (g,x),(h,y)\in G\times \A, (g,x)\sim (h,y)\Leftrightarrow \exists n\in N\ \mid y=\nu(n).x\text{ and }g^{-1}hn\in G_x.\]
  We regard $\A$ as a subset of $\shi$ by  identifying $x$ and $(1,x)$, for $x\in \A$. The group $G$ acts on $\shi$ by $g.(h,x)=(gh,x)$, for $g,h\in G$ and $x\in \A$. An apartment is a set of the form $g.\A$, for $g\in G$. The stabilizer of $\A$ in $G$ is $N$ and if $x\in \A$, then the fixator of $x$ in $G$ is $G_x$. More generally, when $\Omega\subset \A$ has a good fixator, then $G_{\Omega}$ is the fixator of $\Omega$ in $G$ and $G_{\QO}$ permutes transitively the apartments containing $\QO$.
   If $A$ is an apartment, we transport all the notions that are preserved by $W$ (e.g segments, walls, facets, chimneys, etc.) to $A$.  Then by \cite[Corollary 3.7]{hebert2022new}, $\shi$ satisfies the following properties: 
 
 \medskip
 
 (MA II) : Let $A,A'$ be two apartments.  Then $A\cap A'$ is a finite intersection of half-apartments and there exists $g\in G$ such that $g.A=A'$ and $g$ fixes $A\cap A'$.

\medskip

\par (MA III): if $\g R$ is the germ of a splayed chimney and if $F$ is a facet or a germ of a chimney, then there exists an apartment containing $\g R$ and $F$.

\medskip

We also have: \begin{itemize}
\item The stabilizer of $\A$ in $G$ is $N$ and $N$ acts on $\A\subset \shi$ via $\nu$.

\item If $\QO$ has a good fixator, $N.G_{\QO}=\set{g\in G \mid g.\QO\subset\A}$.

\item  The group $U_{\qa,r}:=\{x_\alpha(u)\mid u\in \shk, \omega(u)\geq r\}$\index{U@$U_{\alpha,r}$}, for $\qa\in\QF,r\in\Z$, fixes the half-apartment $D(\qa+r)=\set{x\in \A\mid \qa(x)+r\geq0}$. 
 It is actually the fixator in $U_{\qa}$ of any point in the wall $M(\qa+r)=\set{x\in \A\mid \qa(x)+r=0}$. 
 It acts simply transitively on the set of apartments in $\SHI$ containing $D(\qa+r)$.
\end{itemize}

\par For $x,y\in\SHI$, we write $x\leq y$ (\resp $x\mathring{<}y$, $x\mathring{\leq}y$) if there exists $g\in G$ such that $g.y,g.x\in \A$ and $g.y-g.x\in \sht$ (\resp $g.y-g.x\in \mathring{\sht}$, $g.y-g.x\in \mathring{\sht}\cup \{0\}$). \index{$\leq,\mathring{<},\mathring{\leq}$}
Note that by (MA II), if $x\leq y$, then for all $g'\in G$ such that $g'.x,g'.y\in \A$, we have $g'.y-g'.x\in \sht$.   The relation $x\leq y$ {(\resp $x\stackrel{\circ}{\leq}y$)} is $G$-invariant and  is a preorder relation by \cite[Th\'eor\`eme 5.9]{R11}; in particular it is transitive.

\medskip

Let $H$  be a subgroup of $G$. An $H$-apartment is a set of the form $h.\A$, where $h\in H$.  We denote by $\sha(H)$  the set of $H$-apartments. Note that implicitly, an apartment is a $G$-apartment.  As we shall see (Corollary~\ref{cor_Masure_GR}),  every point of $\shi$ lies in a $G_\shr$-apartment. However,  $\sha(G_\shr)$\index{A@$\sha(G),\sha(G_\shr)$}  can be strictly smaller than $\sha(G)$. 

\par Let $\Omega_1,\Omega_2$ be two filters on $\shi$. We say that $\Omega_1$ and $\Omega_2$ are $H$-friendly\index{h@$H$-friendly} if there exists $A\in \sha(H)$ containing $\Omega_1\cup \Omega_2$.

\medskip

\par Let $H$ be a subgroup of $G$. 
Then one may consider the semigroups $H^+:=\set{g\in H\mid g.0\geq0}$ and $H^-:=\set{g\in H\mid g.0\leq0}$. 
We will often apply this definition with $H=G$ or  $H=G_\shr$ and consider the semigroups $G^+$\index{G@$G^+,G_\shr^+$} and $G_{\shr}^+$. 

\begin{rema}
In \ref{ss_Root_generating}, we made the assumption that the family $(\alpha_i^\vee)_{i\in I}$ is free. This is more convenient and it enables us to use the results of \cite{hebert2022new} for example. However this assumption is not necessary to define Kac-Moody groups (see \cite{Mar18} for example). For example,  $G:=\mathrm{SL}_n(\shk[u,u^{-1}])\rtimes \shk^*$ is naturally equipped with the structure of a Kac-Moody group associated with a root generating system $\mathcal{S}$ having nonfree coroots. This group is particularly interesting for examples, since it is one of the only Kac-Moody groups in which we can make explicit computations.  To handle this kind of group, a solution is to consider a central extension $\widetilde{G}$ of $G$ having free coroots. Then if $\widetilde{\shi}$ is the masure associated with $\widetilde{G}$, we have a natural surjection $\pi:\widetilde{\shi}\twoheadrightarrow \shi$, that is compatible with the actions of $G$ and $\widetilde{G}$. Then we can deduce properties of $\shi$ and $\widetilde{G}$ from properties of $\widetilde{\shi}$ and $\widetilde{G}$.  We detail this reasoning in section~\ref{s6} for the case $n=2$. It should be possible to study  groups with  non necessarily free coroots in general with the same reasoning, using the results of \cite[7.4.5]{Mar18}.
\end{rema}

\subsubsection{Decompositions of subgroups of $G$, retractions}\label{ss_Retractions}

Let $H$ be a subgroup of $G$ and $E_{1},E_{2}$ be two subsets or filters in $\A$.
We write $N_{H}(\A)$ the stabilizer of $\A$ in $H$ and $H_{E_{i}}$ the (pointwise) fixator of $E_{i}$ in $H$.
We are interested in decompositions $H=H_{E_{1}}.N_{H}(\A).H_{E_{2}}$ or $H^+=H_{E_{1}}.(N_{H}(\A)\cap H^+).H_{E_{2}}$, where $H^+$ is a subsemigroup of $H$.
We say that it is a Bruhat (\resp Iwasawa; mixed Iwasawa) decomposition if the pair $(E_{1},E_{2})$ is made of two local chambers (\resp a local chamber and a sector germ; a local chamber and a chimney germ).

\par There is a geometric translation of such a decomposition, when each $H_{E_{i}}$ is transitive on the set of apartments in $\sha(H)$ containing $E_{i}$ (here $\sha(H)=\set{h.\A \mid h\in H}$). 
Then such a decomposition  (involving $H$ and not $H^+$) means that, for any $h_{1},h_{2}\in H$, the subsets or filters $h_{1}E_{1}$ and $h_{2}E_{2}$ are in a same apartment of $\sha(H)$ (they are ``$H-$friendly''). Actually, the axiom (MA III) is a geometric translation of decompositions of $G$.

\par  Let $A$ be an apartment of $\shi$ and  $\g Q$ be a sector germ of $A$. Let $x\in \shi$. Then by (MA III), there exists an apartment $B$ containing $x$ and $\g Q$. By (MA II), there exists $h\in G$ such that $h.B=A$ and $h$ fixes $A\cap B$. Then $h.x$ does not depend on the choices of $B$ and $h$ and we set $\rho_{A,\g Q}(x)=h.x$. The map $\rho_{A,\g Q}:\shi\twoheadrightarrow A$ is the retraction onto $A$ centred at $\g Q$. When $\g Q=\g Q_{\pm \infty}$, i.e when $\g Q$ is the germ at infinity of $\pm C^v_f$ and $A=\A$, we write $\rho_{\pm \infty}$\index{r@$\rho_{\pm \infty}$} instead of $\rho_{\A,\g Q_{\pm \infty}}$.

\subsection{A precise decomposition of $G_{\QO}$, for $\QO$ a local chamber.}\label{3.2}

 \begin{prop*}
 Let $\QO\subset \A \subset \SHI$ be a non empty set or filter.
 Suppose that $\QO$ is narrow, almost open and has a good fixator (for example $\QO$ is a local chamber). Then:
 
 \par\qquad\qquad $G_{\QO}=U_{\QO}^ {+}.U_{\QO}^ {-}.T_{0}=U_{\QO}^ {+}.T_{0}.U_{\QO}^ {-}=U_{\QO}.T_{0}=\langle T_{0}, (U_{\qa,\QO})_{\qa\in\QF} \rangle$,
 
 \parni actually $U_{\QO}^ {pm+}=U_{\QO}^ {+}=U^+\cap G_{\QO}=:U^+(\QO)$ and $U_{\QO}^ {nm-}=U_{\QO}^ {-}=U^-\cap G_{\QO}=:U^-(\QO)$.
 \end{prop*}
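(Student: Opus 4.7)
\medskip

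\noindent\textbf{Proof plan.}
The starting point is to combine the two hypotheses provided. Since $\Omega$ is narrow and has a good fixator, the results recalled in \S\ref{3.1}.(4) from \cite[4.13.4, 5.3]{R16} give
\[G_\Omega = U_\Omega^{pm+}\, U_\Omega^{-}\, N_\Omega = U_\Omega^{nm-}\, U_\Omega^{+}\, N_\Omega.\]
Since $\Omega$ is moreover almost open, \cite[4.3.1]{R16} recalled in \S\ref{3.1}.(3) identifies $N_\Omega$ with $T_0=\g T(\shk_{\omega=0})$. Substituting gives the two ``mixed'' decompositions
\[G_\Omega = U_\Omega^{pm+}\, U_\Omega^{-}\, T_0 = U_\Omega^{nm-}\, U_\Omega^{+}\, T_0.\]

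\medskip

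\noindent The next, and main, step is to upgrade $U_\Omega^{pm+}$ to $U_\Omega^{+}$ (and symmetrically on the negative side). Let $u\in U_\Omega^{pm+}$. We know $u\in U^+\cap G_\Omega$, and by the second decomposition above we can write
\[u = v'\,w'\,t',\qquad v'\in U_\Omega^{nm-}\subset U^-,\ w'\in U_\Omega^{+}\subset U^+,\ t'\in T_0.\]
Then $v' = u\,(w't')^{-1}\in U^-\cap U^+T$. Since the product map $U^-\times T\times U^+\to G$ is injective (equivalently $U^-\cap B^+=\{1\}$ in the minimal split Kac-Moody group $G$, see \cite[Remarque 3.17]{R16} and \cite[1.2.1 (RT3)]{Re02}), one gets $v'=1$ and, writing $u=w't'$ with $u,w'\in U^+$ and $t'\in T_0$, the same injectivity yields $t'=1$. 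Hence $u=w'\in U_\Omega^{+}$. Combined with the inclusion $U_\Omega^{+}\subset U_\Omega^{pm+}$ recalled in \S\ref{3.1}.(4), this proves $U_\Omega^{pm+}=U_\Omega^{+}$; the identity $U^+(\Omega)=U_\Omega^{+}$ then follows from $U_\Omega^{pm+}=G_\Omega\cap U^+$. The analogous argument (switching the roles of $+$ and $-$, using the other mixed decomposition) gives $U_\Omega^{nm-}=U_\Omega^{-}=U^-(\Omega)$.

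\medskip

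\noindent With these refinements, the two displayed decompositions become $G_\Omega=U_\Omega^{+}U_\Omega^{-}T_0=U_\Omega^{-}U_\Omega^{+}T_0$. To pass to $U_\Omega^{+}T_0U_\Omega^{-}$ it suffices to observe that $T_0$ normalises each $U_{\alpha,\Omega}$: for $t\in T_0$ and $x_\alpha(u)\in U_{\alpha,\Omega}$, one has $t\,x_\alpha(u)\,t^{-1}=x_\alpha(\alpha(t)u)$ and $\omega(\alpha(t)u)=\omega(u)\geq f_\Omega(\alpha)$ since $\omega\circ\chi$ vanishes on $T_0$ for every character $\chi$. Thus $T_0$ commutes (as a set) with $U_\Omega^{\pm}$, and $G_\Omega=U_\Omega^{+}T_0U_\Omega^{-}$.

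\medskip

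\noindent Finally, by the general decomposition of $U_\Omega$ recalled in \S\ref{3.1}.(4), $U_\Omega=U_\Omega^{+}U_\Omega^{-}N_\Omega^{u}$ with $N_\Omega^{u}\subset N_\Omega=T_0$. Hence $G_\Omega=U_\Omega^{+}U_\Omega^{-}T_0\subset U_\Omega T_0\subset G_\Omega T_0=G_\Omega$, so $G_\Omega=U_\Omega T_0$. Since $T_0$ normalises $U_\Omega$ (each $U_{\alpha,\Omega}$ being stable under $T_0$), we get $U_\Omega T_0=\langle T_0,(U_{\alpha,\Omega})_{\alpha\in\Phi}\rangle$, which finishes the chain of equalities. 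The only genuinely non-formal step is the third paragraph's use of the injectivity of $U^-\times T\times U^+\to G$; everything else is bookkeeping based on the hypotheses ``narrow'', ``almost open'' and ``good fixator''.
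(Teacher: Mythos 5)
Your proof is correct and follows essentially the same route as the paper's: both start from the mixed decompositions $G_\Omega = U_\Omega^{pm+} U_\Omega^{-} N_\Omega = U_\Omega^{nm-} U_\Omega^{+} N_\Omega$ of \S\ref{3.1}(4), use ``almost open'' to identify $N_\Omega$ with $T_0$, and then invoke Birkhoff uniqueness ($U^-\cap NU^+ = U^+\cap N = \{1\}$) to collapse $U_\Omega^{pm+}$ to $U_\Omega^{+}$ and $U_\Omega^{nm-}$ to $U_\Omega^{-}$. The only cosmetic difference is that you carry out the uniqueness argument at the level of a single element $u$ while the paper compares two full $U^+\,T\,U^-$ factorizations of $G_\Omega$; and you spell out the normalization of $U_{\alpha,\Omega}$ by $T_0$ that the paper takes for granted.
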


 \begin{proof} By \ref{3.1} and the fact that $T_{0}$ normalizes $U_{\QO}^ {\pm},U_{\QO}^ {pm+},U_{\QO}^ {nm-}$, one has $G_{\QO}=U_{\QO}^ {pm+}.U_{\QO}^ {-}.T_{0}=U_{\QO}^ {pm+}.T_{0}.U_{\QO}^ {-}=U_{\QO}^ {nm-}.T_{0}.U_{\QO}^ {+}=U_{\QO}^ {+}.T_{0}.U_{\QO}^ {nm-}$.
 But $G$ is a Kac-Moody group, so one has the Birkhoff-Borel decomposition $G=\sqcup_{n\in N}\, U^+.n.U^-$ and the uniqueness result $U^-\cap N.U^+=U^+\cap N=\set1$, see  \cite[Remark 3.17]{R16} and \cite[1.2.4 (i) + (RT3)]{Re02}.
 In particular in the subset $U^+.T.U^-$ of $G$, the decomposition is unique.
 So the third and fifth formula for $G_{\QO}$ above give $U_{\QO}^ {pm+}=U_{\QO}^ {+}$ and $U_{\QO}^ {nm-}=U_{\QO}^ {-}$.
 \end{proof}
 
 \begin{NB} The proposition above is a simple improvement of Property 4.13.4 in \cite{R16} when $\Omega$ is moreover almost open.
But the trick below in Consequence \ref{3.3} 2), enables us to get the decomposition of $G_x$ guessed in Property 4.13.5 of \cite{R16}.
 \end{NB}

 \subsubsection{Consequences}\label{3.3}
\par\quad\; 1) In particular the Iwahori group $I=G_{C_{0}^+}$ (fixator in $G$ of the fundamental local chamber $C_{0}^+=germ_{0}(C\zv_{f})$) is $\langle T_{0}, (U_{\qa,C_{0}^+})_{\qa\in\QF} \rangle$.
 This is the same definition as in \cite{BrKP16} (given there in the untwisted affine case).
 This result was also proved in \cite[7.2.2]{BPGR16}, using the results of \cite{BrKP16}.
 We get here a direct proof and a more general result.
 
 \par 2) Let $x\in\A$ and $C^\pm_{x}=germ_{x}(x\pm C\zv_{f})$ be the two opposite chambers at $x$ with respective directions $\pm C\zv_{f}$.
 Then $U^ {ma+}_{C^+_{x}}=U^ {ma+}_{x}$, hence $U^ {pm+}_{C^+_{x}}=U^ {pm+}_{x}$.
 So $U^ {pm+}_{x}=U^ {pm+}_{C^+_{x}}=U^ {+}_{C^+_{x}}\subset U^+_{x}\subset U^ {pm+}_{x}$ and $U^ {pm+}_{x}=U_{x}^+=G_{x}\cap U^+$.
 Similarly $U^ {nm-}_{x}=U_{x}^-=G_{x}\cap U^-$.

 \par So (as $x$ has a good fixator) we get $G_{x}=U_{x}^ {pm+}.U_{x}^ {nm-}.N_{x}=U_{x}^ {+}.U_{x}^ {-}.N_{x}=U_{x}^ {-}.U_{x}^ {+}.N_{x}=U_{x}.N_{x}=\langle N_{x},(U_{\qa,x})_{\qa\in\QF} \rangle$.
 
 \par When $x$ is a special point $N_{x}/T_{0}=W\zv$ and $N_{x}=N_{x}\zu.T_{0}$, so $G_{x}=\langle T_{0},(U_{\qa,x})_{\qa\in\QF} \rangle$. 
 Moreover $G_{x}=P_{x}^ {min}=P_{x}^ {pm}=P_{x}^ {nm}$ with the notations of \cite[4.6.a]{R16}.

\begin{lemm*}\label{lem_Fixator_Ain}
 Let $\A_{in}=\bigcap_{\alpha\in \Phi} \ker(\alpha)= \bigcap_{i\in I} \ker(\alpha_i)$ and $\Omega$ be a filter on $\A$. Then we have $G_\Omega=G_{\Omega+\A_{in}}$.

 \end{lemm*}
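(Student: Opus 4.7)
My plan is to reduce the statement to the pointwise identity $G_x=G_{x+v}$ for every $x\in\A$ and every $v\in\A_{in}$, and then to promote it to filters by formal manipulation of the definitions. Two observations drive the whole argument: (a) every root $\alpha\in\Delta$ lies in $Q=\bigoplus_{i\in I}\Z\alpha_i$ and therefore vanishes on $\A_{in}=\bigcap_{i\in I}\ker\alpha_i$; and (b) the simple reflection $r_i(y)=y-\alpha_i(y)\alpha_i\zvee$ acts trivially on $\A_{in}$, hence so does the whole vectorial Weyl group $W\zv$.

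From (a), for any $x\in\A$, $v\in\A_{in}$ and $\alpha\in\Delta$ one has $\alpha(x+v)=\alpha(x)$, so $f_x(\alpha)=f_{x+v}(\alpha)$. This immediately yields $U^{ma+}_x=U^{ma+}_{x+v}$ and $U^{ma-}_x=U^{ma-}_{x+v}$, and therefore also $U^{pm+}_x=U^{pm+}_{x+v}$ and $U^{nm-}_x=U^{nm-}_{x+v}$. From (b), writing $\nu(n)(y)=w(y)+\lambda'$ with $w\in W\zv$ and $\lambda'\in\A$, one computes
\[\nu(n)(x+v)=w(x)+w(v)+\lambda'=\nu(n)(x)+v,\]
so $\nu(n)$ fixes $x$ if and only if it fixes $x+v$; hence $N_x=N_{x+v}$. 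Combining these through the decomposition $G_x=U^{pm+}_x.U^{nm-}_x.N_x$ recalled earlier gives $G_x=G_{x+v}$.

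Finally I would transfer this pointwise equality to the filter $\Omega$. For a set $\Omega$ the argument is immediate: $G_{\Omega+\A_{in}}=\bigcap_{x\in\Omega,\,v\in\A_{in}}G_{x+v}=\bigcap_{x\in\Omega}G_x=G_\Omega$. For a general filter $\Omega$, the family $\{S+\A_{in}\mid S\in\Omega\}$ generates, and is cofinal in, the filter $\Omega+\A_{in}$; since fixators of larger sets are smaller, one gets $G_{\Omega+\A_{in}}=\bigcup_{S\in\Omega}G_{S+\A_{in}}$, and the set case applied to each $S$ then yields $G_{S+\A_{in}}=G_S$, so $G_{\Omega+\A_{in}}=\bigcup_{S\in\Omega}G_S=G_\Omega$.

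There is no genuine obstacle here: the whole data defining $G_x$ (the numbers $f_x(\alpha)$ and the affine displacements of $\nu(N)$) is insensitive to translations by vectors of $\A_{in}$, so the pointwise step is essentially a bookkeeping check. The only point that requires a little attention is the formal manipulation with the filter $\Omega+\A_{in}$ in the last step, where one must be careful that ``$+\A_{in}$'' on filters is understood as the filter generated by $\{S+\A_{in}\}_{S\in\Omega}$.
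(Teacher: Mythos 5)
Your proof is correct and follows essentially the same route as the paper's: reduce to the pointwise identity $G_x=G_{x+v}$ for $v\in\A_{in}$, check it on the unipotent part by noting that every $\alpha\in\Delta$ vanishes on $\A_{in}$ so $f_x=f_{x+v}$, check it on $N$ by using that $W\zv$ fixes $\A_{in}$, and then pass to sets and filters by the same formal manipulation. If anything, your treatment of the unipotent side is a touch more careful than the paper's (which only mentions the real-root groups $U_{\alpha,y}=U_{\alpha,x}$ explicitly), and your remark that $\{S+\A_{in}\}_{S\in\Omega}$ is cofinal in the filter $\Omega+\A_{in}$ makes the last step slightly more precise.
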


\begin{proof}
We begin by the case where $\Omega=\{x\}$, for some $x\in \A$. Let $y\in x+\A_{in}$.   Then we have $U_{\alpha,y}=U_{\alpha,x}$ for all $\alpha\in \Phi$, since $\alpha(x)=\alpha(y)$.  Let $n\in N_x$ and $\underline{w}\in W$ be the automorphism of $\A$ induced by $n$. Write $\underline{w}=a+w$, where $a\in Y$ and $w\in W\zv$. Then we have $a=x-w.x$. As $W\zv$ fixes $\A_{in}$, we deduce $y-w.y=a$ and hence $\underline{w}$ fixes $y$. Otherwise said, $n$ fixes $y$ and we have  $N_x\subset N_y$. By symmetry, $N_x=N_y$ and thus $G_x=G_y$. Let now $\Omega$ be a nonempty set. Then $G_\Omega=\bigcap_{x\in \Omega}G_x=\bigcap_{x\in\Omega}\bigcap_{y\in x+\A_{in}} G_y=G_{\Omega+\A_{in}}$. 

Assume now that $\Omega$ is a filter.  Let $S$ be a subset of $\A$. Then $S\in \Omega+\A_{in}$ if and only if there exists $S'\in \Omega$ such that  $S=S'+\A_{in}$. Therefore \[G_{\Omega+\A_{in}}=\bigcup_{S\in \Omega+\A_{in}}G_S=\bigcup_{S'\in \Omega} G_{S'+\A_{in}}=G_\Omega.\]
\end{proof}

 \par 3) In particular the fixator $K=G_{0}$ of the origin  point  in $\A$ is $K=G_{0}=\langle T_{0},(U_{\qa,0})_{\qa\in\QF} \rangle$.
 This is the same definition as in \cite{BrKP16} (given there in the untwisted affine case).
 This result was also proved in \cite[Remark 3.4]{GR14}, using the results of \cite{BrKP16}.
 We get here a direct proof and a more general result.

 \par 4) Let $x\in\A$ and $F_{x}\subset \ov{C^+_{x}}$ be a segment germ or a local facet.
 Then $U^ {ma+}_{C^+_{x}}=U^ {ma+}_{F_{x}}$ hence $U^ {pm+}_{C^+_{x}}=U^ {pm+}_{F_{x}}$.
  So $U^ {pm+}_{F_{x}}=U^ {pm+}_{C^+_{x}}=U^ {+}_{C^+_{x}}\subset U^+_{F_{x}}\subset U^ {pm+}_{F_{x}}$ and $U^ {pm+}_{F_{x}}=U_{F_{x}}^+=G_{F_{x}}\cap U^+$.
 If $F_{x}\subset \ov{C^-_{x}}$, then we get  $U^ {nm-}_{F_{x}}=U_{F_{x}}^-=G_{F_{x}}\cap U^-$.
 But we do not get the two series of equalities in general.

\subsubsection{Generalization of Proposition~\ref{3.2} to the almost-split case} 

In  \ref{3.3}, we obtained a decomposition of the fixator $G_{\Omega}$ of certain filters $\Omega\subset \A$ and deduced a decomposition of  $G_x$, for $x\in \A$. The advantage of these decompositions is that they involve only the minimal Kac-Moody group $G$ and not its completions. As this result could be interesting on its own, we extend it to almost-split Kac-Moody groups below. This result will not be used in the sequel.

 We consider an almost split Kac-Moody group $\g G$ over a field $\shk$ endowed with a real valuation $\qo$.
 We suppose that $\g G$ is quasi-split over a tamely ramified extension and, if $\g G$ is not split, that the valuation $\qo$ may be extended functorially and uniquely to any separable extension of $\shk$ (\eg $\qo$ is complete).
 Then, by \cite[6.9]{R17}, there exists a  masure $\SHI$ with a strongly transitive action of $G=\g G(\shk)$ and the fixators $G_{x}$ of the points in the canonical apartment $\A$ are a very good family of parahorics.
 For $\QO\subset\A$, we write $U_{\QO}=\langle U_{\qa,\QO} \mid \qa\in\QF \rangle\subset G_{\QO}$, $U^\pm_{\QO}=U_{\QO}\cap U^\pm \subset G_{\QO}$ and $N_{\QO}=N\cap G_{\QO}$, where $G_{\QO}$ is the fixator of $\QO$
in $G$.

\begin{prop*} For any point $x\in\A$, one has $G_{x}=U^+_{x}.U^-_{x}.N_{x}=U^-_{x}.U^+_{x}.N_{x}=U_{x}.N_{x}$, $U^\pm_{x}=G_{x}\cap U^\pm$.
And for any local chamber $\QO$ in $\A$, one has $G_{\QO}=U^+_{\QO}.U^-_{\QO}.N_{\QO}=U^-_{\QO}.U^+_{\QO}.N_{\QO}=U_{\QO}.N_{\QO}$, $U^\pm_{\QO}=G_{\QO}\cap U^\pm$.
\end{prop*}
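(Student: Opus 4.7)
The plan is to follow the template of Proposition~\ref{3.2}, replacing each use of~\cite{R16} by its almost-split analogue in~\cite{R17}. Two ingredients are needed. First, the very good family of parahorics axioms in the setting of~\cite[6.9]{R17} should provide a starting decomposition of $G_x$ involving ``completed'' positive and negative fixators $P_x^\pm$, satisfying $U_x^\pm := G_x\cap U^\pm\subset P_x^\pm$ and $G_x = P_x^+\cdot P_x^-\cdot N_x = P_x^-\cdot P_x^+\cdot N_x$. Second, the Birkhoff decomposition $G=\bigsqcup_{n\in N} U^+ n U^-$ holds in the almost-split case, with uniqueness $U^-\cap N\cdot U^+=\{1\}$ and $U^+\cap N=\{1\}$ (see~\cite{Re02} and~\cite{R17}); consequently any element of $G$ belonging to $U^+\cdot N\cdot U^-$ has a unique such expression.

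Given $g\in G_x$, I would write $g = u^+\cdot u^-\cdot n$ with $u^\pm\in P_x^\pm$ and $n\in N_x$; then $gn^{-1}\in G$ belongs to the completed product $P_x^+\cdot P_x^-$, and viewed inside $U^+\cdot N\cdot U^-$ it has a unique Birkhoff decomposition. This uniqueness forces $u^+\in U^+$ and $u^-\in U^-$, hence $u^\pm\in P_x^\pm\cap U^\pm = U_x^\pm$ (the equality here following from the same uniqueness argument: any element of $G$ lying in the completed $P_x^\pm$ and in $U^\pm$ must already lie in the minimal $U_x^\pm$). This yields $G_x = U_x^+\cdot U_x^-\cdot N_x$. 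The reverse product $U_x^-\cdot U_x^+\cdot N_x$ follows from the symmetric starting decomposition, and $G_x = U_x\cdot N_x$ follows since $U_x\cap N\subset N_x$.

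For a local chamber $\QO$ in $\A$, the argument runs verbatim: $\QO$ is narrow and has a good fixator in the very good family, so~\cite{R17} furnishes the analogous starting decomposition $G_\QO = P_\QO^+\cdot P_\QO^-\cdot N_\QO$, and the Birkhoff collapse yields $G_\QO = U_\QO^+\cdot U_\QO^-\cdot N_\QO$ together with $U_\QO^\pm = G_\QO\cap U^\pm$. The main obstacle I anticipate is the first step: extracting from~\cite{R17} the precise form of the starting decomposition together with the identification $P_x^\pm\cap U^\pm = U_x^\pm$ (the almost-split analogue of~\cite[Proposition 4.14]{R16}). Once this is in hand, the Birkhoff uniqueness step is formal and mirrors the split case without further modification.
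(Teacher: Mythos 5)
Your skeleton is right but the argument glosses over the actual difficulty, which is precisely the step you flag at the end and yet claim earlier ``follows from the same uniqueness argument.'' It does not. You start from a symmetric decomposition $G_x = P_x^+ \cdot P_x^- \cdot N_x$ where $P_x^\pm$ are big fixators (completed, or $G_x\cap U^\pm$). Birkhoff uniqueness on $gn^{-1}$ written in $U^+\cdot N\cdot U^-$ only tells you that the $U^+$-factor and $U^-$-factor of $gn^{-1}$ are uniquely determined inside $U^\pm$; it tells you nothing about whether those factors are generated by the elementary root groups $U_{\alpha,x}$. So you obtain $G_x = (G_x\cap U^+)\cdot(G_x\cap U^-)\cdot N_x$, which the paper denotes $U_x^{(+)}\cdot U_x^{(-)}\cdot N_x$ and is the \emph{starting} input from~\cite{R17}, not the conclusion. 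You have silently redefined $U_x^\pm$ as $G_x\cap U^\pm$, whereas the proposition asserts the nontrivial equality $U_x\cap U^\pm = G_x\cap U^\pm$ with $U_x = \langle U_{\alpha,x}\mid\alpha\in\Phi\rangle$; with your redefinition the statement becomes tautological and you have not proved anything about generation by elementary unipotents.

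What is actually needed, and what the paper does, is to produce two \emph{mixed} decompositions that disagree in sign: $G_\Omega = U^{(+)}_\Omega\cdot Z_0\cdot U^-_\Omega$ and $G_\Omega = U^+_\Omega\cdot Z_0\cdot U^{(-)}_\Omega$, where the superscript $\pm$ without parentheses denotes the minimal (elementarily generated) group. Only then does Birkhoff uniqueness, applied to both expressions for the same element, force $U^{(\pm)}_\Omega = U^\pm_\Omega$. Producing the mixed decompositions is the real work: the paper uses the Iwasawa decomposition $G = U^+\cdot N\cdot U_\Omega$ of~\cite[4.3.3]{R17} to get $G_\Omega = U^{(+)}_\Omega\cdot Z_0\cdot U_\Omega$, and then an induction on adjacent vectorial chambers (quoting~\cite[4.4.a]{R17} and following the argument of~\cite[Prop.~3.4]{GR08}) to show $U_\Omega\subset U^+_\Omega\cdot Z_0\cdot U^-_\Omega$. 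This chamber-induction step, which decomposes $U_\Omega$ itself, is exactly what your proposal omits, and there is no shortcut through Birkhoff uniqueness alone. The point case is then deduced from the local-chamber case via $U_x^{(\pm)} = U_{C_x^\pm}^{(\pm)}$, which you also do not address.
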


\begin{NB} This result is also true if $\QO\subset\A$ is narrow, non empty, almost open, with good fixator.
\end{NB}

\begin{proof} When $\g G$ is actually split, the proof is exactly the same as above in \ref{3.1}, \ref{3.2} and \ref{3.3} (1), (2), (3).
In the general almost split case, we have mainly to replace $T$ by the centralizer $Z$ of a maximal split subtorus of $\g G$ [\lc 2.7].
For any vectorial chamber $C\zv=\pm wC\zv_{f} \subset \A$,  we write $U(C\zv)=wU^\pm w^ {-1}$ and $U_{\QO}(C\zv)=U_{\QO}\cap U(C\zv)$.
When $\QO\subset\A$ has a good fixator, we have $G_{\QO}=U_{\QO}^ {(+)}.U_{\QO}^ {(-)}.N_{\QO}=U_{\QO}^ {(-)}.U_{\QO}^ {(+)}.N_{\QO}$, where $U_{\QO}^ {(\pm)}=G_{\QO}\cap U^\pm\supset U^\pm_{\QO}$ [\lc 4.4.b, 4.5].
We shall use this for $\QO$ a point or a local chamber.

\par When $\QO$ is a local chamber, $N_{\QO}=Z_{0}:=Z\cap G_{\QO}$, $G_{\QO}=U_{\QO}^ {(+)}.Z_{0}.U_{\QO}^ {(-)}$ and the Iwasawa decomposition [\lc 4.3.3] gives $G=U^+.N.U_{\QO}$, so $G_{\QO}=(U^+.N\cap G_{\QO}).U_{\QO}$.
Now, by the uniqueness in the Birkhoff-Borel decomposition [\lc 1.6.2], $U^+.N\cap G_{\QO}=U_{\QO}^ {(+)}.Z_{0}.U_{\QO}^ {(-)}\cap U^+.N.\set1=U_{\QO}^ {(+)}.Z_{0}$; so $G_{\QO}=U_{\QO}^ {(+)}.Z_{0}.U_{\QO}$.
But, for $C\zv_{1},C\zv_{2}\subset \A$ adjacent chambers along the wall $\ker\qa$ (with $\qa(C\zv_{1})\geq0$), we get from [\lc 4.4.a] $U_{\QO}((C\zv_{1})):=G_{\QO}\cap U(C\zv_{1})=U_{\qa,\QO}\ltimes(G_{\QO}\cap U(C\zv_{1}) \cap U(C\zv_{2}))$.
From this we deduce, as in \cite[Prop. 3.4]{GR08}, that $U_{\QO}(C\zv_{1}).U_{\QO}(-C\zv_{1}).Z_{0}$ is independent of the choice of the (positive) chamber $C\zv_{1}$ and $U_{\QO}\subset U_{\QO}(C\zv_{1}).U_{\QO}(-C\zv_{1}).Z_{0}=U_{\QO}^+.Z_{0}.U_{\QO}^-$.
So $G_{\QO}=U_{\QO}^ {(+)}.Z_{0}.U_{\QO}^-$ and, symmetrically, $G_{\QO}=U_{\QO}^ {+}.Z_{0}.U_{\QO}^{(-)}$.
The uniqueness in the Birkhoff-Borel decomposition gives $U_{\QO}^ {(\pm)}=U_{\QO}^\pm$, hence $G_{\QO}=U^+_{\QO}.U^-_{\QO}.N_{\QO}=U^-_{\QO}.U^+_{\QO}.N_{\QO}=U_{\QO}.N_{\QO}$.

\par For $x\in\A$ and $C_{x}^\pm=germ_{x}(x\pm C\zv_{f})$, we have $U_{x}^ {(\pm)}=U_{C_{x}^\pm}^ {(\pm)}$ \cite[beginning of 4.5.3]{R17}.
So $U_{x}^ {(\pm)}=U_{C_{x}^\pm}^ {(\pm)}=U_{C_{x}^\pm}^ {\pm}\subset U_{x}^\pm \subset U_{x}^ {(\pm)}$ and $U_{x}^ {(\pm)}=U_{x}^\pm$.
Now $G_{x}=U^+_{x}.U^-_{x}.N_{x}=U^-_{x}.U^+_{x}.N_{x}$ is equal to $U_{x}.N_{x}$, as $U_{x}^\pm \subset U_{x} \subset G_{x}$.
\end{proof}


\section{Study of $G_\shr$, for $\shr$ a dense subring of a valued field $\shk$}\label{s2}

Let $\shr$ be a dense subring of $\shk$  (for the main applications, we make the additional assumption~\eqref{eq_Assumption_R}). In this section, we study decompositions of $G_\shr$. Our main results are the Bruhat decomposition and the Iwasawa decompositions of $G_\shr$ (see Corollaries~\ref{cor_Bruhat} and \ref{cor_Iwasawa}).  To do that, we study the action of $G_\shr$ on the masure $\shi$ of $G$. Given a subset $P$ of a $\shk$-apartment, we study the existence of an $\shr$-apartment containing $P$ (see Theorem~\ref{thm_Bounded_sets_R_friendly}). We then deduce the desired decompositions from the  corresponding decompositions of $G$.

\subsection{Commutators in $\g U^ {ma+}$} 
\label{2.2}

Let $\beta\in \Phi$. We want to understand the actions of  $x_\beta(u)$ on $\shi$, for $u\in \shk$ satisfying $\omega(u)\gg 0$. To do so, we begin by studying commutators in $\g U^{ma+}$.

\par For $\qa,\qb\in\QD^+$, one would like a formula for the commutators in $[\g U_{\qa},\g U_{\qb}]$.

\par  Assume $\alpha$ and $\beta$ are not collinear. Let $\Psi'=\set{p\qa+q\qb\in \QD^+ \mid p\geq1, q\geq0}$ and $\Psi=\Psi'\cup ((\N_{>0}\qb)\cap\QD)$.
They are closed subsets of $\QD^+$.
Moreover $\Psi'$ is an ideal   of $\Psi$; so $\g U^ {ma}_{\Psi'}(R)\triangleleft \g U^ {ma}_{\Psi}(R)$ by \cite[Lemma 3.3]{R16}. 

\par In particular:

\par\qquad\qquad\qquad $X_{\qb}(u_{\qb}).X_{\qa}(u_{\qa}).X_{\qb}(u_{\qb})^ {-1}=\prod^ {p\geq1,q\geq0}_{p\qa+q\qb\in\QD}\, X_{p\qa+q\qb}(v_{p\qa+q\qb}).$ 
\parni One chooses an order such that \eg the height of $p\qa+q\qb$ is increasing and $u_{\qa}\in \g g_{\qa,\Z}\otimes R$, $u_{\qb}\in \g g_{\qb,\Z}\otimes R$.
Then $v_{p\qa+q\qb}\in \g g_{p\qa+q\qb,\Z}\otimes R$.

\par We now restrict  to the case where $\qb$ is real.

\begin{prop*} Let $\alpha\in \Delta^+$,  $\beta\in \Phi^+$,  $c_\alpha\in \g g_{\alpha,\Z}$  and $u,v\in R$.   Then \[x_{\qb}(u).X_{\qa}(v.c_{\qa}).x_{\qb}(-u)=\prod^ {p\geq1,q\geq0}_{p\qa+q\qb\in\QD}\, X_{p\qa+q\qb}(v^pu^q.c_{p\qa+q\qb}),\]

\parni for some $c_{p\qa+q\qb}\in \g g_{p\qa+q\qb,\Z}$ independent of $u$ and $v$.
\end{prop*}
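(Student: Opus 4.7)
The strategy is to reduce the identity to its universal instance and then specialize. I would apply the general commutator formula stated just above in the universal ring $R_{0}=\Z[u,v]$, viewing $u,v$ as free indeterminates and using the arguments $v\cdot c_{\qa}$ and $u$. This yields, by that formula, an expansion
\[
x_{\qb}(u)\,X_{\qa}(v\cdot c_{\qa})\,x_{\qb}(-u)\;=\;\prod_{\substack{p\geq1,\,q\geq0\\p\qa+q\qb\in\QD}}X_{p\qa+q\qb}\bigl(w_{p,q}(u,v)\bigr),
\]
with $w_{p,q}(u,v)\in\g g_{p\qa+q\qb,\Z}\otimes\Z[u,v]$. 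The whole content of the proposition then reduces to showing that each $w_{p,q}$ is the single monomial $v^{p}u^{q}\,c_{p\qa+q\qb}$ for some $c_{p\qa+q\qb}\in\g g_{p\qa+q\qb,\Z}$ independent of $u$ and $v$.

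For this bi-homogeneity I would use the action of the fundamental torus $\g T$. Since $\qa$ and $\qb$ are non-collinear real/positive roots, they are $\Z$-linearly independent in $X$. Enlarging scalars to a suitable overring $R_{1}\supset R_{0}$ that contains two additional units $\ql,\qm$ (and, if needed, finitely many of their roots so that the assignment $\qa\mapsto\ql,\ \qb\mapsto\qm$ extends to a group homomorphism $X\to R_{1}^{\times}$), one obtains $h\in\g T(R_{1})$ with $\qa(h)=\ql$ and $\qb(h)=\qm$. Conjugation by $h$ scales each $\g U_{\qg}$ by $\qg(h)$, and so $h\,X_{p\qa+q\qb}(w)\,h^{-1}=X_{p\qa+q\qb}\bigl(\ql^{p}\qm^{q}\cdot w\bigr)$. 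Conjugating the universal identity by $h$ and comparing it with the same identity applied to the rescaled arguments $(\qm u,\ql v)$, the injectivity of each $X_{p\qa+q\qb}$ forces
\[
w_{p,q}(\qm u,\ql v)\;=\;\ql^{p}\qm^{q}\,w_{p,q}(u,v)
\]
inside $\g g_{p\qa+q\qb,\Z}\otimes R_{1}[\ql^{\pm1},\qm^{\pm1}]$. Regarded as an equality of polynomials in $u,v,\ql,\qm$ and comparing monomials, this isolates $w_{p,q}$ as the claimed single monomial, the coefficient $c_{p\qa+q\qb}\in\g g_{p\qa+q\qb,\Z}$ depending only on $\qa,\qb$ and $c_{\qa}$.

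The formula over an arbitrary ring $R$ finally follows by pushing the universal identity through the $\Z$-algebra morphism $\Z[u,v]\to R$ sending the two indeterminates to the given scalars. The main delicate point is the existence of the torus element $h$ with prescribed eigenvalues on $\qa,\qb$: since $\Z\qa\oplus\Z\qb$ need not be a direct summand of $X$, one may have to adjoin finitely many roots of $\ql,\qm$ to $R_{1}$, but this is harmless because the identity one wants to extract lives already in $\Z[u,v]$ and is preserved under any injective scalar extension. An alternative, internal to $\g U^{ma+}$, would be to track the $(u,v)$-bigrading directly through the BCH-type computation of the commutator, but the torus conjugation argument seems cleaner.
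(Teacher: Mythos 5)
Your approach is essentially the paper's: both start from the commutator formula, observe that the coefficients $w_{p,q}(u,v)$ are polynomial in $u,v$ with coefficients in $\g g_{p\alpha+q\beta,\Z}$ (because the unipotent groups are defined over $\Z$), and then conjugate by a torus element that scales $\alpha$ and $\beta$ independently to force each coefficient into the monomial $v^{p}u^{q}c_{p\alpha+q\beta}$. The paper specializes to $R=\C$, where $\C^{*}$ is divisible, so a torus element $t$ with $\alpha(t)=v$, $\beta(t)=u$ exists without adjoining roots; your universal variant over $\Z[u,v]$ with scalar extension is a valid reformulation of the same idea, neither more elementary nor more general in substance.

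One gap: you never treat the case where $\alpha$ and $\beta$ are collinear, which (since both are positive and $\beta$ is real) forces $\alpha=\beta$. There the commutator formula you invoke was only stated for non-collinear roots, and your torus element $h$ cannot have $\alpha(h)$ and $\beta(h)$ prescribed independently because $\alpha,\beta$ are not $\Z$-linearly independent in $X$ — the very hypothesis your bi-homogeneity argument rests on. The paper disposes of this case first: $x_{\beta}(u)$ and $X_{\alpha}(v\cdot c_{\alpha})$ then commute, the index set $\set{(p,q)\in\N^{*}\times\N \mid p\alpha+q\beta\in\Delta^{+}}$ is $\set{(1,0)}$, and the identity is immediate. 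Add that line and your argument closes.
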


\begin{NB} 1) For $p=1,q=0$, $c_{p\qa+q\qb}$ is certainly equal to $c_{\qa}$, \ie the factor on the left of the right hand side is $X_{\qa}(v.c_{\qa})$.
This is suggested by the notation, but not proven here.

\par 2) When $\qa$ is imaginary and $p\geq2, q=0$, one should have $c_{p\qa}=0$.
But we do not prove this here.
\end{NB}

\begin{proof} 
If  $\alpha$ and $\beta$ are collinear, then $\alpha=\beta$, $\{(p,q)\in \N^*\times \N\mid p\alpha+q \beta\in \Delta^+\}=\{(1,0)\}$ and $x_\beta(u)$ and $x_\alpha(v)$ commute so the formula is clear in this case. We now assume that $\alpha$ and $\beta$ are not collinear.
From the above formula, $x_{\qb}(u).X_{\qa}(v.c_{\qa}).x_{\qb}(-u)=\prod^ {p\geq1,q\geq0}_{p\qa+q\qb\in\QD}\, X_{p\qa+q\qb}(v_{p\qa+q\qb}(u,v))$, with $v_{p\qa+q\qb}(u,v)\in  \g g_{p\qa+q\qb,\Z}\otimes R$ and the map $R^2\to \g g_{p\qa+q\qb,\Z}\otimes R, (u,v)\mapsto v_{p\qa+q\qb}(u,v)$ is polynomial (defined over $\Z$), as we have unipotent groups defined over $\Z$ by \cite[\S 3.4]{R16}.
One will determine this polynomial map by using $R=\C$ and $u,v\in\C^*$ (we can assume $u,v$ algebraically independent over $\Q$).

\par There exists $t\in\g T(\C)$ such that $\qa(t)=v$ and $\qb(t)=u$, hence $(p\qa+q\qb)(t)=v^pu^q$.
Following the first paragraph of \cite[\S 3.5]{R16}, one has $t.X_{\qg}(v_{\qg}).t^ {-1}=X_{\qg}(\qg(t).v_{\qg})$ for $\gamma\in \Delta^+$ and $v_\gamma\in \g g_{\gamma}$. Hence:

\parni $x_{\qb}(u).X_{\qa}(v.c_{\qa}).x_{\qb}(-u)=t.x_{\qb}(1).X_{\qa}(c_{\qa}).x_{\qb}(-1).t^ {-1}=\prod^ {p\geq1,q\geq0}_{p\qa+q\qb\in\QD}\, t.X_{p\qa+q\qb}(v_{p\qa+q\qb}(1,1)).t^ {-1}=X_{p\qa+q\qb}(v^pu^q.c_{p\qa+q\qb})$, if one writes $c_{p\qa+q\qb}=v_{p\qa+q\qb}(1,1)\in  \g g_{p\qa+q\qb,\Z}$.
\end{proof}

\begin{lemm}\label{2.4}
One writes $\ov \sht$ the closed Tits cone in $\A=Y\otimes\R=\g h_{\R}$, $\ov \sht^\vee$ its analogue in the dual $\A^*=X\otimes\R=\g h_{\R}^*$ and $ \ov Z=\ov{conv}(\QD^+_{im}\cup\set0)$\index{Z@$\ov Z$} the closed convex hull in $\A^*$ (some notations come from \cite[\S 5.8]{K90}). Then,

\par (a) If $\QD$ is of indefinite type, for any $\qa\in\QF=\QD_{re}$, one has $\qa^\vee\not\in \pm \ov \sht$, 

\par (b) If $\QD$ is of indefinite type, for any $\qa\in\QF=\QD_{re}$, one has $\qa\not\in \pm \ov \sht^\vee$,

\par (c) $\ov Z\subset -\ov \sht^\vee$,

\par (d) $\QD_{re}\cap\pm \ov Z=\emptyset$.
\end{lemm}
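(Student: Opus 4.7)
My plan is to prove the four items in the order (c), (a), (b), (d), relying throughout on the standard description of imaginary roots from Kac's book.

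For (c), the key input is the classical description (Kac, Thm.~5.4) of positive imaginary roots as $\QD^+_{im} = W\zv \cdot (K \cap Q)$, where the fundamental region is $K = \{\qa \in Q^+ \setminus \{0\} \mid \qa(\qa_i^\vee)\le 0 \text{ for all } i,\ \mathrm{supp}(\qa)\text{ connected}\}$. Since $K \subset -\ov{(C\zv_{f})^*}$, passing to $W\zv$-orbits gives $\QD^+_{im} \subset -W\zv \cdot \ov{(C\zv_{f})^*} \subset -\ov\sht^\vee$. As $-\ov\sht^\vee$ is a closed convex cone containing $0$, it contains the closed convex hull $\ov Z = \ov{\mathrm{conv}}(\QD^+_{im} \cup \{0\})$.

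For (a), since $\ov\sht$ is $W\zv$-stable and each real coroot is $W\zv$-conjugate to some $\pm \qa_i^\vee$, I reduce to showing $\pm \qa_i^\vee \notin \ov\sht$ for every $i \in I$. The crucial input, available when $A$ is indecomposable of indefinite type (Kac, Thm.~4.3 and Prop.~5.8), is the existence of some $\qb \in \QD^+_{im}$ satisfying $\qb(\qa_j^\vee) < 0$ for \emph{every} $j \in I$. I would then show that $\qb$ is non-negative on $\ov\sht$: for $v = w.v' \in \ov\sht$ with $v' \in \ov{C\zv_{f}}$, one has $\qb(v) = (w^{-1}\qb)(v')$, and $w^{-1}\qb \in \QD^+_{im} \subset Q^+$ (since $W\zv$ stabilizes $\QD^+_{im}$) is a non-negative integer combination of the $\qa_j$, hence evaluates non-negatively on $v' \in \ov{C\zv_{f}}$. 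This contradicts $\qb(\qa_i^\vee) < 0$ and rules out $\qa_i^\vee \in \ov\sht$. For $-\qa_i^\vee$, the same argument applies after replacing $\qb$ by the reflected root $r_i\qb = \qb - \qb(\qa_i^\vee)\qa_i \in \QD^+_{im}$, which satisfies $(r_i\qb)(-\qa_i^\vee) = \qb(\qa_i^\vee) < 0$.

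Items (b) and (d) then follow formally. For (b), I would apply (a) to the dual root generating system $\mathcal{S}^\vee = (A^T, Y, X, (\qa_i^\vee)_{i\in I}, (\qa_i)_{i\in I})$, which is again indecomposable of indefinite type (transposition preserves indefinite type) and whose closed Tits cone is $\ov\sht^\vee$; its real coroots are exactly the real roots of $\mathcal{S}$, so (a) applied to $\mathcal{S}^\vee$ reads $\QF \cap \pm \ov\sht^\vee = \emptyset$. For (d), (c) gives both $\ov Z \subset -\ov\sht^\vee$ and $-\ov Z \subset \ov\sht^\vee$, so $\pm \ov Z \subset \pm \ov\sht^\vee$, which by (b) is disjoint from $\QD_{re} = \QF$. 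The main obstacle is invoking cleanly the right form of Kac's existence theorem producing a strictly anti-dominant positive imaginary root $\qb$ in the indefinite case, and checking that $W\zv$ preserves $\QD^+_{im}$ so that the non-negativity argument using $w^{-1}\qb$ really holds.
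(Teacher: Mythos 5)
Your route through (c), (a), (b) is sound, and for (a) you take a genuinely different path from the paper: the paper simply cites \cite[\S 5.8.1 and Thm.~5.6.c]{K90} for $\qa_i^\vee\notin\ov\sht$ and conjugates, whereas you rederive the fact from the existence (in the indecomposable indefinite case) of a positive imaginary root $\qb$ with $\qb(\qa_j^\vee)<0$ for all $j$, together with the observation (essentially your (c)) that every positive imaginary root is non-negative on $\ov\sht$. That argument is correct, somewhat more self-contained, and buys you a proof that does not lean on the precise numbering of Kac's book. Your (c) and (b) coincide with the paper's: (c) by $\QD^+_{im}=W\zv\cdot K$ with $K$ in the anti-dominant cone, (b) by duality $\mathcal{S}\leftrightarrow\mathcal{S}^\vee$.

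The gap is in (d). Your deduction ``$\pm\ov Z\subset\pm\ov\sht^\vee$, which by (b) is disjoint from $\QD_{re}$'' uses (b), but (b) is stated and true only when $\QD$ is of indefinite type, while (d) carries no such hypothesis (and is later used without it, e.g.\ in the proof of Lemma~\ref{lem_LB}). In fact (b) genuinely fails in the affine case: every real root $\qa$ satisfies $\qa(c)=0$ where $c=\sum a_i^\vee\qa_i^\vee$ is the canonical central element, and since $\ov\sht^\vee=\set{\chi\mid\chi(c)\geq 0}$ in affine type, all real roots lie on the boundary wall and hence in $\ov\sht^\vee\cap(-\ov\sht^\vee)$. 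So your argument proves nothing there. You need to reduce to the indecomposable case and treat the finite type (where $\ov Z=\set 0$, trivial) and the affine type separately; in the affine case the paper's observation is that $\ov Z=[0,+\infty[\,\qd$ and no real root is collinear with the null root $\qd$, which settles (d) directly without reference to (b).
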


\begin{proof} a) By \cite{K90}, 5.8.1 and Theorem 5.6.c, one has $\qa_{i}^\vee\not\in  \ov \sht$, $\forall i$. Conjugating by the Weyl group, we get (a).
Now (b) is the result dual to (a).

\par c) One may suppose $\QD$ indecomposable. The result is clear if $\QD$ is of finite type ($\ov Z=\set0$).
In the affine or indefinite case, one considers $K=\set{\qa\in\sum\N\qa_{i} \mid \qa(\qa_{j}^\vee)\leq0, \forall j \text{\ and\ } supp(\qa) \text{\ connected}}$ \cite[5.3]{K90}.
By \cite[5.8 c) or b)]{K90} $K\subset -\ov \sht^\vee$.
But $\QD^+_{im}=\cup_{w\in W\zv}\,w(K)$ by \cite[5.4]{K90}; so $\QD^+_{im}\subset -\ov \sht^\vee$ and $\ov Z \subset -\ov \sht^\vee$.

\par d) One may suppose $\QD$ indecomposable. The result is clear if $\QD$ is of finite type ($\ov Z=\set0$) or of affine type ($\ov Z=[0,+\infty[\qd$ and no real root is collinear to $\qd$).
In the indefinite case (d) is a consequence of (b) and (c).
\end{proof}

\subsection{Study of the action of root subgroups on $\shi$}

The aim of this subsection is to prove the following lemma. It will enable us to obtain decompositions of $G_\shr$ from decompositions of $G$.
In the reductive case, this lemma is already known, see \cite[Prop. 7.4.33]{BrT72}. The difficulty here is that the number of roots is infinite.

\begin{lemm}\label{lem_Ua++_fix}
Let $x\in \shi$. Then there exists $a\in \A$ such that $U_{a}^{pm+}$ fixes $x$. In particular, if $\alpha\in \Phi^+$, then for $u\in \shk$ such that $\omega(u)\gg 0$,  $x_\alpha(u)$ fixes $x$. 
\end{lemm}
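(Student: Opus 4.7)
The plan is to reduce the fixator question to an estimate on iterated commutators in $\g U^{ma+}(\shk)$, using the apartment axioms together with the commutator formula of \S\ref{2.2}. First I would apply (MA III) to the local facet $\{x\}$ and the splayed chimney germ $\g Q_{+\infty}$ to obtain an apartment $A$ containing both; (MA II) then yields $g\in G$ with $g.A=\A$ and $g$ pointwise fixing $A\cap \A$, and since $\g Q_{+\infty}\subset A\cap \A$ one has $g\in G_{\g Q_{+\infty}}=T_{0}U^{+}$. Writing $g=tu$ with $t\in T_0$ and $u\in U^+$, and setting $x_0:=g.x\in \A$, one obtains $x=u'.x_0$ for some $u'\in U^+$ (after absorbing $t$, which fixes $\A$ pointwise and conjugates $u$ to another element of $U^+$ with the same root-wise valuations, since $\omega\circ\chi\equiv 0$ on $T_0$). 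Relabelling $u'$ as $u$, we may write $u=x_{\beta_1}(s_1)\cdots x_{\beta_k}(s_k)$ as a finite product with $\beta_i\in \Phi^+$ and $s_i\in\shk$.

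The condition ``$U^{pm+}_a$ fixes $x=u.x_0$'' is equivalent to $u^{-1}U^{pm+}_a u\subseteq G_{x_0}$. For each generator $x_\alpha(v)\in U^{pm+}_a$ (so $\alpha\in \Phi^+$ and $\omega(v)\geq -\alpha(a)$), I would iterate the commutator formula of \S\ref{2.2} to expand $u^{-1}x_\alpha(v)u$ in $\g U^{ma+}(\shk)$ as a product of terms $X_\gamma(c_\gamma)$ indexed by pairs $(p,\vec{m})\in \N^{*}\times \N^{k}$ with $\gamma=p\alpha+\sum_i m_i\beta_i\in \Delta^+$, the coefficients satisfying $\omega(c_\gamma)\geq p\,\omega(v)+\sum_i m_i\,\omega(s_i)$. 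For this product to lie in $U^{ma+}_{x_0}\subseteq G_{x_0}$ it suffices that each term satisfies $\omega(c_\gamma)\geq -\gamma(x_0)$; substituting $\omega(v)\geq -\alpha(a)$ reduces this to the family of inequalities
\[
p\,\alpha(x_0-a)+\sum_{i} m_i\bigl(\omega(s_i)+\beta_i(x_0)\bigr)\geq 0
\]
over all admissible $(p,\vec{m},\alpha)$, namely those for which $p\alpha+\sum_i m_i\beta_i\in \Delta^+$.

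Finally, I would produce an $a\in \A$ realising these inequalities uniformly by choosing $a$ deep in the $-C\zv_f$ direction from $x_0$: for the real positive roots the required condition is essentially $x_0-a\in \overline{C\zv_f}$ with depth large enough to dominate the finitely many bounded penalties $\omega(s_i)+\beta_i(x_0)$, while for imaginary positive roots Lemma~\ref{2.4}(c) is decisive---the inclusion $\Delta^+_{im}\subset -\overline{\sht}^\vee$ shows that every $\gamma\in \Delta^+_{im}$ is non-positive on the closed Tits cone $\overline{\sht}$, which converts the remaining inequalities into the geometric condition $a-x_0\in \overline{\sht}$. The main obstacle will be establishing uniformity across the a priori infinite family of admissible $(p,\vec{m},\alpha)$: when the rank-two subsystem spanned by $\alpha$ and some of the $\beta_i$ is of indefinite type, this requires combining height bounds such as $\sum_i m_i\,\mathrm{ht}(\beta_i)=\mathrm{ht}(\gamma)-p\,\mathrm{ht}(\alpha)$ with the geometric content of Lemma~\ref{2.4} to rule out unbounded imaginary-root contributions. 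The ``in particular'' clause then follows at once, since $x_\alpha(u)\in U^{pm+}_a$ as soon as $\omega(u)\geq -\alpha(a)$, and hence $x_\alpha(u)$ fixes $x$.
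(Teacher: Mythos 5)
Your overall strategy coincides with the paper's: write $x=u.x_0$ with $u\in U^+$ and $x_0\in\A$ (the paper invokes $\rho_{+\infty}$, you invoke (MA\,II)/(MA\,III) plus $G_{\g Q_{+\infty}}=T_0U^+$ --- same computation), expand $u^{-1}X_\alpha(\cdot)u$ with Proposition~\ref{2.2}, and choose $a$ deep in $x_0-C\zv_f$. The difference is that you want a one-shot uniform estimate after iterating the commutator formula all the way, whereas the paper does one conjugation at a time (Lemma~\ref{lem_Conj_unp}) and induces on the length of the word in $U^+$ (Lemma~\ref{lem_Conj_unp_2}). This is not a harmless reformulation: it introduces a genuine gap.

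The family of inequalities you propose to verify --- over all $(p,\vec m,\alpha)$ such that $p\alpha+\sum_i m_i\beta_i\in\Delta^+$ --- is not the family the iterated commutator formula actually produces, and on the extra triples the bound fails. In affine $\widehat{\mathfrak{sl}}_2$, take $\alpha=\aleph$, $\beta_1=\delta-\aleph$, $\beta_2=\aleph$, $p=1$, $m_1=m$, $m_2=m-1$: then $p\alpha+m_1\beta_1+m_2\beta_2=m\delta\in\Delta^+$, but $\mathrm{ht}(p\alpha)=1$ while $\sum_i m_i=2m-1\to\infty$, so no fixed depth $n=\mathrm{ht}(x_0-a)$ can dominate the penalty $C\sum_i m_i$ once some $\omega(s_i)+\beta_i(x_0)<0$. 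What saves the day is that the actual terms arising from iterating Proposition~\ref{2.2} have the nested form $p_k(\cdots(p_1\alpha+q_1\beta_1)\cdots)+q_k\beta_k$ with \emph{each intermediate sum required to be a root}, and on that restricted family the one-step estimate of Lemma~\ref{lem_LB} closes the induction. Relatedly, the appeal to Lemma~\ref{2.4}(c) is misdirected: your inequality $p\,\alpha(x_0-a)\geq -\sum_i m_i(\omega(s_i)+\beta_i(x_0))$ has $\alpha$ on the left, not $\gamma$, so knowing $\gamma\in\Delta^+_{im}$ is $\leq 0$ on $\ov\sht$ controls $\gamma(x_0-a)$ but not the quantity you need. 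The correct device (proved from Lemma~\ref{2.4}) is Lemma~\ref{lem_LB}: for fixed $\beta\in\Phi^+$, $\inf\{\mathrm{ht}(\tau/q)\mid \tau+q\beta\in\Delta\}>0$, which bounds $\mathrm{ht}(p\alpha)/q$ below at each step. You should carry out the estimate step-by-step as in Lemmas~\ref{lem_Conj_unp} and \ref{lem_Conj_unp_2}, rather than attempting a global bound over the untruncated family.
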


Recall that $\mathrm{ht}:Q\otimes \R\rightarrow \R$ is defined as follows: if $(n_i)\in \R^I$, then $\mathrm{ht}(\sum_{i\in I} n_i\alpha_i)=\sum_{i\in I} n_i$.  

\begin{lemm}\label{lem_LB}
Let $\beta\in \Phi^+$. Then $\inf \{\mathrm{ht}(\frac{\tau}{q})\mid(q,\tau)\in \N^*\times( Q_+\setminus\{0\}), \tau+q\beta\in \Delta\}>0$.
\end{lemm}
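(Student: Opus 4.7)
The plan is to argue by contradiction: assume the infimum vanishes and extract a sequence $(q_n,\tau_n)\in\N^*\times(Q_+\setminus\set0)$ with $\tau_n+q_n\qb\in\QD$ and $\mathrm{ht}(\tau_n)/q_n\to 0$. Since $\tau_n\neq 0$ forces $\mathrm{ht}(\tau_n)\geq 1$, we must have $q_n\to+\infty$. Moreover, the coefficients of $\tau_n/q_n$ in the simple root basis are nonnegative and sum to $\mathrm{ht}(\tau_n)/q_n\to 0$, hence $\tau_n/q_n\to 0$ in $Q\otimes\R$. Setting $\qg_n:=\tau_n+q_n\qb\in\QD^+$ (positivity since $\tau_n,q_n\qb\in Q_+$), we obtain $\qg_n/q_n\to\qb$ in $\A^*$. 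After passing to a subsequence, I may assume either $\qg_n\in\QD^+_{im}$ for all $n$ or $\qg_n\in\QF^+=\QD^+_{re}$ for all $n$.

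\textbf{Imaginary case.} Here $\qg_n\in \ov Z$, and since $\ov Z$ is convex and contains $0$,
\[
\qg_n/q_n=\tfrac{1}{q_n}\qg_n+\bigl(1-\tfrac{1}{q_n}\bigr)\cdot 0\in \ov Z\qquad\text{for every }n.
\]
Closedness of $\ov Z$ then forces $\qb=\lim \qg_n/q_n\in \ov Z$, which contradicts Lemma~\ref{2.4}(d).

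\textbf{Real case (the main obstacle).} The plan here is to use the standard $W\zv$-invariant symmetric bilinear form $(\cdot\mid\cdot)$ on $\A^*$ attached to the (symmetrizable) Kac-Moody datum, for which $(\qa\mid\qa)>0$ exactly when $\qa\in\QF$, and $(\qa\mid\qa)\leq 0$ when $\qa\in\QD_{im}$. Writing $\qg_n=w_n.\qa_{i_n}$ with $i_n$ in the finite set $I$, and using $W\zv$-invariance, the values $(\qg_n\mid\qg_n)=(\qa_{i_n}\mid\qa_{i_n})$ lie in a finite set of positive reals, bounded above by some $M>0$. Therefore
\[
(\qg_n/q_n\mid\qg_n/q_n)=(\qg_n\mid\qg_n)/q_n^2\leq M/q_n^2\xrightarrow[n\to\infty]{}0,
\]
while by continuity of the form this quantity converges to $(\qb\mid\qb)>0$, the desired contradiction.

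The main difficulty is precisely the real case: the imaginary case is a clean convex-geometric consequence of Lemma~\ref{2.4}(d), but for real $\qg_n$ an extra input beyond the excerpt is needed. The bilinear form route above is the shortest; an alternative, avoiding any symmetrizability hypothesis, would be to invoke the theory of limit roots (accumulation directions of real roots lie in the closure of the imaginary cone $\ov Z$), which again reduces the real case to Lemma~\ref{2.4}(d).
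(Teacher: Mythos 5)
Your argument agrees with the paper's in its overall structure (contradiction, subsequence extraction, and the treatment of the imaginary case via convexity and closedness of $\ov Z$, then Lemma~\ref{2.4}(d)). The divergence is in the real case. The paper's proof invokes \cite[Lemma 5.8]{K90} directly: the rays $\R_+^*(\beta+\tau_n/q_n)$ are generated by real roots and converge to $\R_+^*\beta$, so $\beta\in\ov Z$, again contradicting Lemma~\ref{2.4}(d). That is precisely the ``alternative'' you sketch in your last sentence, and it is the route that actually works at the level of generality of the paper.

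Your \emph{primary} argument for the real case — comparing $(\gamma_n/q_n\mid\gamma_n/q_n)\to 0$ with $(\beta\mid\beta)>0$ via the invariant bilinear form — is clean and correct, but it silently adds a hypothesis. The form $(\cdot\mid\cdot)$ with the sign properties you use ($(\alpha\mid\alpha)>0$ on $\QF$, $W\zv$-invariance, finitely many values on simple roots) exists exactly when the Kac-Moody matrix is symmetrizable. Nothing in \S\ref{ss_Root_generating} of the paper assumes symmetrizability: the setup is a completely general Kac-Moody matrix. So, as stated, your main route proves the lemma only in the symmetrizable case and leaves the general case to the alternative you mention in passing. If you intend the bilinear-form argument as the actual proof, you need to either add the symmetrizability hypothesis explicitly or replace it with the Kac~5.8 accumulation argument, which does not require it. Aside from that, the steps you do carry out ($q_n\to\infty$ from $\mathrm{ht}(\tau_n)\geq 1$, boundedness of $(\gamma_n\mid\gamma_n)$ by $W\zv$-invariance and finiteness of $I$, continuity of the form) are all sound.
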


\begin{proof}
Suppose this is not the case and choose $(q_n)\in (\N^*)^\N$ and $(\tau_n)\in (Q_+\setminus\{0\})^\N$ such that for $n\in \N$, $q_n \beta+\tau_n\in \Delta$ and $\frac{1}{q_n}\mathrm{ht}(\tau_n)\underset{n\to +\infty}{\longrightarrow} 0$. Then $\frac{1}{q_n}\tau_n\underset{n\to +\infty}{\longrightarrow} 0$. Up to choosing a subsequence of $((q_n,\tau_n))_{n\in \N}$, we may assume that one of the following two possibilities holds:\begin{itemize}
\item $q_n \beta +\tau_n\in \Delta_{im}^+$, for all $n\in \N$. In this case,  $\beta+\frac{1}{q_n}\tau_n\in \ov Z=\ov{conv}(\QD^+_{im}\cup\set0)$.
So $\qb\in\ov Z$: this is impossible since $\QD_{re}\cap\ov Z=\emptyset$ (see Lemma \ref{2.4}).

\item $q_n \beta+\tau_n\in \Delta_{re}^+$, for all $n\in \N$. Then the rays \[\R_{+}^*\left(q_n \beta+\tau_n\right)=\R^*_+ \left(\beta+\frac{1}{q_n}\tau_n\right) ,\] which are generated by  real roots, converge to  the ray $\R_{+}^*.\qb$. 
By \cite[Lemma 5.8]{K90} one has $\qb\in \ov Z$: this is impossible (similarly as above).
\end{itemize}
\end{proof}

\begin{lemm}\label{lem_Conj_unp}
Let $b\in \A$, $\beta\in \Phi^+$ and $v\in \shk$.  Then there exists $a\in b-C^v_f$ such that $x_{\beta}(v)U_{a}^{pm+}x_{\beta}(-v)\subset  U_b^{pm+}$. 
\end{lemm}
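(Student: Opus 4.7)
The plan is to choose $a = b - \mu$ with $\mu$ sufficiently deep in the fundamental vectorial chamber, and to check via the commutator formula that conjugation by $x_\beta(v)$ then sends every generator of the pro-unipotent group $U_a^{ma+}$ into $U_b^{ma+}$. Since $U_a^{pm+} = U_a^{ma+} \cap G$ and conjugation by $x_\beta(v) \in G$ preserves $G$, it will suffice to prove $x_\beta(v)\, U_a^{ma+}\, x_\beta(-v) \subset U_b^{ma+}$; as both sides are groups, it is enough to handle generators of the form $X_\alpha(t\cdot x)$ with $\alpha \in \Delta^+$, $x \in \shb_\alpha$, and $\omega(t) \geq f_a(\alpha)$.

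For the choice of $\mu$, I use the fact that $(\alpha_i)_{i \in I}$ is a free family in $\A^*$ to pick $\mu \in \A = Y \otimes \R$ with $\alpha_i(\mu) = M$ for every $i \in I$, where $M > 0$ will be fixed later. Then $\mu \in C^v_f$, so $a = b - \mu \in b - C^v_f$, and $\alpha(\mu) = M \cdot \mathrm{ht}(\alpha)$ for every $\alpha \in \Delta^+$. The commutator formula of \S\ref{2.2} then expands the conjugate $x_\beta(v)\, X_\alpha(t\cdot x)\, x_\beta(-v)$ as a product of factors $X_{p\alpha+q\beta}(t^p v^q c_{p\alpha+q\beta,\,x})$ indexed by $(p,q) \in \N^* \times \N$ with $p\alpha + q\beta \in \Delta$, and each such factor lies in $U_b^{ma+}$ exactly when $p\,\omega(t) + q\,\omega(v) \geq f_b(p\alpha + q\beta)$.

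Since $\omega$ is discrete and $f_a(\alpha) \geq -\alpha(a) = -\alpha(b) + \alpha(\mu)$, this valuation inequality reduces to the affine condition
\[
p\,\alpha(\mu) + q\,(\beta(b) + \omega(v)) \geq 0,\qquad\text{i.e.}\qquad pM \cdot \mathrm{ht}(\alpha) \geq q\bigl(-\beta(b) - \omega(v)\bigr).
\]
The case $q = 0$ is automatic since $\alpha \in \Delta^+$ forces $\mathrm{ht}(\alpha) \geq 1$. For $q \geq 1$, Lemma \ref{lem_LB} applied to $\tau = p\alpha$ (a nonzero element of the positive root lattice, with $\tau + q\beta = p\alpha + q\beta \in \Delta$) yields a constant $\epsilon > 0$ that depends only on $\beta$, satisfying $p \cdot \mathrm{ht}(\alpha)/q \geq \epsilon$; it then suffices to pick $M$ with $M\epsilon \geq \max(-\beta(b) - \omega(v),\,0)$, a choice that depends only on the given data $b, \beta, v$.

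The principal obstacle is obtaining \emph{uniformity} of the valuation estimate as $\alpha$ ranges over the infinite set $\Delta^+$ and $(p,q)$ ranges over the infinitely many pairs with $p\alpha + q\beta \in \Delta$. This is precisely the role of Lemma \ref{lem_LB} (itself relying on Lemma \ref{2.4}): it forbids the rays $\R_+^*(p\alpha + q\beta)$ from approaching $\R_+^*\beta$ as $q \to \infty$, and thereby delivers the single positive constant $\epsilon$ needed so that one well-chosen $\mu$ absorbs all the affine perturbations by $\beta(b) + \omega(v)$ simultaneously.
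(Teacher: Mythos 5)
Your proof is correct and follows essentially the same route as the paper's: conjugate termwise via the commutator formula of \S\ref{2.2}, translate the membership conditions into the affine inequality $pM\cdot\mathrm{ht}(\alpha)\geq q\max(0,-\beta(b)-\omega(v))$, and invoke Lemma~\ref{lem_LB} to get a uniform lower bound on $\mathrm{ht}(p\alpha)/q$; your choice $\mu=M\lambda$ is the paper's $a=b-n\lambda$ in slightly different clothing. The only place to tighten the wording is the reduction ``both sides are groups, so it suffices to handle generators'': $U_a^{ma+}$ is a pro-unipotent group and is the \emph{closure} of the subgroup generated by the $X_\alpha(t\cdot x)$, so one should add that conjugation by $x_\beta(v)$ is compatible with the infinite-product/projective-limit structure (it only raises heights) and that $U_b^{ma+}$ is closed — which is exactly what makes the termwise computation legitimate, and what the paper is implicitly using when it conjugates the full product $h=\prod_\alpha X_\alpha(u_\alpha c_\alpha)$.
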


\begin{proof}
 Let $a\in \A$ and $h\in U_a^{pm+}$. 
By definition of $U_a^{pm+}$, we can write $h=\prod_{\alpha\in \Delta^+} X_\alpha( u_\alpha.c_\alpha)$, where $c_\alpha\in \g g_{\alpha,\Z}$, $u_\alpha\in \shk$ and $\alpha(a)+\omega(u_{\qa})\geq 0$ for all $\alpha\in \Delta^+$, where $\Delta_+$ is equipped with a total order such that the height is an increasing map for $\leq$. 
 
Let $\alpha\in \Delta^+$. Set \[E_\alpha=\{(p,q)\in \N^*\times \N\mid p\alpha+q\beta\in \Delta^+\}.\] We equip $E_\alpha$ with a total order $\leq$ such that for all $(p,q),(p',q')\in E_\alpha$, \[(p,q)\leq (p',q')\Rightarrow \mathrm{ht}(p\alpha+q\beta)\leq \mathrm{ht}(p'\alpha+q'\beta).\]  By \ref{2.2} Proposition, we have \begin{equation}\label{eq_commut}
x_{\beta}(v) X_\alpha(u_\alpha.c_\alpha) x_{\beta}(-v)=\prod_{(p,q)\in E_\alpha}X_{p\alpha+q\beta}(u_\alpha^p v^qc_{(p,q),\alpha}),
\end{equation}
 
where  $c_{(p,q),\alpha}\in \g g_{p\alpha+q\beta,\Z}$, for $(p,q)\in E_\alpha$.
 
 Therefore  \begin{equation}\label{eq_commut2}
x_\beta(v)hx_\beta(-v)=\prod_{\alpha\in \Delta^+} \prod_{(p,q)\in E_\alpha}X_{p\alpha+q\beta}(u_\alpha^p v^qc_{(p,q),\alpha})
\end{equation} (the right hand side of this product is well-defined, as for any $m\in\N$, there exist at most finitely many triples $(\alpha,p,q)$ with $\alpha\in \Delta^+$ and $(p,q)\in E_\alpha$  satisfying $\mathrm{ht}(p\alpha+q\beta)=m$).

Let  $\alpha\in \Delta^+$. Set \[\Omega_\alpha(u_\alpha)=\bigcap_{(p,q)\in E_{\qa}}\{a'\in \A\mid  (p\alpha+q\beta)(a')+\omega(u_\alpha^p v^q)\geq 0\}.\] 
By \eqref{eq_commut2}, $x_\beta(v) X_\alpha(u_\alpha.c_\alpha) x_\beta(-v)$ belongs to $U_{\Omega_\alpha(u_\alpha)}^{ma+}$.  Moreover, \[\begin{aligned}\Omega_\alpha(u_\alpha)&=\bigcap_{(p,q)\in E_\alpha}\{a'\in \A\mid  p\alpha(a')+q\beta(a')+ p\omega(u_\alpha)+q\omega(v)\geq 0\}\\
&\supset  \bigcap_{(p,q)\in E_\alpha}\{a'\in \A\mid \frac{p}{q+1}(\alpha(a')+\omega(u_\alpha))\geq \max\left(0,-\beta(a')-\omega(v)\right)\} \\ 
& \supset \Omega'_\alpha(a):=\bigcap_{(p,q)\in E_\alpha}\{a'\in \A\mid\frac{p}{q+1}(\alpha(a')-\alpha(a))\geq \max\left(0,-\beta(a')-\omega(v)\right)\} .
\end{aligned}\]

We are looking for $a\in \A$ such that $b\in \bigcap_{\alpha\in \Delta^+}\Omega_\alpha'(a)$. Otherwise said,  we are looking for $a\in \A$ such that, for all $\alpha\in \Delta^+$   we have \begin{equation}\label{eq_ineq_condition}
\frac{p}{q+1}\left(\alpha(b)-\alpha(a)\right)\geq \max(0,-\beta(b)-\omega(v)),\ \forall (p,q)\in E_\alpha.
\end{equation}

Let $\lambda\in \A$ be such that $\alpha_i(\lambda)=1$ for all $i\in I$. Then $\lambda\in C^v_f$.  We search for $a$   in the form  $b-n\lambda$, where   $n\in \R_+$. Then \eqref{eq_ineq_condition} 
becomes \begin{equation}\label{eq_ineq_condition2}
\frac{p}{q+1}n \alpha(\lambda) =n\frac{\mathrm{ht}(p\alpha)}{q+1}\geq \max(0,-\beta(b)-\omega(v)),\ \forall (p,q)\in E_\alpha.
\end{equation}

If $(p,q)\in E_\alpha$, then $\frac{\mathrm{ht}(p\alpha)}{q+1}=\mathrm{ht}(p\alpha)$ if $q=0$ and $\frac{\mathrm{ht}(p\alpha)}{q+1}=\frac{\mathrm{ht}(p\alpha)}{q}\frac{q}{q+1}\geq \frac{1}{2}\inf\{\frac{\mathrm{ht}(\tau)}{q} \mid\tau\in Q^+, q\in \N^*, \tau+q\beta\in \Delta^+\}>0$ if $q>0$ (by Lemma~\ref{lem_LB}). Therefore \eqref{eq_ineq_condition2} is satisfied for $n\gg 0$, which proves the lemma. 
\end{proof}

\begin{lemm}\label{lem_Conj_unp_2}
Let $b\in \A$ and $g\in U^+$. Then there exists $a\in b-C^v_f$ such that $gU_{a}^{pm+}g^{-1}\subset  U_b^{pm+}$. 
\end{lemm}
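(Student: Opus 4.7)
The plan is to reduce this to Lemma~\ref{lem_Conj_unp} by induction on the length of a decomposition of $g$ as a product of single root-group elements. By definition, $U^+ = \langle x_\beta(\shk) \mid \beta \in \Phi^+\rangle$, so any $g \in U^+$ can be written as a finite product $g = x_{\beta_1}(v_1)\cdots x_{\beta_n}(v_n)$ with $\beta_i \in \Phi^+$ and $v_i \in \shk$. The key geometric observation is that $C^v_f$ is a convex cone, so $C^v_f + C^v_f \subset C^v_f$, which means the relation $a' \in a - C^v_f$ composes transitively: after finitely many steps, we still land in $b - C^v_f$.

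More precisely, I would argue by induction on $n$. The case $n = 0$ is trivial ($g = 1$, take $a = b$). For the inductive step, write $g = g' \cdot x_{\beta_n}(v_n)$ where $g'$ is a product of $n-1$ factors. The induction hypothesis yields some $a' \in b - C^v_f$ with $g' U^{pm+}_{a'} g'^{-1} \subset U^{pm+}_b$. Then apply Lemma~\ref{lem_Conj_unp} with the pair $(a',\, x_{\beta_n}(v_n))$ playing the role of $(b,\, x_\beta(v))$: this gives some $a \in a' - C^v_f$ such that $x_{\beta_n}(v_n)\, U^{pm+}_a\, x_{\beta_n}(-v_n) \subset U^{pm+}_{a'}$. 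Conjugating by $g'$ then gives $g\, U^{pm+}_a\, g^{-1} \subset U^{pm+}_b$, as desired, and $a \in a' - C^v_f \subset b - C^v_f - C^v_f \subset b - C^v_f$ since $C^v_f$ is stable under addition.

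There is essentially no obstacle here beyond the routine bookkeeping: the real work was already done in Lemma~\ref{lem_Conj_unp}, which handles the single-factor case (this is where the combinatorics of commutators in $\g U^{ma+}$ and the height estimate of Lemma~\ref{lem_LB} are needed). The present lemma is purely a propagation result that leverages stability of $-C^v_f$ under addition. The only point to be slightly careful about is that the decomposition $g = x_{\beta_1}(v_1)\cdots x_{\beta_n}(v_n)$ is not unique, but this is irrelevant for the argument since we only need existence of \emph{some} such decomposition in the minimal Kac-Moody group $U^+$, which is immediate from the definition.
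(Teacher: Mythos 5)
Your proof is correct and follows the paper's argument essentially verbatim: write $g$ as a product of root-group elements, induct on the number of factors, peel off the last factor via Lemma~\ref{lem_Conj_unp}, and conjugate. You are slightly more careful than the paper in explicitly noting that $a\in b-C^v_f-C^v_f\subset b-C^v_f$ because $C^v_f$ is closed under addition; the paper leaves this inclusion implicit, but it is the same argument.
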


\begin{proof}
Write $g=x_{\beta_1}(v_1)\ldots x_{\beta_k}(v_k)$, with $k\in \N$, $\beta_1,\ldots,\beta_k\in \Phi^+$ and $v_1,\ldots,v_k\in \shk$. We proceed by induction on $k$. If $k=1$, this is Lemma~\ref{lem_Conj_unp}. We assume that  $k\geq 2$ and that there exists $a'\in b-C^v_f$ such that $x_{\beta_1}(v_1)\ldots x_{\beta_{k-1}}(v_{k-1}) U_{a'}^{pm+} x_{\beta_{k-1}}(-v_{k-1})\ldots x_{\beta_1}(-v_1)\subset U_b^{pm+}$. By Lemma~\ref{lem_Conj_unp}, there exists $a\in a'-C^v_f$ such that $x_{\beta_k}(v_k) U_{a}^{pm+} x_{\beta_k}(-v_k)\subset U_{a'}^{pm+}$. Then $gU_{a}^{pm+} g^{-1}\subset U_b^{pm+}$, which proves the lemma. 
\end{proof}

We can now prove Lemma~\ref{lem_Ua++_fix}: if $x\in \shi$, then there exists $a\in \A$ such that $U_{a}^{pm+}$ fixes $x$. Indeed, we  have $x\in U^+.\rho_{+\infty}(x)$, where $\rho_{+\infty}$ is defined in \ref{ss_Retractions}.  Therefore there exist $g\in U^+$, $b\in \A$ such that $x=g.b$. By Lemma~\ref{lem_Conj_unp}, there exists  $a\in  \A$ such that $g^{-1} U_a^{pm+} g\subset U_b^{pm+}$. Then $U_{a}^{pm+}$ fixes $x$.

\subsection{Bruhat and Iwasawa decomposition}\label{ss_Br_Iw_dec}

\begin{theo}\label{thm_Bounded_sets_R_friendly}
Let $A\in \sha(G)$ and $P$ be a bounded subset of $A$. Then there exists $\tilde{A}\in \sha(G_\shr)$ such that $\tilde{A}$ contains $P$. If moreover $A$ contains $\g Q_{\epsilon\infty}$, for some $\epsilon\in \{-,+\}$, then we can choose $\tilde{A}=u.\A$, for some $u\in U_\shr^{\epsilon\epsilon}$.
\end{theo}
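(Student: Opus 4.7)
The strategy is to isolate a single approximation lemma and to deduce both parts of the theorem via a preliminary reduction to a product of root subgroup elements.

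\textbf{Reduction.} For the general statement, write $A=g.\A$ with $g\in G$. Since $G$ is generated by $T$ and the root subgroups $x_\alpha(\shk)$ and since $t\,x_\alpha(v)\,t^{-1}=x_\alpha(\alpha(t)v)$ lets one push torus factors to the right, we may write $g=u\cdot t$ with $u=x_{\alpha_1}(v_1)\cdots x_{\alpha_k}(v_k)$ ($\alpha_i\in\Phi$, $v_i\in\shk$) and $t\in T$; since $T$ stabilizes $\A$, this gives $A=u.\A$. For the moreover part with $\epsilon=+$ (the negative case being symmetric), both $\A$ and $A$ contain $\g Q_{+\infty}$, so (MA II) supplies $h\in G$ with $h.\A=A$ fixing $\A\cap A\supset\g Q_{+\infty}$; hence $h\in G_{\g Q_{+\infty}}=T_0U^+=U^+T_0$ by \S\ref{3.1}, and writing $h=u\,t_0$ with $u\in U^+=\langle x_\alpha(\shk)\mid\alpha\in\Phi^+\rangle$ and $t_0\in T_0$ gives $A=u.\A$ with all $\alpha_i\in\Phi^+$.

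\textbf{Approximation.} The claim is: given $u=\prod_{i=1}^k x_{\alpha_i}(v_i)$ and bounded $P\subset u.\A$, there exists an $N\in\Z$ such that, for any choice of $v_i'\in\shr$ with $\omega(v_i-v_i')\geq N$, the element $u':=\prod_i x_{\alpha_i}(v_i')\in G_\shr$ satisfies $u'.\A\supset P$. Set $R:=u^{-1}.P\subset\A$; this is bounded because $u^{-1}$ realizes an affine isomorphism from $u.\A$ onto $\A$. Since $(u')^{-1}.P=(u')^{-1}u\cdot R$, it suffices to show that $(u')^{-1}u$ fixes $R$ pointwise. Iterating the commutator formula of \S\ref{2.2} on
\[
(u')^{-1}u \;=\; x_{\alpha_k}(-v_k')\cdots x_{\alpha_1}(-v_1')\,x_{\alpha_1}(v_1)\cdots x_{\alpha_k}(v_k),
\]
one expands this element in the completion as an ordered product $\prod_\gamma X_\gamma(c_\gamma)$ over $\gamma\in\Delta$, where each $c_\gamma$ is a polynomial in the $v_i, v_i', (v_i-v_i')$ that vanishes when all $v_i'=v_i$; in particular $c_\gamma$ has total degree $d_\gamma\geq 1$ in the differences, so $\omega(c_\gamma)\geq d_\gamma\cdot N - O(1)$ whenever $\omega(v_i-v_i')\geq N$. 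The condition for $X_\gamma(c_\gamma)$ to fix $R$ is $\omega(c_\gamma)\geq -\inf_{r\in R}\gamma(r)$, a finite bound since $R$ is bounded and $\gamma$ is linear. Combined with Lemma~\ref{lem_LB}, which prevents $d_\gamma/\mathrm{ht}(\gamma)$ from accumulating at zero as $\gamma$ ranges over the support of the expansion, this reduces the infinitely many such conditions to a single lower bound on $N$, achievable by the density of $\shr$ in $\shk$. Finally, when all $\alpha_i\in\Phi^\epsilon$, the roots $\gamma$ arising in the expansion lie in $\Delta^\epsilon$, so $u'\in U^{\epsilon\epsilon}_\shr$, which gives the moreover statement.

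\textbf{Main obstacle.} The central technical point is the uniform control, over the infinitely many $\gamma\in\Delta$ appearing in the iterated commutator expansion, of the valuations $\omega(c_\gamma)$; this is exactly the analysis carried out in \S\ref{2.2}--\S\ref{2.4} to reach Lemma~\ref{lem_Conj_unp_2}, with Lemma~\ref{lem_LB} doing the decisive work of handling the imaginary roots by pinning the ratios $d_\gamma/\mathrm{ht}(\gamma)$ away from zero. The verification that $R$ remains bounded under the apartment isomorphism and the sign-tracking in the moreover case are then routine.
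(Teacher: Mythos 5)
Your overall strategy coincides with the paper's: approximate $g$ factor by factor by $\tilde g\in G_\shr$ using the density of $\shr$, and show the error $\tilde g^{-1}g$ fixes the bounded set $g^{-1}.P$; the "moreover" reduction via $G_{\g Q_{\epsilon\infty}}=T_0U^\epsilon$ is also identical. But the approximation step has a genuine gap. You propose to expand $(u')^{-1}u$ as a single ordered product $\prod_{\gamma}X_\gamma(c_\gamma)$ "in the completion" by iterating the commutator formula of \S\ref{2.2}. That formula lives entirely inside $\g U^{ma+}$ (or symmetrically $\g U^{ma-}$). In the first, general part of the theorem the $\alpha_i$ may have both signs, so $(u')^{-1}u\in G$ is in neither $\g U^{ma+}$ nor $\g U^{ma-}$; there is no pro-group in which a product over mixed-sign $\gamma\in\Delta$ converges, and already for $k=2$ with $\alpha_1\in\Phi^+,\alpha_2\in\Phi^-$ the element $(u')^{-1}u$ has a nontrivial $T$-component $(1-v_2'w_1)^{-1}$ that your expansion has no slot for. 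A second, subsidiary problem is the claimed bound $\omega(c_\gamma)\ge d_\gamma N-O(1)$: the implied constant is not uniform in $\gamma$ (the $v_i$-powers push it down proportionally to $\mathrm{ht}(\gamma)$), so what you actually need is $d_\gamma/\mathrm{ht}(\gamma)$ bounded away from zero over the whole support, and Lemma~\ref{lem_LB} as stated only controls $\mathrm{ht}(\tau)/q$ for a single commutation with one fixed $\beta$ — the iterated, multi-variable version is exactly the content of the inductive Lemma~\ref{lem_Conj_unp_2}, not a corollary of Lemma~\ref{lem_LB}.

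The paper avoids both difficulties by never attempting a global expansion. Each error term $x_{\beta_i}(u_i^{(n)})^{-1}x_{\beta_i}(u_i)=x_{\beta_i}(u_i-u_i^{(n)})$ is a single root-group element of a single sign, and Lemma~\ref{lem_Ua++_fix} (which you do not cite, although it is precisely where Lemma~\ref{lem_Conj_unp_2} and the height estimate get consumed), or its $\Phi^-$-symmetric analogue, says this element fixes any given point of $\shi$ once $\omega(u_i-u_i^{(n)})\gg0$. Peeling one factor at a time, $\tilde g(n)^{-1}g.a=a$ for each fixed $a\in\A$ and $n\gg0$. Passing from finitely many points to the whole bounded set still requires work: the paper takes $a_1,\dots,a_m$ with $\mathrm{conv}(a_i)\supset g^{-1}.P$, uses (MA~II) to see $\A\cap\tilde g(n)^{-1}g.\A$ is convex, and uses that an element of $N$ acts affinely on $\A$ to conclude $\tilde g(n)^{-1}g$ fixes $g^{-1}.P$ pointwise. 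In your write-up this last step is absorbed into "it suffices to show that $(u')^{-1}u$ fixes $R$ pointwise", which is true, but the claimed mechanism for producing that pointwise fixing is the invalid expansion; so this convexity argument is not "routine bookkeeping" but a necessary replacement for it.
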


\begin{proof}
Write $A=g.\A$, with $g\in G$. 
By \cite[Proposition 1.5]{R06}, we can write $g=x_{\beta_1}(u_1)\ldots x_{\beta_k}(u_k)t$, for some $k\in \N$, $\beta_1,\ldots,\beta_k\in \Phi$, $u_1,\ldots,u_k\in \shk$ and $t\in T$. As $t.\A=\A$, we may assume that $t=1$. For $1\leq i\leq k$, we choose a sequence $(u_i^{(n)})_{n\in \N}\in \shr^\N$ such that $u_i^{(n)}\rightarrow u_i$. 

Let $a\in \A$. Then by Lemma~\ref{lem_Ua++_fix}, for $n\gg 0$, $x_{\beta_1}(u_1^{(n)})^{-1}x_{\beta_1}(u_1)$ fixes $x_{\beta_2}(u_2)\ldots x_{\beta_k}(u_k).a$ and thus we have \[x_{\beta_1}(u_1^{(n)})^{-1}x_{\beta_1}(u_1) x_{\beta_2}(u_2)\ldots x_{\beta_k}(u_k).a=x_{\beta_2}(u_2)\ldots x_{\beta_k}(u_k).a,\] for $n\gg 0$. For $n\gg 0$, we have $x_{\beta_2}(u_2^{(n)})^{-1}x_{\beta_2}(u_2) x_{\beta_3}(u_3)\ldots x_{\beta_k}(u_k).a= x_{\beta_3}(u_3)\ldots x_{\beta_k}(u_k).a$.  By induction, we deduce that if $\tilde{g}(n)=x_{\beta_1}(u_1^{(n)})\ldots x_{\beta_k}(u_k^{(n)})$, for $n\in \N$, then we have $\tilde{g}(n)^{-1} g.a=a$ for $n\gg 0$. 

Let $a_1,\ldots,a_m\in \A$ be such that $\mathrm{conv}(a_i\mid1\leq i\leq m)\supset g^{-1}.P$. Let $n\in \N$ be sufficiently large such that  $\tilde{g}(n)^{-1}g$ fixes $a_i$, for all $i\in \{1,\ldots,m\}$. Then $a_i\in \tilde{g}(n)^{-1}g.\A\cap \A$ for all $i$ and as $\tilde{g}(n)^{-1}g.\A\cap \A$ is convex, we have \[g^{-1}.P\subset \A\cap \tilde{g}(n)^{-1}g.\A.\] Let $h\in G$ be such that $h.\A= \tilde{g}(n)^{-1}g.\A$ and such that $h$ fixes $\A\cap \tilde{g}(n)^{-1}g.\A$. Then $h^{-1}\tilde{g}(n)^{-1}g$ stabilizes $\A$ and induces an affine morphism on it. In particular $h^{-1}\tilde{g}(n)^{-1}g$ fixes $\mathrm{conv}(a_i\mid1\leq i\leq m)$. Therefore $\tilde{g}(n)^{-1}g.x=x$,  for all $x\in g^{-1}.P$ and in particular, $P\subset \tilde{g}(n).\A$.

Suppose now that $A$ contains $\g Q_{\epsilon\infty}$, for some $\epsilon\in \{-,+\}$. 
Then we can assume that $g$ fixes $A\cap \A$ and thus that $g$ fixes $\g Q_{\epsilon\infty}$.
 Then $g\in G_{\g Q_{\epsilon\infty}}$ and by \ref{3.1} 3)  we can assume that $\beta_i\in \Phi^\epsilon$ for all $i\in \{1,\ldots,k\}$. 
 Then $\tilde{g}(n)\in U^{\epsilon\epsilon}_{\shr}$, which concludes the proof of the theorem.
\end{proof}

\begin{coro}\label{cor_Masure_GR}
(1) We have $\shi=G_\shr.\A$.

\par (2) For any local chamber $C$ in $\SHI$, there is $u\in U^ {\qe\qe}_{\shr}$ such that $C\subset u.\A$; in particular $C$ and $\g Q_{\qe\infty}$ are in a same $G_{\shr}-$apartment.
\end{coro}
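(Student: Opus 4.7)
Both parts are essentially immediate consequences of Theorem~\ref{thm_Bounded_sets_R_friendly}, which converts any bounded subset of a $G$-apartment into a subset of a $G_\shr$-apartment (and, when a sector germ $\g Q_{\epsilon\infty}$ is already present in the ambient $G$-apartment, even realises this $G_\shr$-apartment as $u.\A$ with $u\in U^{\epsilon\epsilon}_\shr$). The work is to set up the right bounded subset, apply the theorem, and then translate the set-theoretic containment back into the filter-theoretic language of local chambers.

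For (1), let $x\in\shi$. Since $\shi=\bigcup_{g\in G} g.\A$, some $G$-apartment $A$ contains $x$. Applying Theorem~\ref{thm_Bounded_sets_R_friendly} to $A$ and the bounded set $P=\{x\}$ yields $\tilde A\in\sha(G_\shr)$ with $x\in\tilde A$; thus $\shi\subset G_\shr.\A$, and the reverse inclusion is obvious.

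For (2), fix $\epsilon\in\{-,+\}$ and a local chamber $C$ in $\shi$. Since $\g Q_{\epsilon\infty}$ is a splayed sector germ and $C$ is a local facet, axiom (MA III) produces a $G$-apartment $A$ containing both $C$ and $\g Q_{\epsilon\infty}$. Inside $A$, write $C=germ_x(x+C^v)$ with $x\in A$ and $C^v$ a vectorial chamber; choose a bounded neighbourhood of $x$ in $x+C^v$ as a representative of the filter $C$, for instance $P=(x+C^v)\cap\overline B(x,r)$ for some small $r>0$, so that $P\in C$ as a filter element. Apply the refined conclusion of Theorem~\ref{thm_Bounded_sets_R_friendly} to the pair $(A,P)$: because $A$ contains $\g Q_{\epsilon\infty}$, this produces $u\in U^{\epsilon\epsilon}_\shr$ with $P\subset u.\A$, hence $C\subset u.\A$ as filters. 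Finally, $U^{\epsilon\epsilon}_\shr\subset U^\epsilon\subset G_{\g Q_{\epsilon\infty}}$ (see \S\ref{3.1}), so $u$ fixes $\g Q_{\epsilon\infty}$; therefore $\g Q_{\epsilon\infty}\subset u.\A$ too, and $C$ together with $\g Q_{\epsilon\infty}$ both lie in the single $G_\shr$-apartment $u.\A$.

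The only subtle point is the filter/set bookkeeping in (2): one must ensure that the \emph{set} $P$ fed to Theorem~\ref{thm_Bounded_sets_R_friendly} is actually an element of the \emph{filter} $C$, so that $P\subset u.\A$ upgrades to $C\subset u.\A$. Choosing $P$ as a bounded neighbourhood of the base point inside $x+C^v$ resolves this without further work; no other step presents a real obstacle, as all the analytic content (convergence of $u_i^{(n)}\to u_i$ in $\shr$ together with the local fixator property from Lemma~\ref{lem_Ua++_fix}) has already been absorbed into Theorem~\ref{thm_Bounded_sets_R_friendly}.
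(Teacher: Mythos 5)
Your proof is correct and follows essentially the same route as the paper: both parts reduce to Theorem~\ref{thm_Bounded_sets_R_friendly} applied to a suitable bounded set inside a $G$-apartment furnished by (MA~III). Your extra care in (2) — replacing the filter $C$ by a bounded filter element $P\in C$ before invoking the theorem, and explicitly noting that $u\in U^{\epsilon\epsilon}_\shr$ fixes $\g Q_{\epsilon\infty}$ — makes explicit a minor bookkeeping step that the paper's proof (which applies the theorem directly to ``$P=C$'') leaves implicit.
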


\begin{proof}
Let $x\in \shi$ (\resp $C\subset\SHI$).
Let $A\in\sha(G)$ containing $x$ (\resp containing $C\cup \g Q_{\qe\infty}$, by (MAIII) in \ref{n1.3.1}).
 Then by applying Theorem~\ref{thm_Bounded_sets_R_friendly} to $P=\{x\}$\ (\resp $P=C$), we get $g\in G_{\shr}$ (\resp $u\in U^ {\qe\qe}_{\shr}$) such that $x\in g.\A$ (\resp $C\subset u.\A$).
\end{proof}

We now assume that $\shr^*$ contains an element $\qp$ such that $\omega(\qp)=1$ (this is Assumption~\ref{eq_Assumption_R}). Recall that we have $\nu(N_\shr)=W\zv\ltimes Y$. 
Let $W^+=W\zv\ltimes Y^+\subset W\zv\ltimes Y$, where $Y^+=Y\cap \sht$.

\begin{prop}\label{2.11}
Let   $A_1,A_2\in \sha(G_\shr)$. Then there exists $g\in G_{\shr}$ fixing $A_1\cap A_2$ such that $A_{2}=g.A_{1}$.
\end{prop}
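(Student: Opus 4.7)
The plan is to reduce to the case $A_1 = \A$, produce a candidate conjugator via axiom (MA II) for the full group $G$, and then correct it to an element of $G_\shr$ using that $\nu(N_\shr) = W$ has already been established in \eqref{eq_Nu(NR)}.

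For the reduction, write $A_i = h_i\A$ with $h_i \in G_\shr$. If $g' \in G_\shr$ fixes $\A \cap h_1^{-1}A_2$ pointwise and sends $\A$ onto $h_1^{-1}A_2$, then $g := h_1 g' h_1^{-1} \in G_\shr$ answers the original question, so I may assume $A_1 = \A$ and $A_2 = h\A$ with $h \in G_\shr$. Set $F := \A \cap A_2$. Applying (MA II) from \S\ref{n1.3.1} inside the ambient masure $\shi = G.\A$ produces $g_0 \in G$ with $g_0\A = A_2$ and $g_0$ fixing $F$ pointwise; since $h^{-1}g_0$ stabilizes $\A$, we get $n := h^{-1}g_0 \in N$. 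By \eqref{eq_Nu(NR)} there exists $n' \in N_\shr$ with $\nu(n') = \nu(n)$, so $n' n^{-1} \in \ker(\nu|_N)$. From the explicit formula $\chi(\nu(t)) = -\omega(\chi(t))$ recalled in \S\ref{ss_N} one sees that $\ker(\nu|_N) = T_0$, and every element of $T_0$ acts as the identity on $\A \subset \shi$. Hence $hn'$ and $hn = g_0$ coincide on $\A$, so for every $x \in F$ we have $hn' x = g_0 x = x$; and $g := hn' \in G_\shr$ sends $\A$ onto $h\A = A_2$ while fixing $F$ pointwise.

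The only subtle point I anticipate is the careful distinction between the action of $n \in N$ on $\A$ as a subset of $\shi$ (via the quotient construction of the masure) and its induced affine automorphism $\nu(n) \in W$, together with the verification that $\ker(\nu|_N) = T_0$ and that elements of $T_0$ really do fix $\A \subset \shi$ pointwise (not merely up to the equivalence relation defining $\shi$). Once these conventions are in place the argument is essentially forced by (MA II) and the surjectivity of $\nu|_{N_\shr}$, with no further computation required.
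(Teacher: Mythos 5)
Your proof is correct and follows essentially the same route as the paper: reduce to $A_1=\A$, invoke (MA II) in the ambient masure to produce $g_0\in G$ fixing $A_1\cap A_2$ with $g_0.\A=A_2$, observe that $h^{-1}g_0\in N$, and then use $\nu(N_\shr)=W$ from \eqref{eq_Nu(NR)} to replace it by an element of $N_\shr$ having the same action on $\A$. The only difference is that you spell out the verification that the discrepancy lies in $\ker(\nu|_N)=T_0$ and that $T_0$ fixes $\A\subset\shi$ pointwise, which the paper leaves implicit.
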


\begin{NB} In this proposition, we may replace $G_{\shr}$ by any subgroup $G'$ of $G$ containing $G_{\shr}$.
\end{NB}

\begin{proof} We may assume $A_{1}=\A$. Let $g_1\in G_{\shr}$ be such that $A_{2}=g_{1}.{\A}$. By (MA II), there exists $g_{2}\in G$ fixing $\A\cap A_2$ such that $A_{2}=g_{2}.\A$.
Hence $g_{1}^ {-1}g_{2}$ stabilizes $\A$ and thus it belongs to $N$. As $\nu(N)=\nu(N_\shr)=W$, there exists $n_\shr\in N_\shr$ such that $n_\shr^{-1}g_1^{-1} g_2$ fixes $\A$. Then $g:=g_1n_\shr$ satisfies the condition of the proposition.
\end{proof}

Recall that two filters $\Omega_1,\Omega_2$ are said to be $G_\shr$-friendly if there exists $A\in \sha(G_{\shr})$ containing $\Omega_1\cup \Omega_2$. Recall that $C_0^+=germ_0(C\zv_f)$. 
The following result is probably not new in the reductive case, but we could not find a reference in this case.

\begin{coro}\label{cor_Bruhat}(Bruhat decomposition)
\begin{enumerate}

\item Let $x,y\in \shi$ and $F_x,F_y$ be two facets based at $x$ and $y$ respectively. Then if $x,y$   are $G$-friendly, $F_x,F_y$ are $G_\shr$-friendly. This is in particular the case if $x\leq y$.

\item Let $I_\shr$ be the fixator of $C_0^+$ in $G_\shr$. Then \[G_\shr^+=I_\shr W^+ I_\shr.\]
\end{enumerate}
\end{coro}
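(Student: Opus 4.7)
The plan is to handle part (1) by a direct appeal to Theorem~\ref{thm_Bounded_sets_R_friendly}, and then to deduce part (2) by combining part (1) with the known Bruhat decomposition $G^+ = IW^+I$ in $G$ together with Proposition~\ref{2.11}.

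For part (1), I would fix a common $G$-apartment $A$ for $x, y$ in which $F_x, F_y$ are realized, choose bounded generators $S_x \subset A$ of the germ $F_x$ and $S_y \subset A$ of $F_y$ (small enough neighbourhoods of $x, y$ intersected with the relevant vectorial facets), and apply Theorem~\ref{thm_Bounded_sets_R_friendly} to $P := S_x \cup S_y$. The resulting $G_\shr$-apartment contains $P$ and hence contains the filter $F_x \cup F_y$. The ``in particular'' clause is immediate from the definition of $\leq$: it produces $g \in G$ with $g.x, g.y \in \A$, so $x, y \in g^{-1}.\A \in \sha(G)$.

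For part (2), the inclusion $I_\shr W^+ I_\shr \subset G_\shr^+$ is routine bookkeeping: $W^+$ lifts into $N_\shr$ by \eqref{eq_Nu(NR)}, $I_\shr$ fixes $0$, and $W^+ \cdot 0 \subset Y^+ \subset \sht$, so the $G$-invariance of $\leq$ places $I_\shr W^+ I_\shr \cdot 0 \geq 0$. For the reverse inclusion, given $g \in G_\shr^+$ I would first use the known Bruhat decomposition $G^+ = I W^+ I$ to write $g = i_1' w i_2'$ with $i_1', i_2' \in I$ and $w \in W^+$; this places $C_0^+$ and $g.C_0^+$ in the common $G$-apartment $i_1'.\A$, so part (1) produces a common $G_\shr$-apartment. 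Proposition~\ref{2.11} applied to this apartment and to $\A$ gives an element of $G_\shr$ fixing their intersection, which contains $C_0^+$, so this element lies in $I \cap G_\shr = I_\shr$ and exhibits the apartment in the form $i_1.\A$ with $i_1 \in I_\shr$. Consequently $i_1^{-1}g.C_0^+ \subset \A$ is a local chamber whose base point $i_1^{-1}g.0$ lies in $G.0 \cap \A = N.0 = Y$ (obtained by another use of (MA~II) applied to $\A$ and $i_1^{-1}g.\A$), so it equals $\tilde w.C_0^+$ for a unique $\tilde w \in W$. Lifting $\tilde w$ to $n \in N_\shr$ via \eqref{eq_Nu(NR)}, the element $n^{-1}i_1^{-1}g$ fixes $C_0^+$ and belongs to $G_\shr$, hence lies in $I_\shr$. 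Finally, since $i_1$ fixes $0$ and $\leq$ is $G$-invariant, $\tilde w.0 = i_1^{-1}g.0 \geq 0$, so $\tilde w \in W^+$.

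The principal technical points I expect to monitor are (i) the passage from an arbitrary common $G_\shr$-apartment to one of the form $i_1.\A$ with $i_1 \in I_\shr$ (resolved by Proposition~\ref{2.11} and the fact that the transition element automatically fixes $C_0^+$), and (ii) the identification $G.0 \cap \A = Y$ needed to conclude that $i_1^{-1}g.C_0^+$ is a local chamber of type $0$ and is thus indexed by an element of $W$; both fall out cleanly from (MA~II).
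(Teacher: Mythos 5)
Your part~(2) is essentially the paper's proof, phrased a little more explicitly in the step identifying $i_1^{-1}g.C_0^+$ with an element of $W.C_0^+$, and it is correct.

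Part~(1), however, has a genuine gap at the very first step. You write that you would ``fix a common $G$-apartment $A$ for $x, y$ in which $F_x, F_y$ are realized,'' but the hypothesis only gives a $G$-apartment containing the \emph{points} $x$ and $y$. A facet $F_x$ based at $x\in A$ need not lie in $A$: in a masure there are in general many local chambers (hence many facets) at a given point of $A$ that are not contained in $A$. Going from ``$x,y$ are $G$-friendly'' to ``$F_x, F_y$ are $G$-friendly'' is itself a nontrivial statement about masures; the paper gets it by citing \cite[Proposition 5.17]{Heb20}. Without that input, you cannot choose bounded subsets $S_x, S_y$ of $A$ generating $F_x, F_y$, and Theorem~\ref{thm_Bounded_sets_R_friendly} cannot be applied. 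Once the existence of a common $G$-apartment containing $F_x\cup F_y$ is established, your use of Theorem~\ref{thm_Bounded_sets_R_friendly} on a bounded set generating both germs is exactly right. Note also that this gap does not actually undermine your part~(2), because there you \emph{do} exhibit directly a common $G$-apartment $i_1'.\A$ containing both chambers $C_0^+$ and $g.C_0^+$ before invoking Theorem~\ref{thm_Bounded_sets_R_friendly}/Proposition~\ref{2.11}; you would just want to apply those rather than (the not-yet-complete) part~(1).
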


\begin{proof}
By \cite[Proposition 5.17]{Heb20}, there exists $A\in  \sha(G)$ containing $F_x\cup F_y$. Let $P\subset A$ be a bounded element of  $F_x\cup F_y$. Then by Theorem~\ref{thm_Bounded_sets_R_friendly}, there exists $\tilde{A}\in \sha(G_\shr)$ containing $P$. Then $\tilde{A}$ contains $F_x\cup F_y$, which proves (1).

Let $h\in G_\shr^+$. Then $h.0\geq 0$ and thus there exists $A\in \sha(G)$ containing $C_0^+$ and $h.C_0^+$. Let $g\in G$ be such that $A=g.\A$ and $g$ fixes $A\cap \A$. Then by  Theorem~\ref{thm_Bounded_sets_R_friendly} and Proposition~\ref{2.11}, there exists $\tilde{g}\in G_\shr$ such that $\tilde{g}.\A$ contains $C_0^+$ and $h.C_0^+$ and such that $\tilde{g}$ fixes $C_0^+$. We have $h.0\geq 0$ and hence $\tilde{g}^{-1}h.0\geq \tilde{g}^{-1}.0=0$.  Therefore $\tilde{g}^{-1}h.C_0^+\subset \A$ is an element of $W^+.C_0^+$ and hence there exists $n\in N_\shr$ (inducing an element of $W^+$ on $\A$) such that $\tilde{g}^{-1}h.C_0^+=n.C_0^+$. Then $n^{-1}\tilde{g}^{-1}h\in I_\shr$ and thus $h\in \tilde{g}n I_\shr=I_\shr W^+ I_\shr$.  
\end{proof}

Recall the definition of ``narrow'' and of the $f_\Omega$ from \ref{3.1}.
\begin{coro}\label{cor_Iwasawa}(Iwasawa decomposition)
Let $\epsilon\in \{-,+\}$ and $\Omega$ be a   narrow  filter on $\A$. Then we have $G_\shr=U^{\epsilon\epsilon}_\shr. N_\shr.(G_\Omega\cap G_\shr)$. In particular, we have $G_\shr=U^{\epsilon\epsilon}_\shr. N_\shr.I_\shr$.
\end{coro}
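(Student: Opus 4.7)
The plan is to mimic the proof of the Bruhat decomposition in Corollary~\ref{cor_Bruhat}, but replacing the second local chamber by the sector germ $\g Q_{\epsilon\infty}$. Given $g \in G_\shr$, I would first use axiom (MA III) to find an apartment $A \in \sha(G)$ containing both $\g Q_{\epsilon\infty}$ and the narrow filter $g.\Omega$, and then pick a bounded representative $S \in g.\Omega$ with $S \subset A$ (available for the narrow filters with good fixator, which is the relevant class here, e.g.\ $C_0^+$). Theorem~\ref{thm_Bounded_sets_R_friendly} applied to $A$, to $P = S$, and to the sign $\epsilon$ then produces $u \in U^{\epsilon\epsilon}_\shr$ with $S \subset u.\A$. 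Since $U^{\epsilon\epsilon}_\shr \subset U^\epsilon$ fixes $\g Q_{\epsilon\infty}$ (cf.\ \ref{3.1} and the fact recalled there that $G_{\g Q_{+\infty}} = T_0 U^+$), the apartment $u.\A$ still contains $\g Q_{\epsilon\infty}$, and the filter $u^{-1}g.\Omega$ has its trace in $\A$.

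For the algebraic step, I would invoke the property recalled in~\ref{n1.3.1} that, because $\Omega$ has good fixator, $\{h \in G \mid h.\Omega \subset \A\} = N \cdot G_\Omega$. Thus $u^{-1}g = n h$ for some $n \in N$ and $h \in G_\Omega$. Using~\eqref{eq_Nu(NR)} I choose $n' \in N_\shr$ with $\nu(n') = \nu(n)$; then $t := (n')^{-1} n$ lies in $\ker \nu = T_0$, which acts trivially on $\A$ and hence is contained in $G_\Omega$. Setting $h' := t h$, I get $u^{-1} g = n' h'$ with $h' \in G_\Omega$; moreover $h' = (n')^{-1} u^{-1} g \in G_\shr$ since $u, g, n' \in G_\shr$. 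This yields $g = u \cdot n' \cdot h'$ with $u \in U^{\epsilon\epsilon}_\shr$, $n' \in N_\shr$, and $h' \in G_\Omega \cap G_\shr$, as required.

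The particular case is then immediate by taking $\Omega = C_0^+$, which is narrow with good fixator and whose fixator in $G_\shr$ equals $I_\shr$ by definition. The main subtlety of the argument lies in the geometric first step: one must ensure that $g.\Omega$ admits a bounded representative sitting inside an apartment that also contains $\g Q_{\epsilon\infty}$, so that Theorem~\ref{thm_Bounded_sets_R_friendly} (stated for bounded sets $P \subset A$) applies. For the standard narrow filters with good fixators listed in~\ref{3.1} (local facets, preordered segment germs, etc.) this is immediate; for a completely general narrow filter the filter-theoretic bookkeeping would need to be spelled out more carefully, but this is not needed for the stated applications.
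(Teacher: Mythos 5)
Your proof follows essentially the same strategy as the paper's, and the core geometric-algebraic argument (find a $G$-apartment containing $\g Q_{\epsilon\infty}$ and $g.\Omega$, apply Theorem~\ref{thm_Bounded_sets_R_friendly} to get $u\in U_\shr^{\epsilon\epsilon}$, then use $\nu(N_\shr)=W$ to extract the $N_\shr$-factor) is correct and complete for bounded $\Omega$. Your explicit handling of the passage from $n\in N\cap G_\shr$ to $n'\in N_\shr$ times a $T_0$-element is actually a bit more careful than the paper's terse ``thus $n\in N_\shr$.''

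The genuine gap is the one you wave off at the end. The corollary is stated for an arbitrary narrow filter, and narrow does \emph{not} imply bounded: any translate of $\A_{in}=\bigcap_i\ker\alpha_i$ is narrow, and $\A_{in}$ can be nontrivial (it is nontrivial whenever the rank of $\A$ exceeds $|I|$). For such $\Omega$, the filter $g.\Omega$ has no bounded representative, so ``pick a bounded representative $S\in g.\Omega$'' fails and Theorem~\ref{thm_Bounded_sets_R_friendly} (which requires $P$ bounded) does not apply. The paper's proof opens precisely with the step you elide: it observes that narrowness forces $\Omega$ to be contained in $\bigcap_i D(\alpha_i,f_\Omega(\alpha_i))\cap D(-\alpha_i,-f_\Omega(\alpha_i)+1)$, hence the image of $\Omega$ in $\A/\A_{in}$ is bounded, so $\Omega\subset\Omega'+\A_{in}$ for some bounded filter $\Omega'$; then Lemma~\ref{lem_Fixator_Ain} gives $G_\Omega=G_{\Omega'}$, reducing to the bounded case. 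Without invoking that lemma (or an equivalent argument that the $\A_{in}$-directions contribute nothing to the fixator), the proposal proves the decomposition only when $\Omega$ is bounded, which suffices for $I_\shr$ but not for the corollary as stated.
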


\begin{proof}
By definition of the $f_\Omega$, we have $\Omega\subset D(\alpha,f_\Omega(\alpha))$, for all $\alpha\in \Phi$. In particular, $\Omega\subset \bigcap_{i\in I} D(\alpha_i,f_\Omega(\alpha_i))\cap D(-\alpha_i,f_\Omega(-\alpha_i))$, for all $i\in I$. As $\Omega$ is narrow, we deduce that $\Omega\subset \bigcap_{i\in I} D(\alpha_i,f_\Omega(\alpha_i))\cap D(-\alpha_i,-f_\Omega(\alpha_i)+1).$ Therefore the image of $\Omega$ in $\A/\A_{in}$ is bounded, where $\A_{in}=\bigcap_{i\in I} \ker(\alpha_i)$. Hence there exists a bounded filter $\Omega'\subset \A$ such that $\Omega\subset \Omega'+\A_{in}$. By Lemma~\ref{lem_Fixator_Ain}, we have $G_{\Omega}=G_{\Omega'}$ and  thus we can assume that $\Omega$ is bounded. 

Let $g\in G_\shr$. Then by the Iwasawa decomposition (\cite[Proposition 4.7]{R16}), there exists $A\in \sha(G)$ containing $\g Q_{\epsilon\infty}$ and $g.\Omega$. 
By Theorem~\ref{thm_Bounded_sets_R_friendly}, there exists $u\in U_{\shr}^{\epsilon\epsilon}$ such that $u.\A$ contains $g.\Omega$. Then $u^{-1}g.\Omega\subset \A$. 

Let $h\in G_\shr$ be such that $hu^{-1}g.\A=\A$ and $h$ fixes $\A\cap u^{-1}g.\A$, see Proposition \ref{2.11}. Then $hu^{-1}g.\A=\A$ and thus $n:=hu^{-1}g\in N_ \shr$. We have $n|_\Omega=u^{-1}g|_\Omega$, so $n^{-1}u^{-1} g\in G_\shr\cap G_\Omega$.
\end{proof}

\begin{rema}\label{rk_Iwa_Gpol}
Let $G'$ be a subgroup of $G$ containing $G_\shr$ (or more generally a subgroup of $G$ containing $U_{\shr}^{\epsilon\epsilon}$ and $N_\shr$, for some $\epsilon\in \{-,+\}$). Then the proof of Corollary~\ref{cor_Iwasawa} actually shows that $G'$ admits an Iwasawa decomposition: \[G'=U_{\shr}^{\epsilon\epsilon}.N_\shr.(G_\Omega\cap G'),\text{ for }\epsilon\in \{-,+\}.\]  

\par If we write an element of $G'$, $g=unh$, with $u\in U^ {\qe\qe}_{\shr}$ (or $u\in U^\qe$), $n\in N_{\shr}$ (or $n\in N$) and $h\in G_{\QO}$, then we have clearly that $\qr_{\qe\infty} (g.\QO)=n.\QO$.  
So the class of $n$ in $W=N_{\shr}/H=N/\g T(\shr)$ is well determined by $g$, up to the right multiplication by the fixator in $W$ of $\QO$.

\end{rema}

\subsection{The twin building at infinity, sector germs and $G_{\shr}-$apartments}\label{n2.14}

\par {\bf(1)} The Kac-Moody group $G=\g G(\shk)$ acts on a twin building $\zv\!\!\SHI$, see \eg \cite{Re02}.
It is the disjoint union of two buildings, the positive one $\zv\!\!\SHI^+$ and the negative one $\zv\!\!\SHI^-$.
Actually $\zv\!\!\SHI^\pm$ is covered by a family $\zv\!\!\sha^\pm(G)$ of vectorial $G-$apartments permuted transitively by $G$, more precisely in bijection with $G/N$, hence also in bijection with the set $\sha(G)$ of $G-$apartments in the masure $\SHI$.

\par The canonical apartment of sign $\pm$ is $\zv\!\A^\pm=\pm\sht\subset\A$, with its vectorial facets of sign $\pm$ (as defined in \ref{subRootGenSyst}).
The stabilizer (and fixator) of the canonical vectorial chamber $\pm C\zv_{f}$ is the Borel subgroup $B^\pm=TU^\pm$.
As $G$ acts transitively on the vectorial chambers of sign $\pm$, the set of these chambers is $G/B^\pm$.

\par One writes $\zv\!\!\sha^\pm(G_{\shr})=G_{\shr}.{\zv\!\A^\pm}$ the set of vectorial $G_{\shr}-$apartments of sign $\pm$.

\par {\bf(2)} On another hand, $G$ permutes transitively the sector germs of sign $\pm$ in $\SHI$ and the fixator of $\g Q_{\pm\infty}=germ_{\infty}(\pm C\zv_{f})$ is  $G_{\g Q_{\pm\infty}}=T_{0}U^\pm$ (see \ref{3.1}.3).
Clearly $B^\pm=TU^\pm$ stabilizes $\g Q_{\pm\infty}$, and the stabilizer is actually reduced to $B^\pm$: 
as $(B^\pm,N)$ is a BN pair in $G$, a subgroup of $G$ strictly greater than $B^\pm$ should contain a simple reflection in $W\zv$, which does not stabilize $\g Q_{\pm\infty}$.

\par So we get bijections $\set{\mathrm{sector\ germs\ of\ sign\ }\pm} \leftrightarrow G/B^\pm \leftrightarrow \set{\mathrm{vectorial\ chambers\ of\ sign\ }\pm}$, $g.\g Q_{\pm\infty}\leftrightarrow \ov g\in G/B^\pm \leftrightarrow g.(\pm C\zv_{f})$.
 These bijections are compatible with the above bijections between apartments:
$g.\g Q_{\pm\infty} \subset h.\A \iff h^ {-1}g\in N.G_{\g Q_{\pm\infty}}=N.U^\pm=N.B^\pm \iff g.(\pm C\zv_{f})\subset h.{\zv\!\A^\pm}$, for any $g,h\in G$.

\begin{lemm}\label{n2.15} We assume that $\shr$ is principal and that $\shk$ is its ring of fractions.
Then any sector germ in $\SHI$ (\resp any vectorial chamber in $\zv\!\!\SHI^\pm$) is contained in a $G_{\shr}-$apartment (\resp a vectorial $G_{\shr}-$apartment).
\end{lemm}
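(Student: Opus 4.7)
By the $G$-equivariant bijections recalled in \ref{n2.14}(2) between sector germs of sign $\epsilon$ in $\SHI$ and vectorial chambers of sign $\epsilon$ in $\zv\!\!\SHI^\epsilon$, bijections compatible with the notions of apartment on both sides, the two assertions of the lemma are equivalent. I therefore only argue the sector germ case.

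Fix a sector germ $\g Q$ of sign $\epsilon$ and represent it as the germ at infinity of a sector $\g q = x + C\zv$ lying in some apartment, with $x$ a point of $\SHI$ and $C\zv \subset \epsilon\,\sht$ a vectorial chamber of sign $\epsilon$. Associated to this data is the local chamber $C := germ_{x}(\g q)$ at $x$ of direction $C\zv$.

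The plan is then to apply Corollary~\ref{cor_Masure_GR}(2) directly to $C$: it produces an element $u \in U^{\epsilon\epsilon}_{\shr} \subset G_{\shr}$ such that $C \subset u.\A$. The crucial geometric observation is that $u.\A$ is, like $\A$, an affine space of the same dimension as $\A$: the inclusion $C = germ_{x}(x+C\zv) \subset u.\A$ therefore forces $u.\A$ to contain both the base point $x$ and the full direction $C\zv$ at $x$; by affineness this already implies that the entire sector $x + C\zv$ lies inside $u.\A$, and consequently
\[
\g Q \;=\; germ_{\infty}(x+C\zv) \;\subset\; u.\A .
\]
This exhibits $\g Q$ as a sector germ of the $G_{\shr}$-apartment $u.\A$, which is the desired statement; the vectorial chamber assertion then transfers through the bijection of \ref{n2.14}(2), $u.\A$ being sent to the vectorial $G_{\shr}$-apartment $u.\zv\!\A^\epsilon$. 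No step appears particularly delicate: the whole argument reduces to choosing the right local chamber to which to apply Corollary~\ref{cor_Masure_GR}(2), after observing that a sector germ is canonically encoded by any local chamber based at a representative of it and sharing its direction.
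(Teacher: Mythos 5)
Your reduction via the bijections of \ref{n2.14}(2) mirrors the paper's (which goes in the opposite direction, from sector germs to vectorial chambers), but the heart of your argument is flawed. The step claiming that $C = germ_x(x+C\zv)\subset u.\A$ forces $x+C\zv\subset u.\A$ ``by affineness'' is false. The apartment $u.\A$ is intrinsically an affine space, but it is not an affine subspace of the apartment $A$ containing $x+C\zv$; by (MA~II) the intersection $u.\A\cap A$ is merely a finite intersection of half-apartments of $A$, which may well be bounded. Containing the germ $C$ at $x$ only means containing some neighbourhood of $x$ inside $x+C\zv$, not the whole sector -- $u.\A$ can diverge from $A$ as soon as one leaves that neighbourhood (picture two bi-infinite geodesics in a tree sharing only the first edge of a ray: both contain the local chamber at the vertex, but not the same end).

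A concrete symptom of the gap: your argument uses only the density of $\shr$ in $\shk$ (through Corollary~\ref{cor_Masure_GR}(2), hence Theorem~\ref{thm_Bounded_sets_R_friendly}) and never invokes the hypotheses that $\shr$ is principal with $\mathrm{Frac}(\shr)=\shk$. Yet the N.B.\ after the lemma notes that $\mathrm{Frac}(\shr)=\shk$ is necessary: taking $\shr=\shk$ (a PID, dense in $\widehat\shk$) acting on $\widehat\SHI$, some sector germs of $\widehat\SHI$ lie in no $G=\g G(\shk)$-apartment, while your argument would ``prove'' the contrary. The paper's proof instead works in the vectorial twin building $\zv\!\!\SHI^\pm$, by induction on gallery distance to a vectorial $G_{\shr}$-apartment; the crux is the transitivity of $\mathrm{SL}_2(\shr)$ on $\PP_1(\shk)$ for $\shr$ a principal ring with fraction field $\shk$, which is precisely where the hypotheses enter and which your local-chamber shortcut cannot reproduce.
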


\begin{NB}\begin{enumerate}
\item The hypothesis that $\shk$ is the field of fractions of $\shr$ is clearly necessary, as we know that some sector germs in the masure $\widehat\SHI$ of $\g G$ over the completion $\widehat\shk$ of $\shk$ are not in a $G-$apartment.

\item Actually for this result, there is no need to assume that $\shr$ is dense in $\shk$.
\end{enumerate} 
\end{NB}

\begin{proof} By \S \ref{n2.14}.2, in particular the last equivalences, we may concentrate on the case of 
 $\zv\!\!\SHI^\pm$.
We use induction on the distance of a vectorial chamber to a vectorial $G_{\shr}-$apartment.
Using galleries, we are reduced to prove that, if $C_{1}, C_{2}$ are adjacent chambers in $\zv\!\!\SHI^\pm$ and $C_{1}$ is in a vectorial $G_{\shr}-$apartment, then so is $C_{2}$.
The set of chambers containing the common panel of $C_{1}$ and $C_{2}$ is isomorphic to the projective line $\PP_{1}(\shk)$ and the induced action of the fixator in $G$ (\resp $G_{\shr}$) of this panel on $\PP_{1}(\shk)$ is induced by an action of $\mathrm{SL}_{2}(\shk)\subset G$ (\resp $\mathrm{SL}_{2}(\shr)\subset G_{\shr}$).
But, as $\shr$ is a principal ideal domain with field of fractions $\shk$, we know that $\mathrm{SL}_{2}(\shr)$ acts transitively on $\PP_{1}(\shk)$ (see \eg \cite[1.17]{BeMR03}  or \cite[8.124 page 265]{Mar18}). Our result follows.
\end{proof}

\begin{prop}\label{n2.16} We assume that $\shr$ is a principal ideal domain, that $\shk$ is its field of fractions (and that $\shr$ is dense in $\shk$ for the valuation $\omega$). 
We assume moreover that   $\shr$ satisfies  assumption \eqref{eq_Assumption_R}.
If a sector germ $\g Q\subset \SHI$ and a bounded set $P\subset \SHI$ are $G-$friendly (\ie contained in a same $G-$apartment), then they are also $G_{\shr}-$friendly (\ie contained in a same $G_{\shr}-$apartment)
\end{prop}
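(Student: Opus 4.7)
My plan is to exploit the two reductions that the hypotheses allow: first bring the sector germ $\g Q$ into the standard apartment $\A$, then move it to the canonical germ $\g Q_{\epsilon\infty}$, and finally invoke Theorem~\ref{thm_Bounded_sets_R_friendly}, which already contains the core of the argument.

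The first reduction uses Lemma~\ref{n2.15}: since $\shr$ is a principal ideal domain with field of fractions $\shk$, there exists $g_{1}\in G_{\shr}$ with $\g Q\subset g_{1}.\A$. Translating the pair $(\g Q,P)$ by $g_{1}^{-1}$, I may assume $\g Q\subset \A$; note that $G$-friendliness is preserved (the containing $G$-apartment is simply translated by $g_{1}^{-1}$), and a $G_{\shr}$-apartment produced afterwards can be pushed forward by $g_{1}$ to cover the original data. For the second reduction, let $\epsilon\in\{-,+\}$ be the sign of $\g Q$. Every sector germ of sign $\epsilon$ inside $\A$ is a $W\zv$-translate of $\g Q_{\epsilon\infty}$, and by \eqref{eq_Nu(NR)} one has $\nu(N_{\shr})=W\zv\ltimes Y\supset W\zv$, so there is $n\in N_{\shr}$ with $n.\g Q_{\epsilon\infty}=\g Q$. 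Translating once more by $n^{-1}$, I may assume $\g Q=\g Q_{\epsilon\infty}$.

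At this point the twice-translated pair $(\g Q_{\epsilon\infty},P')$ still lies in a common $G$-apartment by $G$-friendliness. I then apply the stronger half of Theorem~\ref{thm_Bounded_sets_R_friendly}: it provides $u\in U^{\epsilon\epsilon}_{\shr}$ with $P'\subset u.\A$. Since $U^{\epsilon\epsilon}_{\shr}\subset U^{\epsilon}\subset G_{\g Q_{\epsilon\infty}}$ (see \S\ref{3.1} and \S\ref{n2.14}), the apartment $u.\A$ also contains $\g Q_{\epsilon\infty}$. Conjugating back, $g_{1}nu.\A\in\sha(G_{\shr})$ contains $\g Q\cup P$.

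There is essentially no obstacle here once the earlier results are in hand: Lemma~\ref{n2.15} exactly supplies the first reduction (and is where the principal-ideal-domain hypothesis is really used, via transitivity of $\mathrm{SL}_{2}(\shr)$ on $\PP_{1}(\shk)$), the surjection $\nu(N_{\shr})\twoheadrightarrow W$ supplies the second, and the combinatorial work on commutators in $\g U^{ma+}$ and the inductive conjugation argument behind Theorem~\ref{thm_Bounded_sets_R_friendly} take care of the rest. The one point deserving care is making sure the assumption~\eqref{eq_Assumption_R} is in force for step two (to guarantee $\nu(N_{\shr})=W$), which is indeed part of the hypotheses of the proposition.
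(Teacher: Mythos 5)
Your proof is correct and follows exactly the same route as the paper's: reduce to $\g Q\subset\A$ via Lemma~\ref{n2.15}, then to $\g Q=\g Q_{\epsilon\infty}$ using $\nu(N_\shr)=W$, and conclude from the $U^{\epsilon\epsilon}_\shr$-clause of Theorem~\ref{thm_Bounded_sets_R_friendly} together with the fact that $U^{\epsilon\epsilon}_\shr$ fixes $\g Q_{\epsilon\infty}$. You have merely spelled out the ``easy consequence'' step that the paper leaves to the reader.
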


\begin{remas*} (a) A sector germ and a bounded subset of an apartment are not always contained in a same apartment (even for the complete system of apartments of an affine building).
Think to the case of a tree.

\par (b) This proposition generalizes Theorem \ref{thm_Bounded_sets_R_friendly} (for some $\shr$) in a framework similar to Iwasawa decomposition.
But it is actually a simple corollary of this theorem.

\par (c) As a particular case of this proposition, we have that any local chamber (or facet) and any sector germ in $\shi$ are contained in a $G_\shr$-apartment.
\end{remas*}

\begin{proof} By Lemma \ref{n2.15}, one may suppose (up to the action of $G_{\shr}$) that $\g Q\subset \A$ and even $\g Q=\g Q_{\pm\infty}$ (using the action of $N_{\shr}$).
Then the proposition is an easy consequence of Theorem \ref{thm_Bounded_sets_R_friendly}.
\end{proof}

\section{Study of the action of $G_{twin}$ on the twin masure}\label{s3}

 Let $\k$\index{k@$\k$} be any field, $\shk=\k(\qp)$\index{K@$\shk$} and $\sho=\k[\qp,\qp^{-1}]$\index{O@$\sho$}, where $\qp$\index{p@$\qp$} is an indeterminate. In this section, we study the groups $G=\g G(\shk)$, $G_{twin}=G_\sho$ (see \ref{ss_m_KM_grp} for the definitions of $G$ and $G_\sho$) and an other group $G_{pol}$ lying between $G$ and $G_{twin}$ (see \ref{1.2} for the definition). Let $\omega_\oplus:\k(\qp)\twoheadrightarrow \Z\cup\{\infty\}$ (resp. $\omega_{\ominus}:\k(\qp)\twoheadrightarrow \Z\cup\set{\infty}$) be the valuation such that $\omega_\oplus(\qp)=1$ (resp. $\omega_{\ominus}(\qp^{-1})=1$).  Let $\shi_\oplus$ (resp. $\shi_\ominus$) be the masure associated with $\left(\mathfrak{G},\k(\qp),\omega_\oplus\right)$ (resp. $\left(\mathfrak{G},\k(\qp^{-1}),\omega_\ominus \right)$). 
 We study the action of these three groups on the twin masure $\shi_{\oplus}\times \shi_{\ominus}$. 
 
 \medskip
 
In \S \ref{ss_Gtwin_Gpol} we introduce the framework.
 
 In \S \ref{ss_Existence_iso_fixing}, we prove the existence, for any two apartments $A_1,A_2$ of $\shi_\oplus\times \shi_{\ominus}$, of an element $g\in G_{twin}$ (or $G_{pol}$) such that $g.A_1=A_2$ and $g$ fixes $A_1\cap A_2$.
 
In \S \ref{2.17}, we study the existence of an apartment of $\shi_\oplus\times \shi_{\ominus}$ containing $E_\oplus\cup E_{\ominus}$,  for certain pairs of filters $E_\oplus \subset \shi_\oplus$, $E_{\ominus}\subset \shi_{\ominus}$. Equivalently, we are interested in certain decompositions of $G_{twin}$ (or $G_{pol}$).

\subsection{The groups $G_{twin}$ and $G_{pol}$}\label{ss_Gtwin_Gpol}

\subsubsection{The field}\label{1.1}  

Let $\k$ be any field (\eg a finite field) and $\qp$ be  an indeterminate. 
The field of rational functions over $\k$ is written $\shk=\k(\qp)$.  Then $\shk$ is a global field when $\k$ is finite and is a function field over $\k$ in any case. We refer to \cite[1]{Sti09} for more details on this subject.

\par A valuation ring on $\shk/\k$ is a ring $\sho'\subset \shk$ such  that $\k\subsetneq \sho'\subsetneq \shk$ and such that for all $z\in \sho'$, we have either $z\in \sho'$ or $z^{-1}\in \sho'$. Such a ring is local (i.e it has a unique maximal ideal $\g v_{\sho'}$). A set of the form $\g v=\g v_{\sho'}$, for a valuation ring $\sho'$, is called a place of $\shk$ (over $\k$). Then $\sho'$ is uniquely determined by $\g v$.  

If $P$ is a monic irreducible polynomial of $\k[\qp]$, then there exists a unique valuation $\omega_P:\k(\qp)\twoheadrightarrow \Z\cup\{\infty\}$ such that  $\omega_P(\k^*P)=\{1\}$. Then $\g v_P:=\{z\in \k(\qp)\mid\omega_P(z)>0\}$ is a place of $\shk$. We write $\omega_{\oplus}$\index{o@$\omega_\oplus$, $\omega_\ominus$} instead of $\omega_\qp$. Let $\omega_{\ominus}:\k(\qp)\twoheadrightarrow \Z\cup\{\infty\}$ be the valuation such that   $\omega_{\ominus}(\k^* \qp^{-1})=\{1\}$. Then $\omega_{\ominus}$ defines a place of $\shk$. We denote by $\oplus$ (resp. $\ominus$) the place associated with $\omega_{\oplus}$ (resp. $\omega_{\ominus}$). By \cite[Theorem 1.1.2]{Sti09}, every place of $\shk$ is either equal to $\ominus$ or to $\g v_P$ for some monic irreducible element $P$ of $\k[\qp]$. Note that $\ominus$ is often called the place at infinity of $\shk$, which explains why we sometimes index the objects related to $\ominus$ with an ``$\infty$''.  If $\g v$ is a place of $\shk$, we denote by $\omega_\g v$ (resp. $\sho_{\g v}=\set{x\in\shk \mid \qo_{\g v}(x)\geq0}{=\shk_{\qo_{\g v}\geq0}}$\index{O@$\sho_{\g v}$, $\sho_{\oplus}$, $\widehat{\sho}_{\g v}$}) the associated valuation (resp. valuation ring). We have  $\sho_{\oplus}=\k[\qp][(1+\qp\k[\qp])^ {-1}]$  and $\sho_{\infty}=\sho_{\ominus}=\k[\qp^ {-1}][(1+\qp^ {-1}\k[\qp^ {-1}])^ {-1}]$.

\par We also set  $\sho=\k[\qp,\qp^ {-1}]=\bigcap_{\g v\neq0,\infty} \sho_{\g v}$.

One may write $\widehat\shk_{\g v}$\index{K@$\widehat{\shk}_{\g v}$} the completion of $\shk$ with respect to $\qo_{\g v}$ and $\widehat\sho_{\g v}$\index{O@$\sho_{\g v}$, $\sho_{\oplus}$, $\widehat{\sho}_{\g v}$} its ring of integers; $\widehat\shk_{\g v}$ is a ``local'' field (a true local field if $\k$ is finite).
In particular $\widehat\shk_{\oplus}=\k((\qp))$ (\resp $\widehat\shk_{\infty}=\widehat\shk_{\ominus}=\k((\qp^ {-1}))$) and $\widehat\sho_{\oplus}=\k[[\qp]]$ (\resp $\widehat\sho_{\infty}=\widehat\sho_{\ominus}=\k[[\qp^ {-1}]]$).

\begin{rema}
Our main motivation for this work is the definition of Kazhdan-Lusztig polynomials in the Kac-Moody setting. For this, we could restrict ourselves to the case where $\k$ is finite. This assumption is important when we count the number of lifts of a path (to obtain finiteness results) but for many results, it would not simplify our proofs to make this assumption. This is why for most results we make no assumption on  $\k$. 
\end{rema}


\subsubsection{The Kac-Moody group, masures  and the groups $G_{twin}$ and $G_{pol}$}\label{1.2} 

 \par {\bf(1) The masures} 
 
 \par $\bullet$ Let  $\mathcal{S}=(A,X,Y,(\alpha_i)_{i\in I},(\alpha_i^\vee)_{i\in I})$ be a root generating system (as defined in \ref{ss_Root_generating}) and $\g G=\g G_{\mathcal{S}}$ be the associated Kac-Moody group described in \ref{ss_m_KM_grp}. We set $G=\g G(\shk)$.

\par $\bullet$ Let $\g v$ be a place of $\shk$. We denote by $\widehat{\shi_\g v}$ the masure associated with $(\g G,\widehat{\shk}_\g v, \omega_{\g v})$ and by $\shi_{\g v}$\index{I@$\shi_{\g v}$ $\widehat{\shi}_{\g v}$} the masure associated with $(\g G,\shk,\omega_{\g v})$ (see \ref{sub_Masure}). Let $G_\g v= \g G(\widehat{\shk}_{\g v})$\index{G@$G_{\g v}$}. By \cite[5.8 3]{R17}, the  inclusion   $G\times \A_{\g v}\hookrightarrow G_{\g v}\times \A_{\g v}$  induces a $G$-equivariant inclusion $\shi_\g v\rightarrow \widehat{\shi}_{\g v}$ and we identify $\shi_\g v$ with its image in $\widehat{\shi}_{\g v}$.
 
 \par$\bullet$ The apartments of $\widehat\SHI_{\g v}$ (\resp $\SHI_{\g v}$) are the subsets $g.\A_{\g v}\subset\widehat\SHI_{\g v}$, for $g\in G_{\g v}$ (\resp $g\in G$).
 One writes $\sha_{\g v}(G_\g v)$\index{A@$\sha_{\g v}(G_{\g v}), \sha_{\g v}(G)$} (\resp $\sha_{\g v}(G)$) the set of these apartments.
 They are associated respectively to the set of maximal split tori of $\g G$ over  $\widehat\shk_{\g v}$ and $\shk$.
By Corollary~\ref{cor_Masure_GR} $\widehat\SHI_{\g v}$ is the union of all apartments in $\sha_{\g v}(G)$ (hence also in $\sha_{\g v}(G_\g v)$). Otherwise said, $\shi_{\g v}=\widehat{\shi_{\g v}}$ as a set.

 \par$\bullet$ The group $G_{\g v}=\g G(\widehat\shk_{\g v})$ acts on $\widehat\SHI_{\g v}$.
 The stabilizer of $\A_{\g v}$ in $G_{\g v}$ (\resp $G$) is $\g N(\widehat\shk_{\g v})$ (\resp $N=\g N(\shk)$).
  
 \par$\bullet$ The group $\g T(\widehat\shk_{\g v})$ acts by translations: to $t\in \g T(\widehat\shk_{\g v})$ is associated the translation of vector $v$, where $v\in \A$ is determined by $\chi(v)=-\qo_{\g v}(\chi(t))$, for any $\chi\in X$ (hence $\chi$ in the dual of $\A$).
 The group of vectors of all these translations is $Y$.
   
 \par$\bullet$ The action of $n\in\g N(\widehat\shk_{\g v})$ is affine with associated linear map the action of the class $\ov n$ of $n$ in the Weyl group $W\zv=\g N(\widehat\shk_{\g v})/\g T(\widehat\shk_{\g v})=\g N(\shk)/\g T(\shk)=\g N(\k)/\g T(\k)$ (this group acts $\Z-$linearly on $Y$, hence $\R-$linearly on $\A$).

\par$\bullet$ One may choose an origin $0_{\g v}$ of $\A_{\g v}$ in such a way that $\g N(\k)$ fixes $0_{\g v}$.
Then the image $W_{\g v}$ of $\g N(\widehat\shk_{\g v})$ or $\g N(\shk)$ in the affine group of $\A_{\g v}$ is identified with $W\zv\ltimes Y$.

\par $\bullet$ If $\g v\in \{\ominus,\oplus\}$, we set $C\zv_{f,\g v}=\{x\in \A_{\g v}\mid\alpha_i(x)>0, \ \forall i\in I\}$\index{C@$C\zv_{f,\g v}$}. 
We set $C_\oplus=germ_{0_{\oplus}}(C\zv_{f,\oplus})\subset \A_{\oplus}$\index{C@$C_{\oplus}, C_{\ominus}, C_{\infty}$} and  $C_\infty=C_{\ominus}=germ_{0_{\ominus}}(-C\zv_{f,\ominus})\subset \A_{\ominus}$. 
These are the fundamental local chambers of $\shi_{\oplus}$ and $\shi_{\ominus}$.

 \medskip 
 \par {\bf(2) The twin group} We want to study the group of $\sho$-points of $\g G$ (where $\sho=\k[\qp,\qp^{-1}]$). As mentioned before, this notion is not well defined. We studied the group $G_\sho=\langle \g N(\sho),(\g U_{\qa}(\sho))_{\qa\in\QF}\rangle$ in section~\ref{s2}. 
 We now denote this group by $G_{twin}$. As suggested by Muthiah, it   seems also natural  to study the group $G_{pol}$, more ``adelic'' in nature,    defined below. We will use the fact that $G_{twin}$ is a subgroup of $G_{pol}$ in our study of $G_{twin}$\index{G@$G_{twin},G_{pol}$}. 

 The group $G_{pol}$ is the subgroup of $G$ consisting of the elements $g\in G$ such that for every place $\g v$ of $\shk$ different from $\oplus$ and $\ominus$, we have $g\in \g G(\widehat{\sho}_\g v)$.  \index{G@$G_{twin},G_{pol}$}

As $\widehat\sho_{\g v}$ is not a field, there are several possible definitions for $\mathfrak{G}(\widehat\sho_{\g v})$. We define it as the fixator of the point $0_{\g v}$ for the action of $G$ on the masure $\shi_{\g v}=\shi(\mathfrak{G},\shk,\qo_{\g v})$.
By \cite[Proposition 3.1]{hebert2023topologies}, we actually have $\mathfrak{G}(\widehat\sho_{\g v})=\mathfrak{G}^{min}(\widehat\sho_{\g v})$, where $\mathfrak{G}^{min}$ is the minimal group defined by Marquis. 
The group $G_{pol}$ contains $G_{twin}$.

 \medskip
 \par Actually $N_{twin}=\g N(\sho), T_{twin}=\g T(\sho)$ and $U_{\qa,twin}=\g U_{\qa}(\sho)$ are well defined as $\g N, \g T, \g U_{\qa}$ are algebraic groups over $\k$. We have $N_{twin}=N_\sho$, for the notation of \ref{ss_N}.
 
 \par We denote by $I_{\oplus}$\index{I@$I_{\oplus},I_{\ominus},$} (resp. $I_{\ominus}$) the fixator of $C_{\oplus}$ (resp. $C_\ominus$) in $G$.
  We denote by $I_{twin}$\index{I@$I_{twin},I_{pol},I_{\infty}$} (resp. $I_\infty$) the fixator of $C_\oplus$ (\resp $C_{\ominus}$) in  $G_{twin}$ and by $I_{pol}$ the fixator of $C_\oplus$ in $G_{pol}$.

\begin{rema*} When $\g G$ is a split reductive group over $\k$, it is a well defined functor over the $\k-$algebras  and we saw in \ref{ss_m_KM_grp} that $G_{\sho}$ (as defined in 1.2.1) is equal to $\g G(\sho)$.
 So $G_{twin}=\g G(\sho)=\g G(\cap_{\g v\neq 0,\infty} \sho_{\g v})=\cap_{\g v\neq 0,\infty} \g G(\sho_{\g v})$.
 And $\g G(\sho_{\g v})$ is the fixator in G of $0_{\g v}\in\SHI_{\g v}$, by \cite[6.13.b, 7.1 and 7.4.4]{BrT72}.
 So $G_{twin}=G_{pol}$ in this reductive case.
 \end{rema*}

 \par One may ask wether $G_{twin}=G_{pol}$ in general. The answer is unknown.
 For affine $\mathrm{SL}_{n}$ and $n=2$, the answer is unknown, but for $n\geq3$ there is equality, see Remark {\ref{6.4c}}.

\subsubsection{Affine roots} \label{1.3} 

\par Following \cite[Appendix B]{BrKP16} there is a system of affine roots:

\par\qquad $\QF_{a}=\QF\times\Z=\set{\un\qa=\qa+r\xi \mid \qa\in \QF, r\in\Z}$, where $\qx$ is a symbol (see also below).

\par $\QF^+_{a+}=\set{\qa+r\xi \mid \qa\in \QF^+, r\geq0}$ \quad ; \quad $\QF^+_{a-}=\set{\qa+r\xi \mid \qa\in \QF^+, r<0}$

\par $\QF^-_{a+}=\set{\qa+r\xi \mid \qa\in \QF^-, r>0}$ \quad ; \quad $\QF^-_{a-}=\set{\qa+r\xi \mid \qa\in \QF^-, r\leq0}$

\par\qquad $\QF_{a+}=\QF^+_{a+}\cup \QF^-_{a+}$ \quad and \quad $\QF_{a-}=-\QF_{a+}=\QF^-_{a-}\cup \QF^+_{a-}$

\par So $\QF_{a+}$ may be considered as a system of positive roots in $\QF_{a}$; but there is no associated basis (as $\QF^-$ has no smallest root).

\bigskip
\par One may consider the vector space $\A_{twin}=\A\oplus\R$.\index{a@$\A_{twin}$}
So $\QF_{a}$ is a set of linear forms on $\A_{twin}$: $\qa\in \QF\subset X$ is a linear form on $\A$ and we set $\qa(\R)=\set0$; $\qx(\A)=\set0$ and $\qx\vert_{\R}=Id\vert_{\R}$.
$\A_{twin}$ contains three interesting subspaces $\A_{\oplus}=\A\oplus\set{1}$, $\A_{\ominus}=\A\oplus\set{-1}$ (affine subspaces) and $\zv\A=\A\oplus\set0$.

\par If  $\g v=\oplus$ or $\g v=\ominus$, $\A_{\g v}$ is the (canonical) apartment associated to $\g T$ in the masure $\SHI_{\g v}=\SHI(\g G,\shk,\qo_{\g v})$, see \S \ref{1.2}.2 above. 

\par $\zv\A=\A$ is, more or less, the (twin) apartment associated to $\g T$ in the twin building $\zv\!\!\SHI={\zv\!\!\SHI^+}\cup{\zv\!\!\SHI^-}$ of $\g G$ over $\shk$.
Actually the (twin) apartment is the union of $\zv\A^+=\sht\subset {\zv\!\!\SHI^+}$ and $\zv\A^-=-\sht\subset {\zv\!\!\SHI^-}$, where $\sht$ is the Tits cone in $\A$ (see \ref{2.2b}.1).

 \subsubsection{The affine Weyl group}\label{1.4}  
 
 \par To each $\un\qa=\qa+s\qx\in\QF_{a}$ is associated a reflection $r_{\un\qa}$ in $\A_{twin}$ with respect to the hyperplane (=wall) $M_{twin}(\un\qa)$\index{M@$M_{twin}(\qa+k\qx)$} with equation $(\qa+s\qx)(x,p)=0$: \quad
 $r_{\qa+s\qx}(x,p)=(x-(\qa(x)+sp)\qa^\vee,p)$.
 
 \par On $\zv\A=\A$ it acts as $r_{\qa}$ (reflection associated to the root $\qa$, with respect to the wall $\ker\qa$).
 On $\A_{\oplus}=\A\oplus\set{1}\simeq \A$ (\resp $\A_{\ominus}=\A\oplus\set{-1}\simeq \A$) it acts as the usual reflection $r^\oplus_{\qa,s }$ (\resp $r^\ominus_{\qa,-s }$) with respect to the affine hyperplane (=wall) $M_{\oplus}(\qa+s)$  (\resp $M_{\ominus}(\qa-s)$) with equation $\qa(x)+ s=0$ (\resp $\qa(x)- s=0$); its associated linear map is $r_{\qa}$.
 
 \par Clearly the generated group is $W_{a}=W\zv\ltimes Q^\vee$ where $Q^\vee=\sum_{\qa\in\QF} \Z\qa^\vee=\oplus \Z\qa^\vee_{i}$ acts by transvections: $\qa^\vee*(x,p)=(x+p\qa^\vee,p)$.
 The group $W_{a}$ is not a Coxeter group in general.

  \subsubsection{The root groups in $G_{twin}$}\label{1.5}
  
  \par For $\un\qa=\qa+s\qx\in\QF_{a}$ there is a group embedding $x_{\un\qa}: (\k,+)\to U_{\qa},\ a\mapsto x_{\qa}(\qp^s.a)$.
  Its image is the group $U_{\qa+s\qx}=x_{\qa+s\qx} (\k)\subset G_{twin}\subset G$.
  Then $U_{\qa,twin}=\g U_{\qa}(\sho)=\langle U_{\qa+s\qx} \mid s\in \Z\rangle = \bigoplus_{s\in \Z} \, U_{\qa+s\qx}$.
  
  \par The link with the groups $^{\g v}U_{\qa,r}$ of  \ref{n1.3.1}  is as follows:  $^{\oplus}U_{\qa,r}=(^{\oplus}U_{\qa,r+1})\times U_{\qa+r\qx}$, $^{\oplus}U_{\qa,r}/(^{\oplus}U_{\qa,r+1})\simeq U_{\qa+r\qx}$.
  But $^{\ominus}U_{\qa,r}=x_{\qa}(\shk_{\qo_{-}\geq r})=(^{\ominus}U_{\qa,r+1})\times U_{\qa-r\qx}$, $^{\ominus}U_{\qa,r}/(^{\ominus}U_{\qa,r+1})\simeq U_{\qa-r\qx}$.  

  \par We may consider the action of $G$ on $\SHI_{\oplus}\sqcup\SHI_{\ominus}\sqcup{\zv\SHI}\supset \A_{\oplus}\sqcup\A_{\ominus}\sqcup{\zv\A}$.
  Then, by \ref{n1.3.1}, the fixed point set of $x_{\qa+s\qx}(k)$ (for $k\in\k^*$) in $\A_{\oplus}\sqcup\A_{\ominus}\sqcup{\zv\A}$ is the intersection $D_{\oplus}(\qa+s)\sqcup D_{\ominus}(\qa-s)\sqcup D\zv(\qa)$ of the half-apartment $D_{twin}(\qa+s\qx)=\set{a\in\A \mid (\qa+s\qx)(a)\geq0}$\index{D@$D_{twin}(\qa+k\qx)$} with $\A_{\oplus}\sqcup\A_{\ominus}\sqcup{\zv\A}$. 
  (Recall that $\qx=1$ (\resp $\qx=-1$, $\qx=0$) on $\A_{\oplus}$ (\resp $\A_{\ominus}$, $\zv\A$).

\begin{lemm*} For any $\qa\in \QF$, one has $U_{\qa,twin}=U_{\qa}\cap G_{twin}=U_{\qa}\cap G_{pol}$.
\end{lemm*}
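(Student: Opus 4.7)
The plan is to establish the two equalities by cyclically proving the inclusions
\[ U_{\alpha,twin} \subset U_\alpha \cap G_{twin} \subset U_\alpha \cap G_{pol} \subset U_{\alpha,twin}. \]
The first inclusion holds because $U_{\alpha,twin} = \g U_\alpha(\sho) \subset G_{twin}$ by definition and $U_{\alpha,twin} \subset U_\alpha = \g U_\alpha(\shk)$. The second inclusion is immediate from $G_{twin} \subset G_{pol}$ noted in \S\ref{1.2}. So the only real content is the third inclusion.

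First I would identify $U_{\alpha,twin}$ explicitly. By \S\ref{1.5}, $U_{\alpha+s\xi} = x_\alpha(\qp^s \k)$ and $U_{\alpha,twin} = \bigoplus_{s\in\Z} U_{\alpha+s\xi}$, so via the isomorphism $x_\alpha \colon (\shk,+) \xrightarrow{\sim} U_\alpha$, we have $U_{\alpha,twin} = x_\alpha(\sho)$, with $\sho = \k[\qp,\qp^{-1}]$. Consequently, proving $U_\alpha \cap G_{pol} \subset U_{\alpha,twin}$ amounts to showing: if $u \in \shk$ is such that $x_\alpha(u) \in G_{pol}$, then $u \in \sho$.

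Take such an element $g = x_\alpha(u)$. By the definition of $G_{pol}$ given in \S\ref{1.2}, $g$ fixes the point $0_{\g v}$ in the masure $\shi_{\g v}$ for every place $\g v$ of $\shk/\k$ different from $\oplus$ and $\ominus$. Since $\alpha(0_{\g v}) = 0$, the point $0_{\g v}$ lies on the wall $M(\alpha)$, and by \S\ref{n1.3.1} the fixator of any point of $M(\alpha)$ inside $U_\alpha$ is exactly
\[ U_{\alpha,0} = \{ x_\alpha(v) \mid v \in \shk,\ \omega_{\g v}(v) \geq 0 \} = x_\alpha(\sho_{\g v}). \]
Therefore $x_\alpha(u)$ fixes $0_{\g v}$ if and only if $u \in \sho_{\g v}$. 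The condition $g \in G_{pol}$ then translates to $u \in \bigcap_{\g v \neq \oplus,\ominus} \sho_{\g v}$.

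To conclude, I would invoke the identity $\sho = \k[\qp,\qp^{-1}] = \bigcap_{\g v \neq \oplus, \ominus} \sho_{\g v}$ recalled in \S\ref{1.1} (this is the standard description of rational functions regular outside $0$ and $\infty$ on $\PP^1_\k$). This gives $u \in \sho$, hence $g = x_\alpha(u) \in x_\alpha(\sho) = U_{\alpha,twin}$, closing the chain of inclusions. There is no serious obstacle here: the argument rests entirely on combining the standard description of $\sho$ as an intersection of valuation rings with the explicit description of the fixator of $0_{\g v}$ in the root subgroup $U_\alpha$.
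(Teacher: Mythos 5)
Your proof is correct and follows essentially the same route as the paper: reduce to the third inclusion, note that membership in $G_{pol}$ forces $x_\alpha(u)$ to fix $0_{\g v}$ for all $\g v\neq\oplus,\ominus$, use that the fixator of $0_{\g v}$ in $U_\alpha$ is exactly $x_\alpha(\sho_{\g v})$, and conclude $u\in\bigcap_{\g v\neq\oplus,\ominus}\sho_{\g v}=\sho$. The paper's version is terser but the argument is the same.
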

\begin{proof} One has $U_{\qa,twin}\subset U_{\qa}\cap G_{twin}\subset U_{\qa}\cap G_{pol}$.
If $x_{\qa}(a)\in U_{\qa}\cap G_{pol}$ (with $a\in\shk$), then, $\forall\g v\neq0,\infty$, $x_{\qa}(a)$ fixes $0_{\g v}$ in $\SHI(\g G,\shk,\qo_{\g v})$, so $\qo_{\g v}(a)\geq0$ and $a\in\sho$, $x_{\qa}(a)\in U_{\qa,twin}$.
\end{proof}

  \medskip
    \par 
  For $\qe=+$ or $\qe=-$, one considers $U^ {\qe\qe}_{twin}=U^{\epsilon\epsilon}_{\sho} = \langle U_{\qa+s\qx} \mid \qa+s\qx \in \QF^\qe_{a-} \cup \QF^\qe_{a+} \rangle \subset  U^\qe$.
  
  \par Let us define also $U^ {\qe}_{twin} :=  U^\qe \cap G_{twin}$ and $U^\qe_{pol}:=U^\qe\cap G_{pol}$. 
  
\parni Clearly $U^ {\qe\qe}_{twin} \subset U^ {\qe}_{twin} \subset U^ {\qe}_{pol}$. 
As we saw in \ref{ss_m_KM_grp}, the first inclusion is strict in general. For the second inclusion one does not know wether it may be an equality.


\subsubsection{The group $N_{twin}=\g N(\sho)$ ($=N_{pol}$)}\label{1.6}   

We have $\g T(\k)\subset \g T(\sho)=T_{twin}\subset T=\g T(\shk)$.
For $\ql\in Y= Hom(\g{Mult},\g T)$, we may define $\qp^\ql:=\ql(\qp){\in} \g T(\sho)=T_{twin}$, as $\qp\in\sho^*$.

\par Then one has: \qquad $T_{twin}=\g T(\sho)=\set{h.\qp^\ql \mid h\in \g T(\k), \ql \in Y }$,

\qquad\qquad\qquad\qquad\quad $N_{twin}=\g N(\sho)=\set{n_{0}.\qp^\ql \mid n_{0}\in \g N(\k), \ql \in Y }$,

\par and the Weyl group is \qquad $W:=N_{twin}/\g T(\k)=\set{w.\qp^\ql \mid w\in W\zv, \ql \in Y }=W\zv\ltimes Y$.

\parni Actually the image of $n_{0}.\qp^\ql\in N_{twin}$ in $N_{twin}/\g T(\k)$ is $w.\qp^\ql$ if the class of $n_{0}\in \g N(\k)$ in $\g N(\k)/\g T(\k)$ is $w$.

\par All this may be seen \eg from \cite{T85} page 204: $\g N(\sho)$ is generated by $\g T(\sho)$ and elements $m_{i}$ such that $m_{i}tm_{i}^ {-1}=r_{i}(t)$ (for $t\in \g T(\sho)$), the $m_{i}$ satisfy the braid relations and $m_{i}^2=\eta_{i}\in Hom(Y,\C^*)$ such that $\eta_{i}(\ql)=(-1)^ {\langle\ql,\qa_{i}^\vee\rangle}$, \ie with classical notation $\eta_{i}=(-1)^ {\qa_{i}^\vee}$ (see \eg the relation $\tilde s(-1)=(\tilde s)^ {-1}=\tilde s.(-1)^ {\qa_{i}^\vee}$ in \cite{Re02} page 196).

\begin{NB} 1) In particular, for $\g v=\oplus$ or $\g v=\ominus$, $W_{\g v}=\g N(\shk)/\set{t\in\g T(\shk) \mid \qo_{\g v}(\chi(t))=0, \forall\chi\in X}=\g N(\shk)/\g T(\sho_{\g v})$ is also equal to $W=N_{twin}/\g T(\k)$: any action of an element of $\g N(\shk)$ on $\A_{\g v}$ is induced by the action of an element of $N_{twin}$.
The same things are true for the action on $\zv\A$.

\par 2) We shall see below in \S \ref{1.7}, Lemma \ref{1.7b}, that $N\cap G_{twin}=N_{twin}=N\cap G_{pol}=:N_{pol}$ and $T\cap G_{twin}=T_{twin}=T\cap G_{pol}=:T_{pol}$.

\par 3) 
By the Iwasawa decomposition (Remark \ref{rk_Iwa_Gpol})  $G_{twin}=G_{pol}\iff I_{twin}=I_{pol}$. 
 
\end{NB}
  
\subsubsection{Stabilizers or fixators in $G_{twin}$ or $G_{pol}$ of canonical apartments $\A_{\g v}$ or $\zv\A$}   
\label{1.7}

\par Following \cite[cor. 10.4.3]{Re02}, the fixator (\resp stabilizer) of $\zv\A$
in $G=\g G(\shk)$ is $T$ (\resp $N$).
Let now $\g v=\oplus$ or $\g v=\ominus$.
We know  that $\zv\A$ is at infinity of $\A_{\g v}$, that $\zv\!\!\SHI$ is at infinity of $\SHI_{\g v}$, and that the action of $G$ on $\SHI_{\g v}$ induces at infinity its action on $\zv\!\!\SHI$. 
So it follows that the stabilizer of $\A_{\g v}$ in $G$ is $N=\g N(\shk)$ and, then, that its fixator is $\g T(\sho_{\g v})$.

\par {\bf a)} We prove below that the fixator (\resp stabilizer) in $G_{twin}$ or $G_{pol}$ of $\zv\A$ is $\g T(\shk)\cap G_{twin}=\g T(\shk)\cap G_{pol}=\g T(\sho)=T_{twin}$ (\resp $\g N(\shk)\cap G_{twin}=\g N(\shk)\cap G_{pol}=\g N(\sho)=N_{twin}$).

\par {\bf b)} We have the inclusions $\g T(\shk)\cap G_{pol}\supset\g T(\shk)\cap G_{twin}\supset\g T(\sho)=T_{twin}$.
Let us prove $\g T(\shk)\cap G_{pol}\subset\g T(\sho)$.
We have $\g T\simeq \g{Mult}^ {d}$ and $(p_{1},\ldots,p_{{d}})\in\g T(\shk)=(\shk^*)^ {d}$ fixes $0_{\g v}$ in $\SHI(\g G,\shk,\qo_{\g v})$ for all $\g v\neq0,\infty$ if, and only if, $\forall j, \forall \g v, \qo_{\g v}(p_{j})=0$ if, and only if, $\forall j, p_{j}\in \k[\qp,\qp^ {-1}]^*$.
We get that the above inclusions are equalities.

\par {\bf c)} We remarked above (in \S \ref{1.6}) that $\g N(\shk)/\g T(\shk)$ is equal to $\g N(\sho)/\g T(\sho)$ and $\g N(\sho)$ is in $G_{twin}\subset G_{pol}$.
So $\g N(\shk)\cap G_{twin}=\g N(\shk)\cap G_{pol}=\g N(\sho)=N_{twin}$ follows from b). And a) is proved.

\par {\bf d)} Now, for $\g v=\oplus$ or $\g v=\ominus$, the fixator (\resp stabilizer) in $G_{twin}$ or $G_{pol}$ of $\A_{\g v}$ is $\g T(\sho_{\g v})\cap \g T(\sho)=\g T(\k)$ (\resp $\g N(\shk)\cap G_{twin}=\g N(\shk)\cap G_{pol}=\g N(\sho)=N_{twin}$).

\begin{lemm}\label{1.7b}   
\quad $(\g U^\pm(\shk).\g N(\shk))\cap G_{twin}=U^\pm_{twin}.\g N(\sho)$ \ and \ $\g N(\shk)\cap G_{twin}=\g N(\sho)=N_{twin}$\index{N@$N_{twin}$}.
\end{lemm}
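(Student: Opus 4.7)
The plan is to prove the two equalities in turn, starting with the simpler second one. For $\g N(\shk)\cap G_{twin}=\g N(\sho)$, the inclusion $\supset$ is immediate. For the reverse, I would take $n\in\g N(\shk)\cap G_{twin}$ and use the equality $\g N(\shk)=\g T(\shk)\cdot\g N(\k)$ (a direct consequence of $\g N(\shk)/\g T(\shk)=W\zv=\g N(\sho)/\g T(\sho)$, as recalled in \S\ref{1.6}--\ref{1.7}(c)) to write $n=tw$ with $t\in\g T(\shk)$ and $w\in\g N(\k)\subset N_{twin}$. Then $t=nw^{-1}$ lies in $\g T(\shk)\cap G_{twin}=\g T(\sho)$ by \S\ref{1.7}(b), whence $n\in\g T(\sho)\cdot\g N(\k)=N_{twin}$.

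For the first equality, the inclusion $\supset$ is trivial. For the reverse, given $g\in(\g U^\epsilon(\shk)\cdot\g N(\shk))\cap G_{twin}$ (with $\epsilon\in\{-,+\}$), I would write $g=un$ with $u\in U^\epsilon$ and $n\in N$; this decomposition is unique since $U^\epsilon\cap N=\{1\}$. It then suffices to prove $n\in G_{twin}$: for then $u=gn^{-1}\in U^\epsilon\cap G_{twin}=U^\epsilon_{twin}$ (by the definition in \S\ref{1.5}), and $n\in N\cap G_{twin}=N_{twin}$ by the second equality just established.

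To show $n\in G_{twin}$, I would exploit that $g$ fixes $0_{\g v}$ in $\SHI_{\g v}$ for every place $\g v\neq\oplus,\ominus$, so $g$ lies in the fixator $K_{\g v}:=G_{0_{\g v}}$. By the proposition of \S\ref{3.2} and its consequence \S\ref{3.3}(3), together with the fact that $\g N(\k)$ fixes $0_{\g v}$, this fixator decomposes as $K_{\g v}=U^\epsilon_{0,{\g v}}\cdot U^{-\epsilon}_{0,{\g v}}\cdot\g T(\sho_{\g v})\cdot\g N(\k)$. Writing $g=v_{+}\,v_{-}\,t_{\g v}\,w_{\g v}$ in this form (with $v_{+}\in U^\epsilon_{0,{\g v}}$, $v_{-}\in U^{-\epsilon}_{0,{\g v}}$, $t_{\g v}\in\g T(\sho_{\g v})$, $w_{\g v}\in\g N(\k)$) and reordering using that $T$ normalizes $U^{-\epsilon}$, I would bring the equality $g=un$ into the shape $u^{-1}v_{+}\cdot t_{\g v}\cdot\tilde v_{-}=n\,w_{\g v}^{-1}$, with $\tilde v_{-}\in U^{-\epsilon}$. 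The left-hand side lies in the big cell $U^\epsilon\,T\,U^{-\epsilon}$ of the Birkhoff decomposition $G=\bigsqcup_{w\in W\zv}U^\epsilon\,T\,\tilde w\,U^{-\epsilon}$, while the right-hand side lies in $N$; since $N\cap U^\epsilon T U^{-\epsilon}=T$, uniqueness in the big cell forces $u^{-1}v_{+}=1$, $\tilde v_{-}=1$, and $n\,w_{\g v}^{-1}=t_{\g v}\in\g T(\sho_{\g v})$.

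In particular the Weyl image $\ov{w_{\g v}}=\ov n\in W\zv$ is independent of $\g v$. Fixing a single lift $\tilde w\in\g N(\k)\subset N_{twin}$ of $\ov n$ and absorbing a factor of $\g T(\k)$, a short rearrangement yields $n\tilde w^{-1}\in\g T(\sho_{\g v})$ for every $\g v\neq\oplus,\ominus$. Intersecting over all such places and invoking the identity $\sho=\bigcap_{\g v\neq\oplus,\ominus}\sho_{\g v}$ from \S\ref{1.1}, I would conclude $n\tilde w^{-1}\in\g T(\sho)=T_{twin}$, whence $n\in T_{twin}\cdot\g N(\k)\subset N_{twin}$, as required. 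The main obstacle I anticipate is the algebraic bookkeeping required to apply Birkhoff uniqueness at each place $\g v$ simultaneously and to reconcile the resulting $w_{\g v}$ across places; once this is handled, the global conclusion follows at once from $\sho=\bigcap_{\g v}\sho_{\g v}$.
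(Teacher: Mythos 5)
Your proof is correct, and it arrives at the same conclusion the paper does, but the route for the first equality is genuinely different. The paper's proof is short and geometric: having written $g=un$ with $u\in U^\pm$, $n\in N$, it observes that for $x\in\A_{\g v}$ one has $\rho_{\A_{\g v},\g Q_{\pm\infty}}(g.x)=n.x$ (since $u\in U^\pm$ fixes the sector germ $\g Q_{\pm\infty}$), so $n.0_{\g v}=\rho(g.0_{\g v})=\rho(0_{\g v})=0_{\g v}$ for every place $\g v\neq\oplus,\ominus$, whence $n\in N\cap G_{pol}=N_{twin}$ by \S\ref{1.7}.d. This is precisely the mechanism isolated in Remark~\ref{rk_Iwa_Gpol}. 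You instead decompose the parahoric $K_{\g v}=G_{0_{\g v}}$ explicitly via \S\ref{3.2}/\S\ref{3.3}(3) as $U^{\epsilon}_{0_{\g v}}U^{-\epsilon}_{0_{\g v}}\g T(\sho_{\g v})\g N(\k)$, reorder using that $T$ normalizes $U^{-\epsilon}$, and invoke uniqueness in the Birkhoff big cell $U^{\epsilon}TU^{-\epsilon}$ to extract $n\in\g T(\sho_{\g v})\g N(\k)$ place by place, then intersect over $\g v$ using $\sho=\bigcap_{\g v\neq\oplus,\ominus}\sho_{\g v}$. Your argument is more computational and requires the precise description of $K_{\g v}$ and the Birkhoff uniqueness, whereas the paper's retraction argument hides this bookkeeping inside the already-established properties of $\rho$; the two inputs are logically equivalent (the retraction is built from exactly these decompositions), so neither is strictly more elementary, but the paper's version is noticeably shorter and reuses machinery already in place. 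One small caution in your version: the precise identity $N_{0_{\g v}}=\g T(\sho_{\g v})\cdot\g N(\k)$ is not literally what \S\ref{3.3}(3) says (there $N_x=N_x^u T_0$); it is nonetheless true and elementary to verify, since every $n\in N$ factors as $t\tilde w$ with $t\in T$, $\tilde w\in\g N(\k)$, and $t\tilde w$ fixes $0_{\g v}$ iff $t\in\g T(\sho_{\g v})$, but you should say so rather than attribute it directly to that paragraph.
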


\begin{NB} We write $U^\pm_{twin}=\g U^\pm(\shk)\cap G_{twin}$. The same things are true with $G_{pol}$ instead of $G_{twin}$ (just replacing $U^\pm_{twin}$ by $\g U^\pm(\shk)\cap G_{pol}=U^\pm_{pol}$) and with $\g T$ instead of $\g N$.
\end{NB}

\begin{proof} The last equality is proved above in \S \ref{1.7}.d.
Let $g=u.n$ with $g\in G_{pol}, n\in N$ and $u\in U^\pm$.
Let $\g v$ be a place of $\shk$, $\g v\neq 0,\infty$.
As $g\in G_{pol}$, it fixes $0_{\g v}$ for the action of $G$ on $\SHI(\g G,\shk,\qo_{\g v})$.
Let us consider the retraction $\qr$ onto the canonical apartment $\A_{\g v}$ of $\SHI(\g G,\shk,\qo_{\g v})$ associated to $U^\pm$ \ie to $\g Q_{\pm\infty}$ (see \ref{ss_Retractions}).
Then the maps from $\A_{\g v}$ to itself given by $x\mapsto n.x$ and $x\mapsto \qr(g.x)$ coincide.
So $n$ fixes $0_{\g v}$; we have proved that $n\in N\cap G_{pol}=N_{twin}$ (\S \ref{1.7}.d above) and thus $u\in U^\pm \cap G_{pol}$ (and  $u\in U^\pm \cap G_{twin}=U_{twin}^ {\pm}$
 if $g\in G_{twin}$).
\end{proof}

\subsubsection{(Linear) action of $\g N(\sho)=N_{twin}$ on $\A_{twin}$}\label{1.8}  
\par We shall  define an action $\qn:N_{twin}\to Aut(\A_{twin})$.  

\par By \S \ref{1.6} $N_{twin}=\set{n_{0}.\qp^\ql \mid n_{0}\in \g N(\k), \ql \in Y }$, we ask that:

\par\quad$\bullet$ $n_{0}$ acts linearly on $\A_{twin}=\A\oplus\R$, trivially on $\R$ and by its linear action $\qn\zv$ on $\A$ (as $W\zv=\g N(\k)/\g T(\k)$).

\par\quad$\bullet$ $t\in T_{twin}=\g T(\sho)$ acts by transvections: $\qn(t)=tr_{v}:\A_{twin}\to\A_{twin}, x\mapsto x+v\qx(x)$, with $v\in \A$ determined by $\chi(v)=-\qo_{\oplus}(\chi(t))$, $\forall \chi \in X$.

\par In particular for $t=\qp^\ql$, $v=-\ql\in Y\subset Y\otimes\R=\A$ (see \eg \cite[2.9]{BPGR19}).

\par This action induces the known actions of $N_{twin}\subset N$ on $\zv\A$, $\A_{\oplus}$ and $\A_{\ominus}$.
For $\A_{\ominus}$, one has to remark that $\qp^\ql$ acts by a translation of vector $v'$ given by $\chi(v')=-\qo_{\ominus}(\chi(\qp^\ql))=\qo_{\oplus}(\chi(\qp^\ql))=\chi(\ql)$, $\forall \chi \in X$.
This agrees with the fact that $\qx=-1$ on $\A_{\ominus}$.

\subsubsection{Root datum in $G_{twin}$ or $G_{pol}$ ?}\label{1.9}   

\par We want to indicate some other relations between the groups defined above. 
For this we consider the definition of root datum given in \cite[1.5 p. 505]{R06}.
This is close to the definition of Bruhat and Tits in \cite{BrT72} or of R\'emy (as ``donn\'ee radicielle jumel\'ee'') in \cite{Re02}.
We shall not get all the axioms and moreover, mainly as $\QF_{a}$ is associated to $W_{a}$ which is not a Coxeter group, the known results for these more classical root data would not be available.

\par One considers the triple\qquad\qquad $(G_{twin}, (U_{\qa+r\qx})_{\qa+r\qx\in\QF_{a}},H=\g T(\k))$.

\medskip
\parni{\bf (1) (DR1)} $H$ is a subgroup of $G_{twin}\subset G_{pol}$, the $U_{\qa+r\qx}$ are non trivial subgroups normalized by $H$. 

\par This is clear.

\medskip
\parni{\bf(2) (DR2)} For $\set{\qa,\qb}\subset\QF$ prenilpotent  and $r,s\in \Z$, the commutator subgroup $[U_{\qa+r\qx},U_{\qb+s\qx}]$ is contained in the group generated by the $U_{p\qa+q\qb+(pr+qs)\qx}$ for $p,q\in\N\setminus\set0$ and $p\qa+q\qb\in \QF$.  

\par This comes from the explicit commutation relations of $\g U_{\qa}$ and $\g U_{\qb}$ (\cf \cite[9.2.2 p.207]{Re02}): $[x_{\qa}(u),x_{\qb}(v)]=\prod_{p,q}\, x_{p\qa+q\qb}(u^p.v^q.C^ {\qa,\qb}_{p,q})$ with $C^ {\qa,\qb}_{p,q}\in\Z$.

\medskip
\parni{\bf(3)} There is no need of {\bf(DR3)} as the system $\QF_{a}$ is reduced.

\medskip
\parni{\bf(4) (DR4)} For $\un\qa=\qa+s\qx\in\QF_{a}$ and $u\in U_{\qa+s\qx}, u\neq1$, there exist $u', u''\in U_{-\qa-s\qx}=U_{-\un\qa}$ such that $m(u)=u'uu''$ conjugates $U_{\qg+t\qx}$ into $U_{r_{\qa+s\qx}(\qg+t\qx)}$, for all $\qg+t\qx\in \QF_{a}$.
Moreover, $\forall u,v\in U_{\qa+s\qx}, u,v\neq1$, one asks $m(u)H=m(v)H$.

\par We prove this in three steps:

\par a) Let $u=x_{\qa+s\qx}(a)=x_{\qa}(\qp^s.a)\in U_{\qa+s\qx}\setminus\set1 \subset U_{\qa}\setminus\set1$ (\ie $a\in\k^*$).
To calculate in $\langle \g U_{\qa},\g U_{-\qa}\rangle$, one may use the group $\g{\mathrm{SL}}_{2}$ and the classical formula:

$$\left(\begin{matrix}1&0  \cr -d^ {-1}&1\cr \end{matrix}\right)\left(\begin{matrix}1&d \cr 0&1\cr \end{matrix}\right)\left(\begin{matrix}1&0  \cr -d^ {-1} &1\cr \end{matrix}\right)=\left(\begin{matrix}0 & d \cr -d^ {-1} &0\cr \end{matrix}\right)=\left(\begin{matrix}1&d  \cr 0&1\cr \end{matrix}\right)\left(\begin{matrix}1&0 \cr -d^ {-1}&1\cr \end{matrix}\right)\left(\begin{matrix}1&d  \cr 0 &1\cr \end{matrix}\right).$$

\par So one defines $u'=u''=x_{-\qa}(-(\qp^sa)^ {-1})=x_{-\qa-s\qx}(-a^ {-1})$.
Then $m_{\qa+s\qx}(u)=m_{\qa}(u)=u'uu''\in\g N(\sho)=N_{twin}$.
Clearly $m_{\qa+s\qx}(u).H=m_{\qa+s\qx}(v).H$ in the above situation, for $v\in U_{\qa+s\qx}\setminus\set1$ (by a calculation in $\g{\mathrm{SL}}_{2}$).

\par b) One has to identify the action of  $m_{\qa+s\qx}(u)\in N_{twin}$ on $\A_{twin}$ by the action $\qn$ of \S \ref{1.8}.
\par Let $\g v=\oplus,\qe=+$ or $\g v=\ominus,\qe=-$.
On $\A_{\g v}$, $\qn(m_{\qa+s\qx}(u))=\qn(m_{\qa}(u))$ is the reflection of $W$ with respect to the following wall of $\A_{\g v}$: $M(\qa+\qo_{\g v}(\qp^s.a))=M(\qa+\qe s)={\A_{\g v}}\cap M_{twin}(\qa+s\qx)$, where $M_{twin}(\qa+s\qx)$ is $\ker(\qa+s\qx)$.
On $\zv\A$, $\qn(m_{\qa+s\qx}(u))=\qn(m_{\qa}(u))=r_{\qa}$.

\par So the action of  $m_{\qa+s\qx}(u)\in N_{twin}$ on $\A_{twin}$ is the reflection $r_{\qa+s\qx}$ defined in \S \ref{1.4}.

\par c) One has to deduce from this that $m_{\qa+s\qx}(u)$ conjugates $U_{\qg+t\qx}$ into $U_{r_{\qa+s\qx}(\qg+t\qx)}$.

\par Actually, using the known results for $G$ acting on $\SHI_{\oplus}=\SHI(\g G,\shk,\qo_{\oplus})$, one gets that $m_{\qa+s\qx}(u)$ conjugates $U_{\qg+t\qx}$ into a subgroup of $\widehat U_{r_{\qa}(\qb),n}\cap G_{twin}$ (if $r_{\qa+s\qx}(\qg+t\qx)=r_{\qa}(\qb)+n\qx$), where $\widehat U_{r_{\qa}(\qb),n}=\prod_{m\geq n}\, U_{r_{\qa}(\qb)+m\qx}=\g x_{r_{\qa}(\qb)}(\qp^n\widehat\sho_{\oplus})$.
Now, if we calculate with $G$ acting on $\SHI_{\ominus}=\SHI(\g G,\shk,\qo_{\ominus})$, one gets that $m_{\qa+s\qx}(u)$ conjugates $U_{\qg+t\qx}$ into a subgroup of $G_{twin}$ and $^{\ominus}\widehat U_{r_{\qa}(\qb),n}=\prod_{m\leq n}\, U_{r_{\qa}(\qb)+m\qx}=\g x_{r_{\qa}(\qb)}(\qp^n\widehat\sho_{\infty})$.
As $\qp^n\widehat\sho_{\oplus} \cap \sho \cap \qp^n\widehat\sho_{\infty}=\qp^n\k$, one gets the expected result using Lemma \ref{1.5}.

\begin{remas*} 1) It is easy to prove that $m(u')=m(u'')$ acting on $\A_{twin}$ is also $r_{\qa+s\qx}=r_{-\qa-s\qx}$.

\par 2) One would like to say that $u$ (\resp $u',u''$) fixes the half-apartment $D_{twin}(\qa+s\qx)=\set{(x,p)\in\A_{twin} \mid (\qa+s\qx)(x,p)=\qa(x)+sp\geq0}$ (\resp $D_{twin}(-\qa-s\qx)$).
The boundary of these half-apartments is the wall $M(\qa+s\qx)=\ker(\qa+s\qx)$, fixed point set of $r_{\qa+s\qx}$.

\par Actually this is satisfied if we consider the restricted actions on $\A_{\g v}\subset{\SHI_{\g v}}$ and $\zv\A\subset {\zv\SHI}$.
\end{remas*}

\medskip
\parni{\bf(5) (DR5 ?)} For $\qe=\pm$, let $U_{\qe}:=\langle U_{\qa+r\qx} \mid \qa+r\qx \in \QF_{a\qe} \rangle$.
 Is it true that $H.U_{\qe}\cap U_{-\qe}=\set1$   ?

\par It seems difficult to answer these two questions (which are actually equivalent).

\par If we look at $G$ acting on $\SHI_{\oplus}$, then $H.U_{+}$ fixes the fundamental local chamber $C_{\oplus}\subset {\A_{\oplus}}$ (\ie $H.U_{+}\subset {I_{twin}}$, ``positive'' Iwahori subgroup of $G_{twin}$). But, if $\qa+r\qx\in\QF_{a-}=\QF^+_{a-}\cup\QF^-_{a-}$ and $u\in U_{\qa+r\qx}\setminus\set1$, then $u$ does not fix $C_{\oplus}$; so we get only the following weaker axiom.

\par{\bf(DR5'')} \quad $H.U_{\qe}\cap U_{\qa+r\qx}=\set1$, for any $\qa+r\qx\in\QF_{a(-\qe)}$.

\begin{NB} 1) The axiom (DR5') of \cite{R06} (weaker than (DR5'')) has no meaning here, as it involves ``simple roots'', which do not exist in $\QF_{a}$.

\par 2) To deduce (DR5) from (DR5''), one should generalize Theorem 3.5.4 in \cite{Re02}.
This is not at all clear (at least up to now).

\par 3) A good question may be: is $H.U_{+}$ equal to $ I_{twin}$ ? (see \ref{3.6}) 
\end{NB}

\medskip
\parni{\bf(6) (DRG ?)} Is $G_{twin}$ equal to $\langle H, (U_{\qa+r\qx})_{\qa+r\qx\in \QF_{a}} \rangle$   ?

\par This fails in general, even if this looks like the definition of $G_{twin}$: $G_{twin}\supset G'_{twin}:=\langle H, (U_{\qa+r\qx})_{\qa+r\qx\in \QF_{a}} \rangle$.
But in $G'_{twin}$ one has, a priori, only a subgroup of $\g N(\sho)=N_{twin}$, due to the fact that one finds only a subgroup of $\g T(\sho)=T_{twin}$.
It seems that $G'_{twin}\cap T_{twin}$ is generated by $H$ and the $m_{\qa+r\qx}(u)m_{\qa+s\qx}(v)^ {-1}$.
In particular the Weyl group associated to $G'_{twin}$ is certainly $W_{a}=W\zv\ltimes Q^\vee
$.

\subsubsection{Twin and twinnable apartments}\label{2.6}

We saw that the system of apartments $\sha_{\oplus}(G)=G.{\A_{\oplus}}=\g G(\shk).\A_{\oplus}$ of $\SHI_{\oplus}$ is smaller than the system of apartments  $\sha_{\oplus}(G_{\oplus})=G_{\oplus}.{\A_{\oplus}}=\g G(\widehat{\shk}_\oplus).\A_{\oplus}$   of $\widehat{\shi}_\oplus$ associated to the completion $\widehat\shk_{\oplus}=\k((\qp))$. As in section~\ref{s2}, we also consider the still smaller system of apartments (called twinnable apartments) $\sha_{\oplus}(G_{twin})=\sha_{\oplus twin}=G_{twin}.\A_{\oplus}$. 
By \S \ref{1.7}, $\sha_{\oplus}(G_{twin})$ is in bijection with $G_{twin}/N_{twin}$ or with the set $\SHT_{twin}$ of maximal split tori in $G$ conjugated to $\g T$ by $G_{twin}$ (that we may call ``twin maximal split tori'').

\par There are analogous things on the negative side:  $\sha_{\ominus}(G_{twin})=G_{twin}.{\A_{\ominus}}$.
The bijections  $\sha_{\oplus}(G_{twin}) \leftrightarrow G_{twin}/N_{twin} \leftrightarrow \SHT_{twin}   \leftrightarrow{\sha_{\ominus}(G_{twin})}$ tell that a positive (\resp negative) twinnable apartment has a unique twin  in $\sha_{\ominus twin}$ (\resp $\sha_{\oplus twin}$).
Classically a twin apartment is a pair $({A_{\oplus}},{A_{\ominus}})=g.({\A_{\oplus}},{\A_{\ominus}})\in {\sha_{\oplus}(G_{twin})}\times{\sha_{\ominus}(G_{twin})}$ (for $g\in G_{twin}$). We denote by $\sha_{twin}$\index{A@$\sha_{twin},\sha_{pol}$} the set of twin apartments: $\sha_{twin}=G_{twin}.(\A_{\oplus},\A_{\ominus})$. If $\g v\in \{\ominus, \oplus\}$, we call the apartments of $\sha_{\g v}(G_{twin})$ ``twinnable apartments''.

\par There is also a notion of twinnable apartment in the twin building $\zv\!\!\SHI={\zv\!\!\SHI^+}\sqcup{\zv\!\!\SHI^-}$ of $G$: $\zv\sha_{twin}=G_{twin}.{\zv\A}$ (\cf \ref{1.3}) and, as $\zv\sha_{twin}=G_{twin}/N_{twin}$ (\cf \S \ref{1.7}), the three sets $\zv\sha_{twin}$, $\sha_{\oplus}(G_{twin})$, $\sha_{\ominus}(G_{twin})$ are in one to one correspondance.

\par Note that the apartments of $\zv\!\!\SHI$ are often called twin in the classical litterature (see \S \ref{1.3}).
Of course we shall (now) avoid this terminology.

There are also analogous systems of apartments for $G_{pol}$. We define similarly $\sha_{\oplus}(G_{pol})=G_{pol}.\A_{\oplus}\simeq \sha_{\ominus}(G_{pol})=G_{pol}.\A_{\ominus}$ and $\sha_{pol}=G_{pol}.(\A_{\ominus},\A_{\oplus})$.\index{A@$\sha_{twin},\sha_{pol}$}
This is similar to the case of $G_{twin}$ since $G_{pol}/N_{pol}\simeq \mathscr{T}_{pol}$.
As $\sha_{twin}\simeq G_{twin}/N_{twin}$, $\sha_{pol}\simeq G_{pol}/N_{pol}$ and $N_{twin}=N_{pol}$ (\S \ref{1.6}), one has $\sha_{twin}=\sha_{pol}\iff G_{twin}=G_{pol}$.

\par Implicitly, we will refer to $G_{twin}$ instead  of $G_{pol}$: a twin apartment is a $G_{twin}$-twin apartment. We will sometimes refer to $G_{pol}$-twin apartments (or $G_{pol}$-twinnable apartments).

\par We say that two sets or filters $\QO_{1},\QO_{2}$ in $\SHI_{\oplus}\cup\SHI_{\ominus}$ are twin-friendly\index{twin-friendly} (\resp pol-friendly) if there exists $A\in\sha_{twin}$ (\resp $A\in\sha_{pol}$) containing $\QO_{1}\cup\QO_{2}$.

\begin{prop}\label{4.1}
Let $(x,y)\in {\SHI_{{\oplus}}}\times {\SHI_{{\ominus}}}$ be a twin-friendly pair (\ie there is a twin apartment $A_{{\oplus}}\times {A_{{\ominus}}}$ such that $x\in {A_{{\oplus}}}$ and $y\in {A_{{\ominus}}}$).         
One considers local chambers $C_{x}\subset {\SHI_{{\oplus}}}$, $C_{y}\subset {\SHI_{{\ominus}}}$ with respective vertices $x,y$.
Then $(C_{x},C_{y})$ is a twin-friendly pair (\ie there is a twin apartment $A_{{\oplus}}'\times {A_{{\ominus}}'}$ such that $C_{x}\in {A_{{\oplus}}'}$ and $C_{y}\in {A_{{\ominus}}'}$).
\end{prop}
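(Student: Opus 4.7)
The plan is to construct an element $h\in G_{twin}$ such that $C_x\subset h.\A_\oplus$ and $C_y\subset h.\A_\ominus$; the twin apartment $h.(\A_\oplus,\A_\ominus)$ will then witness that $(C_x,C_y)$ is twin-friendly. Since $(x,y)$ is already twin-friendly, acting by a suitable element of $G_{twin}$ reduces us to the case $(x,y)\in\A_\oplus\times\A_\ominus$.

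First, I would apply Corollary~\ref{cor_Masure_GR}(2) to the local chamber $C_x\subset\shi_\oplus$ with $\shr=\sho$ and $\epsilon=+$, obtaining $u\in U^{++}_\sho\subset G_{twin}$ with $C_x\subset u.\A_\oplus$. The crucial observation is that $u\in U^+$ also fixes the sector germ $\g Q_{+\infty}$ when acting on $\shi_\ominus$, since the description $G_{\g Q_{+\infty}}=T_0U^+$ recalled in \S\ref{3.1} is valid inside every masure $\shi_\g v$. Consequently $u.\A_\ominus$ is an apartment of $\shi_\ominus$ still containing $\g Q_{+\infty}$, while $u^{-1}.x\in\A_\oplus$ and $u^{-1}.C_x\subset\A_\oplus$.

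Second, I would adjust the $\shi_\ominus$ side. Setting $C_y':=u^{-1}.C_y$, a local chamber of $\shi_\ominus$ at $u^{-1}.y$, I would look for $v\in G_{twin}$ with $C_y'\subset v.\A_\ominus$ and such that $v$ also preserves the inclusion $u^{-1}.C_x\subset\A_\oplus$ for its action on $\shi_\oplus$, i.e.\ $v^{-1}(u^{-1}.C_x)$ is still a local chamber of $\A_\oplus$. Taking $h:=uv$ would then simultaneously yield $C_y\subset h.\A_\ominus$ (directly) and $C_x\subset h.\A_\oplus$ (because $v^{-1}(u^{-1}.C_x)\subset\A_\oplus$ implies $h^{-1}.C_x\subset\A_\oplus$).

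The main obstacle is to establish the existence of such $v$. I expect this to follow from an Iwasawa-type decomposition in the spirit of Corollary~\ref{cor_Iwasawa} and Remark~\ref{rk_Iwa_Gpol}, applied to the subgroup of $G_{twin}$ fixing $u^{-1}.C_x$ in $\shi_\oplus$ but now acting on $\shi_\ominus$: one writes a candidate $v$ as a product of factors drawn from $U^{\pm\pm}_\sho$, $N_\sho$ and this fixator, arranging the $\shi_\ominus$-factor so that $v.\A_\ominus$ contains $C_y'$. The delicate point is that the two Iwasawa conditions, one intrinsic to $\shi_\oplus$ and one to $\shi_\ominus$, must be fulfilled by one and the same element of $G_{twin}$; verifying this compatibility, together with checking that the relevant fixator still contains enough root-subgroup elements over $\sho$ for the decomposition to hold, is the technical heart of the proof.
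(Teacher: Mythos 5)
Your reduction to $(x,y)\in\A_\oplus\times\A_\ominus$ and the use of Corollary~\ref{cor_Masure_GR}(2) to produce $u\in U^{++}_\sho$ with $C_x\subset u.\A_\oplus$ are fine, and the remark that $u\in U^+$ fixes $\g Q_{+\infty}$ in $\shi_\ominus$ is correct. But that remark is far weaker than what your plan needs: fixing a sector germ at infinity gives no control over where $u$ sends $y$. After applying $u^{-1}$ you have replaced $(C_x,C_y)$ by $(u^{-1}.C_x,\,u^{-1}.C_y)$; the first now lies in $\A_\oplus$, but the vertex $u^{-1}.y$ of $u^{-1}.C_y$ is in general no longer in $\A_\ominus$, so the hypothesis on the vertices that you started with has been consumed rather than preserved. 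Your two conditions on $v$, namely $u^{-1}.C_x\subset v.\A_\oplus$ and $u^{-1}.C_y\subset v.\A_\ominus$, are \emph{exactly} the assertion that $(u^{-1}.C_x,\,u^{-1}.C_y)$ is twin-friendly, i.e.\ you have translated the problem, not reduced it. The ``technical heart'' you flag is therefore not a compatibility check that an Iwasawa decomposition can supply: Corollary~\ref{cor_Iwasawa} and Remark~\ref{rk_Iwa_Gpol} decompose $G_{twin}$ against the fixator of a \emph{sector germ}, whereas what your $v$ must provide is a mixed, Birkhoff-type decomposition coupling the fixator of a local chamber in $\shi_\oplus$ with a local chamber on the $\shi_\ominus$ side — precisely the kind of statement the paper identifies as conjectural in general (\S\ref{4.4}); Theorem~\ref{2.18} and Corollary~\ref{2.19} only reach chimney germs, not local chambers, on one of the two sides.

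The paper's proof sidesteps this by never producing a single global $u$. It reduces to moving one side at a time, takes a gallery $(C_1,\dots,C_n=C_x)$ of local chambers at $x$ with $C_1\subset\A_\oplus$, and folds one wall at a time. At the wall $M_{twin}(\alpha+r\xi)$ the folding element is taken in $U_{\alpha+r\xi}\cong(\k,+)$ (not in a root group over $\sho$), and the hypothesis that $y$ (resp.\ $C_y$) is twin-friendly with $x$ is used exactly once per step to decide which of the two half-twin-apartments $D_{twin}(\pm(\alpha+r\xi))$ contains $y$ (resp.\ $C_y$); the element of $U_{\alpha+r\xi}$ then fixes that whole half-twin-apartment in \emph{both} $\shi_\oplus$ and $\shi_\ominus$. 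It is this local, one-wall-at-a-time choice — keeping both halves of the twin apartment under control at every step — that your global $u\in U^{++}_\sho$ cannot emulate, and it is why the gallery argument closes while the decomposition argument stalls.
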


\begin{NB} We may replace the local chambers by local facets or preordered segment germs.
\end{NB}

\begin{proof} We are easily reduced to prove that, if $(x,y)$ (\resp $(x,C_{y})$) is twin friendly, then $(C_{x},{y})$ (\resp $(C_{x},C_{y})$) is  twin-friendly.
And we may suppose $x\in {\A_{{\oplus}}}$ and $y\in {\A_{{\ominus}}}$ (\resp $C_{y}\subset {\A_{{\ominus}}}$).
Let $C_{1}$ be a local chamber in ${\A_{{\oplus}}}$ at $x$, with the same sign as $C_{x}$ and $(C_{1},C_{2},\ldots,C_{n}=C_{x})$ be a gallery of local chambers (in the tangent space $\sht_{x}({\SHI_{{\oplus}}})$).
We argue by induction on $n$, the case $n=1$ is clear and we are reduced to prove the case $n=2$: $C_{1}$ and $C_{x}$ are adjacent.
One writes $F$ the local panel common to $C_{1}$ and $C_{x}$.
If $F$ is in no wall, then $C_{x}\subset cl(C_{1})$ is in ${\A_{{\oplus}}}$, and we are done.
Otherwise $F$ is in a wall $M_{{\oplus}}(\qa+r)=M_{twin}(\qa+r\qx)\cap {\A_{{\oplus}}}$.
One of the two half-apartments $D_{twin}(\pm(\qa+r\qx))$ contains $y$ (\resp $C_{y}$), we may suppose it is $D_{twin}(\qa+r\qx)\supset {D_{{\oplus}}(\qa+r)}$.
Now there is an apartment $A$ of ${\SHI_{{\oplus}}}$  containing ${D_{{\oplus}}(\qa+r)}\cup C_{x}$ and $u\in {^{{\oplus}}U_{\qa,r}}$ such that $A=u.{\A_{{\oplus}}}$ (see \cite[1.4.3]{BPGR19} and \cite[5.7.7]{R16}).
Now $ {^{{\oplus}}U_{\qa,r+1}}$ fixes $u^ {-1}.C_{x}$ and $ {^{{\oplus}}U_{\qa,r}}=U_{\qa+r\qx}\times  {^{{\oplus}}U_{\qa,r+1}}$ (by \S \ref{1.5}).
So there is $u'\in  U_{\qa+r\qx}$ such that $C_{x}\subset u'.( {\A_{{\oplus}}})$.
As $U_{\qa+r\qx}\subset G_{twin}$ fixes $D_{twin}(\qa+r\qx)\cap{\SHI_{{\ominus}}}$, we are done.
\end{proof}

\subsection{Existence of an isomorphism fixing the intersection of two apartments }\label{ss_Existence_iso_fixing}

In this subsection, we prove that if $A$ and $B$ are twin apartments, then there exists $g\in G_{twin}$ such that $g.A=B$ and $g$ fixes $A\cap B$ (i.e, $g$ fixes $(A_\oplus\cap B_\oplus)\cup (A_{\ominus}\cap B_{\ominus})$) (see Theorem~\ref{thmIsomorphisms_fixing_twin_apartments}). This result is crucial in order to define a retraction centered at $C_\infty$ for example.

To that end, we begin by studying, for any place $\g v$ on $\shk$,  the properties of $G_{0_\g v}\cap U^+U^-N$, where $G_{0_\g v}$ is the fixator in $G$ of $0_{\g v}\in \shi_\g v$. We then  deduce a description of $G_{pol}\cap U^{\pm}$. Using these results, we prove a weak version of Theorem~\ref{thmIsomorphisms_fixing_twin_apartments}: we prove it in the case where $A_{\oplus}\cap B_{\oplus}$ and $A_{\ominus}\cap B_{\ominus}$ contain a chamber based at vertices of type $0$ (i.e elements of $G.0_{\oplus}$ or $G.0_{\ominus}$). We then deduce the theorem.

\subsubsection{Intersections of $G_{0_\g v}$ (fixator of $0_{\g v}$ in $G$) with $U^+U^-N$ or $U^+U^-$}\label{3.4}

\par Let $\g v$ be a place on $\shk$ with associated valuation $\qo$. We work in $\shi_{\g v}$. 

{
\par One defines $Q^\vee_{\R,+}=\oplus_{i=1}^\ell\, \R_{\geq0}\qa_{i}^\vee\subset \A_{\g v}$ and, for $\qm={\sum_{i=1}^\ell}\, a_{i}\qa_{i}^\vee$, $\mathrm{ht}(\qm)={\sum_{i=1}^\ell}\, a_{i}$. 
One also chooses an element $\qz\in C\zv_{f}\cap Y\subset \A_{\g v}$.

\par The action of $T=\g T(\shk)$ on $\A_{\g v}$ is given by translations.
More precisely $t\in T$ acts by the translation $\qn(t)=\qn_{\qo}(t)$ of vector $\qn(t)=\qn_{\qo}(t)\in \A_{\g v}=Y\otimes\R$ given by: $\chi(\qn(t))=-\qo(\chi(t))$ for any $\chi\in X$.
In particular $\qn(\qp_{\g v}^\ql)=-\ql$ (if $\qp_{\g v}$ is a uniformizing parameter for $\qo_{}$).

\par We define $T_{\qo}(Q^\vee_{\R,+}):=\qn_{\qo}^ {-1}(Q^\vee_{\R,+})$.

\begin{lemm*} 1) $(U^+U^-N)\cap G_{0_\g v}\subset U^+U^-T_{\qo}(Q^\vee_{\R,+})W\zv$ and $(U^+U^-T)\cap G_{0_{\g v}}\subset U^+U^-T_{\qo}(Q^\vee_{\R,+})$.

\par 2) We have $(U^+U^-)\cap G_{0_{\g v}}=(U^+\cap G_{0_{\g v}})(U^-\cap G_{0_{\g v}})=U_{0_{\g v}}^+U_{0_{\g v}}^-$.
\end{lemm*}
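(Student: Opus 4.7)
\emph{Plan.} For part 1, I take $g = uvn \in U^+U^-N \cap G_{0_{\g v}}$ and decompose $n = t\tilde w$, where $t \in T$ and $\tilde w \in N$ is the canonical lift of some $w \in W\zv$ fixing the special point $0_{\g v}$. Then $\tilde w \cdot 0_{\g v} = 0_{\g v}$, so the condition $g \cdot 0_{\g v} = 0_{\g v}$ rewrites as
\[
 v \cdot \nu(t) \;=\; u^{-1} \cdot 0_{\g v},
\]
where $\nu(t)=t\cdot 0_{\g v}\in \A_{\g v}$. Apply the retraction $\rho_{-\infty}:\SHI_{\g v}\to\A_{\g v}$ centered at $\g Q_{-\infty}$. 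Since $v\in U^-\subset G_{\g Q_{-\infty}}$ by \ref{3.1}, the apartment $v\cdot\A_{\g v}$ still contains $\g Q_{-\infty}$ and $v^{-1}$ realizes the retraction isomorphism fixing the common part, so the left-hand side retracts to $\nu(t)$. For the right-hand side, I invoke the standard positivity (Hecke-path) property of masures: for any $u'\in U^+$ and $x\in\A_{\g v}$ one has $\rho_{-\infty}(u'\cdot x)\in x + Q^\vee_{\R,+}$. Applied to $u'=u^{-1}$ at $x=0_{\g v}$, this gives $\rho_{-\infty}(u^{-1}\cdot 0_{\g v})\in Q^\vee_{\R,+}$, hence $\nu(t)\in Q^\vee_{\R,+}$, i.e. $t\in T_\qo(Q^\vee_{\R,+})$. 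This proves the first inclusion; the refinement with $T$ in place of $N$ is the special case $\tilde w=1$ of the same argument.

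For part 2, I avoid any further retraction argument and use the Birkhoff decomposition directly. Let $g=uv\in U^+U^-\cap G_{0_{\g v}}$. By \ref{3.3} applied at the special point $0_{\g v}$,
\[
 G_{0_{\g v}}\;=\; U^+_{0_{\g v}}\, U^-_{0_{\g v}}\, N_{0_{\g v}}\qquad\text{with}\qquad N_{0_{\g v}}=T_0\cdot \widetilde{W\zv},
\]
where $\widetilde{W\zv}$ denotes the lift of $W\zv$ in $N$ fixing $0_{\g v}$. Writing $g=u_0v_0 t_0\tilde w_0$ accordingly with $u_0\in U^+_{0_{\g v}}$, $v_0\in U^-_{0_{\g v}}$, $t_0\in T_0$ and $\tilde w_0$ a lift of $w_0\in W\zv$, I rearrange by conjugating $v_0$ across $t_0$ and $\tilde w_0$ (using that $T$ normalizes $U^-$ and that $\tilde w_0$ conjugates $U^-$ into $\tilde w_0^{-1}U^-\tilde w_0$), bringing $g$ into a form supported on the $w_0$-cell of the Birkhoff decomposition $G=\bigsqcup_{w\in W\zv} U^+\tilde w T U^-$. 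But $g=u\cdot 1\cdot v$ visibly lies in the $(w=1)$-cell, so disjointness of Birkhoff cells forces $w_0=1$, and then the uniqueness within $U^+TU^-$ recalled in \ref{3.2} forces $t_0=1$, $u=u_0\in U^+_{0_{\g v}}$ and $v=v_0\in U^-_{0_{\g v}}$. The middle equality in the statement of part 2 follows directly from \ref{3.3}, which identifies $U^\pm\cap G_{0_{\g v}}$ with $U^\pm_{0_{\g v}}$.

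The main obstacle in this plan is the positivity statement $\rho_{-\infty}(U^+\cdot x)\subset x+Q^\vee_{\R,+}$ in the Kac-Moody masure setting: geometric in the Bruhat-Tits case, it genuinely uses the Hecke-path analysis of masures (going back to Gaussent-Rousseau) in general. Once it is in hand, part 1 is essentially one line of retraction calculus and part 2 is purely formal, relying only on the decompositions proved in \ref{3.2}-\ref{3.3} together with the uniqueness of the Birkhoff decomposition on the $U^+TU^-$ cell.
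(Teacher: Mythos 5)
Your treatment of part 1 is correct and is essentially the paper's argument, just packaged differently: the point $u^{-1}\cdot 0_{\g v}$ you retract is exactly the point $x=u^-\cdot\nu(t)$ considered in the paper (they are equal via the relation $v\cdot\nu(t)=u^{-1}\cdot 0_{\g v}$), and your ``positivity property'' $\rho_{-\infty}(u'\cdot y)\in y+Q^\vee_{\R,+}$ for $u'\in U^+$ is precisely the comparison of the two retractions $\rho_{\pm\infty}$ invoked from H\'ebert's work, since $\rho_{+\infty}(u'\cdot y)=y$.

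Part 2, however, has a genuine gap. The rearranging step does not put $g=u_0v_0t_0\tilde w_0$ into Birkhoff form: conjugating $v_0$ across $\tilde w_0$ lands in $\tilde w_0^{-1}U^-\tilde w_0$, which is \emph{not} contained in $U^-$ when $w_0\neq 1$ (it contains positive root groups $U_{-w_0^{-1}\alpha}$ for those $\alpha\in\Phi^+$ with $w_0^{-1}\alpha<0$). So the product is not ``supported on the $w_0$-cell'' in any sense to which Birkhoff disjointness applies. And the phenomenon you would need to rule out really does happen: left- or right-multiplication by unipotents changes the Birkhoff cell. For instance in $\mathrm{SL}_2$,
\[
\begin{psmallmatrix}1&0\\ b^{-1}&1\end{psmallmatrix}
\begin{psmallmatrix}0&b\\ -b^{-1}&0\end{psmallmatrix}
\begin{psmallmatrix}1&0\\ b^{-1}&1\end{psmallmatrix}
=\begin{psmallmatrix}1&b\\ 0&1\end{psmallmatrix}\in U^+,
\]
so an element of $U^-\tilde s\,U^-$ (with $\tilde s$ a nontrivial Weyl lift) lies in the trivial Birkhoff cell $U^+\cdot 1\cdot U^-$. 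Hence the equality $uv=u_0v_0t_0\tilde w_0$ alone, plus disjointness of Birkhoff cells, does not force $w_0=1$. The paper's proof of part 2 in fact uses masure geometry again: it sets $x=u^-\cdot 0_{\g v}$, computes $\rho_{+\infty}(x)=\rho_{-\infty}(x)=0_{\g v}$, and then invokes \cite[Corollary 4.4]{He17}, which asserts that a point whose retractions from the two opposite sector germs coincide must lie in $\A_{\g v}$. That convexity-type statement is the essential non-formal input; there is no purely Birkhoff-theoretic shortcut of the kind you proposed.
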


\begin{proof}
1) Let $u^+\in U^+,u^-\in U^-$ and $n\in N$ be such that $u^+u^-n\in G_{0_{\g v}}$.
We write $n=t\tilde w$, with $t\in T$ and $\tilde w$ any representative of $w\in W\zv=N/T$ fixing $0_{\g v}$ (\eg $\tilde w\in \g N(\k)$). So $u^+u^-t\in G_{0_{\g v}}$.
We write $\qm=t.0_{\g v}\in \A_{\g v}$ (\ie $\qm=\qn(t)\in \A_{\g v}$).
We consider the retractions $\qr_{\pm\infty}$ of $\SHI$ onto $\A_{\g v}$ with center $\g Q_{\pm\infty}=germ_{\infty}(\pm C\zv_{f})$.
Now $x:=u^-t(0_{\g v})=u^-(\qm)$ satisfies $\qr_{-\infty}(x)=\qm$ and $\qr_{+\infty}(x)=0_{\g v}$ (as $u^+(x)=0_{\g v}$).
By \cite [7.6.1]{Heb18f}=\cite [6.5.1]{Heb18e} or \cite[3.1]{He17}, one has $-\qm\in-Q^\vee_{\R,+}$, so $\qn(t)=\qm\in Q^\vee_{\R,+}$ and $t\in T_\omega(Q^\vee_{\R,+})$.

2) Let $u^+\in U^+,u^-\in U^-$ be such that $u^+u^-\in G_{0_{\g v}}$. Let $x=u^-.0_{\g v}$. Then we have $\rho_{-\infty}(x)=0_{\g v}$ and $\rho_{+\infty}(x)=u^+u^-.0_{\g v}=0_{\g v}$, since $\rho_{+\infty}(x)$ is the unique element of $U^+.x\cap \A_{\g v}$.  Using  \cite[Corollary 4.4]{He17}, we deduce $x\in \A_{\g v}$, and hence $x=\rho_{-\infty}(x)=u^-.0_{\g v}=0_{\g v}=u^+u^-.0_{\g v}$, which proves the lemma. 
\end{proof}

\subsubsection{Application to $G_{pol}$}\label{3.5}

\par We consider now all the places of $\shk$ and the associated valuations.

\medskip
\par We are first looking at $U^\pm\cap G_{pol}=:U^\pm_{pol}\supset U^\pm_{twin}$.

\par From \ref{3.1}.2 we know that, for $\qo=\qo_{\g v}$, $\g v\neq \oplus,\ominus$, $U^ {ma+}_{0_{\g v}}=\prod_{\qa\in\QD^+}\, X_{\qa}(\g g_{\qa,\Z}\otimes \shk_{\qo\geq 0})$, where $\shk_{\qo\geq 0}=\{x\in\shk \mid \qo(x)\geq0 \}=\sho_{\g v}$ and $U^ {pm+}_{0_\g v}=U^ {ma+}_{0_{\g v}}\cap G$ is the fixator of $0_{\g v}$ in $U^+$ for the action on $\SHI_{\qo}$ (\cf \ref{3.1}.3).
As the product decomposition of $U^ {ma+}$ is unique (\cf \ref{2.1}) and ${\sho}=\cap_{\g v\neq\oplus,\ominus}\, \shk_{\qo\geq0}$, one gets:
 \qquad\qquad $$U^\pm\cap G_{pol}=(\prod_{\qa\in\QD^+}\, X_{\qa}(\g g_{\qa,\Z}\otimes {\sho}))\cap G.$$

\par And clearly, if $\QO\subset{\SHI_{\g v}}$ ($\g v=\oplus$ or $\ominus$), its fixator in $U^\pm\cap G_{pol}$ is: 
$$U^\pm(\QO)\cap G_{pol}=(\prod_{\qa\in\QD^+}\, X_{\qa}(\g g_{\qa,\Z}\otimes {\sho}_{\qo\geq f_{\QO}(\qa)}))\cap G.$$
where $\sho_{\qo\geq f_{\QO}(\qa)}=\{ x\in{\sho} \mid \qo(x)\geq f_{\QO}(\qa)\}$. 

\par One may also write a formula for $U^\pm({\QO_{\oplus}}\cup{\QO_{\ominus}})\cap G_{pol}$ when ${\QO_{\oplus}}\subset {\SHI_{\oplus}}$, ${\QO_{\ominus}}\subset {\SHI_{\ominus}}$.

\subsubsection{A particular case of Theorem~\ref{thmIsomorphisms_fixing_twin_apartments}}\label{3.7}

\par 1) We may include $\qa_{1}^\vee,\ldots,\qa_{\ell}^\vee$ in a $\Q-$basis of $Y\otimes\Q$.  
So, taking a ``dual basis'' there is $(\chi_{1},\ldots,\chi_{d})\in X^d$ that is  an $\R-$basis of $\A^*$ (\ie a $\Q-$basis of $X\otimes\Q$) and satisfies $\chi_{i}(\qa_{j}^\vee)=m_{i}\qd_{i,j}$ for $1\leq i\leq d, 1\leq j\leq\ell$ with $m_{i}\in\N_{>0}$.
Actually in the simply connected case (\ie when $\oplus_{i=1}^\ell\Z\qa_{i}^\vee$ is a direct factor in $Y$), one may suppose that $(\chi_{1},\ldots,\chi_{d})$   is a $\Z-$basis of $X$ and $m_{i}=1$.
We have $Q^\vee_{\R,+}=\{ x\in \A \mid \chi_{i}(x)\geq0 \text{\ for\ }1\leq i\leq\ell ; \chi_{i}(x)=0 \text{\ for\ }i>\ell\}$. 
And for $\qm=\sum_{i=1}^\ell a_{i}\qa_{i}^\vee$, we have $a_{i}=\chi_{i}(\qm)/m_{i}$ and $\mathrm{ht}(\qm)=\sum_{i=1}^\ell \chi_{i}(\qm)/m_{i}$ (notation of \ref{3.4}).

\par 2) Let $\g v$ be a place on $\shk$  (typically $\g v\neq \oplus,\ominus$),  and $\omega=\omega_{\g v}$.  
 We write $\qn_{\qo}$ the action of $T$ on $\A_{\g v}\subset \SHI_{\g v}$ associated  $\g v$ and $T_{\qo}(Q^\vee_{\R,+})=\qn_{\qo}^ {-1}(Q^\vee_{\R,+})$.
As $\chi_{i}(\qn_{\qo}(t))=-\qo(\chi_{i}(t))$, we have $\qn_{\qo}(t)=-\sum_{i=1}^d\,\frac{\qo(\chi_{i}(t))}{m_{i}}\qa_{i}^\vee$ for any $t\in T$.
So $t\in T_{\qo}(Q^\vee_{\R,+})\iff \qo(\chi_{i}(t))\leq0$ for $1\leq i\leq\ell$ and $\qo(\chi_{i}(t))=0$ for $i>\ell$. And then $\mathrm{ht}(\qn_{\qo}(t))=-\sum_{i=1}^d\,\frac{\qo(\chi_{i}(t))}{m_{i}}$.

\par 3) Let us now consider $u^+\in U^+, u^-\in U^-$ and $t\in T$ such that $u^+u^-t\in G_{pol}$ (actually by the proof of Lemma \ref{3.4}.1, the study of $U^+U^-N\cap G_{pol}$ may be reduced to this case).
By Lemma \ref{3.4}.1, we have then $t\in T_{\qo}(Q^\vee_{\R,+})$, $\forall \qo\neq\qo_{\oplus},\qo_{\ominus}$.
So $\qo(\chi_{i}(t^ {-1}))\geq0$ for $1\leq i\leq\ell$ and $\qo(\chi_{i}(t^ {-1}))=0$ for $i>\ell$. 
This means that $\chi_{i}(t^ {-1})\in\sho$ for $1\leq i\leq\ell$ and $\chi_{i}(t^ {-1})\in\sho^*$  for $i>\ell$.

\begin{lemm}\label{4.3}  Let $C_{x}\subset \SHI_{{\ominus}}$ and  $C_{y}\subset \SHI_{{\oplus}}$ be local chambers with respective vertices $x$ and $y$.
We suppose $x$ and $y$ of type $0$, i.e they are conjugated by $G$ to $0_{\ominus}$ and $0_{\oplus}$ respectively .
We consider two twin apartments $A_{1},A_{2}\in{\sha_{twin}}$ containing  $C_{x}\cup C_{y}$.
Then there is $g\in G_{twin}$ fixing $C_{x}$ and $C_{y}$ such that $A_{2}=g.A_{1}$.
\end{lemm}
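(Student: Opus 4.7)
The plan is to normalize the position of $A_1$, $x$, and $C_x$ using the action of $G_{twin}$, then apply Proposition~\ref{2.11} to each of the two masures separately, and finally analyze the obstruction in $N_{twin}$ using the type-$0$ hypothesis.

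First, by Proposition~\ref{4.1} and the $G_{twin}$-transitivity on $\sha_{twin}$ (see \S\ref{2.6}), we may translate so that $A_1=\A_{twin}$, whence $C_x\subset \A_\ominus$ and $C_y\subset \A_\oplus$. Since $x$ is of type $0$, the computation of \S\ref{1.6} and \S\ref{1.8} shows that the $T_{twin}$-orbit of $0_\ominus$ in $\A_\ominus$ is exactly $Y+0_\ominus$, i.e.\ all type-$0$ vertices of $\A_\ominus$; applying an appropriate $\qp^\lambda\in T_{twin}$ followed by an element of $\g N(\k)$ (which fixes both $0_\oplus$ and $0_\ominus$), we may assume $x=0_\ominus$ and $C_x=C_\infty$. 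Note $y$ and $C_y$ still range over type-$0$ data in $\A_\oplus$.

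Next, I would apply Proposition~\ref{2.11} twice. Applied to $\shi_\ominus$ with $\shr=\sho$, it produces $g_\ominus\in G_{twin}$ with $g_\ominus.A_1^\ominus=A_2^\ominus$ fixing $A_1^\ominus\cap A_2^\ominus\supset C_x$; by the bijection $\sha_{twin}\simeq \sha_\ominus(G_{twin})$ recalled in \S\ref{2.6}, this $g_\ominus$ automatically sends the whole twin apartment $A_1$ onto $A_2$. Symmetrically one gets $g_\oplus\in G_{twin}$ with $g_\oplus.A_1=A_2$ fixing $C_y$. The element $n:=g_\ominus^{-1}g_\oplus$ stabilizes $\A_{twin}$ and therefore lies in $N\cap G_{twin}=N_{twin}$ (\S\ref{1.7}). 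If $n$ can be shown to fix $C_x$, then $g_\ominus\cdot n=g_\oplus$ fixes $C_x$ as well as $C_y$, and $g:=g_\oplus$ is the element sought.

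Thus the whole task reduces to analyzing $n\in N_{twin}$. Write $\phi_\ominus(n),\phi_\oplus(n)\in W=W\zv\ltimes Y$ for the actions on the two standard apartments (\S\ref{1.8}): their linear parts $w\zv\in W\zv$ coincide, and their translation parts are opposite. The fact that $g_\oplus$ fixes $C_y$ and acts on $\A_\ominus$ via $g_\ominus\circ \phi_\ominus(n)$ imposes that $\phi_\oplus(n)$ sends $C_y$ to $g_\ominus^{-1}.C_y$, where the latter is a local chamber of $\A_\oplus$ with type-$0$ vertex $g_\ominus^{-1}.y$. Hence the hard part is to show that $g_\ominus$ may be chosen so that $g_\ominus^{-1}.C_y=C_y$ (equivalently, so that $n$ fixes $C_x$): this is the main obstacle. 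Here the type-$0$ hypothesis is essential, because local chambers at type-$0$ vertices have trivial stabilizer in $W$, so once $\phi_\oplus(n)$ fixes $C_y$ we automatically get $w\zv=1$ and the translation parts vanish, forcing $n\in T(\k)$ which acts trivially on $\A_{twin}$. To arrange the compatibility $g_\ominus^{-1}.C_y=C_y$ I would refine the approximation step in the proof of Proposition~\ref{2.11}/Theorem~\ref{thm_Bounded_sets_R_friendly} so that it respects the additional local chamber $C_y$: concretely, use \S\ref{3.1} to write the element $g'$ of $G$ fixing $A_1^\ominus\cap A_2^\ominus$ as a product of root group elements acting on $C_y$ through integrality constraints coming from \S\ref{3.4}, \S\ref{3.5}, \S\ref{3.7}, and approximate the root parameters by elements of $\sho$ while keeping them inside the $\omega_\oplus$- and $\omega_\ominus$-integral subgroups that fix $C_y$.
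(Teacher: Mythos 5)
Your normalization (reducing to $A_1=\A$, $x=0_\ominus$, $C_x=C_\infty$) and your use of Proposition~\ref{2.11} on each side to produce $g_\ominus\in G_{twin}\cap G_{C_x}$ and $g_\oplus\in G_{twin}\cap G_{C_y}$ with $g_\ominus.A_1=g_\oplus.A_1=A_2$ match the paper's set-up, as does identifying $n=g_\ominus^{-1}g_\oplus\in N_{twin}$ as the obstruction. But the proof stops exactly where the real work begins. The sentence ``I would refine the approximation step in the proof of Proposition~\ref{2.11}/Theorem~\ref{thm_Bounded_sets_R_friendly} so that it respects the additional local chamber $C_y$'' is asking to produce a single element of $G_{twin}$ fixing both $C_x$ and $C_y$ and carrying $A_1$ to $A_2$ — i.e.\ it re-states the lemma rather than proving it. There is no reason an approximation designed inside $\SHI_\ominus$ (near $C_x$) can be controlled simultaneously near $C_y\subset\SHI_\oplus$: the root-group elements one approximates to fix a bounded subset of $\A_\ominus$ have no a priori constraint on their effect on $C_y$, and the two masures carry opposite valuations, so making a coefficient small for $\omega_\ominus$ typically makes it large for $\omega_\oplus$. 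This is precisely the tension the paper's proof must (and does) resolve by a completely different route.

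The paper does \emph{not} try to choose $g_\ominus$ cleverly. It takes arbitrary $^+g,\,{^-g}$ from Proposition~\ref{2.11}, writes $n={^-g}^{-1}{^+g}=t_2\tilde w$, and then spends the whole argument pinning down $t_2$ and $w$. The decisive input is that $G_{twin}\subset G_{pol}$ together with the parallel ``Birkhoff-type'' constraints from \S\ref{3.4}, \S\ref{3.5}, \S\ref{3.7}: the element $g_2=u_2^+u_2^-t_2$ lies in $G_{pol}$ and fixes $0_\oplus$, which forces $\chi_i(t_2^{-1})\in\sho$ and then $\chi_i(t_2^{-1})\in\k[\qp]$ (step (a)); combining with the fact that $u_2^\pm$ fix $C_x$ (step (b)) one squeezes $\omega_\ominus(\chi_i(t_2))=0$ and hence $\chi_i(t_2)\in\k$, so $t_2\in\g T(\k)$ (step (c), (d)); finally $w=1$ is obtained by a separate retraction argument in the tangent twin building at $0_\oplus$ (step (e)). None of this appears in your proposal, and it is not replaceable by an appeal to trivial stabilizers. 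Incidentally, your stated reason for the type-$0$ hypothesis — that local chambers at type-$0$ vertices have trivial $W$-stabilizer — is not the point: every local chamber has trivial $W$-stabilizer. The actual role of the type-$0$ hypothesis is that the fixator in $T$ of $0_{\g v}$ is $\g T(\sho_{\g v})$, which is what makes the integrality constraints on $\chi_i(t_2)$ in steps (a)–(c) bite.

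In short: the reduction is right, the identification of $n$ is right, but the core of the lemma — showing $n$ acts trivially — is missing and replaced by a circular appeal to a strengthened approximation that is not available.
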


 


\begin{proof} The action of $G_{twin}$ permutes transitively the twin apartments and the action of the stabilizer $N_{twin}$ of $\A$ in $G_{twin}$ permutes transitively the local chambers in $\A_{\oplus}$, of a given sign and with a vertex of type $0$.
So one may suppose $(A_{1},A_{2})=(\A,A)$, $y=0_{\oplus}$, $C_{y}=C_{\oplus}\subset{\A_{{\oplus}}}, C_{x}\subset{\A_{{\ominus}}}$, both contained in $A\cap\A$. 
Then, by Proposition \ref{2.11} and \S \ref{2.6}, there exist $^+g\in G_{twin}\cap G_{C_{\oplus}}$ and $^-g\in G_{twin}\cap G_{C_{x}}$ such that $A={^+g}\A={^-g}\A$. We would like that ${^+g}={^-g}$ or, more generally, that ${^+g}={^-g}t$ with $t\in T$ fixing $\A$.
But from ${^+g}\A={^-g}\A$ and $^+g,{^-g}\in G_{twin}$, we get only ${^+g}={^-g}n$, with $n\in N_{twin}=N\cap G_{twin}$. 

\par One writes $^+g=u_{1}^+u_{1}^-t_{1}$ and $^+g={^-gn}=u_{2}^+u_{2}^-t'_{2}n=u_{2}^+u_{2}^-n_{2}$, with $u_{1}^+,u_{2}^+ \in U^+$, $u_{1}^-,u_{2}^- \in U^-$, $t'_{2},t_{1}\in T$, $n_{2}=t'_{2}n\in N$ and moreover $u_{1}^+u_{1}^-t_{1}\in G_{twin}\cap G_{C_{\oplus}}$ (so $u_{1}^+,u_{1}^-\in G_{C_{\oplus}}$ and $t_{1}$ fixes $\A_{{\oplus}}$ by Proposition \ref{3.2}) and $u_{2}^+u_{2}^-t'_{2}={^-g} \in  G_{C_{x}}\cap G_{twin}$ (so $u_{2}^+,u_{2}^- \in  G_{C_{x}}$ and $t'_{2}$ fixes $\A_{{\ominus}}$ by Proposition \ref{3.2}).
We want to prove that $n_{2}$ fixes $C_{x}$ and $C_{y}$.

\par One writes $n_{2}=t_{2}\tilde w$ with $t_{2}\in T$ and $\tilde w$ any representative of $w\in W\zv=N/T$ in $\g N(\k)\subset G_{twin}$.
In particular $\tilde w$ fixes $0_{\g v}$ in any masure $\SHI_{\g v}$.

\par (a) But $^+g=u_{1}^+u_{1}^-t_{1}=u_{2}^+u_{2}^-t_{2}\tilde w$ is in $G_{pol}$ and fixes $0_{\oplus}$ in $\SHI_{{\oplus}}$, so $g_{2}:=u_{2}^+u_{2}^-t_{2}$ is in $G_{pol}$. 
By {\S \ref{3.7}}.3, we get  $\chi_{i}(t_{2}^ {-1})\in\sho$, $\forall i=1,\ldots,d$.
Moreover  $g_{2}={^+g}\widetilde w^ {-1}$ fixes $0_{\oplus}$ in $\SHI_{{\oplus}}$, so $\qo_{\oplus}(\chi_{i}(t_{2}^ {-1}))\geq0$ (by \S \ref{3.4}.1 and \ref{3.7}.2), $\chi_{i}(t_{2}^ {-1})\in\k[\qp]$ and $\qo_{\ominus}(\chi_{i}(t_{2}^ {-1}))\leq0$.

\par(b) Now $u_{2}^+$ and $u_{2}^-$ fix $C_{x}\subset{\SHI_{{\ominus}}}$, so one may write $u_{2}^+u_{2}^-=u_{3}^-u_{3}^+t_{3}$ with $u_{3}^-\in U^-,u_{3}^+\in U^+,t_{3}\in T$, all fixing $C_{x}$ (by \ref{3.2}).
In particular $\qo_{{\ominus}}(\chi_{i}(t_{3}))=0$, $\forall i=1,\ldots,d$ by \S {\ref{3.7}.2} (formula for $\qn_{\qo}(t)$).

\par (c) But $g_{2}=u_{3}^-u_{3}^+t_{3}t_{2}\in G_{pol}$, so, by \S \ref{3.7}.3 and \ref{3.4}.b, $\chi_{i}(t_{3}t_{2})\in\sho$, $\forall i=1,\ldots,d$.
We also know that $g_{2}$ fixes $0_{\oplus}$ in $\SHI_{{\oplus}}$.
So, by \S \ref{3.4}.b, $t_{3}t_{2}\in T_{\qo_{{\oplus}}}(-Q^\vee_{\R,+})$, \ie (by \S \ref{3.7}.2) $\qo_{{\oplus}}(\chi_{i}(t_{3}t_{2}))\geq0$.
We deduce from this that $\chi_{i}(t_{3}t_{2})\in\k[\qp]$, hence $\qo_{{\ominus}}(\chi_{i}(t_{3}t_{2}))\leq0$.
But $\qo_{{\ominus}}(\chi_{i}(t_{3}))=0$ (by (b) above), so $\qo_{{\ominus}}(\chi_{i}(t_{2}))\leq0$.
Comparing with (a), we get $\qo_{{\ominus}}(\chi_{i}(t_{2}^ {\pm1}))=0$.
But $\chi_{i}(t_{2}^ {-1})\in\k[\qp]$ by (a), so $\chi_{i}(t_{2})\in\k$.
Hence $t_{2}$ fixes $\A_{{\ominus}}$ and $\A_{\oplus}$, $n_{2}=t_{2}\widetilde w$  fixes $0_{{\ominus}}$ and $0_{\oplus}$.

\par (d) Now $\g T=\g{Mult}^d$, we write $\qth_{j}$ the $j^ {th}$ coordinate map.
As $\chi_{1},\ldots,\chi_{d}\in X$ is a $\Q-$basis of $X\otimes_{\Z}\Q$, we have $n_{j}\in\Z_{>0}$ and $b_{j,i}\in\Z$ with $n_{j}\qth_{j}=\sum_{i}b_{j,i}\chi_{i}$.
So $\qth_{j}(t_{2})^ {n_{j}}=\prod_{i}\chi_{i}(t_{2})^ {b_{j,i}}\in\k$.
As $\qth_{j}(t_{2})\in\shk=\k(\qp)$, we get $\qth_{j}(t_{2})\in\k$, \ie $t_{2}\in\g T(\k)\subset G_{twin}$ and $n_{2}=t_{2}\widetilde w \in G_{twin}$.
So $u_{2}^+u_{2}^-={^+g}n_{2}^ {-1}\in G_{twin}$ and $t'_{2}\in G_{twin}$; one may replace $^-g=u_{2}^+u_{2}^-t'_{2}$ by $u_{2}^+u_{2}^-$ \ie suppose $t'_{2}=1$.
Symmetrically we get also $t_{1}\in G_{twin} \cap \g T(\shk)$ and one may replace $^+g$ by $u_{1}^+u_{1}^-$ \ie suppose $t_{1}=1$.


\par (e) 
We argue now in the tangent twin building $\sht_{0_{\oplus}}({\SHI_{{\oplus}}})$ and use that $^+g=u_{1}^+u_{1}^-=u_{2}^+u_{2}^-t_{2}\widetilde w$ with $u_{1}^\pm$ fixing $C_{\oplus}$, $t_{2}$ fixing $\A_{\oplus}$. 
But $u_{2}^+u_{2}^-={^+g}(t_{2}\widetilde w)^ {-1}$ fixes $0_{\oplus}$ in $\SHI_{{\oplus}}$, and so do $u_{2}^+$, $u_{2}^-$ by \S \ref{3.4}.2.
Hence $u_{2}^+$ fixes $C_{\oplus}=germ_{0_{\oplus}}(C\zv_{f})$ and $u_{2}^-$ fixes $C_{0}^-:=germ_{0_{\oplus}}(-C\zv_{f})\subset\A_{\oplus}$.
We have $C_{0}^-=u_{2}^-.C_{0}^-=(u_{2}^+)^ {-1}u_{1}^+u_{1}^-(t_{2}\tilde w)^ {-1}.C_{0}^-=(u_{2}^+)^ {-1}u_{1}^+u_{1}^-\tilde w^ {-1}.C_{0}^-$.
We consider now the retraction $\qr^+$ of $\sht_{0_{\oplus}}({\SHI_{{\oplus}}})$ onto $\sht_{0_{\oplus}}({\A_{{\oplus}}})$ with center $C_{\oplus}$.
As $u_{2}^+,u_{1}^+$ and $u_{1}^-$ fix $C_{\oplus}$, we get $C_{0}^-=\qr^+(C_{0}^-)=\widetilde w^ {-1}.C_{0}^-$.
We have proved that the class $w$ of $\widetilde w$ in $W\zv$ is trivial.
We could have taken $\widetilde w=1$ and then $n_{2}=t_{2}$ fixes $\A$ as expected.
\end{proof}

\subsubsection{Conclusion}

We now extend the result of Lemma~\ref{4.3} to arbitrary pairs $A,B$ of $\sha_{twin}$. We begin with the case where $A_{\oplus}\cap B_{\oplus}$ and $A_{\ominus}\cap B_{\ominus}$ have nonempty interior and then drop this condition.

\begin{lemm}\label{lemIsomorphisms_fixing_twin_apartments_nonempty_interior}
Let $A,B\in \sha_{twin}$ be such that $A_{\oplus}\cap B_{\oplus}$ and $A_{\ominus}\cap B_{\ominus}$ have non-empty interior. Then there exists $g\in G_{twin}$ such that $g.A=B$ and $g$ fixes $A\cap B$ (i.e $g$ fixes pointwise $(A_{\oplus}\cap B_{\oplus})\sqcup (A_{\ominus}\cap B_{\ominus})$).
\end{lemm}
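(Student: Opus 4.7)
The approach is to reduce to Lemma~\ref{4.3} by producing local chambers at type-$0$ vertices lying in $A_{\oplus}\cap B_{\oplus}$ and $A_{\ominus}\cap B_{\ominus}$, and then to upgrade the conclusion that $g$ fixes those two chambers to the stronger statement that $g$ fixes all of $A\cap B$, via an affine-extension argument.

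First I would reduce, using the transitive action of $G_{twin}$ on $\sha_{twin}$, to the case $A=(\A_{\oplus},\A_{\ominus})$, so that the type-$0$ vertices in $A_{\oplus}$ are exactly the lattice $Y\subset\A_{\oplus}$. The intersection $\A_{\oplus}\cap B_{\oplus}$ is enclosed (a finite intersection of half-apartments) with non-empty interior, hence of full affine dimension. I then locate a type-$0$ vertex $y$ in its closure together with a local chamber $C_y=germ_{y}(y+C\zv)$ at $y$ whose vectorial direction $C\zv$ points into the interior, ensuring $C_y\subset \A_{\oplus}\cap B_{\oplus}$; the analogous procedure produces a type-$0$ vertex $x\in \A_{\ominus}\cap B_{\ominus}$ and a local chamber $C_x\subset \A_{\ominus}\cap B_{\ominus}$. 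The existence of such $y$ relies on the fact that $Y$ generates $\A_{\oplus}$ affinely together with the full dimensionality of the intersection: starting from any type-$0$ vertex of $A_{\oplus}$ one moves through the affine structure toward the closure of $\A_{\oplus}\cap B_{\oplus}$ until a type-$0$ vertex of that closure is reached, from which a chamber germ entering the open interior exists because the latter is non-empty.

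Next, applying Lemma~\ref{4.3} to the twin apartments $A$ and $B$ together with the chambers $C_x,C_y$ yields $g\in G_{twin}$ satisfying $g.A=B$ and fixing $C_x\cup C_y$ pointwise. To conclude that $g$ fixes $A\cap B$ pointwise, observe that $g|_{A_{\oplus}}\colon A_{\oplus}\to B_{\oplus}$ is an affine isomorphism; since $A_{\oplus}\cap B_{\oplus}$ is full-dimensional, there is a unique affine isomorphism $\iota_{\oplus}\colon A_{\oplus}\to B_{\oplus}$ restricting to the identity on the intersection. Both $g|_{A_{\oplus}}$ and $\iota_{\oplus}$ are affine and fix $C_y$, and since $C_y$ contains an open subset of $A_{\oplus}\cap B_{\oplus}$ whose set of directions spans $\A_{\oplus}$ linearly, these two maps must agree on an open set and hence coincide. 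In particular $g$ acts as $\iota_{\oplus}$ on $A_{\oplus}$, so it fixes $A_{\oplus}\cap B_{\oplus}$ pointwise; the $\ominus$-side is identical.

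The delicate step is the first one: because $Y$ is a discrete lattice, the open interior of $\A_{\oplus}\cap B_{\oplus}$ can easily fail to contain a type-$0$ vertex, so one must genuinely use a boundary vertex together with a chamber germ pointing into the intersection, and these boundary-vertex choices must be made compatibly on the $\oplus$ and $\ominus$ sides so that $C_y$ and $C_x$ lie simultaneously in both $A$ and $B$, as required to invoke Lemma~\ref{4.3}.
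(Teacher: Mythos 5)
Your approach has a genuine gap at the first step. You claim that one can locate a type-$0$ vertex $y$ (i.e.\ a point in $G.0_{\oplus}$, equivalently a lattice point of $Y\subset\A_{\oplus}$) lying in $\A_{\oplus}\cap B_{\oplus}$ with a local chamber at $y$ pointing into the interior of the intersection. Although you flag the issue yourself, the fix you sketch (``use a boundary vertex together with a chamber germ pointing into the intersection'') does not close the gap: the corners of the polytope $\A_{\oplus}\cap B_{\oplus}$ are intersections of walls $M(\alpha+k)$ with $\alpha\in\Phi$, $k\in\Z$, and such vertices are special points but in general are \emph{not} in $Y$, hence not $G$-conjugate to $0_{\oplus}$. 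Already for a rank-$2$ root system the full-dimensional intersection of two apartments may contain no point of $Y$ at all. The assertion that ``$Y$ generates $\A_{\oplus}$ affinely'' does not give you density of $Y$, only that $Y$ spans, so it cannot force $Y$ to meet a fixed bounded convex set.

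The paper resolves exactly this obstruction by passing to the field extension $\shk^{(n)}=\k(\qp^{1/n})$, over which the associated masure has the same underlying affine space for the standard apartment but a refined cocharacter lattice $Y^{(n)}=\frac{1}{n}Y$; for $n$ large enough the intersection does contain a chamber based at a vertex of $\frac{1}{n}Y$, which is of type $0$ for $G^{(n)}$. Lemma~\ref{4.3} is then applied in $G^{(n)}_{twin}$, producing only an element $g_y\in G^{(n)}_{twin}$, not $G_{twin}$; this is why the paper cannot stop there and must run the subsequent descent argument (comparison of $g_y$ with an element $h_y\in G_{twin}$ supplied by Proposition~\ref{2.11}, identification of $g_y^{-1}h_y$ with an element of $N^{(n)}_{twin}$ fixing a local chamber, hence fixing $\A_{\oplus}$, and finally the symmetric treatment of the $\ominus$-side). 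Your ``affine-extension'' step for upgrading from fixing chambers to fixing the whole intersection is fine in itself and is in fact a cleaner route than the paper's descent \emph{when} the element $g$ produced already lies in $G_{twin}$; but because the element one actually obtains lives a priori in $G^{(n)}_{twin}$, that shortcut is not available, and the paper's longer descent is what makes the field-extension trick usable.
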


\begin{proof}
Using isomorphism of apartments, we may assume that $A=\A$. 
We fix an element of $y\in \A_{\ominus}\cap B_{\ominus}$.
  As $\A_{\oplus}\cap B_{\oplus}$ (\resp $\A_{\ominus}\cap B_{\ominus}$) has non-empty interior, there exists $n\in \N^*$ such that $\A_{\oplus}\cap B_{\oplus}$ (\resp $\A_{\ominus}\cap B_{\ominus}$) contains an element $C_{x}$ of $G_{twin}.(\frac{1}{n}Y+C_{\oplus })$ (\resp $C_{y}$ of $G_{twin}.(\frac{1}{n}Y+C_{\ominus })$).  
  Let $\shk^{(n)}=\k(\qp^{1/n})$, where $\qp^{1/n}$ is an indeterminate such that $(\qp^{1/n})^n=\qp$. Let $G^{(n)}=\g G(\shk^{(n)})$. We add an exponent $(n)$ when we consider an object corresponding to $G^{(n)}$ (for example we have $\shi_{\oplus}^{(n)},\shi_{\ominus}^{(n)}$, $G_{twin}^{(n)}$, $\A_{\oplus}^{(n)}$, $\ldots$).  We have $\shi_{\oplus}^{(n)}\supset  \shi_{\oplus}$ and $\shi_{\ominus}^{(n)}\supset  \shi_{\ominus}$. As an affine space, $\A_{\oplus}^{(n)}$ can be identified with $\A_{\oplus}$. 
However, it contains more walls, and we have $Y^{(n)}=\frac{1}{n}Y$. 
Therefore by Lemma~\ref{4.3} applied with $G^{(n)}_{twin}$ instead of $G_{twin}$, there exists $g_{y}\in G_{twin}^{(n)}$ fixing $C_{x}\cup C_{y}$ and such that $g_{y}.\A=B$. 
By Proposition~\ref{2.11}, there exists $h_y\in G_{twin}$ such that $h_{y}.\A_{\oplus}=B_{\oplus}$ (hence $h_{y}.\A=B$) and $h_{y}$ fixes $\A_{\oplus}\cap B_{\oplus}$.
 Then $g_{y}^{-1}h_{y}$ stabilizes $\A_\oplus$ and is an element of $G^{(n)}_{twin}$.  
 Therefore $g^{-1}_yh_y$ is an element of $N_{twin}^{(n)}$.
  Moreover $g^{-1}_yh_y$ fixes $C_{x}$ and thus $g^{-1}_yh_y$ fixes $\A_{\oplus}$. 
  Using \S \ref{1.7} we deduce that  $g^{-1}_yh_y$ fixes $\A_{\ominus}$.
   Hence $h_y$ fixes $(\A_{\oplus}\cap B_{\oplus})\sqcup C_{y}$.
 By Proposition \ref{2.11}, there exists $h_{x}\in G_{twin}$ such that $h_{x}.\A=B$ and $h_{x}$ fixes $\A_{\ominus}\cap B_{\ominus}$.
  So $h_{x}^ {-1}h_{y}$ stabilizes $\A_{\ominus}$ and fixes $C_{y}$: it is the identity on $\A_{\ominus}$. This  
  proves that $h_y$ fixes $\A_{\ominus}\cap B_{\ominus}$ and completes the proof of the lemma. 
\end{proof}

The following proposition corresponds to \cite[Proposition 2.9 1)]{R11} in the twin case.

\begin{prop}\label{propRou11_2.10}
Let $\g v\in \{\ominus,\oplus\}$.  Let $A_{\g v}$ be a twinnable apartment in the masure ${\SHI}_{\g v}$. Let $M$ be a wall of $A_{\g v}$ and $C$ be a (local) chamber of $\shi_\g v$ not in $A_{\g v}$, but dominating a (local) panel of $M$. Then there exist two twinnable apartments $A_{1,\g v}$ and $A_{2,\g v}$ of $\shi_{\g v}$ such that:\begin{enumerate}
\item $A_{1,\g v}$ and $A_{2,\g v}$ contain $C$,

\item $A_{1,\g {v'}}\cap A_{\g {v'}}$ and $A_{2,\g {v'}}\cap A_{\g {v'}}$ (\resp $A_{1,\g {v'}}\cap A_{\g {v'}}$ and $A_{1,\g {v'}}\cap A_{2,\g {v'}}$, $A_{2,\g {v'}}\cap A_{\g {v'}}$ and $A_{1,\g {v'}}\cap A_{2,\g {v'}}$) are two opposite half-apartments of $A_{\g {v'}}$ (\resp $A_{1,\g {v'}}$, $A_{2,\g {v'}}$) for both $\g {v'}\in \{\ominus,\oplus\}$.

\end{enumerate} 
\end{prop}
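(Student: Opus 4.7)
The plan is to exploit the product decomposition of root groups from \S\ref{1.5} in order to realize $A_{1,\g v}$ and $A_{2,\g v}$ as translates of $\A_\g v$ by elements of the ``twin-degree--$k$'' root groups $U_{\pm(\alpha+k\xi)}\subset G_{twin}$; the reason to single out these specific root groups is that they pointwise fix the twin half-apartments $D_{twin}(\pm(\alpha+k\xi))$, whose traces on $\A_{\oplus}$ and $\A_{\ominus}$ are opposite half-apartments bounded by (the twin counterpart of) $M$, so that both signs $\g v'\in\{\oplus,\ominus\}$ are handled simultaneously.

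First I would reduce, via the transitivity of $G_{twin}$ on $\sha_{twin}$ and the action of $N_{twin}$ on $\A$, to the case $A_\g v=\A_\g v$ and $M=\A_\g v\cap M_{twin}(\alpha+k\xi)$ for some $\alpha\in\Phi$, $k\in\Z$. Write $D_{\pm}:=\A_\g v\cap D_{twin}(\pm(\alpha+k\xi))$ for the two closed half-apartments of $\A_\g v$ bounded by $M$, let $P\subset M$ be the panel dominated by $C$, and let $C_{\pm}\subset\overline{D_{\pm}}$ be the two local chambers of $\A_\g v$ at the vertex of $P$ dominating $P$. The main construction is to find $u_1\in U_{\alpha+k\xi}\setminus\{1\}$ with $u_1.C_-=C$ and $u_2\in U_{-\alpha-k\xi}\setminus\{1\}$ with $u_2.C_+=C$, then set $A_{i,\g v}:=u_i.\A_\g v$, which is automatically twinnable since $u_i\in G_{twin}$ and contains $C$ by construction. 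The existence of $u_1$ follows exactly the pattern of the last lines of the proof of Proposition~\ref{4.1}: standard single-masure theory produces some $u\in{}^{\g v}U_{\alpha,k}$ with $C\subset u.\A_\g v$, and the abelian decomposition ${}^{\g v}U_{\alpha,k}={}^{\g v}U_{\alpha,k+1}\times U_{\alpha+k\xi}$ of \S\ref{1.5}, combined with the fact that ${}^{\g v}U_{\alpha,k+1}$ fixes the germ $C_-$ (it fixes a strict enlargement of $D_+$ that contains a neighbourhood of the vertex of $P$ on the $D_-$ side), writes $u=u''u_1$ with $u''.C_-=C_-$, so that $u_1.C_-=u.C_-=C$. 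The element $u_2$ is produced by the symmetric argument applied to the opposite root.

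To finish I would verify the four half-apartment identifications uniformly for both $\g v'\in\{\oplus,\ominus\}$. Since $u_1\in U_{\alpha+k\xi}$ pointwise fixes $D_{twin}(\alpha+k\xi)$, its trace on $\A_{\g v'}$ is a closed half-apartment contained in $A_{1,\g v'}\cap\A_{\g v'}$; equality comes from the simply transitive action of ${}^{\g v'}U_{\alpha,k}$ on apartments of $\shi_{\g v'}$ containing this half-apartment, which forces $u_1\neq 1$ to move the opposite open half-apartment off $\A_{\g v'}$. Symmetrically for $u_2$, so $A_{1,\g v'}\cap\A_{\g v'}$ and $A_{2,\g v'}\cap\A_{\g v'}$ are the two opposite closed half-apartments of $\A_{\g v'}$ bounded by $\A_{\g v'}\cap M_{twin}(\alpha+k\xi)$. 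Finally, $A_{1,\g v'}\cap A_{2,\g v'}$ contains this same wall together with the transported chamber $C$; the main obstacle is then to check that this intersection equals the unique closed half-apartment bounded by the wall on the $C$-side, which one resolves by applying (MA~II) of \S\ref{n1.3.1} to the pair $(A_{1,\g v'},A_{2,\g v'})$ and invoking the uniqueness of such a half-apartment inside the masure.
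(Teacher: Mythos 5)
Your construction of $u_1$ and $u_2$ is correct and in fact produces the same pair of apartments as the paper: there one uses \cite[Prop.~2.9~1)]{R11} to find an apartment $x_\alpha(y).\A_{\g v}$ through $D_{\g v}$ and $C$, then replaces $y$ by a monomial $z\in\k^*\qp^{\Z}$ with $\omega_{\g v}(y-z)>\omega_{\g v}(y)$, so that $A_{1,\g v}=x_\alpha(z).\A_{\g v}$ and $A_2=x_{-\alpha}(z^{-1}).\A$. Your $u_1$ is precisely this $x_\alpha(z)$, and the residue conditions $u_1.C_-=C=u_2.C_+$ actually force $u_2=x_{-\alpha}(z^{-1})$: in the rank-one residue at the panel, the parametrizations of chambers by $U_{\alpha+k\xi}\cong\k$ (off $C_+$) and by $U_{-\alpha-k\xi}\cong\k$ (off $C_-$) are related by inversion.

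The gap is in your final step, precisely for the sign $\g{v'}\neq\g v$: there is no chamber $C$ in $\shi_{\g{v'}}$, so ``(MA~II) plus uniqueness of the half-apartment on the $C$-side'' has nothing to anchor. For that sign you have only shown that $A_{1,\g{v'}}\cap A_{2,\g{v'}}$ is enclosed, contains the wall, and meets $\A_{\g{v'}}$ in exactly the wall; that does not by itself rule out the intersection reducing to the wall. What is still needed is the equality of half-apartments $u_1.\overline{\A_{\g{v'}}\setminus D_{\g{v'}}}=u_2.D_{\g{v'}}$ in $\shi_{\g{v'}}$, and this is exactly where the specific form $u_2=x_{-\alpha}(z^{-1})$ is used. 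The paper gets it by exhibiting $r:=x_{-\alpha}(-z^{-1})x_\alpha(z)x_{-\alpha}(-z^{-1})\in N_{twin}$, which acts on every $\A_{\g{v'}}$ as the reflection in $M_{twin}(\alpha+k\xi)$; then the identity $x_{-\alpha}(z^{-1})\,r=x_\alpha(z)\,x_{-\alpha}(-z^{-1})$, together with the fact that $x_{-\alpha}(-z^{-1})$ fixes $\overline{\A_{\g{v'}}\setminus D_{\g{v'}}}$, gives the required equality uniformly in $\g{v'}$. Your write-up should record this computation; (MA~II) alone does not supply it.
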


\begin{proof}
Using apartment isomorphisms, we may assume that $A_{\g v}=\A_{\g v}$. 
 Let $D_{\g v}$ be a half-apartment of $A_{\g v}$ delimited by $M$.   By \cite[Proposition 2.9 1)]{R11}, there exists an apartment $\tilde{B}_{\g v}$ of ${\SHI}_{\g v}$ containing $D_{\g v}$ and $C$.  
 By {\ref{n1.3.1}}, we can write $\tilde{B}_{\g v}=x_\alpha(y).\A_{\g v}$, for some $\alpha\in \Phi$ and $y\in \shk$, with $x_\alpha(y)$ fixing $D_{\g v}$.
  Let $z\in \k^* \qp^\Z$  be such that $\omega_{\g v}(y-z)>\omega_{\g v}(y)$. 

 Let $A_{1,\g v}=x_\alpha(z).\A_{\g v}$. Then \[\begin{aligned}A_{1,\g v}\cap \tilde{B}_{\g v} &=x_\alpha(y).(x_\alpha(-y).A_{1,\g v}\cap x_\alpha(-y).\tilde{B}_{\g v})\\
&= x_\alpha(y).(x_\alpha(z-y).\A_{\g v}\cap \A_{\g v}).\end{aligned}\]

As $C\not \subset \A_{\g v}$, we have $\tilde{B}_{\g v}\cap \A_{\g v}=D_{\g v}$. Moreover $D_{\g v}=\{a\in \A_{\g v}\mid\alpha(a)+\omega_{\g v}(y)\geq 0\}$ and $\A_{\g v}\cap x_\alpha(z-y).\A_{\g v}=\{a\in \A_{\g v}\mid\alpha(a)+\omega_{\g v}(z-y)\geq 0\}$. 
Therefore $ \A_{\g v}\cap x_\alpha(z-y).\A_{\g v}\supsetneq D_{\g v}$ and thus $ \A_{\g v}\cap x_\alpha(z-y).\A_{\g v}$ contains any local chamber of $\A_{\g v}$ which dominates some local panel of $M$. 
Therefore $A_{1,\g v}$ contains $D_{\g v}$ and $C$. 
Moreover if $\g v'\in \{\ominus,\oplus\}$, then $A_{1,\g v'}\cap A_{\g v'}=: D_{\g v'}$ is a half-apartment. Let now $A_{2}=x_{-\alpha}(z^{-1}).\A$. 
Then $A_{2,\g v'}\cap A_{\g v'}=\overline{A_{\g v'}\setminus D_{\g v'}}$ and  $r:=x_{-\alpha}(-z^{-1})x_\alpha(z)x_{-\alpha}(-z^{-1})\in N_{twin}$ induces reflections with respect to the wall $\{a\in \A_{\g v'}\mid\alpha(a)+\omega_{\g v'}(z)=0\}$. Hence we have (2) and thus we have (1), which proves the proposition.
\end{proof}

\begin{lemm}\label{lemIso_fixing_twin_pairs_points}
Let $A,B\in \sha_{twin}$. Then for all $(x,y)\in (A_\oplus\cap B_\oplus)\times (A_{\ominus}\cap B_{\ominus})$, there exists $g\in G_{twin}$ fixing $x,y$ and such that $g.A=B$. 
\end{lemm}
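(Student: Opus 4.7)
The plan is to reduce to Lemma~\ref{lemIsomorphisms_fixing_twin_apartments_nonempty_interior} by an induction that passes through intermediate twin apartments, each containing $x$ and $y$, moving closer to $B$ at each step via Proposition~\ref{propRou11_2.10}.

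\textbf{Setup and base case.} For local chambers $C_x^A \subset A_\oplus$ and $C_x^B \subset B_\oplus$ based at $x$, let $d_\oplus(C_x^A, C_x^B)$ denote the gallery distance in the local building at $x$ in $\shi_\oplus$; define $d_\ominus$ analogously at $y$ in $\shi_\ominus$. Let $d$ be the minimum over all such choices of $d_\oplus(C_x^A, C_x^B) + d_\ominus(C_y^A, C_y^B)$. We argue by induction on $d$. When $d=0$, one may choose $C_x := C_x^A = C_x^B \subset A_\oplus \cap B_\oplus$ and $C_y := C_y^A = C_y^B \subset A_\ominus \cap B_\ominus$. These local chambers have nonempty interior, so both $A_\oplus \cap B_\oplus$ and $A_\ominus \cap B_\ominus$ have nonempty interior. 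Lemma~\ref{lemIsomorphisms_fixing_twin_apartments_nonempty_interior} then supplies $g \in G_{twin}$ with $g.A = B$ fixing $A \cap B$, which contains $x$ and $y$.

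\textbf{Inductive step.} For $d \geq 1$, suppose without loss of generality that the $\oplus$-side minimum is positive. Let $C_x^1$ be the local chamber at $x$ following $C_x^A$ in a minimum gallery toward $C_x^B$; then $C_x^1 \not\subset A_\oplus$ and the common local panel of $C_x^A$ and $C_x^1$ lies in a wall $M$ of $A_\oplus$ passing through $x$. Apply Proposition~\ref{propRou11_2.10} with this data to obtain twin apartments $A_1, A_2$ containing $C_x^1$ such that $A_{i,\g v'} \cap A_{\g v'}$ is a half-apartment of $A_{\g v'}$ for each sign $\g v'$, with the choices $i=1,2$ giving opposite half-apartments on each side. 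Since $x \in M$, the point $x$ lies on the $\oplus$-wall and hence in both $\oplus$-side half-apartments. On the $\ominus$ side, the twin-partner of $M$ cuts $A_\ominus$ into two opposite halves; choose $A' \in \{A_1, A_2\}$ so that $y$ lies in the selected $\ominus$-side half (possible since $y$ lies on one side of that wall or on the wall itself). Then $A \cap A'$ contains half-apartments on both sides, each with nonempty interior and containing $x$ or $y$, so Lemma~\ref{lemIsomorphisms_fixing_twin_apartments_nonempty_interior} provides $g' \in G_{twin}$ with $g'.A = A'$ fixing $A \cap A' \supset \{x,y\}$. Since $C_x^1 \subset A'_\oplus$, the $\oplus$-side distance strictly decreases; the $\ominus$-side distance does not increase, because we may choose $C_y^A$ from the outset on the half of any $\ominus$-side wall we will later select. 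Thus the inductive invariant decreases, and the inductive hypothesis applied to $(A',B)$ yields $g'' \in G_{twin}$ fixing $x,y$ with $g''.A'=B$. Set $g := g''g'$.

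\textbf{Main obstacle.} The delicate technical issue is controlling $d_\ominus$ when we perform Proposition~\ref{propRou11_2.10} on the $\oplus$ side: the twin-partner wall on the $\ominus$ side generally does not pass through $y$, so the collection of local chambers of $A'_\ominus$ at $y$ differs from that of $A_\ominus$ at $y$ on the "wrong" side of this wall, and one must verify that a minimum-distance local chamber to $C_y^B$ survives on the "right" side. This is handled using the freedom of choice between $A_1$ and $A_2$ in Proposition~\ref{propRou11_2.10} (ensuring $x$ and $y$ both remain in the new apartment) and by an appropriate initial choice of $C_y^A$ within the relevant half-apartment.
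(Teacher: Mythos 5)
Your approach is essentially a single combined induction on the sum $d_\oplus + d_\ominus$ of local gallery distances, whereas the paper first uses Proposition~\ref{4.1} to introduce a third twin apartment $B'$ with $C_y\subset B'_\ominus\cap A_\ominus$ and $C_x\subset B'_\oplus\cap B_\oplus$, thereby reducing to the case where one of the two intersections already contains a local chamber; then only a one-sided gallery induction (on the $\oplus$ side) is needed, with the fixed chamber $C_y$ being carried along. Both routes lean on the same machinery --- Proposition~\ref{propRou11_2.10} to cross one wall at a time and Lemma~\ref{lemIsomorphisms_fixing_twin_apartments_nonempty_interior} to realize each crossing by an element of $G_{twin}$ --- so the core idea matches. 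The paper's reduction buys a cleaner argument: once $C_y$ is pinned in $A_\ominus\cap B_\ominus$, one never needs to track an $\ominus$-side distance at all, just keep $C_y$ inside the apartment at each step.

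The one real weakness in your write-up is the ``main obstacle'' paragraph, which has the case analysis backwards. You write that the problematic situation is when the twin-partner wall $M_\ominus$ does \emph{not} pass through $y$, claiming the local chambers of $A'_\ominus$ at $y$ then differ from those of $A_\ominus$. In fact it is the opposite: if $M_\ominus$ does not pass through $y$, then (with the forced choice of $A'$ containing $y$) $y$ lies in the open half-apartment $A'_\ominus\cap A_\ominus$, so a full neighbourhood of $y$ in $A_\ominus$ is contained in $A'_\ominus$ and the local chambers at $y$ are unchanged; $d_\ominus$ is automatically preserved. The genuine case to watch is when $M_\ominus$ \emph{does} pass through $y$: only then does the passage from $A_\ominus$ to $A'_\ominus$ replace half the local chambers at $y$, and only then do you actually have the ``freedom of choice between $A_1$ and $A_2$'' you invoke (both contain $y$). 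The fix is simple and unambiguous: pick any $C_y^A$ realizing $d_\ominus(A,B)$; if $M_\ominus$ avoids $y$, nothing changes; if $M_\ominus$ contains $y$, choose $A'\in\{A_1,A_2\}$ so that $C_y^A$ lies in $A'_\ominus$, which is possible because $C_y^A$ is in the closure of exactly one of the two opposite half-apartments. There is no need for --- and indeed no coherent meaning to --- an ``initial choice of $C_y^A$'' made in anticipation of future walls, which is what your phrasing suggests and which risks a circular dependency.
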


\begin{proof}
Considering local chambers $C_{x}\subset B_{\oplus}$, $C_{y}\subset A_{\ominus}$ and a third twin apartment $B'$ containing $C_{x}\cup C_{y}$ (by Proposition \ref{4.1}), we are reduced to consider the case where $A_\oplus\cap B_\oplus$ or $A_\ominus\cap B_\ominus$ contains a local chamber.
We choose the case $A_\ominus\cap B_\ominus\supset C_{y}$; the other case is similar.
  Let $C$ (resp. $C'$) be a positive local chamber of $A_{\oplus}$ (resp $B_{\oplus}$) based at $x$ and $\Gamma=(C_1,\ldots,C_n)$ be a minimal gallery of local chambers at $x$ from $C=C_1$ to $C'=C_n$.   Let $P$ be the panel dominated by both $C_1$ and $C_2$. There are two cases: either  the panel $P$ is not contained in any wall of $A_\oplus$, or  the panel $P$ is contained in exactly one wall of $A_\oplus$.

In the first case, any half-apartment containing $C_1$ contains $C_2$ and thus  the enclosure of $C_1$ contains $C_2$. By (MA II) we deduce that   $A_\oplus$ contains $C_2$ so we can replace $\Gamma$ by the gallery $(C_2,\ldots,C_n)$.

We now assume that we are in the second case.    Let $D_{1,\oplus}$, $D_{2,\oplus}$ be the two half-apartments of $A_{\oplus}$ delimited by $P$. By Proposition~\ref{propRou11_2.10}, there exist  twin apartments $A_{1}$ and $A_{2}$ such that $A_\oplus\cap A_{i,\oplus}=D_{i,\oplus}$ for both $i\in \{1,2\}$. Then $A_{\ominus}\cap A_{1,\ominus}$ and $A_{\ominus}\cap A_{2,\ominus}$ are two opposite half-apartments of $A_{\ominus}$. 
Therefore $A_{1,\ominus}$ or $A_{2,\ominus}$ contains $C_{y}$ and there exists $i\in \{1,2\}$ such that $A\cap A_i\supset D_{i,\oplus}\cup C_{y}$. 
By Lemma~\ref{lemIsomorphisms_fixing_twin_apartments_nonempty_interior}, there exists $g\in G_{twin}$ such that $g.A=A_i$ and $g$ fixes $x$ and $C_{y}$. 
By induction, we deduce that we can assume that $A\cap B$ contains $C_n$ and $C_{y}$. Then by Lemma~\ref{lemIsomorphisms_fixing_twin_apartments_nonempty_interior}, there exists $g\in G_{twin}$ fixing $x,y$ and such that $g.A=B$, which proves the lemma.
\end{proof}

\begin{theo}\label{thmIsomorphisms_fixing_twin_apartments}
Let $A,B\in \sha_{twin}$. Then there exists $g\in G_{twin}$ such that $g.A=B$ and such that $g$ fixes $A\cap B$ (i.e $g$ fixes pointwise $(A_{\oplus}\cap B_{\oplus})\sqcup (A_{\ominus}\cap B_{\ominus})$). 
\end{theo}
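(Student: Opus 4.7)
The plan is to upgrade Lemma \ref{lemIso_fixing_twin_pairs_points} (fixation of a single pair $(x,y)$) to fixation of the whole $A\cap B$ by exploiting the rigidity of $G_{twin}$ acting on both sides of the twin structure. After using a $G_{twin}$-isomorphism of twin apartments, I may assume $A=\A$, so that $B=h_0.\A$ for some $h_0\in G_{twin}$.

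First I would produce two auxiliary elements of $G_{twin}$, each fixing one side of $A\cap B$. Applying (MA II) in $\shi_\oplus$ to the pair $(\A_\oplus,B_\oplus)$ yields some element of $G$ sending $\A_\oplus$ onto $B_\oplus$ while fixing $\A_\oplus\cap B_\oplus$; Proposition \ref{2.11} lets me replace it by an element $h_\oplus\in G_{twin}$, and then the bijection $\sha_{twin}\leftrightarrow \sha_{\oplus}(G_{twin})$ of \S \ref{2.6} (the positive twinnable apartment determines its twin) forces $h_\oplus.A=B$. Symmetrically I obtain $h_\ominus\in G_{twin}$ with $h_\ominus.A=B$ and $h_\ominus$ fixing $\A_\ominus\cap B_\ominus$.

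Now I would combine them using Lemma \ref{lemIso_fixing_twin_pairs_points}: choose a pair $(x_0,y_0)$ lying in the relative interiors of $\A_\oplus\cap B_\oplus$ and $\A_\ominus\cap B_\ominus$ respectively, and get $g\in G_{twin}$ with $g.A=B$, $g(x_0)=x_0$, $g(y_0)=y_0$. The element $h_\oplus^{-1}g$ then stabilizes $A$ and fixes $x_0$; by \S \ref{1.7} it lies in (a conjugate of) $N_{twin}$, so its actions on $\A_\oplus$ and on $\A_\ominus$ are given by a single element $\overline{m}$ of $W=W^v\ltimes Y$, coupled as described in \S \ref{1.8} (on $\A_\oplus$ the translation part is $-\lambda$, on $\A_\ominus$ it is $+\lambda$). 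For $z\in \A_\oplus\cap B_\oplus$, $g(z)=h_\oplus(\overline{m}(z))$ equals $z$ iff $\overline{m}$ fixes $z$; so the positive-side fixation reduces to showing that $\overline{m}$, already trivial at $x_0$, fixes all of $\A_\oplus\cap B_\oplus$ pointwise. A symmetric statement with the element $h_\ominus^{-1}g$ must hold on the negative side. The coupling between the two sides (the $\oplus$- and $\ominus$-translation parts of the same $\lambda\in Y$ have opposite signs) together with the simultaneous constraints $\overline{m}(x_0)=x_0$ and $\overline{m}(y_0)=y_0$ should pin down $\overline{m}$ enough to force the required pointwise fixation.

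The main obstacle is precisely this last step: ensuring the Weyl discrepancy $\overline{m}$ is trivial on both intersections. In the generic situation where $\A_\oplus\cap B_\oplus$ and $\A_\ominus\cap B_\ominus$ both have non-empty interior, this is already done by Lemma \ref{lemIsomorphisms_fixing_twin_apartments_nonempty_interior}, so the real issue is the degenerate case when one of the intersections is contained in a wall. To handle it, I would insert an intermediate twin apartment $C\in\sha_{twin}$, constructed by iterated application of Proposition \ref{propRou11_2.10} to walls separating $A$ from $B$, chosen so that $A\cap C\supset A\cap B$ and $C\cap B\supset A\cap B$ both have non-empty interior on each side; then two successive applications of Lemma \ref{lemIsomorphisms_fixing_twin_apartments_nonempty_interior} give $g_1,g_2\in G_{twin}$ with $g_1.A=C$ fixing $A\cap C$ and $g_2.C=B$ fixing $C\cap B$, and the composition $g=g_2g_1$ fixes $A\cap B$ and sends $A$ to $B$.
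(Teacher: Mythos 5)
The key step of your second (and operative) route --- constructing a single intermediate twin apartment $C$ with $A\cap C\supset A\cap B$ and $C\cap B\supset A\cap B$, both of non-empty interior on each side --- is not established and does not follow from the tools you invoke. Proposition~\ref{propRou11_2.10} produces, from one wall of $A$ and a local chamber dominating one of its panels, a twin apartment $C$ with $A\cap C$ a half-apartment on each side; but it gives no control whatsoever over $C\cap B$, which has no reason to contain $A\cap B$, let alone to have non-empty interior. Iterating the proposition, as in the proof of Lemma~\ref{lemIso_fixing_twin_pairs_points}, produces a gallery $A=C_0,C_1,\ldots,C_n=B$ of twin apartments, but composing the resulting fixing elements only fixes $\bigcap_i C_i$, which can be strictly smaller than $A\cap B$; to reduce to two applications of Lemma~\ref{lemIsomorphisms_fixing_twin_apartments_nonempty_interior} you would need every $C_i$ to contain $A\cap B$, and nothing in the gallery construction ensures this. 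The existence of such a $C$ is a strong simultaneous constraint on both signs, coupled by the twin structure, and requires an argument you do not supply. (Your first route suffers from the obstacle you yourself identify: $\overline m$ fixing a single pair $(x_0,y_0)$ does not force it to be trivial, since $\overline m$ could be the reflection $r_{\alpha+k\xi}$ in a wall passing through $x_0$ in $\A_\oplus$ and through $y_0$ in $\A_\ominus$, and these two linear conditions $\alpha(x_0)+k=0$, $\alpha(y_0)-k=0$ certainly admit solutions.)

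The paper's proof handles the degenerate case by a quite different mechanism that avoids constructing any intermediate apartment. Starting from fixation of a single pair (Lemma~\ref{lemIso_fixing_twin_pairs_points}), it enlarges the fixed affine subspace one dimension at a time, exploiting the countability of $W$: for each $x$ in the relative interior of $\A_\oplus\cap B_\oplus$ one gets an element $h_{x,\vec V}\in G_{twin}$ sending $\A$ to $B$ and fixing $(x+\vec V)\cap\A_\oplus\cap B_\oplus$ and $y$; comparing with a fixed reference element from Proposition~\ref{2.11} gives an element $n_x\in N_{twin}$, hence a class $w_x\in W$. Along an uncountable segment of choices of $x$ only countably many $w_x$ can occur, so two distinct choices $x',x''$ give $w_{x'}=w_{x''}$, and that common $w$ then fixes the larger affine subspace through $x'$ and $x''$. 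This pigeonhole-on-$W$ argument is the ingredient that closes the gap both of your routes run into.
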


\begin{proof}
We identify $A$ and $\A$. We assume that $\A_{\oplus}\cap B_{\oplus}$ and  $\A_{\ominus}\cap B_{\ominus}$ are non-empty, since otherwise we can use Proposition~\ref{2.11}. Fix $y\in \A_{\ominus}\cap B$.
By (MAII) in \ref{n1.3.1}, $\A_{\oplus}\cap B_{\oplus}$ is a finite intersection of half-apartments in $\A_{\oplus}$.
In particular it is convex and the closure of its relative interior $(\A_{\oplus}\cap B_{\oplus})^\bullet$ (the interior of $\A_{\oplus}\cap B_{\oplus}$ considered inside the support $V_{0}$ of $\A_{\oplus}\cap B_{\oplus}$ in $\A_{\oplus}$).
   We regard $\A_{\oplus}$ as an $\R$-vector space and $V_0$ as an affine subspace of $\A$. Let $\vec{V_0}$ be the direction of $V_0$. 
   If $\vec{V}$ is a vector subspace of $\vec{V_0}$, we say that  $\vec{V}$ satisfies the property $\mathscr{P}$  if for all $x\in (\A_{\oplus}\cap B_{\oplus})^\bullet$, there exists $h_{x,\vec{V}}\in G_{twin}$ such that $h_{x,\vec{V}}.\A=B$ and  $h_{x,\vec{V}}$ fixes $(x+\vec{V})\cap \A_{\oplus}\cap B_{\oplus}$ and $y$. 
   Then $\{0_\oplus\}$ satisfies  $\mathscr{P}$ by Lemma~\ref{lemIso_fixing_twin_pairs_points}. 
   Let $\vec{V}$ be a vector subspace of $\vec{V_0}$ satisfying $\mathscr{P}$. Assume $\vec{V}\neq \vec{V_0}$ and take $v\in \vec{V_0}\setminus \vec{V}$. 
   Let $h\in G_{twin}$ be such that $h.\A=B$ and such that $h$ fixes $\A_{\oplus}\cap B_{\oplus}$ (the existence of such an $h$ is provided by Proposition~\ref{2.11}). 
 For $x\in (\A_{\oplus}\cap B_{\oplus})^\bullet$, define $n_x=h^{-1}.h_{x,\vec{V}}$; it is in $ N_{twin}$ and fixes $(x+\vec V)\cap A_{\oplus}\cap B_{\oplus}$ (hence all $x+\vec V$). 
   Let $w_x$ be the image of $n_x$ in the Weyl group $W=N_{twin}/\g T(\k)$, that we regard as a group of automorphisms of the affine space $\A_{\oplus}$. 
   As $W$ is countable, there exist $x',x''\in (\A_{\oplus}\cap B_{\oplus})^\bullet$ such that $x',x''\in x+\R v$,  $x'\neq x''$ and $w_{x'}=w_{x''}$. 
   Then $w_{x'}$ fixes $x''+\vec{V}$ and $x'+\vec{V}$ and thus it fixes $x+(\vec{V}+\R v)$. 
 So $h_{x',\vec{V}}$ fixes $(x+\vec V+\R v)\cap\A_{\oplus}\cap B_{\oplus}$. 
   Therefore $\vec{V}+\R v$ satisfies $\mathscr{P}$ and by induction we deduce that $\vec{V_0}$ satisfies $\mathscr{P}$. In particular, there exists $h_y\in G_{twin}$ such that $h_y.\A=B$ and such that $h_y$ fixes $\A_\oplus\cap B_{\oplus}$ and $y$. We conclude the proof of the theorem by a similar reasoning.
\end{proof}

\begin{rema*}
The theorem above is true if we replace $\sha_{twin}$ and $G_{twin}$ by $\sha_{pol}$ and $G_{pol }$ respectively. The proof is similar since we mainly used that $G_{twin}\subset G_{pol}$ and our preliminary study of $G_{pol}$. 
\end{rema*}

\subsection{Decompositions of $G_{twin}$ and $G_{pol}$}\label{2.17}

\subsubsection{Twin Iwasawa decomposition}\label{2.5}

Recall that $C_{\oplus}=germ_{0_{\oplus}}(C\zv_{f})$ is  the fundamental positive local chamber in $\A_{\oplus}$ and $I=I_{\oplus}$ (\resp $I_{twin}$) is the fixator of $C_{\oplus}$ in $G=\g G(\shk)$ (\resp $G_{twin}=G_{\sho}$).
From Corollary \ref{cor_Iwasawa} and Remark \ref{rk_Iwa_Gpol}, we get:

\begin{prop*} Let $\epsilon\in \{-,+\}$. Then we have:
 \[G_{twin}=U_{twin}^{\epsilon\epsilon}.N_{twin}.I_{twin}\text{  and }G_{pol}=U_{twin}^{\epsilon\epsilon}.N_{twin}.(I_{\oplus}\cap G_{pol}).\]
\end{prop*}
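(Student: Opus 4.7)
The plan is to show both decompositions as direct specializations of the general Iwasawa decomposition from Corollary~\ref{cor_Iwasawa} and its extension to overgroups in Remark~\ref{rk_Iwa_Gpol}. No new geometric work is required: all of the heavy lifting (the ``$P$ bounded in a $G$-apartment $\Rightarrow$ $P$ lies in a $G_\shr$-apartment'' theorem, the transitivity of $N_\shr$ on the relevant Weyl-group worth of apartments, etc.) has already been established in \S\ref{s2} for a general dense subring $\shr \subset \shk$ satisfying \eqref{eq_Assumption_R}. The task here is to check that the hypotheses are met and to plug in the correct filter.

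First, I verify that we are in a position to apply Corollary~\ref{cor_Iwasawa} with $\shr = \sho = \k[\qp,\qp^{-1}]$, $\shk = \k(\qp)$ and $\omega = \omega_\oplus$. The ring $\sho$ is dense in $\shk$ for $\omega_\oplus$ (every Laurent series in $\k(\!(\qp)\!)$ can be truncated), and $\qp \in \sho^\ast$ with $\omega_\oplus(\qp)=1$, so Assumption~\eqref{eq_Assumption_R} holds. The fundamental local chamber $C_\oplus = \mathrm{germ}_{0_\oplus}(C^v_f)$ is a local chamber of $\A_\oplus$, hence a narrow filter (see \ref{3.1}). With the notation of section~\ref{s2}, $U^{\epsilon\epsilon}_\sho = U^{\epsilon\epsilon}_{twin}$ (both are $\langle x_\alpha(\sho)\mid\alpha\in\Phi^\epsilon\rangle$), $N_\sho = N_{twin}$ (see \S\ref{1.6}), and $G_{C_\oplus}\cap G_\sho = I_\oplus \cap G_{twin} = I_{twin}$ by definition. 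Applying Corollary~\ref{cor_Iwasawa} with $\Omega = C_\oplus$ therefore yields
\[
G_{twin} \;=\; U^{\epsilon\epsilon}_{twin}\cdot N_{twin} \cdot I_{twin}
\]
for both choices of $\epsilon$, which is the first assertion.

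For the second assertion I invoke Remark~\ref{rk_Iwa_Gpol}: any subgroup $G' \subset G$ containing $U^{\epsilon\epsilon}_\sho$ and $N_\sho$ inherits the Iwasawa decomposition
\[
G' \;=\; U^{\epsilon\epsilon}_\sho \cdot N_\sho \cdot (G_\Omega \cap G')
\]
whenever $\Omega$ is narrow. Since $G_{twin}\subset G_{pol}$ (see \S\ref{1.2}), the group $G_{pol}$ certainly contains both $U^{\epsilon\epsilon}_{twin}$ and $N_{twin}$, so taking $G' = G_{pol}$ and $\Omega = C_\oplus$ gives
\[
G_{pol} \;=\; U^{\epsilon\epsilon}_{twin}\cdot N_{twin} \cdot (I_\oplus \cap G_{pol}),
\]
completing the proof.

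There is essentially no obstacle beyond bookkeeping; the only point that deserves a brief word is the identification of the fixators, namely $G_{C_\oplus}\cap G_{twin} = I_{twin}$ and $G_{C_\oplus}\cap G_{pol} = I_\oplus\cap G_{pol}$, which are both immediate from the definitions in \S\ref{1.2} since $I_\oplus = G_{C_\oplus}$ (fixator in $G$) and $I_{twin} = I_\oplus \cap G_{twin}$.
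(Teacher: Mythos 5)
Your proof is correct and matches the paper's own derivation exactly: the paper also obtains this proposition as an immediate consequence of Corollary~\ref{cor_Iwasawa} (with $\shr=\sho$, $\Omega=C_\oplus$) and Remark~\ref{rk_Iwa_Gpol} (with $G'=G_{pol}$). Your added verification of the hypotheses — density of $\sho$, Assumption~\eqref{eq_Assumption_R}, narrowness of $C_\oplus$, and the identifications $U^{\epsilon\epsilon}_\sho=U^{\epsilon\epsilon}_{twin}$, $N_\sho=N_{twin}$, $G_{C_\oplus}\cap G_{twin}=I_{twin}$ — is exactly the bookkeeping the paper leaves implicit.
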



\par \begin{NB} In $\A_{\ominus}\subset{\SHI_{\ominus}}$, one considers the fundamental negative local chamber ${C_{\infty}=}germ_{0_{\ominus}}(-C\zv_{f})$ and its fixator or stabilizer the negative Iwahori subgroup {$I_{\ominus}$} of $G$ (acting on $\SHI_{\ominus}$).
One writes $I_{\infty}={I_{\ominus}}\cap G_{twin}$ and the (negative) Iwasawa decomposition may be written:

\par\qquad\qquad\qquad $G_{twin}=U^ {\qe\qe}_{twin}.N_{twin}.I_{\infty}$ and  $G_{pol}=U_{twin}^{\epsilon\epsilon}.N_{twin}.(I_{\ominus}\cap G_{pol})$.
\end{NB}

\begin{lemm*} Let $\qe=+$ or $\qe=-$ and $A\in {\sha_{\oplus twin}}$ such that $A\supset \g Q_{{\qe\infty}}$. Then there is a $u\in U^\qe_{twin}$ such that $A=u.{\A_{\oplus}}$.
\end{lemm*}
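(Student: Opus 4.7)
\noindent\textit{Proof plan.} The plan is to upgrade $A$ to a twin apartment, apply Theorem~\ref{thmIsomorphisms_fixing_twin_apartments} to obtain an isomorphism fixing $\g Q_{\qe\infty}$, and then extract a $U^\qe_{twin}$-representative via Lemma~\ref{1.7b}.

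First I would fix some $h \in G_{twin}$ with $h.\A_\oplus = A$ (which exists by the definition $\sha_{\oplus twin} = G_{twin}.\A_\oplus$) and consider the twin apartment $B := h.\A \in \sha_{twin}$, whose positive component is $B_\oplus = A$. By hypothesis $\g Q_{\qe\infty} \subset A = B_\oplus$, and trivially $\g Q_{\qe\infty} \subset \A_\oplus$, so $\g Q_{\qe\infty} \subset \A \cap B$. Theorem~\ref{thmIsomorphisms_fixing_twin_apartments} applied to the pair $(\A, B)$ then produces $g \in G_{twin}$ with $g.\A = B$ (in particular $g.\A_\oplus = A$) and with $g$ fixing $\A \cap B$ pointwise; a fortiori $g$ fixes $\g Q_{\qe\infty}$.

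Next I would identify the fixator: by \S\ref{3.1}.3 the fixator of $\g Q_{\qe\infty}$ in $G$ is $T_0 U^\qe$, so $g = t u'$ with $t \in T_0 \subset T \subset N$ and $u' \in U^\qe$. Since $T$ normalizes $U^\qe$ we have $g \in U^\qe \cdot N$, and because $g \in G_{twin}$ Lemma~\ref{1.7b} supplies a factorization $g = u n$ with $u \in U^\qe_{twin}$ and $n \in N_{twin}$. As $n$ stabilizes $\A_\oplus$, we conclude $A = g.\A_\oplus = u.\A_\oplus$, which is the desired element.

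Given the tools already established no step is a real obstacle: the substantive content is packaged in Theorem~\ref{thmIsomorphisms_fixing_twin_apartments} and in Lemma~\ref{1.7b}, and the rest is a formal combination. If one wished to avoid the appeal to Theorem~\ref{thmIsomorphisms_fixing_twin_apartments}, an alternative route would start from any $h \in G_{twin}$ with $h.\A_\oplus = A$, observe that $h^{-1}.\g Q_{\qe\infty}$ is a sector germ of sign $\qe$ contained in $\A_\oplus$, use the transitivity of $W = \nu(N_{twin})$ on such sector germs to find $n_0 \in N_{twin}$ with $n_0 h^{-1}$ fixing $\g Q_{\qe\infty}$, and then apply Lemma~\ref{1.7b} to $g := h n_0^{-1}$ exactly as above.
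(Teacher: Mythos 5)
Your main argument is correct, but it routes through a heavier theorem than the paper uses. You invoke Theorem~\ref{thmIsomorphisms_fixing_twin_apartments} (which concerns the full twin structure and required its own delicate proof) to produce $g\in G_{twin}$ fixing $\g Q_{\qe\infty}$, and then identify $g\in G_{\g Q_{\qe\infty}}=T_0U^\qe\subset U^\qe N$ before applying Lemma~\ref{1.7b}. The paper instead stays entirely inside $\shi_\oplus$: since $A\supset\g Q_{\qe\infty}$ and the fixator of $\g Q_{\qe\infty}$ acts transitively on the apartments containing it (\S\ref{3.1}.3 together with \ref{n1.3.1}), one already has some $g_2\in U^\qe$ with $g_2.\A_\oplus=A$; comparing this with any $g_1\in G_{twin}$ satisfying $g_1.\A_\oplus=A$ gives $g_2^{-1}g_1\in\mathrm{Stab}_G(\A_\oplus)=N$, so $g_1\in U^\qe N\cap G_{twin}=U^\qe_{twin}N_{twin}$ and one finishes as you do. What your route buys is a uniform appeal to the twin-apartment isomorphism theorem, but at the cost of circularity risk if the ordering of results were different; what the paper's route buys is economy, since no twin-specific statement beyond Lemma~\ref{1.7b} is needed.

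Your closing ``alternative route'' is in fact much closer to the paper's proof, and is a legitimate variant, but be careful: the transitivity of $\nu(N_{twin})=W$ on sector germs of sign $\qe$ in $\A_\oplus$ only gives you that $n_0h^{-1}$ \emph{stabilizes} $\g Q_{\qe\infty}$, not that it fixes it pointwise (the stabilizer is $B^\qe=TU^\qe$, strictly larger than the fixator $T_0U^\qe$, as noted in \S\ref{n2.14}.2). Since $TU^\qe\subset U^\qe N$ this imprecision does not break the argument, but the claim as written is stronger than what the transitivity statement delivers.
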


\begin{NB} 1) $u$ is unique and Corollary \ref{cor_Masure_GR}.2 tells, more or less, that $U^ {\qe\qe}_{twin}$ is ``dense'' in $U^ {\qe}_{twin}$.

\par 2) Such results are also true for all pairs ``sector germ $\subset$ twinnable apartment of $\SHI_{\oplus}$ or $\SHI_{\ominus}$'' with $u\in G$ fixing the sector germ, by  \ref{3.1}.3 and  \ref{n1.3.1}.
\end{NB}

\begin{proof} There are $g_{1}\in G_{twin}, g_{2}\in U^\qe$ such that $A=g_{1}.{\A_{\oplus}}=g_{2}.{\A_{\oplus}}$.
So $g_{2}^ {-1}g_{1}\in Stab_{G}({\A_{\oplus}})=N$ and $g_{1}\in G_{twin}\cap (U^\qe.N)=U^\qe_{twin}.N_{twin}$ by Lemma \ref{1.7b}.
One writes $g_{1}=u.n$ with $u\in U^\qe_{twin}$ and $n\in N$ (stabilizing $\A_{\oplus}$), so $A=u.{\A_{\oplus}}$ and the lemma is proved.
\end{proof}

\subsubsection{Decomposition of twin Iwahori subgroups ?}\label{3.6}

\par We saw in \ref{3.1} that the fixator in $G$ of the fundamental positive local chamber $C_{\oplus}$ in $\SHI_{\oplus}$, may be written $I_{\oplus}=U^+_{C_{\oplus}}.U^-_{C_{\oplus}}.\g T(\shk_{\qo_{\oplus}=0})$, with $U^\pm_{C_{\oplus}}=I_{\oplus}\cap U^\pm$.
We would like such a decomposition of $I_{twin}=I_{\oplus}\cap G_{twin}$ or $I_{pol}=I_{\oplus}\cap G_{pol}$.
But this is impossible in general as shown by the following counterexample for $\g G=\g{\mathrm{SL}}_{2}$ (semi-simple).

\par Then $I_{\oplus}$ is the group of the products $\left(\begin{matrix}1&u  \cr 0&1\cr \end{matrix}\right)\left(\begin{matrix}1&0 \cr v &1\cr \end{matrix}\right)\left(\begin{matrix}z&0  \cr 0 & z^ {-1}\cr \end{matrix}\right)$ with $u,v,z\in\shk, \qo_{\oplus}(u)\geq0,\ \qo_{\oplus}(v) >0$ and $\qo_{\oplus}(z)=0$.
But the  fixator in $\mathrm{SL}_{2}(\shk)$ of $0_{\g v}\in\SHI_{\g v}$ is $\mathrm{SL}_{2}(\sho_{\g v})$ \cite{BrT72}, so such a product fixes $0_{\g v}$ if, and only if, $\qo_{\g v}(z)\leq0$, $\qo_{\g v}(zv)\geq0$, $\qo_{\g v}(z^ {-1}u)\geq0$ and $\qo_{\g v}(z(1+uv))\geq0$; hence it 
is in $G_{pol}$ if, and only if, $z^ {-1}\in\sho,\ zv\in\sho,\ z^ {-1}u\in\sho$ and $z(1+uv)\in\sho$. 
Actually then $\left(\begin{matrix}1&u  \cr 0&1\cr \end{matrix}\right)\left(\begin{matrix}1&0 \cr v &1\cr \end{matrix}\right)\left(\begin{matrix}z&0  \cr 0 & z^ {-1}\cr \end{matrix}\right)=\left(\begin{matrix}z(1+uv)& z^ {-1}u  \cr zv & z^ {-1}\cr \end{matrix}\right)\in \mathrm{SL}_{2}(\sho)$, and $\mathrm{SL}_{2}(\sho)=(\mathrm{SL}_{2})_{twin}$ as $\sho$ is a principal ideal domain and $\mathrm{SL}_{2}$ is semisimple.

\par One chooses $P\in\k[\qp]$ an irreducible polynomial, $P\neq\qp$ and writes Bezout $1=-\qp u'+Pv'$, with $u',v'\in \k[\qp]$, we may choose $v'\in\k$.
One chooses $z^ {-1}:=P,\ z^ {-1}u:=u', \ zv:=\qp$, \ie $u=u'/P,\ v=P\qp$, so $z(1+uv)=P^ {-1}(1+u'\qp)=v'$. 
Hence $g:=\left(\begin{matrix}v' &u'  \cr \qp &P\cr \end{matrix}\right)=\left(\begin{matrix}1&u'/P  \cr 0&1\cr \end{matrix}\right)\left(\begin{matrix}1&0 \cr P\qp &1\cr \end{matrix}\right)\left(\begin{matrix}P^ {-1}&0  \cr 0 & P\cr \end{matrix}\right)=\left(\begin{matrix}1&0  \cr \qp/v'&1\cr \end{matrix}\right)\left(\begin{matrix}1&u'v' \cr 0 &1\cr \end{matrix}\right)\left(\begin{matrix} v' &0  \cr 0 & v'^ {-1}\cr \end{matrix}\right)$ is in the Iwahori subgroup of $\mathrm{SL}_{2}$ and in $(\mathrm{SL}_{2})_{twin}$, but its (unique) decomposition in $U^+U^-T$ involves factors not in $(\mathrm{SL}_{2})_{pol}$.
Nevertheless the last decomposition shows that $g$ is in $U_{+}H=\langle H, (U_{\qa+r\qx})_{\qa+r\qx\in \QF_{a+}}\rangle$. This agrees with the fact that, in reductive cases, the answer to the question in \S \ref{1.9} DR5, NB3 is yes.

\subsubsection{Groups associated with spherical vectorial facets}\label{2.16}

\par We choose now to work in $\SHI_{\oplus}$, but the similar results in $\SHI_{\ominus}$ are also true.

\par So we consider a spherical vectorial facet $F\zv\subset \A_{\oplus}$.

\par{\bf (1)} Following \cite[6.2.1, 6.2.2, 6.2.3, 12.5.2]{Re02} we associate to the facet $F\zv$ a parabolic subgroup of $G=\g G(\shk)$ with a Levi decomposition: $P(F\zv)=M(F\zv)\ltimes U(F\zv)$.
Actually $M(F\zv)$ is a $\shk-$split reductive subgroup with maximal $\shk-$split torus $\g T$ and root system $\QF\zm(F\zv)=\set{\qa\in\QF \mid \qa(F\zv)=0}$.
It is generated by $T$ and the $U_{\qa}$ for $\qa\in\QF\zm(F\zv)$.
And $U(F\zv)$ is the smallest normal subgroup of $P(F\zv)$ containing all $U_{\qa}$ for $\qa\in \QF\zu(F\zv)=\set{\qa\in\QF \mid \qa(F\zv)>0}$.

\par{\bf (2)} Parabolics and $G_{twin}$. One defines $U_{twin}(F\zv):=U(F\zv)\cap G_{twin}$, $M_{twin}(F\zv):=\langle T_{twin}; \g U_{\qa}(\sho), \qa\in\QF\zm(F\zv) \rangle$ and $P_{twin}(F\zv):=M_{twin}(F\zv)\ltimes U_{twin}(F\zv)$.

\par One has clearly $U_{twin}(F\zv)\supset \langle \g U_{\qa}(\sho) \mid\qa\in\QF\zu(F\zv)\rangle$, $M_{twin}(F\zv)\subset M(F\zv)\cap G_{twin}$ and $P_{twin}(F\zv)\subset P(F\zv)\cap G_{twin}$.
These three inclusions may certainly be strict in general.

\par From the definition in \S \ref{1.5}, one gets easily that $U^ {++}_{twin}\subset P_{twin}(F\zv)$ when $F\zv\subset \ov{C\zv_{f}}$.

\par One may also define $U_{pol}(F\zv):=U(F\zv)\cap G_{pol}$, $M_{pol}(F\zv):=M(F\zv)\cap G_{pol}$ and $P_{pol}(F\zv):=M_{pol}(F\zv)\ltimes U_{pol}(F\zv)$.

\par{\bf (3)} Twin Iwasawa decomposition. Let $C_{1}$ be a  local facet in $\A_{\oplus}$ or $\A_{\ominus}$.
As in \S \ref{2.5} or \S \ref{1.2}.2 one defines $I_{twin}(C_{1})$ or $I_{pol}(C_{1})$ as the stabilizer (or fixator) in $G_{twin}$ or $G_{pol}$ of $C_{1}$.
So, from Remark \ref{rk_Iwa_Gpol}, one gets the following Iwasawa decompositions:

\par\qquad $G_{twin}=P_{twin}(F\zv).N_{twin}.I_{twin}(C_{1})$

\par  and \ $G_{pol}=P_{twin}(F\zv).N_{twin}.I_{pol}(C_{1})=P_{pol}(F\zv).N_{pol}.I_{pol}(C_{1})$.


\subsubsection{Parabolo-parahoric subgroups}\label{n2.17}
\par We consider now a splayed chimney $\g r_{0}=cl(F,F\zv)$ in $\A_{\oplus}$ (with direction $F\zv$) and its germ $\g R_{0}$.

\par{\bf (1)} Following \cite[6.5]{R11}, we define $P^\qm(\g r_{0})=P^\qm(\g R_{0})=M^\qm(\g r_{0})\ltimes U(F\zv)$, where $M^\qm(\g r_{0})=M^\qm(\g R_{0})$ is the parahoric subgroup of the reductive group $M(F\zv)$, fixator of the local facet $F$ (or of $\g r_{0},\g R_{0}$, as $\g r_{0}$ is in the enclosure of $F$ for the reductive group $M(F\zv)$).
From \cite[6.5, 6.6]{R11}, we get that the group $P^\qm(\g r_{0})$ fixes the chimney germ $\g R_{0}$.
 It depends only on $\g R_{0}$, but it is not clear that it is the whole fixator of $\g R_{0}$ in $G$.

\par{\bf (2)} We consider also the subgroup $P^\qm_{twin}(\g r_{0})=P_{twin}^\qm(\g R_{0})=M^\qm_{twin}(\g r_{0})\ltimes U(F\zv)$ of $P^\qm(\g r_{0})\cap G_{twin}$, where $M^\qm_{twin}(\g r_{0})=\langle \g T(\k); U_{\qa+r\qx},\qa\in\QF\zm(F\zv), (\qa+r\qx)(F)\geq0 \rangle \subset M^\qm(\g r_{0})\cap G_{twin}$.

\par Actually $M^\qm_{twin}(\g r_{0})$ is the parabolic subgroup of the affine Kac-Moody group $M_{twin}(F\zv)$ associated to the local facet $F\subset \A_{\oplus}$.
To see precisely $M_{twin}(F\zv)$ as an affine Kac-Moody group, one has to write it $\g M(F\zv)(\k[\qp,\qp^ {-1}])$ where $\g M(F\zv)$ is the split reductive algebraic group (or group-scheme) with root system $\QF\zm(F\zv)$ and split maximal torus $\g T$.

\begin{theo}\label{2.18}
With the above notations in \S \ref{2.16} and \S \ref{n2.17}, we have:
$$G_{twin}= P^\qm_{twin}(\g r_{0}).N_{twin}.I_{twin}(C_{1})$$
\end{theo}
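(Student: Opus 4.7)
The plan is to refine the twin Iwasawa decomposition of \S\,\ref{2.16}(3),
\[
G_{twin}=P_{twin}(F\zv)\cdot N_{twin}\cdot I_{twin}(C_{1}),
\]
whose sole difference from the desired equality is that the left-hand factor has the full Levi $M_{twin}(F\zv)$ in place of the parahoric $M^\qm_{twin}(\g r_{0})$. Since $U_{twin}(F\zv)\subset U(F\zv)$ is common to $P_{twin}(F\zv)=M_{twin}(F\zv)\ltimes U_{twin}(F\zv)$ and $P^\qm_{twin}(\g r_{0})=M^\qm_{twin}(\g r_{0})\ltimes U(F\zv)$, the content of the theorem reduces to establishing an Iwasawa-like decomposition inside the Levi,
\[
M_{twin}(F\zv)=M^\qm_{twin}(\g r_{0})\cdot N^{m}_{twin}\cdot I^{M_{twin}}(C_{1}^{M}),
\]
where $N^{m}_{twin}:=N_{twin}\cap M_{twin}(F\zv)$ and $C_{1}^{M}\subset\A$ is an $M$-local chamber whose closure contains (a $G_{twin}$-translate of) $C_{1}$, the latter existing after a preliminary normalisation using the twin Iwasawa Lemma of \S\,\ref{2.5}.

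To obtain this Levi decomposition, I would exploit that $M_{twin}(F\zv)=\g M(F\zv)(\sho)$ is the $\sho$-points of a \emph{split reductive} $\k$-group (\S\,\ref{n2.17}(2)), so its Tits cone is the whole of $\A$ and hence $M_{twin}(F\zv)^{+}=M_{twin}(F\zv)$. Applying Corollary~\ref{cor_Bruhat}(2) inside $M_{twin}(F\zv)$ therefore yields the unrestricted Bruhat decomposition $M_{twin}(F\zv)=I^{M_{twin}}\cdot W^{m}\cdot I^{M_{twin}}$. Since $M^\qm_{twin}(\g r_{0})$ is the parahoric at $F$, it contains $I^{M_{twin}}$ together with representatives of the finite subgroup $W^{m}_{F}\subset W^{m}$ fixing $F$; absorbing $I^{M_{twin}}\cdot W^{m}_{F}$ on the left then rewrites the Bruhat product as $M^\qm_{twin}(\g r_{0})\cdot N^{m}_{twin}\cdot I^{M_{twin}}$, and one chooses the chamber on the right to be $C_{1}^{M}$. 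The inclusion $I^{M_{twin}}(C_{1}^{M})\subset I_{twin}(C_{1})$ follows because the $M$-chamber $C_{1}^{M}$ is coarser than the $G$-chamber $C_{1}\subset\overline{C_{1}^{M}}$.

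For the reassembly, write $g\in G_{twin}$ via \S\,\ref{2.16}(3) as $g=m\cdot u\cdot n\cdot i$ with $m\in M_{twin}(F\zv)$, $u\in U_{twin}(F\zv)$, $n\in N_{twin}$, $i\in I_{twin}(C_{1})$; decompose $m=m^\qm\cdot n^{m}\cdot \iota^{m}$ as above; and use that $M(F\zv)$ normalises $U(F\zv)$ to commute
\[
m\cdot u=m^\qm\cdot n^{m}\cdot \iota^{m}\cdot u=m^\qm\cdot n^{m}\cdot u_{1}\cdot \iota^{m}=m^\qm\cdot u_{2}\cdot n^{m}\cdot \iota^{m}=p^\qm\cdot n^{m}\cdot \iota^{m},
\]
where $u_{1}=\iota^{m}u(\iota^{m})^{-1}$ and $u_{2}=n^{m}u_{1}(n^{m})^{-1}$ lie in $U_{twin}(F\zv)$ (for $u_{2}$, one uses that $N^{m}_{twin}$ acts on $\A^{*}$ through the finite Weyl group $W^{m,v}\subset W\zv$, which fixes $F\zv$ pointwise and hence stabilises $\Phi^{u}(F\zv)$), and $p^\qm:=m^\qm\cdot u_{2}\in M^\qm_{twin}(\g r_{0})\cdot U_{twin}(F\zv)\subset P^\qm_{twin}(\g r_{0})$. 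Finally, the remaining factor $\iota^{m}\cdot n\cdot i\in G_{twin}$ is fed into a last application of the Iwasawa decomposition (Corollary~\ref{cor_Iwasawa} or Remark~\ref{rk_Iwa_Gpol} with $\epsilon$ chosen to lie in $P^\qm_{twin}(\g r_{0})$) to rewrite it as an element of $N_{twin}\cdot I_{twin}(C_{1})$ modulo a correction absorbed into $p^\qm$.

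The main obstacle will be the parahoric--Iwahori decomposition of $M_{twin}(F\zv)$: Corollary~\ref{cor_Iwasawa} as stated outputs $U^{m,\epsilon\epsilon}_{twin}$ on the left rather than the parahoric $M^\qm_{twin}(\g r_{0})$, so one genuinely needs the full Bruhat decomposition (Corollary~\ref{cor_Bruhat}(2))—available here because $M_{twin}(F\zv)$ is of finite type with trivial positivity constraint—to promote the unipotent factor to the parahoric. A secondary technical point is the verification that the final reassembly collapses back to a three-term product, i.e.\ that the extra piece coming from $\iota^{m}\cdot n\cdot i$ can indeed be compatibly absorbed.
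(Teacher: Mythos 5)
Your strategy---refining the twin Iwasawa decomposition $G_{twin}=P_{twin}(F\zv).N_{twin}.I_{twin}(C_{1})$ of \S\ref{2.16}(3) by further decomposing the Levi factor $M_{twin}(F\zv)$---is the same as the paper's, and the unipotent bookkeeping (conjugating $U(F\zv)$ past elements of $M$ and $N$) is sound. But there is a genuine gap.

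The theorem allows $C_{1}$ to lie in $\A_{\ominus}$ as well as in $\A_{\oplus}$, and your argument only handles the latter. When $C_{1}\subset\A_{\ominus}$, the required decomposition of $m\in M_{twin}(F\zv)$ is the \emph{Birkhoff} decomposition of the twin BN-pair of the affine Kac--Moody group $\g M(F\zv)(\k[\qp,\qp^{-1}])$, not the Bruhat decomposition of Corollary~\ref{cor_Bruhat}. Indeed, Corollary~\ref{cor_Bruhat}(2), applied to $M_{twin}(F\zv)$ acting on the Bruhat--Tits building of $\g M(F\zv)$ over $(\k(\qp),\omega_{\oplus})$, only relates parabolics on the \emph{same} side. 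Here $M^\qm_{twin}(\g r_{0})$ fixes $F\subset\A_{\oplus}$ and is a positive parabolic, while the fixator of (a translate of) $C_{1}\subset\A_{\ominus}$ is a negative one; they sit in opposite halves of the twin BN-pair, so your absorption step cannot produce the required three-term product. You need to invoke the Birkhoff decomposition---the standard opposite-sign decomposition for split affine Kac--Moody groups over a field---exactly as the paper does.

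A secondary, related point: you decompose $m$ with respect to a chamber $C_{1}^{M}$ fixed in advance, independent of the element $n$ arising in the Iwasawa factorisation $g=umnq$. The paper instead decomposes $m$ with respect to $F$ and $n(C_{1})$, so that the third factor $q_{1}$ satisfies $n^{-1}q_{1}n\in I_{twin}(C_{1})$ by construction; then $g=(up_{1})(n_{1}n)(n^{-1}q_{1}n\,q)$ is visibly in $P^\qm_{twin}(\g r_{0}).N_{twin}.I_{twin}(C_{1})$, with no further rearrangement. Your choice leaves the term $\iota^{m}\cdot n\cdot i$ unresolved---this is precisely the loose end you flag at the end: the extra Iwasawa application produces a new left unipotent factor that still has to be shown to lie in $P^\qm_{twin}(\g r_{0})$. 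Decomposing $m$ with respect to $n(C_{1})$ removes this difficulty at the source.
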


\begin{NB} (a) This is the mixed twin Iwasawa decomposition.
It mixes an Iwasawa decomposition in $G_{twin}$ and a Bruhat decomposition (if $C_{1}\subset\A_{\oplus}$) or a Birkhoff decomposition (if $C_{1}\subset\A_{\ominus}$) in the Kac-Moody group $M_{twin}(F\zv)$ (which is actually reductive).

\par (b) One has also $G_{pol}= P^\qm_{twin}(\g r_{0}).N_{twin}.I_{pol}(C_{1})$.
\end{NB}

\begin{proof} Let $g\in G_{twin}$ (\resp $g\in G_{pol}$).
From \S \ref{2.16}, one gets $p\in P_{twin}(F\zv)$, $n\in N_{twin}$, $q\in I_{twin}(C_{1})$ (\resp $q\in I_{pol}(C_{1})$) and $u\in U_{twin}(F\zv)$, $m\in M_{twin}(F\zv)$ with $g=pnq$ and $p=um$.

\par Then one uses the Bruhat (\resp Birkhoff) decomposition in the affine Kac-Moody group $M_{twin}(F\zv)$ associated to the local facets $F\subset \A_{\oplus}$ and $n(C_{1})\subset \A_{\oplus}$ (\resp $n(C_{1})\subset \A_{\ominus}$). So :

\par\qquad $m=p_{1}n_{1}q_{1}$\quad with $p_{1}\in M^\qm_{twin}(\g r_{0})$\quad, \quad$n_{1}\in N_{twin}\cap M_{twin}(F\zv)$
\par \qquad \quad and $q_{1}\in \langle \g T(\k) ; U_{\qa+r\qx}, \qa\in \QF\zm(F\zv), (\qa+r\qx)(n(C_{1}))\geq0 \rangle$.

\par Now $n^ {-1}q_{1}n\in I_{twin}(C_{1})$  and $g=up_{1}n_{1}nn^ {-1}q_{1}nq$ is in $P^\qm_{twin}(\g r_{0}).N_{twin}.I_{twin}(C_{1})$ (\resp $P^\qm_{twin}(\g r_{0}).N_{twin}.I_{pol}(C_{1})$).
\end{proof}

\begin{coro}\label{2.19}
Let $C$ be a local facet in $\SHI_{\oplus}$ (\resp in $\SHI_{\ominus}$) and $\g R$ a splayed chimney germ in $\SHI_{\oplus}$.
Then $C$ and $\g R$ are always contained in a same twin apartment $A$: $\g R\subset A_{\oplus}$ and $C\subset A_{\oplus}$ (\resp $C\subset A_{\ominus}$).
\end{coro}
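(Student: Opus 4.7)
The plan is to deduce this from the mixed twin Iwasawa decomposition, Theorem~\ref{2.18}: starting from a twin apartment that contains $C$ alone, I shall realign it via the $P^{\qm}_{twin}(\g r_{0})$-factor of the decomposition so that it also contains $\g R$.

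First I reduce to the case $\g R = \g R_{0} := germ_{\infty}(\g r_{0})$ for some splayed chimney $\g r_{0} \subset \A_{\oplus}$. This reduction uses the transitivity of $G_{twin}$ on twin apartments, once one knows that $\g R$ itself lies in some twin apartment. That point is a small extension of Lemma~\ref{n2.15}: the sector germ contained in $\g R$ lies in a twin apartment $g.\A$ by Lemma~\ref{n2.15} (applied to the sector germ at infinity in the spherical direction $F^{v}$ of $\g R$), and because $F^{v}$ is spherical, a sufficiently deep shortening of a representative chimney of $\g R$ sits inside $g.\A$. On the other hand, by Corollary~\ref{cor_Masure_GR}.2, and the observation that an apartment containing a local chamber contains every local facet at the same vertex dominated by that chamber, the local facet $C$ lies in some twin apartment $h.\A$ with $h \in G_{twin}$. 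Set $C_{1} := h^{-1}.C$, a local facet of $\A_{\oplus}$ or of $\A_{\ominus}$ according to the sign of $C$.

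Next I apply Theorem~\ref{2.18} to the element $h$, with the chosen data $\g r_{0}$ and $C_{1}$, obtaining a decomposition $h = p\,n\,q$ with $p \in P^{\qm}_{twin}(\g r_{0})$ fixing the germ $\g R_{0}$, $n \in N_{twin}$ stabilizing the canonical apartment $\A$, and $q \in I_{twin}(C_{1})$ fixing $C_{1}$. Set $A := pn.\A$, a twin apartment. Since $q$ fixes $C_{1}$, one has $pn.C_{1} = pnq.C_{1} = h.C_{1} = C$, so $C \subset A$. Moreover $p$ fixes $\g R_{0}$ and $n$ preserves $\A_{\oplus}$ (by \S\ref{1.7}), whence
\[ (pn)^{-1}.\g R_{0} \;=\; n^{-1}p^{-1}.\g R_{0} \;=\; n^{-1}.\g R_{0} \;\subset\; \A_{\oplus}, \]
i.e. $\g R_{0} \subset pn.\A_{\oplus} = A_{\oplus}$. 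Hence $A$ witnesses the desired twin-friendliness of $C$ and $\g R$.

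The main obstacle I anticipate is the preliminary step that every splayed chimney germ of $\SHI_{\oplus}$ lies in some twin apartment: the tools proved so far are stated for sector germs (Lemma~\ref{n2.15}) or for a sector germ paired with a bounded subset (Proposition~\ref{n2.16}), neither of which literally covers a chimney germ. The shortening-plus-splayedness argument indicated above is the only non-formal ingredient; once it is in place, the rest of the proof is a direct application of Theorem~\ref{2.18}, exactly parallel to how the usual axiom (MA~III) is extracted from a classical mixed Iwasawa decomposition.
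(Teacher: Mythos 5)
Your proof follows the paper's route essentially verbatim: bring $C$ and $\g R$ into the fundamental twin apartment via elements of $G_{twin}$, apply Theorem~\ref{2.18}, and read off the twin apartment from the $P^\qm_{twin}(\g r_{0})$-factor. The bookkeeping difference (you normalise $\g R$ first, decompose $h$, and take $A=pn.\A$, whereas the paper decomposes $h^{-1}g=pnq$ and takes $A=hp.\A$) is immaterial.

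The one place where you and the paper both lean on a preliminary claim is that every splayed chimney germ $\g R\subset\SHI_{\oplus}$ lies in a twin apartment; the paper simply asserts the existence of $h\in G_{twin}$ with $h^{-1}\g R\subset\A_{\oplus}$, while you attempt a justification, and your sketch does not close the gap. Two concrete problems. First, ``the sector germ contained in $\g R$'' is not well defined: when the direction $F\zv$ of $\g R$ is a spherical facet that is not a chamber, $\g R$ contains no sector germ, so Lemma~\ref{n2.15} cannot be applied to anything inside $\g R$. What you should use is a sector germ $\g Q$ of direction a vectorial chamber $C\zv$ with $F\zv\subset\overline{C\zv}$; but $\g Q$ and $\g R$ are then incomparable filters, and $\g Q\subset g.\A$ by itself says nothing about $\g R$. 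Second, the claim that ``a sufficiently deep shortening of a representative chimney of $\g R$ sits inside $g.\A$'' is stated but not argued, and it does not follow from what has been proved: Theorem~\ref{thm_Bounded_sets_R_friendly} controls bounded subsets, Proposition~\ref{n2.16} bounded subsets together with a sector germ, while a chimney germ is neither. The step is true and can be repaired: take a $G$-apartment $A$ containing $\g R$ and $\g Q$ as above (by (MA~III)), write $A=u.\A$ with $u\in U(C\zv)$ fixing $\g Q$, approximate $u$ by some $\tilde{u}(n)$ with $\shr$-coefficients as in the proof of Theorem~\ref{thm_Bounded_sets_R_friendly}, and observe that every root occurring in the factorisation of $u$ or in the commutator expansion is nonnegative on $C\zv$, hence nonnegative on $F\zv$, hence bounded below along the whole chimney, so that $\tilde{u}(n)^{-1}u$ eventually fixes a full shortening rather than only a bounded piece. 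That last point --- the compatibility $F\zv\subset\overline{C\zv}$ is what makes the approximation control an unbounded filter --- is exactly what your ``shortening-plus-splayedness'' heuristic needs and currently omits.
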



\begin{NB}Mutatis mutandis, one may also clearly suppose $\g R\subset\SHI_{\ominus}$.
\end{NB}

\begin{proof} There are $g,h\in G_{twin}$ with $C_{1}=g^ {-1}C\subset\A_{\oplus}$ (\resp $C_{1}=g^ {-1}C\subset\A_{\ominus}$) and $\g R_{0}=h^ {-1}\g R\subset\A_{\oplus}$.
From Theorem \ref{2.18}, one gets $p\in P^\qm_{twin}(\g R_{0})$, $n\in N_{twin}$ and $q\in I_{twin}(C_{1})$ such that $h^ {-1}g=pnq$.
Now $p$ fixes $\g R_{0}$ (by \S \ref{n2.17}) and $q$ fixes $C_{1}$ (by definition).
So $C=gC_{1}=hpnC_{1}\subset hp(\A_{\oplus})$ (\resp $\subset hp(\A_{\ominus})$) and $\g R=h\g R_{0}=hp\g R_{0}\subset hp(\A_{\oplus})$.
We conclude now with $A=hp(\A)$ as $hp\in G_{twin}$.
\end{proof}

\begin{remas}\label{2.20}
When $\g R$ is a sector germ and $C\subset\SHI_{\oplus}$, this corollary is a consequence of Corollary \ref{cor_Masure_GR}.2.
When $\g R$ is still a sector germ and $C\subset\SHI_{\ominus}$, then this corollary may also be deduced from Corollary \ref{cor_Masure_GR}.2: actually we have bijections between the sets of sector germs in $\SHI_{\oplus}$ or in $\SHI_{\ominus}$ (and with the set of chambers in $\zv\!\!\SHI$).

\par When $\g R$ is no longer a sector germ and $C\subset\SHI_{\ominus}$, this corollary or theorem \ref{2.18} gives a kind of non trivial link between $\SHI_{\oplus}$ and $\SHI_{\ominus}$.
It may be considered as a weak twinning of $\SHI_{\oplus}$ and $\SHI_{\ominus}$.
The twinning that may be hoped is a Birkhoff decomposition looking like \ref{2.18}, with $C_{1}\subset\A_{\ominus}$ and $\g r_{0}$ replaced by a local facet in $\A_{\oplus}$ (well chosen with respect to $C_{1}$).  See \ref{4.0} below.
\end{remas}

\subsection{Expected Birkhoff decompositions  and retraction centered at $C_\infty$}
\label{4.0}

 {Let $H$ be $G_{twin}$ (\resp $G_{pol}$), let $E_{+}\subset \A_{\oplus}$, $E_{-}\subset\A_{\ominus}$ be either points or local facets and let $H_{E_{\pm}}$ be  their fixators in $H$. 
Then a {\it Birkhoff decomposition} in $H$ is a decomposition $H=H_{E_{+}}.\mathrm{Stab}_{H}(\A).H_{E_{-}}$; one may also consider a decomposition $H'=H_{E_{+}}.(\mathrm{Stab}_{H}(\A)\cap H').H_{E_{-}}$ for a subsemigroup $H'$ of $H$.
As in \ref{ss_Retractions}, the existence of such a decomposition means that any $h_{+}.E_{+}$ and $h_{-}.E_{-}$ (for $h_{+},h_{-}\in H$, with some conditions in the case of $H'$) are in a same twin apartment 
$A\in\sha_{twin}$, if $H\subset G_{twin}$ (or in a same $G_{pol}-$twin apartment $A\in\sha_{pol}$,  if $H\subset G_{pol}$).
In the case where $\g G$ is a reductive group, then $\SHI=(\SHI_{\oplus},\SHI_{\ominus})$ is a twin building with a strongly transitive action of the affine Kac-Moody group $G_{twin}=G_{pol}$ (see Remark \ref{1.2}.2). 
Then the Birkhoff decomposition, for $G_{twin}$, is well known (see \eg \cite{Re02}).

\subsubsection{Conjectures}\label{4.4} { One would perhaps have liked that any pair of chambers $C_{x}\subset{\SHI_{\oplus}}$, $C_{y}\subset{\SHI_{\ominus}}$ is twin-friendly, \ie there exists a twin apartment $({ A_{\oplus}},{ A_{\ominus}})$ with $C_{x}\subset {A_{\oplus}}$, $C_{y}\subset { A_{\ominus}}$.
This would correspond to a Birkhoff decomposition  $H=H_{E_{+}}.N_{H}(\A).H_{E_{-}}$ for $H=G_{twin}$ and $E_{+},E_{-}$ as in \S \ref{4.0}.

\par But the experience of masures leads to think that this is not true in general. 
A counterexample is actually given below in Section \ref{s6}.
From this it is reasonable to think that a condition like $x\leq y$ or $y\leq x$ has to be added.

\par For Muthiah's purposes, we may restrict to the case $C_{y}\subset{\A_{\ominus}}\subset{\SHI_{\ominus}}$, $C_{y}=C_{\infty}=germ_{0}(-C\zv_{f})$ is the fundamental chamber in $\SHI_{\ominus}$.
Then we write $0_{\oplus}$ the element $0\in{\A_{\oplus}}$.

\par We give below two conjectures, the first one closely related to Muthiah's framework.

\medskip
\parni{\bf Conjecture.} For $x\in{\SHI_{\oplus}}$ such that $x\leq{0_{\oplus}}$ or $x\geq{0_{\oplus}}$, then $(C_{x},C_{\infty})$ is twin friendly.

\medskip
\par Actually  Muthiah needs a weaker result: For $x\in\SHI_{\oplus}$, with $x\leq 0_{\oplus}$ and $(x,0_{\ominus})$ twin friendly, then, for any $z\in[0_{\oplus},x]$, the pair $(z,0_{\ominus})$ is twin friendly.

\par But, using Proposition \ref{4.1} and the following Proposition \ref{4.5}, we get from such a result the general conjecture above (at least for $x \stackrel{\circ}{\leq} 0_{\oplus}$).

\medskip
 \parni{\bf Enhanced conjecture} For $x\in{\SHI_{\oplus}}$ and $y\in{\SHI_{\ominus}}$, we write $x\leq y$ (\resp $x\geq y$) if there is a twin apartment $A=({A_{\oplus}},{A_{\ominus}})$ with $x\in {A_{\oplus}}, y\in{A_{\ominus}}$ and $op_{A}(y)\geq x$ (\resp $op_{A}(y)\leq x$), where $op_{A}(y)$ is the point in $A_{\oplus}$ opposite $y$.

\par Then, for $x'\in{\SHI_{\oplus}}$ and $y'\in{\SHI_{\ominus}}$ with $x'\leq x$ and $y\leq y'$ (\resp $x'\geq x$ and $y\geq y'$) one has $x'\leq y'$ (\resp $x'\geq y'$).

\medskip
\par This second conjecture seems to be a reasonable generalization of the result known in masures.

\par Note that these two conjectures are certainly more reasonable, if we replace everywhere $\leq$ by $\stackrel{\circ}{\leq}$ and $\geq$ by $\stackrel{\circ}{\geq}$.

In a recent preprint \cite{patnaik2024local}, Manish Patnaik looks at the above conjecture in the untwisted affine case, \ie for loop groups.
Unfortunately the Birkhoff decomposition he gets is, up to now, proved only in a completion of the Kac-Moody group.

\begin{prop}\label{4.5} For $x \stackrel{\circ}{\leq} y$ in $\SHI_{\oplus}$, there is a $z\in \SHI_{\oplus}$ such that $x\in[z,y]$ and $(z,C_{\infty})$ is twin friendly.
\end{prop}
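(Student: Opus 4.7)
The plan is to produce $z$ inside a splayed chimney germ of $\shi_\oplus$ and invoke Corollary~\ref{2.19} with the local facet $C_\infty\subset\shi_\ominus$. Since $x\stackrel{\circ}{\leq}y$, there is an apartment $A$ of $\shi_\oplus$ containing both points, and under some apartment isomorphism $A\simeq\A_\oplus$ one has $y-x\in\mathring{\sht}$. Because $\mathring{\sht}$ is the union of the positive spherical vectorial facets (\S\ref{2.2b}(1)), and any two apartment isomorphisms $A\to\A_\oplus$ differ by an element of $W\zv$ on linear parts, one may adjust the isomorphism so that $y-x$ lies in a standard positive spherical facet $F\zv\subset\overline{C\zv_f}$; equivalently $x-y\in -F\zv$. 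Set $\g{r}:=\g{r}(F(y,\{0\}),-F\zv)=cl(y+(-F\zv))$ in $A$; it is splayed (of sign $-$) since $-F\zv$ is spherical, and its germ $\g{R}:=germ_\infty(\g{r})$ is a splayed chimney germ in $\shi_\oplus$.

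Applying Corollary~\ref{2.19} to $\g{R}\subset\shi_\oplus$ and $C_\infty\subset\shi_\ominus$ produces a twin apartment $B=(B_\oplus,B_\ominus)$ with $\g{R}\subset B_\oplus$ and $C_\infty\subset B_\ominus$. The condition $\g{R}\subset B_\oplus$ means that $B_\oplus$ contains some shortening of $\g{r}$, that is, $B_\oplus\supset y+\xi+(-F\zv)$ for some $\xi\in -F\zv$. Define $z:=y+t(x-y)$ for a real $t>1$ to be chosen. Writing $F\zv=F\zv(J)$, one evaluates on the simple roots: for $i\in J$ both $\alpha_i(x-y)$ and $\alpha_i(\xi)$ vanish, while for $i\notin J$ both are strictly negative, so $\alpha_i\bigl(t(x-y)-\xi\bigr)=t\alpha_i(x-y)-\alpha_i(\xi)<0$ as soon as $t$ is large enough. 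Hence $t(x-y)-\xi\in -F\zv$, so $z\in y+\xi+(-F\zv)\subset B_\oplus$. This yields $(z,C_\infty)$ twin-friendly via $B$, and $x=y+\tfrac{1}{t}(z-y)$ with $\tfrac{1}{t}\in(0,1)$ shows $x\in[z,y]$.

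The only genuinely delicate point is the preliminary alignment $y-x\in F\zv\subset\overline{C\zv_f}$, which rests on the characterization of $\mathring{\sht}$ as a union of positive spherical vectorial facets. The remainder is filter-theoretic bookkeeping combining Corollary~\ref{2.19} with the definitions of the germ of a chimney and of its shortenings. The degenerate case $x=y$ (relevant only when $0\in\mathring{\sht}$, i.e.\ $W\zv$ is finite) is trivial: take for $z$ any point twin-friendly with $C_\infty$ and sharing an apartment with $y$.
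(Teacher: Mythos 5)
Your proof is correct and takes essentially the same route as the paper's: form a splayed negative chimney based at $y$ in the direction of $x-y$, apply Corollary~\ref{2.19} to obtain a twin apartment containing its germ together with $C_\infty$, and take $z$ far enough along the ray from $y$ through $x$ so as to land in the given shortening. Two small remarks: the WLOG alignment of $y-x$ into $\overline{C\zv_f}$ is unnecessary (the paper simply takes the spherical vectorial facet containing $\vect{yx}$ in the apartment $A$, without normalizing), and the parenthetical on the degenerate case is off --- $x=y$ satisfies $x\stackrel{\circ}{\leq}y$ regardless of whether $0\in\mathring{\sht}$, since $0\in\mathring{\sht}\cup\{0\}$ always --- though the resolution you give still works.
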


\begin{proof} One may suppose $x\neq y$.
There is an apartment $A_{\oplus}$ in $\SHI_{\oplus}$ containing $x$ and $y$.
One may consider in $A_{\oplus}$ the spherical vectorial facet $F\zv$ of $\vect{ A_{\oplus}}$ containing $\vect{yx}$, the ray $\qd=y+\R_{+}\vect{yx}$ and the splayed chimney $\g r=\g r(F(y,F\zv),F\zv)$.
By Corollary \ref{2.19}, there is a twin apartment $(A'_{\ominus},A'_{\oplus})$ such that $C_{\infty}\subset A'_{\ominus}$ and $A'_{\oplus}$ contains the germ $\g R$ of $\g r$, \ie $A'_{\oplus}$ contains a shortening $\g r(F(y+k\vect{yx},F\zv),F\zv)$ of $\g r$ (for some $k\in\R$ supposed $\geq1$).
Then $A'_{\oplus}$ contains $z=y+k\vect{yx}$ (and the ray $z+\R_{+}\vect{yx}$).
So $(z,C_{\infty})$ is twin friendly and $x\in[z,y]$ (as $k\geq1$).
\end{proof}
}

\subsubsection{Retraction centered at $C_\infty$}\label{n3.15.1}

Our main motivation to study twin masures is the study of the Kazhdan-Lusztig polynomials introduced by Muthiah in \cite{Mu19b} in the Kac-Moody frameworks. His definition involves the cardinalities of sets of the form 
\begin{equation}\label{eq_coset}
K_{twin} \qp^\lambda K_{twin}\cap I_\infty \qp^\mu K_{twin}/K_{twin},
\end{equation} 
where $K_{twin}$ is the fixator of $0_{\oplus}$ in $G_{twin}$ and $\lambda,\mu \in Y^+=Y\cap\sht$ (and $\qp^\lambda$ is defined in \S \ref{ss_N}). The strategy he proposes to compute these cardinalities follows the   steps below.
 \begin{enumerate}
\item Define a retraction $\rho_{C_\infty}:\shi_{\oplus,\leq 0_{\oplus}}=\set{x\in\SHI_{\oplus}\mid x\leq 0_{\oplus}}\rightarrow \A_{\oplus,\leq 0_{\oplus}}=\A_{\oplus}\cap\shi_{\oplus,\leq 0_{\oplus}} $ centered at $C_\infty$. Then the coset \eqref{eq_coset} is in bijection with 
\begin{equation}\label{eq_retrac} 
\{x\in \shi_{\oplus,\leq 0_{\oplus}}\mid d\zv(0_{\oplus},x)=-\lambda\text{ and }\rho_{C_{\infty}}(x)=-\mu\}, 
\end{equation}(see \ref{5.2} for the definition of $d\zv$).

\parni Recall that for us, following Tits, $\qp^\ql$ acts on $\A_{\oplus}$ by the translation of vector $-\ql$: see \S \ref{1.8}.

\item Study the images by $\rho_{C_\infty}$ of line-segments of $\shi_{\oplus,\leq 0_{\oplus}}$.  He proves in \cite{Mu19b} that such an image is a piecewise linear path of $\sha_{\oplus}$ satisfying certain conditions. He calls such paths $I_\infty$-Hecke paths.

\item Prove that an $I_{\infty}-$Hecke path from $0_{\oplus}$ to $-\qm$ in $\A_{\oplus}$, of shape $-\ql$, has only a finite (computable) number of liftings as line segments of $\shi_{\oplus,\leq 0_{\oplus}}$ from $0_{\oplus}$ to $x\in\SHI_{\oplus}$ with $d\zv(0_{\oplus},x)=-\ql$.

\item Prove that, for $\ql$ and $\qm$ given, there is only a finite number of $I_{\infty}-$Hecke paths from $0_{\oplus}$ to $-\qm$ in $\A_{\oplus}$, of shape $-\ql$.
Together with 3. this gives the cardinality of the set \ref{eq_retrac}.

\end{enumerate}

In \cite{Mu19b}, Muthiah achieves steps 2 and 3 in general and step 4 in certain cases (when $G$ is untwisted affine of type $A$, $D$ or $E$, see \cite[Theorem 5.54]{Mu19b}). 
Step 4 is achieved in full generality in \cite[Corollary 3.11]{hebert2024kazhdan}.
However, step 1 is only conjectural. 

\par We now explain {step 1, \ie}how to define  $\rho_{C_\infty}$ under the assumption that $(G_{twin})^+_\oplus$ (or $(G_{twin})^-_\oplus$) admits a Birkhoff decomposition (which is still  conjectural).
 Steps 2 and 3 will be explained with great details in Section \ref{s5}, see particularly Subsections \ref{5.3}, \ref{5.13} and Theorem \ref{5.7}.
In step 3, it seems that our formula for the number of liftings of a $C_{\infty}-$Hecke path is more precise than Muthiah's formula.
We shall tell nothing about step 4.

\medskip

Let $\she=I_\infty.\A_{\oplus}$. Then  $\she$ is the set of elements $x\in \shi_{\oplus}$ such that $x\cup C_\infty$ is $G_{twin}$-friendly. Indeed, take $x\in \she$ and write $x=i_\infty.y$, with $i_\infty\in I_\infty$ and $y\in \A_{\oplus}$.  Then $A:=i_\infty.\A$ contains $x\cup C_\infty$. Conversely, let $x\in \shi_\oplus$ be such that $x\cup C_\infty$ is $G_{twin}$-friendly. Then there exists $g\in G_{twin}$ such that $A:=g.\A$ contains $x\cup C_\infty$. Then by Theorem~\ref{thmIsomorphisms_fixing_twin_apartments}, there exists $h\in G_{twin}$ such that $h.\A=A$ and $h$ fixes $A\cap \A$. Then $h\in I_\infty$ and there exists $y\in \A_{\oplus}$ such that $h.y=x$, so $x\in \she$.

\begin{lemm}\label{l_def_ret}
Let $z\in \A_{\oplus}$ and $i_\infty\in I_\infty$ be such that $i_\infty.z\in \A_{\oplus}$. Then $i_{\infty}.z=z$. 
\end{lemm}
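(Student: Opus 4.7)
The plan is to apply Theorem~\ref{thmIsomorphisms_fixing_twin_apartments} to the pair of twin apartments $(\A_\oplus,\A_\ominus)$ and $i_\infty^{-1}.(\A_\oplus,\A_\ominus)$, and thereby reduce the lemma to showing that every element of $N_{twin}\cap I_\infty$ fixes $\A_\oplus$ pointwise.

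First I would verify that the intersection $(\A_\oplus,\A_\ominus)\cap i_\infty^{-1}.(\A_\oplus,\A_\ominus)$ contains both $z$ and some set in the filter $C_\infty$. The point $z$ lies in $\A_\oplus$ by hypothesis and satisfies $z=i_\infty^{-1}(i_\infty.z)\in i_\infty^{-1}.\A_\oplus$ because $i_\infty.z\in\A_\oplus$; on the negative side, $I_\infty$ being a subgroup, $i_\infty^{-1}\in I_\infty$ also fixes pointwise some $S\subset\A_\ominus$ in $C_\infty$, so $S\subset i_\infty^{-1}.\A_\ominus$ as well. Theorem~\ref{thmIsomorphisms_fixing_twin_apartments} then furnishes $g\in G_{twin}$ with $g.(\A_\oplus,\A_\ominus)=i_\infty^{-1}.(\A_\oplus,\A_\ominus)$ that fixes the intersection pointwise; in particular $g.z=z$ and $g$ fixes some set of $C_\infty$, so $g\in I_\infty$. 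Setting $n:=i_\infty g$, we obtain $n.(\A_\oplus,\A_\ominus)=(\A_\oplus,\A_\ominus)$, so $n\in N_{twin}$ by \S\ref{1.7}, and $n\in I_\infty$ because both $i_\infty$ and $g$ are. Since $g.z=z$, the equality $i_\infty.z=z$ reduces to $n.z=z$.

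It remains to show that any $n\in N_{twin}\cap I_\infty$ fixes $\A_\oplus$ pointwise. Such an $n$ fixes some $S'\in C_\infty$ pointwise; as $-C^v_f$ is an open cone of $\A_\ominus$ having $0_\ominus$ on its boundary, $S'$ contains a nonempty open subset of $\A_\ominus$. Since $N_{twin}$ acts on $\A_\ominus$ by affine automorphisms (via $\nu$, see \S\ref{1.8}), $n$ must act as the identity on all of $\A_\ominus$. By \S\ref{1.7} the fixator of $\A_\ominus$ in $G_{twin}$ is $\g T(\sho)\cap\g T(\sho_\ominus)=\g T(\sho\cap\sho_\ominus)=\g T(\k[\qp^{-1}])$, and since $\k[\qp^{-1}]^*=\k^*$ this coincides with $\g T(\k)\subset\g T(\sho_\oplus)$, which fixes $\A_\oplus$ pointwise. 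Therefore $n.z=z$ and hence $i_\infty.z=z$. The only slightly delicate point is this torus computation, which ensures that trivial action on $\A_\ominus$ forces trivial action on $\A_\oplus$.
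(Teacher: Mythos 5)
Your proof is correct and follows essentially the same route as the paper's: apply Theorem~\ref{thmIsomorphisms_fixing_twin_apartments} to produce an element of $G_{twin}$ fixing the intersection, observe that composing with $i_\infty$ gives an element of $N_{twin}\cap I_\infty$ which, acting affinely on $\A_\ominus$ and fixing an open subset, must act trivially on $\A_\ominus$, and then invoke \S\ref{1.7}.d to get that such an element lies in $\g T(\k)$ and hence also fixes $\A_\oplus$. The only cosmetic difference is that you work with $i_\infty^{-1}.\A$ rather than $i_\infty.\A$ and spell out the torus intersection $\sho\cap\sho_\ominus=\k[\qp^{-1}]$ with $\k[\qp^{-1}]^*=\k^*$, whereas the paper cites \S\ref{1.7}.d for the equality $\g T(\sho_\ominus)\cap\g T(\sho)=\g T(\k)$ directly.
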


\begin{proof}
Let $A=i_\infty.\A=(i_\infty.\A_{\oplus},i_{\infty}.\A_{\ominus})$. By Theorem~\ref{thmIsomorphisms_fixing_twin_apartments}, there exists $h\in G_{twin}$ such that $h.A=\A$ and $h$ fixes $A\cap \A$. Then $hi_\infty$ stabilizes $\A$ and thus it belongs to $N_{twin}$. As $hi_{\infty}$ fixes $C_\infty$, it fixes an open subset of $\A_{\ominus}$. Therefore $hi_{\infty}$ fixes $\A_{\ominus}$. By \S \ref{1.7} d), $hi_\infty$ lies in   $\g T(\k)$ and thus it also fixes $\A_{\oplus}$. Therefore $hi_{\infty}.z=z=i_\infty.z$. 
\end{proof}

  We  define $\rho_{C_\infty}:\she\rightarrow \A_{\oplus}$\index{r@$\rho_{C_\infty}$} by $\rho_{C_\infty}(i_\infty.x)=x$ for $x\in \A_{\oplus}$ and $i_\infty\in I_\infty$. This is well-defined by the lemma above. Moreover it is $I_\infty$-invariant and $\rho_{C_\infty}(x)=x$ for all $x\in \A_{\oplus}$, so it satisfies the conditions of \cite[Proposition 2.4]{Mu19b}, with $Q=I_\infty$.

It is however difficult to describe explicitly $\she$. It is related to the existence of Birkhoff decompositions on $G$ by the lemma below. For our purpose, we would like that $\she$ contains $\shi_{\oplus,\geq 0_{\oplus}}$ (or $\shi_{\oplus,\leq 0_{\oplus}}$, since our sign conventions differ from the ones of Muthiah). 
In the following of this subsection \ref{4.0} we work with $\shi_{\oplus,\geq 0_{\oplus}}$, but the same results are true for $\shi_{\oplus,\leq 0_{\oplus}}$.

We set $(G_{twin})^+_\oplus=\{g\in G_{twin}\mid g.0_{\oplus}\geq 0_{\oplus}\}$.

\begin{lemm}
\begin{enumerate}
\item Let $J=\bigcap_{x\in \A_{\oplus}} I_\infty N_{twin} (G_x\cap G_{twin})\text{ and }J^+=\bigcap_{x\in \A_{\oplus,\geq0_{\oplus}}} I_\infty N_{twin} (G_x\cap G_{twin}).$ Then $\she\supset J.\A_{\oplus}\cup J^+.\A_{\oplus,\geq 0_{\oplus}}$. 

\item If $\she=\shi_{\oplus}$, then $G_{twin}=J$. 

\item If $\g G$ is reductive, then $\she=\shi_{\oplus}$.

\item We have $(G_{twin})^+_{{\oplus}}.\A_{\oplus,\geq 0_{\oplus}}=\shi_{\oplus,\geq 0_{\oplus}}$. 

\item We have $J^+\supset (G_{twin})^+_\oplus$ if and only if $\she\supset \shi_{\oplus,\geq 0_{\oplus}}$.

\end{enumerate}

\end{lemm}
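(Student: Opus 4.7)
My plan is to handle the five parts in order, noting that parts (1), (3), (4) are essentially formal consequences of tools already established in the paper, while part (2) is the substantive step and part (5) combines the earlier parts.

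For part (1), I would unpack the definition of $J$: given $g\in J$ and any $x\in\A_{\oplus}$, write $g=inh$ with $i\in I_\infty$, $n\in N_{twin}$, $h\in G_x\cap G_{twin}$. Then $g.x=in.x$ lies in $I_\infty.\A_{\oplus}=\she$ because $N_{twin}$ stabilizes $\A_{\oplus}$. The same argument, restricted to $\A_{\oplus,\geq 0_{\oplus}}$, gives $J^+.\A_{\oplus,\geq 0_{\oplus}}\subset\she$.

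For part (2), suppose $\she=\shi_{\oplus}$ and fix $g\in G_{twin}$, $x\in\A_{\oplus}$. I would write $g.x=i.y$ with $i\in I_\infty$, $y\in\A_{\oplus}$, and set $g':=i^{-1}g\in G_{twin}$. Since $y\in\A_{\oplus}\cap g'.\A_{\oplus}$, applying Theorem~\ref{thmIsomorphisms_fixing_twin_apartments} to the twin apartments $\A$ and $g'.\A$ produces $h\in G_{twin}$ with $hg'.\A=\A$ and $h$ fixing $\A\cap g'.\A$, in particular fixing $y$. Then $n:=hg'\in N\cap G_{twin}=N_{twin}$ (by \S \ref{1.7}) and $n.x=h.y=y$. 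Rewriting $g=ih^{-1}n=i\cdot n\cdot(n^{-1}h^{-1}n)$, one checks that $k:=n^{-1}h^{-1}n$ lies in $G_{twin}$ and satisfies $k.x=n^{-1}h^{-1}.y=n^{-1}.y=x$, so $k\in G_x\cap G_{twin}$ and $g=ink$ has the required form.

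For part (3), in the reductive case $G_{twin}=\g G(\sho)$ acts strongly transitively on the classical Bruhat--Tits twin building (see the Remark following \S \ref{1.2}.2), so any local positive chamber is twin-friendly with $C_\infty$; applying Theorem~\ref{thmIsomorphisms_fixing_twin_apartments} to the resulting twin apartment and $\A$ produces an element of $I_\infty$ transporting it to $\A$, exhibiting every $x\in\shi_{\oplus}$ as an element of $I_\infty.\A_{\oplus}=\she$. For part (4), the inclusion $(G_{twin})^+_{\oplus}.\A_{\oplus,\geq 0_{\oplus}}\subset\shi_{\oplus,\geq 0_{\oplus}}$ follows from $G$-invariance and transitivity of $\leq$; for the reverse, given $x\geq 0_{\oplus}$ the pair $\{0_{\oplus},x\}$ lies in a common $G$-apartment by definition of $\leq$, hence by Theorem~\ref{thm_Bounded_sets_R_friendly} in a common $G_{twin}$-apartment, and Proposition~\ref{2.11} then produces $h\in G_{twin}$ fixing $0_{\oplus}$ with $h.\A_{\oplus}\ni x$; $G$-invariance of $\leq$ applied to $h^{-1}$ shows $y:=h^{-1}.x\in\A_{\oplus,\geq 0_{\oplus}}$, while $h.0_{\oplus}=0_{\oplus}$ gives $h\in(G_{twin})^+_{\oplus}$.

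Finally, part (5) is a formal combination: the implication $(\Leftarrow)$ observes that $g\in(G_{twin})^+_{\oplus}$ and $x\in\A_{\oplus,\geq 0_{\oplus}}$ force $g.x\geq 0_{\oplus}$, hence $g.x\in\she$, after which the argument of part (2) applies verbatim; $(\Rightarrow)$ chains parts (1) and (4) to obtain $\she\supset J^+.\A_{\oplus,\geq 0_{\oplus}}\supset (G_{twin})^+_{\oplus}.\A_{\oplus,\geq 0_{\oplus}}=\shi_{\oplus,\geq 0_{\oplus}}$. The main obstacle is the bookkeeping in part (2): one has to invoke Theorem~\ref{thmIsomorphisms_fixing_twin_apartments} at just the right pair of twin apartments so that the transport element $h$ fixes $y$, and then perform the conjugation trick reorganizing $g=ih^{-1}n$ into $ink$ to shift the \emph{fixed} point from $y$ back to $x$. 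Everything else is either a direct computation or a straightforward appeal to the decomposition results of Section~\ref{s2}.
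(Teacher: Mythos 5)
Your proof is correct and, for parts (1), (2), (4), and (5), follows essentially the same route as the paper: part (2) in particular uses the same transport-and-conjugate argument via Theorem~\ref{thmIsomorphisms_fixing_twin_apartments}, producing the same element $n=hg'\in N_{twin}$ and the same $G_x$-element $n^{-1}h^{-1}n=n^{-1}(i_\infty)^{-1}g$. The only genuine divergence is in part (3): you argue geometrically, using the twin-building property of the reductive case (any positive local chamber lies in a twin apartment with $C_\infty$) to conclude directly that every $x$ satisfies the characterization $\she=I_\infty.\A_\oplus$, whereas the paper invokes the algebraic Birkhoff decomposition $G_{twin}=I_\infty N_{twin} I_{twin}$ of the affine Kac--Moody group $\g G(\k[\qp,\qp^{-1}])$, conjugates by elements of $N_{twin}$ to show $G_x\cap G_{twin}\supset mI_{twin}m^{-1}$ and hence $J=G_{twin}$, and then cites part (1). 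Both routes rest on the same underlying Birkhoff/twinning property of the reductive twin building; yours is its geometric reformulation and bypasses part (1), while the paper's algebraic version exhibits an explicit generator of the parahoric $G_x\cap G_{twin}$, which fits its emphasis on the semigroup $J$.
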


\begin{proof}
2) Suppose $\she=\shi_{\oplus}$. Let $g\in G_{twin}$   and $x\in \A_\oplus$. Then $g.x\in \she$ and thus there exists $i_\infty\in I_\infty$, $y\in \A_{\oplus}$ such that $g.x=i_\infty.y$ and $(i_\infty)^{-1}g.x=y$. Let $h\in G_{twin}$ be such that $h(i_\infty)^{-1}g.\A=\A$ and such that $h$ fixes $\A\cap (i_\infty)^{-1}g.\A$ (Theorem \ref{thmIsomorphisms_fixing_twin_apartments}). 
Set $n=h(i_\infty)^{-1}g$. Then $n\in N_{twin}$ and  $y=n.x$. Then $g.x=i_\infty n.x$ and hence $n^{-1}(i_\infty)^{-1} g\in G_x$. Consequently, $g\in I_\infty N_{twin} (G_x\cap G_{twin})$ and $G_{twin}=J$.

(1) Let $x\in J.\A_{\oplus}$ and $j\in J$, $y\in \A_{\oplus}$ be such that $x=j.y$. Write $j=i_\infty n k$, where $(i_\infty,n,k)\in I_\infty\times N_{twin}\times (G_y\cap G_{twin})$. Then $x=i_\infty.(n.y)\in \she$, so $\she\supset J.\A_{\oplus}$. Similarly we have $J^+.\A_{\oplus,\geq 0_{\oplus}}\subset \she$. 

(3) Suppose $\g G$ is reductive. Then we have $G_{twin}=I_{\infty} N_{twin} I_{twin}$, by the Birkhoff decomposition in the affine Kac-Moody group over $\k$, $\g G(\k[\qp,\qp^ {-1}])=G_{twin}$. 
Therefore we have $G_{twin}=I_\infty  N_{twin} mI_{twin} m^{-1}$   for every $m\in N$. Take $x\in \A_{\oplus}$. Then there exists $m\in N_{twin}$ such that $m^{-1}.x\in \overline{C_{\oplus}}$. Then $G_x\cap G_{twin}\supset mI_{twin} m^{-1}$, which proves (3) using (1).

(4) Let $g\in (G_{twin})^+_{\oplus}$ and $x\in \A_{\oplus,\geq 0_{\oplus}}$. Then $x\geq 0_{\oplus}$ and by $G$-invariance of $\leq$ we have $g.x\geq g.0_{\oplus}$. By definition of $(G_{twin})^+_{\oplus}$, we have $g.0_{\oplus}\geq 0_{\oplus}$. 
By transitivity of $\leq$, $g.x\geq 0_{\oplus}$, thus $(G_{twin})^+_{\oplus}.\A_{\oplus,\geq 0_{\oplus}}\subset \shi_{\oplus,\geq 0_{\oplus}}$. 
Let $x\in \shi_{\oplus,\geq 0_{\oplus}}$.  
Then there exists $g\in G_{twin}$ such that $g.x,g.0_{\oplus}\in \A_{\oplus}$ and $g.x\geq g.0_{\oplus}$.
 We can moreover assume that $g.0_\oplus=0_{\oplus}$ (see Corollary \ref{cor_Bruhat}.1 and Proposition \ref{2.11}). 
 Then $x=g^{-1}.(g.x)$ and $g^{-1}\in (G_{twin})^+_{\oplus}$, hence $x\in (G_{twin})^+_{\oplus}.\A_{\oplus,\geq 0_{\oplus}}$. Therefore $\shi_{\oplus,\geq 0_{\oplus}}\subset (G_{twin})^+_{\oplus}.\A_{\oplus,\geq 0_{\oplus}}$ which proves 4). 
 
 5) By 1) and 4), we already have the implication ``$\Rightarrow$''. Assume $\she \supset \shi_{\oplus,\geq 0_{\oplus}}$ and take $g\in (G_{twin})^+_{\oplus}$ and $x\in \A_{\oplus,\geq 0_{\oplus}}$. Then by $G$-invariance of $\leq$, we have $g.x\geq g.0_{\oplus}\geq 0_{\oplus}$, so $g.x\in \shi_{\oplus,\geq 0_{\oplus}}\subset \she$. Therefore there exists $y\in \A_{\oplus}$ and $i_\infty\in I_\infty$ such that $g.x=i_\infty.y$. As in the proof of 2), we have $y\in N_{twin}.x$,  thus $g\in I_\infty N_{twin} (G_x\cap G_{twin})$ and the lemma follows.

\end{proof}

As we shall see in \ref{ss_count_ex}, $J\neq G_{twin}$ in general. We conjecture that $J^+\supset (G_{twin})^+_{\oplus}$ which is equivalent to $\she\supset \shi_{\oplus,\geq 0_{\oplus}}$, by the Lemma above. We also expect similar results for $\shi_{\oplus,\leq 0_{\oplus}}$, $(G_{twin})^-_{\oplus}$ and $J^-$ (where $J^-$ is defined similarly to $J^+$).

\begin{rema}
It seems also natural to define $\she'=I_\ominus.\A_{\oplus}$ and then define $\rho_{C_\infty}':\she'\rightarrow \A_{\oplus}$ by $\rho_{C_\infty}(i.x)=x$ for $i\in I_{\ominus}$, $x\in \A_{\oplus}$. However this is not defined in general because the fixator of $\A_{\ominus}$ in $G$ does not fix $\A_{\oplus}$. Indeed, let $z\in \k(\qp)$ be such that $\omega_\oplus(z)\neq 0$ and $\omega_{\ominus}(z)= 0$ and $\lambda\in Y\setminus\{0\}$. Set $z^\lambda=\lambda(z)\in T$ (recall that $Y=\mathrm{Hom}(\g {Mult},\g T)$). Then $z^\lambda$ acts by translation of vector $-\omega_{\oplus}(z)\lambda$ on $\A_{\oplus}$ and by translation of vector $\omega_{\ominus}(z)\lambda=0$ on $\A_{\ominus}$. Actually, $\nu_{\oplus}(\g T(\sho_{\ominus}^*))=Y$, so we can define $\rho'_{C_\infty}:\she'\rightarrow \A_{\oplus}/Y$. Then we can define the image by $\rho_{C_\infty}'$ of a line-segment of $\she'$ (up to an element of $Y$) by demanding its image to be continuous. So it might be  helpful to look for a  Birkhoff decomposition of $G$ instead of a Birkhoff decomposition of $G_{twin}$, in order to study Kazhdan-Lusztig polynomials. 
\end{rema}

\section{$C_{\infty}-$Hecke paths}\label{s5}

\par As explained above in \S \ref{4.4}, we do not get what is expected to define the retraction $\qr_{I_{\infty}}=\qr_{C_{\infty}}$ (on a great part of $\SHI_{{{\oplus}}}$). 
One would like that : $\forall x\in {\SHI_{\oplus}}, x\geq0_{\oplus}$ (or $x\leq0_{\oplus}$), then $(x,{C_{\infty}})$ is twin friendly.
Actually we get interesting results if, at least, $(z,{C_{\infty}})$ is twin friendly for any $z\in [0_{\oplus},x]$.
Then $\qr_{I_{\infty}}=\qr_{C_{\infty}}$ is defined on $[0_{\oplus},x]$ (by  Theorem~\ref{thmIsomorphisms_fixing_twin_apartments} or by \S\ \ref{n3.15.1}).
In this section we shall prove, using Proposition \ref{4.1},  that $\qr_{C_{\infty}}([0_{\oplus},x])$ is  an $I_{\infty}-$Hecke path (as defined in \cite{Mu19b}).
Actually  $C_{\infty}$ is the canonical (negative) local chamber in ${\SHI_{\ominus}}$ and $\qr_{C_{\infty}}=\qr_{I_{\infty}}$ is the retraction of (a part of) $\SHI_{{\oplus}}$ onto (a part of) $\A_{{\oplus}}$ with center $C_{\infty}$; it is also defined on a part of $\SHI_{\ominus}$ (using a Bruhat decomposition in $\SHI_{\ominus}$).

 More precisely, under the above hypothesis on $[0_{\oplus},x]$, we prove that $\qr_{C_{\infty}}([0_{\oplus},x])$ is a $\ql-$path (with $\ql=d\zv(0_{\oplus},x)$) and may be endowed with a superdecoration (\S \ref{5.2}, \ref{5.3}).
Conversely we prove that any superdecorated $\ql-$path is the image by $\qr_{C_{\infty}}$ of a line segment $[0_{\oplus},x]$ with $\ql=d\zv(0_{\oplus},x)$ and we count the number of these possible $x$ (Theorem \ref{5.7}).
Then, starting from \S \ref{5.10}, we get that the underlying path of a superdecorated $\ql-$path is a $C_{\infty}-$Hecke path of shape $\ql$, for the definition of D. Muthiah (\S \ref{5.15}).

\subsection{Projections and retractions}\label{5.1}

\par \quad\;1) One considers a twin friendly pair $(C_{y},x)$ with $C_{y}$ a local chamber in ${\SHI_{\ominus}}$ and $x\in {\SHI_{\oplus}}$. So one may suppose $C_{y}\subset {\A_{\ominus}}$ and $x\in {\A_{\oplus}}$ (up to an element of $G_{twin}$).

\par By  paragraph \ref{n3.15.1}   the retraction $^+\qr_{C_{y}}$ of $\sht^\pm_{x}(\SHI_{\oplus})$ onto $\sht^\pm_{x}(\A_{\oplus})$ with center $C_{y}$ is well defined.
This means that $^+\qr_{C_{y}}([x,z))$  or $^+\qr_{C_{y}}(C_{x})$ is well defined for $z\in{\SHI_{\oplus}}$ and $x\leq z$ (\resp $z\leq x$) or when $C_{x}$ is a local chamber at $x$ in $\SHI_{\oplus}$ with positive (\resp negative) direction (recall that $[x,z)$ is the germ of $[x,z]$ at $x$).

\par 2) Projections: One defines:

\par $\widetilde{pr}_{x}(C_{y})$ (\resp $pr_{x}(C_{y})$, also written $C^\infty_{x}$ when $C_{y}=C_{\infty}$)\index{p@$pr_{x}$} is the germ in $x$ of the intersection of the half-apartments $D_{\oplus}(\qa+k)$ with $\qa\in\QF$, $k\in\Z$ (\resp of the open-half-spaces $D_{\oplus}^\circ(\qa+k)$ with $\qa\in\QF$, $k\in\R$) such that $D_{twin}(\qa+k\qx) \supset \set x \cup C_{y}$.
 By Theorem \ref{thmIsomorphisms_fixing_twin_apartments},  $\widetilde{pr}_{x}(C_{y})$ (\resp $pr_{x}(C_{y})$) is independent of the choice of $(\A_{\ominus},{\A_{\oplus}})$ containing $(C_{y},x)$.

\par One may remark that $\QF_{a}(C_{y}):=\set{\qa+k\qx\in\QF_{a} \mid {D_{twin}(\qa+k\qx)\supset C_{y}}}$ looks like a system of positive roots in $\QF_{a}$ (in a clear sense).
But it is not clear that $C^\infty_{x}$ is a local chamber (its direction might be outside the Tits cone). 

\par 3) We are mostly interested in the case $C_{y}=C_{\infty}$, hence $^+\qr_{C_{y}}=\qr_{C_{\infty}}=\qr_{I_{\infty}}$.
Then $\QF_{a}(C_{y})=\QF_{a-}$ \ie $C_{y}\subset {D_{twin}(\qa+k\qx)} \iff C_{\oplus} \subset {D_{twin}(-\qa-k\qx)}$.
So (if $x\in\pm\sht^\circ$, more precisely $x\stackrel{\circ}{>}0_{\oplus}$ or $x\leq0_{\oplus}$), $C^\infty_{x}$ is the local chamber opposite at $x$ to $pr_{x}(C_{\oplus})$ (defined similarly to $pr_{x}(C_{y})$ above, see \cite[2.1]{BPR19} for details); its sign is $+$ if $x\stackrel{\circ}{>}0_{\oplus}$ and $-$ if $x\leq0_{\oplus}$.
Moreover $\widetilde{pr}_{x}(C_{y})$ is the closed chamber in the restricted sense (see \cite[\S 4.5]{GR08}) containing  $pr_{x}(C_{y})=C^\infty_{x}$.
If $x$ is a special vertex, $\widetilde{pr}_{x}(C_{y}) = \overline{pr_{x}(C_{y})}$.

\begin{NB} a) Note that we chose above to suppose (up to $G_{twin}$) that $C_{y}\subset {\A_{\ominus}}$ and $x\in {\A_{\oplus}}$.
So, in general, when we speak of $C_{0}=C_0(A_\oplus)$ (\resp $0$) in this section \ref{s5}, it means the positive local chamber (\resp the vertex $0(A_{\oplus})$) opposite $C_{\infty}$ (\resp $0_{\ominus}$) in a twin apartment $A_{twin}\supset {A_{\ominus}}\cup{A_{\oplus}}$ (in the sense of \ref{2.6} and \ref{1.3}) such that $C_{\infty}\subset{A_{\ominus}}$ and $x\in {A_{\oplus}}$.
By Theorem \ref{thmIsomorphisms_fixing_twin_apartments} the condition $x\stackrel{\circ}{>}0(A_{\oplus})$ or $x\leq0(A_{\oplus})$ does not depend of the choice of $A_{twin}$.

\par b) In this case $C_{y}=C_{\infty}$ and $x\stackrel{\circ}{>}0$ or $x\leq0$, we proved that $C^\infty_{x}$ is a local chamber.
\end{NB}

\begin{enonce*}[plain]{\quad\;4) Lemma} Let $C_{x}$ be a local chamber at $x$ in $\SHI_{\oplus}$.
 Then there are affine roots $\qa_{1}+k_{1}\qx, \ldots , \qa_{n}+k_{n}\qx \in \QF_{a}(C_{y})$ with $(\qa_{i}+k_{i}\qx)(x)=0$ and elements $u_{i} \in U_{\qa_{i}+k_{i}\qx}\subset G_{twin}\cap G_{x}\cap G_{C_{y}}$ (possibly $u_{i}=1$) such that $^+\qr_{C_{y}}(C_{x})=u_{n}.\ldots .u_{1}.C_{x}$.

\par In particular $^+\qr_{C_{y}}$ restricted to $\sht^\pm_{x}(\SHI_{\oplus})$ is induced by elements of the group $G^ {min}_{twin}(x)=\langle U_{\qb+r\qx} \mid \qb+r\qx \in \QF_{a}(C_{y}); (\qb+r\qx)(x)=0 \rangle \subset G_{twin}\cap G_{x}$, which fixes $\widetilde{pr}_{x}(C_{y})$.
Hence (in the case of 3) above) this restriction (of $^+\qr_{C_{y}}=\qr_{C_{\infty}}$) is the retraction $\qr'$ of $\sht^\pm_{x}(\SHI_{\oplus})$ onto $\sht^\pm_{x}(\A_{\oplus})$ with center $\widetilde{pr}_{x}(C_{y})$ (or $pr_{x}(C_{y})=C^\infty_{x}$).

\end{enonce*}

\begin{NB}
 $G_{twin}\cap G_{x}\cap G_{C_{y}}$ has the same restriction to $\sht^\pm_{x}(^+\SHI)$ as $G^ {min}_{twin}(x)$.
\end{NB}

\begin{proof} Let $C^0\subset\A_{\oplus}, C^1,\ldots,C^{n}=C_{x}$ be a minimal gallery of local chambers at $x$ in $\SHI_{\oplus}$, with origin in $\A_{\oplus}$ and end $C_{x}$.
One argues by induction on $n$; it is clear for $n=0$.  
If $n\geq1$, one considers the hyperplane $M_{\oplus}(\qa_{1}+k_{1}\qx)$ (with $\qa_{1}\in\QF,k_{1}\in\R$) of $\A_{\oplus}$ containing the local panel common to $C^0$ and $C^1$.
One may suppose $(\qa_{1}+k_{1}\qx)(C_{y})\geq0$. 
If $k_{1}\not\in\Z$, this hyperplane is not a wall and $C^1\subset{\A_{\oplus}}$.
By induction $^+\qr_{C_{y}}(C_{x})=u_{n}.\ldots .u_{2}.C_{x}$ (with clear notations) and we are done (we replace $k_{1}$ by any $k_{1}\in\Z$ and take $u_{1}=1$).
If $k_{1}\in\Z$, then $\qa_{1}+k_{1}\qx\in \QF_{a}(C_{y})$, and, as in Proposition \ref{4.1}, one sees that there exists $u_{1} \in U_{\qa_{1}+k_{1}\qx}$ such that $u_{1}C^1\subset{\A_{\oplus}}$.
One considers the gallery $u_{1}C^1,\ldots,u_{1}C^{n}=u_{1}C_{x}$.
By induction there are $\qa_{2}+k_{2}\qx, \ldots , \qa_{n}+k_{n}\qx \in \QF_{a}(C_{y})$ with $(\qa_{i}+k_{i}\qx)(x)=0$ and elements $u_{i} \in U_{\qa_{i}+k_{i}\qx}\subset G_{twin}\cap G_{x}\cap G_{C_{y}}$ such that $^+\qr_{C_{y}}(u_{1}C_{x})=u_{n}.\ldots .u_{2}.u_{1}.C_{x}$.
So $^+\qr_{C_{y}}(C_{x})={^+\qr}_{C_{y}}(u_{1}C_{x})=u_{n}.\ldots .u_{2}.u_{1}.C_{x}$ as expected.
As each $u_{i}$ fixes $\widetilde{pr}_{x}(C_{y})$ and $pr_{x}(C_{y})$, it is also equal to $\qr'(C_{x})$.

\end{proof}

\subsection{$C_{\infty}-$friendly line segments in $\SHI_{\oplus}$}\label{5.2}

\par\quad\; 1) Let $x,y\in \SHI_{\oplus}$ be such that $x \stackrel{\circ}{<}y$ (\resp $x \stackrel{\circ}{>}y$).
There is a $G-$apartment $g.\A_{\oplus}$ containing $\set{x,y}$, so $g^ {-1}y - g^ {-1}x$ is in $\sht^\circ$ (\resp $-\sht^\circ$).
We define the vectorial distance $\ql=d\zv(x,y)$\index{d@$d\zv(-,-)$} as the unique element in $\ov{C}\zv_{f} \cap \sht^\circ$ (\resp $-\ov{C}\zv_{f} \cap \sht^\circ$) conjugated by $W\zv$ to $g^ {-1}y - g^ {-1}x$.
It does not depend on the choices made (see \eg \cite[\S 1.6]{BPR19}).

\par The line segment $[x,y]$ in $\SHI_{\oplus}$ is said $C_{\infty}-$friendly\index{C@$C_{\infty}-$friendly} if, moreover, $\forall z\in[x,y]$, $(C_{\infty},z)$ is twin friendly.
By Proposition \ref{4.1} we may ask that $A_{\oplus}$ contains $[z,x)$ or $[z,y)$.
 We actually parametrize $[x,y]$ by $[0,1]$ : $\qf:[0,1]\to[x,y]$ is an affine bijection.
 We define  $\qe(\qf)=+1$ if $x \stackrel{\circ}{<}y$ and  $\qe(\qf)=-1$ if $x \stackrel{\circ}{>}y$. 
\par\qquad In the following we suppose $[x,y]$ $C_{\infty}-$friendly.

\par 2) By the usual argument using the compactness of $[x,y]$ and Proposition~\ref{4.1}, we get points $z_{0}=x,z_{1},\ldots,z_{n}=y$ in this order in $[x,y]$ and twin apartments $(A^\ominus_{i},{A^\oplus_{i}})$, $1\leq i\leq n$, with $C_{\infty}\subset{A^\ominus_{i}}$ and $[z_{i-1},z_{i}]\subset{A^\oplus_{i}}$.
We set $z_{i}=\qf(t_{i}), t_{0}=0<t_{1}<\cdots<t_{n}=1$.
By Theorem \ref{thmIsomorphisms_fixing_twin_apartments} or \S \ref{n3.15.1},  we know that $\qr_{C_{\infty}}$ is defined on $[x,y]$, and also on all local chambers $C_{z}$ with vertex $z\in[x,y]$ by Proposition \ref{4.1}.
The above result tells that $\qr_{C_{\infty}}([x,y])$ (or better $\pi=\qr_{C_{\infty}}\circ\qf$) is a piecewise linear continuous path in $\A_{\oplus}$.
It is actually a {\bf $\ql-$path}, as defined in \cite[1.7]{BPR19}, \cite[1.8]{GR14} or  \cite[5.1]{GR08},
 \ie it is a piecewise linear continuous path  $\pi:[0,1]\to\A$ such that, $\forall t\in[0,1]$, $\pi'_{\pm}(t)\in W\zv.\ql$ (which is in $\pm\sht^\circ$).\index{lambda-path@$\ql-$path}
We shall investigate its properties more closely and then call it an $I_{\infty}-$Hecke path (to follow \cite{Mu19b}) or a $C_{\infty}-$Hecke\index{C@$C_{\infty}-$Hecke} path or (more precisely) a Hecke path of shape $\ql$ in $\A_{\oplus}$ with respect to $C_{\infty}$ (in $\A_{\ominus}$).

\par 3) We suppose now moreover that $C^\infty_{x}$ is a local chamber, more precisely that, in the apartment $A^\oplus_{1}$, one has $x \stackrel{\circ}{>}0_{1}$ (\resp $x\ {\leq}\ 0_{1}$), where $0_{1}$ means the opposite in  $A^\oplus_{1}$ of $0_{\ominus}\in{A^\ominus_{1}}$.
By Theorem \ref{thmIsomorphisms_fixing_twin_apartments} this condition does not depend on $A_{1}$ or $[x,y)$ but only on $(C_{\infty},x)$.
In particular the sign of $C^\infty_{x}$ is positive (\resp negative).
We may decorate $[x,y]$ by the use of $C^\infty_{x}$:

\par For $z\in [x,y[$ we set $C^+_{z,\qf}=pr_{[z,y)}(C^\infty_{x})$\index{p@$pr_{[z,y)}$}\index{C@$C^\pm_{z,\qf},C^\pm_{p,\pi}$} and for $z\in ]x,y]$ we set $C^-_{z,\qf}=pr_{[z,x)}(C^\infty_{x})$, \ie $C^+_{z,\qf}$ (\resp $C^-_{z,\qf}$) is the local chamber containing $[z,y)$ (\resp $[z,x)$) in its closure that is the closest to $C^\infty_{x}$, for details
 see \cite[\S 2.1 and Def 2.4]{BPR19} where $C^-_{z,\qf}$ is written $C''_{z}$.
 One has to be careful that, contrary to \lc, we may have  $x \stackrel{\circ}{>}y$ (\ie $\qe(\qf)=-1$) and then $C^+_{z,\qf}$ (\resp $C^-_{z,\qf}$) has a negative (\resp positive) direction.
When $z=\qf(t)$ we write also $C^\pm_{z,\qf}=C^\pm_{t,\qf}$.
We write $\un \qf$ or $\un {[x,y]}$ this decorated line segment.

\par We recall the notations for some segment germs: $\qf_{+}(t)=\qf_{+}(z)=\qf([t,1))=[z,y)$, $\pi_{+}(t)=\pi_{+}(p)=\pi([t,t+\eta))$ (\resp $\qf_{-}(t)=\qf_{-}(z)=\qf([t,0))=[z,x)$, $\pi_{-}(t)=\pi_{-}(p)=\pi([t,t-\eta))$ if $t<1$ (\resp $0<t$) and $z=\qf(t),p=\pi(t)$, $\eta>0$ small; also the right (\resp left) derivatives $\pi'_{+}(t)$ (\resp $\pi'_{-}(t)$).

\par We may also define $C^\pm_{p,\pi}=C^\pm_{t,\pi}:=\qr_{C_{\infty}}(C^\pm_{z,\qf})$\index{C@$C^\pm_{z,\qf},C^\pm_{p,\pi}$} when $p=\pi(t)=\qr_{C_{\infty}}(z)=\qr_{C_{\infty}}(\qf(t))$.
We get thus a decoration of $\pi$:

\par {\bf Definition} \cite[Def. 2.6]{BPR19} A \textbf{decorated} \index{decorated@decorated} $\ql-$path is a triple ${\un\pi}=(\pi,(C^+_{t,\pi})_{t<1},(C^-_{t,\pi})_{t>0})$ such that:
 $\pi$ is a $\ql-$path, $C^+_{t,\pi}$ (\resp $C^-_{t,\pi}$) is a local chamber with the same (\resp opposite) sign as $\ql$, with vertex $\pi(t)$, containing $\pi_{+}(t)$ (\resp $\pi_{-}(t)$) in its closure.
Moreover, for some subdivision $t'_{0}=0<t'_{1}<\cdots<t'_{n}=1$ of $[0,1]$ such that $\pi\vert_{[t'_{i-1},t'_{i}]}$ is a line segment and for any $t'_{i-1}\leq t,t'\leq t'_{i}$, we ask that $C^+_{t,\pi}=pr_{\pi_{+}(t)}(C^\pm_{t',\pi})$ (\resp $C^-_{t,\pi}=pr_{\pi_{-}(t)}(C^\pm_{t',\pi})$) (here we exclude $C^-_{t'_{i-1},\pi}$ and $C^+_{t'_{i},\pi}$ of these equalities).
 
We get easily these properties in our context, as the apartment $A^\oplus_{i}$ above contains $C^+_{z_{i-1},\qf}$ and $C^-_{z_{i},\qf}$ (hence all $C^\pm_{z,\qf}$ for $z\in ]z_{i-1},z_{i}[$).
So, for $p_{i}=\pi(t_{i})$, the restriction $\pi\vert_{[t_{i-1},t_{i}]}$ is a line segment from $p_{i-1}$ to $p_{i}$  and $\qr_{C_{\infty}}([x,y])=[p_{0},p_{1}]\cup[p_{1},p_{2}]\cup \cdots \cup[p_{n-1},p_{n}]$.

\subsection{Retractions of $C_{\infty}-$friendly line segments}\label{5.3}

\par\quad\; 1) We suppose $[x,y]\subset{\SHI_{\oplus}}$, $C_{\infty}-$friendly and parametrized by $\qf$ as in \ref{5.2}.1.
We suppose moreover $[x,y) \subset {\A_{\oplus}}$.
We may then decorate $[x,y]$ (\ie $\qf$) by the use of $C^\infty_{x}$, if $x\leq0$ or $x\stackrel{\circ}{>}0$ (actually we assume often $x=0$), \cf \S \ref{5.2}.3.
We get also a decoration on the $\ql-$path $\qr_{C_{\infty}}([x,y])$ (\ie on $\pi=\qr_{C_{\infty}}\circ\qf$); we keep the notations of \S \ref{5.2}.

\par 2) We suppose $0\stackrel{\circ}{\leq}x \stackrel{\circ}{<}y$ hence $\qe=\qe(\qf)=+1$ (\resp $0\stackrel{\circ}{\geq}x \stackrel{\circ}{>}y$ hence $\qe=\qe(\qf)=-1$); for this we may eventually exchange $x$ and $y$ if \eg $0\stackrel{\circ}{\leq}y \stackrel{\circ}{<}x$.
From this we deduce (by induction on $i$) that, for any $z\in ]z_{i-1},z_{i}[$, one has $z\stackrel{\circ}{>} 0({A^\oplus_{i}})$ (\resp $z\stackrel{\circ}{<} 0({A^\oplus_{i}})$); in particular $C^\infty_{z}$ is a well defined local chamber of sign $\qe$.

\par We now consider $t\in]0,1[$, $z=\qf(t)$, $p=\pi(t)=\qr_{C_{\infty}}(z)$.
We write $({A_{\ominus}},{A_{\oplus}})$ a twin apartment containing $C_{\infty}$ and $C^-_{z,\qf}$.
By \ref{5.1}.4 the restriction of $\qr_{C_{\infty}}$ to $\sht^\pm_{z}({\SHI_{\oplus}})$ (whose image is $\sht^\pm_{p}({\A_{\oplus}})$) is the retraction $\qr_{C^\infty_{z}}$ (of $\sht^\pm_{z}({\SHI_{\oplus}})$ onto $\sht^\pm_{z}({A_{\oplus}})$ with center $C^\infty_{z}$) followed by the isomorphism $\psi$ of $\sht^\pm_{z}({A_{\oplus}})$ onto $\sht^\pm_{p}({\A_{\oplus}})$ induced by $\qr_{C_{\infty}}$ (hence by an element of $I_{\infty}$).
Note that $\psi(C^\infty_{z})=C^\infty_{p}$.

\par We saw that $C^\infty_{z}$ and $C^+_{z,\qf}$ have the same sign $\qe$.
So we may consider a minimal gallery $C^0=C^\infty_{z}, C^1,\ldots,C^m=C^+_{z,\qf}$, of length $m=m_{z}=m_{t}$; we write $\mathbf i_{z}=\mathbf i_{t}$\index{i@$\mathbf i_{z}$} its type. 
We suppose that $C^0,C^1,\ldots, C^ {m'_{t}}$ is a minimal gallery from $C^\infty_{z}$ to $\qf_{+}(z)$.
Now $(C^i_{p}=\qr_{C_{\infty}}(C^i))_{0\leq i\leq m}$ is a minimal gallery in $\sht^\pm_{p}({\A_{\oplus}})$ of type $\mathbf i_{p}:=\mathbf i_{t}$ from $C^\infty_{p}=\qr_{C_{\infty}}(C^\infty_{z})$ to $C^+_{p,\pi}=\qr_{C_{\infty}}(C^+_{z,\qf})$. It is minimal as we retract with respect to $C_p^\infty$, which is the first chamber of the gallery, see \ref{5.1} 4) Lemma.
\par 3) So the $\ql-$path $\pi$ is decorated by the datum $((C^+_{t,\pi})_{t<1},(C^-_{t,\pi})_{t>0})$, with $C^+_{0,\pi}=pr_{\pi_{+}(0)}(C^\infty_{\pi(0)})$.
For any $t\in]0,1[$, one has chosen the type $\mathbf i_{t}$ of a minimal gallery of local chambers in $\sht^\qe_{p}({\A_{\oplus}})$ from $C^\infty_{p}$ to $C^+_{p,\pi}$; its length is $m=m_{p}=m_{t}$.
We supposed also that this minimal gallery begins by a minimal gallery (of length $m'_{t}$) from $C^\infty_{p}$ to $\pi_{+}(t)$ and continues by a gallery of local chambers dominating $\pi_{+}(t)$.

\par For any $t\in]0,1[$ we may consider a gallery} $\mathbf c_{p}=\mathbf c_{t}$\index{c@$\mathbf c_{p}$} of local chambers in $\sht^\qe_{p}({\A_{\oplus}})$ from $C^\infty_{p}=pr_{p}(C_{\infty})$ to the projection $C^ {(+)}_{p,\pi}$ of $C^-_{p,\pi}$ on the segment germ $\pi_{(+)}(t)=\pi(t)+\pi'_{-}(t).[0,1)$ (opposite $\pi_{-}(t)$), that is of type $\mathbf i_{t}$ and centrifugally folded with respect to $C^-_{p,\pi}$, see \cite[\S 2.2]{BPR19}.

\par Such galleries may not exist in general. But we saw above that the decorated line segment $\un\qf$ or $\un{[x,y]}$ gives rise to such galleries.

\subsection{Superdecorated $C_{\infty}-\ql$ paths}\label{5.4}

\par Let $\pi$ be a $\ql-$path in $\A_{\oplus}$, with $\ql\in \qe(\ov C\zv_{f}\cap\sht^\circ)$ and $\pi(0) \stackrel{\circ}{\geq}0$ if $\qe=1$, $\pi(0) \stackrel{\circ}{\leq}0$ if $\qe=-1$.
Clearly we have $\pi(]0,1]) \subset \qe\sht^\circ$.

\par 1) We consider the numbers $0=t'_{0}<t'_{1}<\cdots<t'_{n}=1$ of \S \ref{5.2}.2 and the points $p'_{i}=\pi(t'_{i})$ where $\pi$ may be folded.
For $t'_{i}\leq t < t'_{i+1}$(\resp $t'_{i} < t \leq t'_{i+1}$) the derivative $\pi'_{+}(t)$ (\resp $\pi'_{-}(t)$) is a constant.
The derivative $\pi'_{\pm}(t)\in W\zv.\ql$ is in $\qe\sht^\circ$.

\begin{enonce*}[plain]{\quad\; 2) Lemma} There is only a finite number of pairs $(M,t)$ with a wall $M$ containing a point $p=\pi(t)$ for $0<t<1$, such that $\pi_{+}(t)$ is not in $M$ and $C^\infty_{p}$ is not in the same side of $M$ as $\pi_{+}(t)$.
\end{enonce*}

\begin{proof} We may restrict to the $t\in [t'_{i},t'_{i+1}[$, more precisely to the $t$ in a small open set $\QO$ in $[t'_{i},t'_{i+1}]$. 
We write $M=M_{\oplus}(\qa+k\qx)$ with $\qa+k\qx \in \QF_{a+}$ (so $k\geq0$). 
The conditions are thus $(\qa+k\qx)(\pi(t))=0$ (hence $\qa(\pi(t))\leq0$), $\qa(\pi'_{+}(t))\neq0$ and more precisely $\qa(\pi'_{+}(t))>0$ (as $C^\infty_{t}\subset D(-\qa-k\qx)$).
Suppose $\qe=+1$, then $\pi(\QO)$ (\resp $\pi'_{+}(t)$, which is independent of $t\in [t'_{i},t'_{i+1}[$) is in the open Tits cone $\sht^\circ$ (as $t>0$), so $\qa(\pi(t))\leq0$ (for some $t\in\QO$) (\resp $\qa(\pi'_{+}(t))>0$) is possible only for a finite number of positive (\resp negative) roots $\qa$.
Hence there is a finite number of possible $\qa$ (by \cite[Proposition 3.12 c)]{K90}, and, then, the condition $(\qa+k\qx)(\pi(t))=0$ is possible for only a finite number of $k\in\Z$.
Moreover $t\in\QO$ is uniquely determined by $\qa+k\qx$  as $\qa(\pi'_{+}(t))\neq0$. 
 We get now the expected finiteness by using the compactness of $[t'_{i},t'_{i+1}]$. 

\par In the case $\qe=-1$, one argues similarly, just exchanging positive and negative roots.
\end{proof}

\par 3) Suppose now that $\pi$  is the underlying path of a decorated $\ql-$path  $\un\pi=(\pi,(C^+_{t,\pi})_{t<1},(C^-_{t,\pi})_{t>0})$
 with $\ql\in \qe(\ov C\zv_{f}\cap\sht^\circ)$ and $\pi(0) \stackrel{\circ}{\geq}0$ if $\qe=1$ (\resp $\pi(0) \stackrel{\circ}{\leq}0$ if $\qe=-1$).
 Moreover, for any $t\in]0,1[$, one supposes the existence of a gallery $\mathbf c_{t}$ satisfying the
 conditions of \S \ref{5.3}.3.
 
\par The fact that $\un\pi=(\pi,(C^+_{t,\pi})_{t<1},(C^-_{t,\pi})_{t>0})$ is a decorated $\ql-$path tells that there are   numbers $0=t'_{0}<t'_{1}<\cdots<t'_{r}=1$ such that, for any $1\leq i\leq r$, $\set{\pi(t) \mid t'_{i-1}\leq t \leq t'_{i}}$ is a segment $[\pi(t'_{i-1}),\pi(t'_{i})]$ and $\un{[\pi(t'_{i-1}),\pi(t'_{i})]}=([\pi(t'_{i-1}),\pi(t'_{i})],(C^+_{t,\pi})_{t'_{i-1}\leq t < t'_{i}},(C^-_{t,\pi})_{t'_{i-1} <  t \leq t'_{i}})$ is a decorated segment ( defined in \cite[Def. 2.6]{BPR19}).

\par In particular the direction $C^ {+\zzv}_{t,\pi}$ of $C^+_{t,\pi}$ for $t'_{i-1}\leq t < t'_{i}$ (\resp $C^ {-\zzv}_{t,\pi}$ of $C^-_{t,\pi}$ for $t'_{i-1} < t \leq t'_{i}$) is constant of sign $\qe$ (\resp $-\qe$), the same (\resp opposite) as the sign of the direction $C^{\infty\zzv}_{\pi(t)}$ of  $C^{\infty}_{\pi(t)}$ (if $t\neq0$).
We write $w_{i-1}^+=d\zw(C^{\infty\zzv}_{\pi(t_{i-1})},C^ {+\zzv}_{t_{i-1},\pi})$ if $i\geq2$ (\resp $w_{i}^-=d^{*\zzw}(C^{\infty\zzv}_{\pi(t_{i})},C^ {-\zzv}_{t_{i},\pi})=d\zw(C^{\infty\zzv}_{\pi(t_{i})},-C^ {-\zzv}_{t_{i},\pi})$\index{d@$d\zw(-,-),d^ {*\zzw}(-,-)$} the corresponding Weyl distance (\resp codistance), \cite[5.133]{AB08}.
We then clearly have $\pi'_{+}(t_{i})=w_{i}^+.\ql$ (\resp $\pi'_{-}(t)=w_{i}^-.\ql$) if one considers $C^{\infty\zzv}_{\pi(t)}$ as a new fundamental vectorial chamber (for $t\neq0$).

\begin{enonce*}[plain]{\quad\; 4) Lemma} One writes $p_{0}=\pi(t_{0}),p_{1}=\pi(t_{1}),\ldots,p_{\ell_{\pi}}=\pi(t_{\ell_{\pi}})$ with $0=t_{0}<t_{1}<\cdots<t_{\ell_{\pi}-1}<t_{\ell_{\pi}}=1$ the points $p=\pi(t)$ satisfying (for some wall $M$) the conditions of Lemma \ref{5.4}.2 above (or $t=0$, $t=1$).
Then any point $t$ where the path $\pi$ is folded at $\pi(t)$ appears in the set $\set{t_{k} \mid 1\leq k \leq\ell_{\pi}-1}$.
\end{enonce*}

\begin{proof} If $\pi$ is folded at $p=\pi(t)$ (for $t\in]0,1[$), one has $\pi'_{+}(t)\neq\pi'_{-}(t)$, \ie $\pi_{(+)}(t)\neq\pi_{+}(t)$.
And, as $\pi_{(+)}(t)$ (\resp $\pi_{+}(t)$) is the segment germ in $\ov{C^ {(+)}_{p,\pi}}$ (\resp $\ov{C^ {+}_{p,\pi}}$) with the same type as $\ql$, one has $C^ {(+)}_{p,\pi}\neq C^ {+}_{p,\pi}$. 
So the gallery $\mathbf c_{p}$ from $C^\infty_{p}$ to $C^ {(+)}_{p,\pi}$ is folded.
This is possible only if there is at least one  wall $M$ separating $C^\infty_{p}$ from $C^ {+}_{p,\pi}$; as $\pi_{(+)}(t)\neq\pi_{+}(t)$ we may also assume $\pi_{+}(t)\not\subset M$.
So $t\in \set{t_{k} \mid 1\leq k \leq\ell_{\pi}-1}$.
\end{proof}

\par {\bf 5) Definition.} 

A superdecorated\index{superdecoration@superdecoration} $C_{\infty}-\ql$ \index{C@$C_{\infty}-\ql$} path is a quadruple $\un{\un\pi}=(\pi,(C^+_{t,\pi})_{t<1},(C^-_{t,\pi})_{t>0}, (\mathbf c_{t})_{0<t<1})$ where $\un{\pi}=(\pi,(C^+_{t,\pi})_{t<1},(C^-_{t,\pi})_{t>0})$ is a decorated $\ql-$path and each $\mathbf c_{t}$ is a gallery of type $\mathbf i_{t}$ satisfying the conditions of \S\ref{5.3}.3.
We ask moreover that the local chamber $C^+_{0,\pi}$ is the projection $pr_{\pi_{+}(0)}(C^\infty_{\pi(0)})=pr_{\pi_{+}(0)}(C_\infty)$.

\par

\medskip
\par 6) It is interesting to describe the properties of the underlying $\ql-$path of a superdecorated $C_{\infty}-\ql$ path}.
 We shall do this in \S \ref{5.13} to \S \ref{5.15}, after some auxiliary results about twin buildings in \S \ref{5.10} to \S \ref{5.12}.
This underlying $\ql-$path is a $C_{\infty}-$Hecke path, as in \cite[5.3.1]{Mu19b} (and similar to \cite[def. 5.2]{GR08}).

\par A $\ql-$path $\pi: [0,1] \to\A$ (with $\ql\in \qe(\ov C\zv_{f}\cap\sht^\circ)$) has only a finite number (possibly $0$, if it is not a $C_{\infty}-$Hecke path) of compatible superdecorations $\un{\un\pi}=(\pi,(C^+_{t,\pi})_{t<1},(C^-_{t,\pi})_{t>0}, (\mathbf c_{t})_{0<t<1})$.
Actually, by \S \ref{5.6} and Theorem \ref{5.7} below, such a superdecoration is the image by $\qr_{C_{\infty}}$ of a $C_{\infty}-$friendly line segment (as explained in \S \ref{5.3}.3) and these line segments depend only of the data $(C^+_{p_{k},\pi})_{0\leq k\leq \ell_{\pi}-1}$, $(C^-_{p_{k},\pi})_{1\leq k\leq \ell_{\pi}-1}$ and $(\mathbf c_{t_{k}})_{1\leq k\leq \ell_{\pi}-1}$.
Now, as $\ql$ is spherical, the number of possible local chambers $C^\pm_{p_{k},\pi}\subset\A$ containing $\pi_{\pm}(t_{k})$ in their closure is finite.
The type $\mathbf i_{t_{k}}$ is the type of a specific minimal gallery in $\sht^\qe_{p}(\A_{\oplus})$ between the chambers $C_{p}^\infty$ and $C^+_{p,\pi}$ (which are well defined by the decoration and $C_{\infty}$); so there is only a finite number of possible such types (moreover we shall fix one of them).
Therefore the number of galleries $\mathbf c_{t_{k}}$ in $\A$ of type $\mathbf i_{t_{k}}$ from $C^\infty_{p_{k}}$ to $C^ {(+)}_{p_{k},\pi}$ is also finite.

\subsection{Liftings of superdecorated $C_{\infty}-\ql$ paths}\label{5.6}

\par\quad\; 1) One considers a superdecorated $C_{\infty}-\ql$  path  $\un{\un\pi}=(\pi,(C^+_{t,\pi})_{t<1},(C^-_{t,\pi})_{t>0}, (\mathbf c_{t})_{0<t<1})$ of shape $\ql\in\qe(\ov C\zv_{f})$, as above in \S \ref{5.4}.5.
One considers also a point $x$ that is $C_{\infty}-$friendly (\ie there is a twin apartment $({A_{\ominus}},{A_{\oplus}})$ with $x\in {A_{\oplus}}$ and $C_{\infty}\subset{A_{\ominus}}$) and such that $\qr_{C_{\infty}}(x)=p_{0}=\pi(0)$.
By Theorem \ref{thmIsomorphisms_fixing_twin_apartments}, we have moreover $x\stackrel{\circ}{\geq}0({A_{\oplus}})$  if $\qe=+1$ and $x\stackrel{\circ}{\leq}0({A_{\oplus}})$ if $\qe=-1$.

\par We aim to prove that there is a $C_{\infty}-$friendly line segment $[x,y]$ with $d\zv(x,y)=\ql\in\qe(\ov C\zv_{f})$ such that $\un{\un\pi}$ is the ``image'' of $[x,y]$ by $\qr_{C_{\infty}}$ (as constructed in \S \ref{5.3}).
We want also a formula for the number of these $[x,y]$.

\par The idea is to build $[x,y]$ progressively, starting from $x$. So we look locally.

\medskip
\par 2) We look first for the segment germs $[x,x_{+})$ of sign $\qe$ such that $\qr_{C_{\infty}}([x,x_{+}))=\pi_{+}(0)=p_{0}+\pi'_{+}(0).[0,1)$, more precisely to local chambers $C^+_{x}$ of sign $\qe$ such that $\qr_{C_{\infty}}(C^+_{x})=C^+_{p_{0},\pi}$ (then $[x,x_{+})$ is the segment in $\ov{C^+_{x}}$ with the same type as $\ql$; so $\qr_{C_{\infty}}([x,x_{+}))=\pi_{+}(0)$ and $C^+_{x}=pr_{[x,x_{+})}(C^\infty_{x})$).

\begin{prop*} There is a local chamber $C^+_{x}$ of sign $\qe$ such that $\qr_{C_{\infty}}(C^+_{x})=C^+_{p_{0},\pi}$.
In case $\qe=+1$, we suppose now moreover $p_{0} \stackrel{\circ}{>}0$ (\ie $p_{0}\neq 0$), then the number of these $C^+_{x}$ (or of the corresponding segment germ $[x,x_{+})$) is finite (if $q=\vert\k\vert$ is finite) and equal to $q^ {m_{0}}$ if $p_{0}$ or $x$ is a special vertex, where $m_{0}$ is the length of $w_{0}^+$ (\cf \S \ref{5.4}.3) \ie the length of a minimal gallery $\mathbf d$ in $\sht^\qe_{p_{0}}({\A_{\oplus}})$ from $C^\infty_{p_{0}}$ to $C^+_{p_{0},\pi}$.
If $p_{0}$ is not special, one has to replace $m_{0}$ by the number $m''_{0}$ of walls separating $C^\infty_{p_{0}}$ from $C^+_{p_{0},\pi}$.
\end{prop*}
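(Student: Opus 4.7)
The plan is to reduce to the spherical tangent building at $x$ via Lemma in \S \ref{5.1}.4, and to construct $C^+_x$ by lifting a minimal gallery step by step. Since $x$ is $C_\infty$-friendly, the local chamber $C^\infty_x$ is well-defined (see \S \ref{5.1}.3); and the Lemma in \S \ref{5.1}.4 asserts that the restriction of $\rho_{C_\infty}$ to $\sht^\epsilon_x(\shi_\oplus)$ coincides, up to a canonical apartment isomorphism $\psi$ induced by an element of $I_\infty$, with the retraction $\rho_{C^\infty_x}$ onto $\sht^\epsilon_x(\A_\oplus)$. So counting the $C^+_x$ with $\rho_{C_\infty}(C^+_x) = C^+_{p_0,\pi}$ reduces to counting the fiber of $\rho_{C^\infty_x}$ over $\psi^{-1}(C^+_{p_0,\pi})$, starting from $C^\infty_x$.

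Fix a minimal gallery $\mathbf{d} = (D_0 = C^\infty_{p_0}, D_1, \ldots, D_{m_0} = C^+_{p_0,\pi})$ in $\sht^\epsilon_{p_0}(\A_\oplus)$ and lift it inductively. Given a lift $(\tilde D_0 = C^\infty_x, \tilde D_1, \ldots, \tilde D_{i-1})$ in $\sht^\epsilon_x(\shi_\oplus)$, let $M_\oplus(\qa_i + k_i)$ be the wall containing the panel common to $D_{i-1}$ and $D_i$. By minimality of $\mathbf d$ starting at $C^\infty_{p_0}$, the chambers $C^\infty_{p_0}$ and $D_{i-1}$ lie on the same side of this wall; after normalizing orientation one has $\qa_i + k_i\qx \in \QF_{a-}$, so the root group $U_{\qa_i + k_i\qx} \simeq (\k, +)$ lies in $I_\infty$ (it pointwise fixes $D_{twin}(\qa_i + k_i\qx) \supset C_\infty$) and, after transporting $\tilde D_{i-1}$ into $\A_\oplus$ by an element of $I_\infty$, it also fixes $\tilde D_{i-1}$ and the lifted panel $\tilde P_i$. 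The standard root-group argument used in the proof of Proposition~\ref{4.1} shows that $U_{\qa_i + k_i\qx}$ acts simply transitively on the $q$ local chambers through $\tilde P_i$ distinct from $\tilde D_{i-1}$, and all of these retract under $\rho_{C_\infty}$ to $D_i$. This yields $q$ admissible choices for $\tilde D_i$ at each step, establishing existence.

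For the exact count I would appeal to the classical fact that in a spherical building of thickness $q+1$, the retraction onto an apartment from a fixed chamber has fiber size $q^{\ell(C_0,C')}$ over any apartment-chamber $C'$ at gallery distance $\ell(C_0, C')$. When $p_0$ (equivalently $x$) is a special vertex, every $\qa \in \QF$ satisfies $\qa(p_0) \in \Z$, so every tangent panel in $\mathbf d$ corresponds to a genuine wall of $\A_\oplus$; the fiber has size exactly $q^{m_0}$. In the non-special case the local Coxeter group at $p_0$ is the proper parabolic $W_{p_0}\zv$ generated by the reflections $r_\qa$ with $\qa(p_0) \in \Z$: at the $m_0 - m_0''$ steps of $\mathbf d$ where the crossed vectorial hyperplane is not a wall of $\A_\oplus$, the local panel is contained in only two chambers (the apartment axiom forcing a unique lift), so only the $m_0''$ genuine-wall steps contribute a factor of $q$, yielding the total $q^{m_0''}$.

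The principal technical obstacle will be the careful wall-vs.-non-wall analysis in the tangent structure at non-special points, and the clean identification of the root-group fibers supplied by the inductive construction with the abstract retraction fibers invoked in the count; in particular one must verify that distinct tuples $(u_1,\ldots,u_{m_0}) \in \prod_i U_{\qa_i + k_i\qx}$ really produce distinct endpoints $\tilde D_{m_0} = C^+_x$, which is the content of the rigidity of minimal galleries in the spherical tangent building.
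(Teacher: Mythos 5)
Your proof is correct and follows essentially the same route as the paper's: both reduce to the spherical tangent building at $x$ via the identification of $\rho_{C_\infty}|_{\sht^\qe_x}$ with the retraction centred at $C^\infty_x$ (your \S5.1.4 citation; the paper invokes the equivalent \S5.3.2), both translate the condition $\rho_{C_\infty}(C^+_x)=C^+_{p_0,\pi}$ into the Weyl-distance condition $d^{\mathrm{w}}(C^\infty_x,C^+_x)=w_0^+$, and both count by lifting minimal galleries of a fixed reduced type. Where the paper's proof asserts in a single line that ``there are $q^{m_0}$ (or more generally $q^{m_0''}$) such galleries,'' you supply the detailed justification — the inductive panel-by-panel lift using the root groups $U_{\qa_i+k_i\qx}\subset I_\infty$, the simply transitive action on the $q$ other chambers through a thick panel, the thin-panel/unique-lift alternative at a non-special vertex, and the rigidity of minimal galleries ensuring distinct tuples give distinct endpoints. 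One minor omission: you should state explicitly that the count excludes the problematic case $\qe=+1$, $p_0=0$, where $C^\infty_{p_0}$ and $C^+_{p_0,\pi}$ have opposite signs, so that the Weyl distance becomes a codistance and infinitely many $C^+_x$ exist (this is treated in the remark following the proposition, and the paper's proof opens by setting this case aside).
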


\begin{rema*} When $p_{0}=0$, then $C^\infty_{0}$ is negative and  $C^+_{0,\pi}$ of sign $\qe$, so there is a problem if $\qe=+1$.
(Fortunately, for   Muthiah's purpose one has $p_{0}=0$ but $\qe=-1$, as $\qp^\ql$ acts by the translation of vector $-\ql$.)
In this problematic case the condition for $C^+_{x}$  involves codistances: it is $d^{*\zzw}(C^\infty_{x},C^+_{x})=w_{0}^+:=d^{*\zzw}(C_{0}^\infty,C^+_{0,\pi})$.
By the retraction $\qr_{C_{\infty}}$, it is clearly equivalent to $d^{*\zzw}(C^\infty_{0},\qr_{C_{\infty}}(C^+_{x}))=w_{0}^+$, \ie to $\qr_{C_{\infty}}(C^+_{x})=C^+_{p_{0},\pi}$.
There are infinitely many solutions for this condition.
\end{rema*}

\begin{proof} We avoid the problematic case $\qe=+1$, $p_{0}=0$.
Then the equality $\qr_{C_{\infty}}(C^+_{x})=C^+_{p_{0},\pi}$ is equivalent to $d^{\zzw}(C^\infty_{x},C^+_{x})=w_{0}^+:=d^{\zzw}(C_{0}^\infty,C^+_{0,\pi})$.
This is clear as we saw (in \ref{5.3}.2) that $\qr_{C_{\infty}}$ restricted to $\sht^\pm_{x}({\SHI_{\oplus}})$ is equal to a retraction $\qr_{C_{x}^{\infty}}$ (of $\sht^\pm_{x}({\SHI_{\oplus}})$ onto $\sht^\pm_{x}({A_{\oplus}})$ with center $C^\infty_{x}$) followed by an isomorphism $\psi$ of $\sht^\pm_{x}({A_{\oplus}})$ onto $\sht^\pm_{p_{0}}({\A_{\oplus}})$ (which sends $C^\infty_{x}$ to $C^\infty_{p_{0}}$).

Now $d^{\zzw}(C^\infty_{x},C^+_{x})=w_{0}^+$ is equivalent to the existence of a minimal gallery of type $\mathbf i$ (the type of a fixed minimal decomposition of $w_{0}^+$), hence of length $m_{0}=\ell(w_{0}^+)$, in $\SHI_{\oplus}$ from $C^\infty_{x}$ to $C^+_{x}$.
There are $q^ {m_{0}}$ (or more generally $q^ {m''_{0}}$) such galleries.
\end{proof}

\medskip
\par 3) For $0<t<1$, we suppose now given a $z=\qf(t)$, a local chamber $C^-_{z,\qf}$ hence a segment germ $\qf_{-}(t)\subset\ov{C^-_{z,\qf}}$ (of the same type as $-\ql$) such that the pair $(C_{\infty},z)$ (hence also $(C_{\infty},C^-_{z,\qf})$ or $(C_{\infty},\qf_{-}(t))$) is twin friendly and $\qr_{C_{\infty}}(z)=\pi(t)=p,\ \qr_{C_{\infty}}(C^-_{z,\qf})=C^-_{p,\pi} ,\  \qr_{C_{\infty}}(\qf_{-}(t))=\pi_{-}(t)$.
We write $({A_{\ominus}},{A_{\oplus}})$ a twin apartment with $C_{\infty}\subset{A_{\ominus}},\ C^-_{z,\qf},\ \qf_{-}(t) \subset {A_{\oplus}}$.
We now look for a segment germ $[z,z_{+})$ of sign $\qe$ opposite $\qf_{-}(t)$, such that $\qr_{C_{\infty}}([z,z_{+}))=\pi_{+}(t)=p+\pi'_{+}(t).[0,1)$; more precisely we look for a local chamber $C^+_{z}$ of sign $\qe$ opposite $\qf_{-}(t)$, such that $\qr_{C_{\infty}}(C^+_{z})=C^+_{p,\pi}$ and $C^+_{z}=pr_{[z,z_{+})}(C^-_{z,\qf})$.

\begin{prop*} a) There is a local chamber $C^+_{z}$ of sign $\qe$ in $\sht^\qe_{z}({\SHI_{\oplus}})$  such that $\qr_{C_{\infty}}(C^+_{z})=C^+_{p,\pi}$ and that the segment germ $[z,z_{+})$ in $\ov{C^+_{z}}$ of the same type as $\ql$ is opposite $\qf_{-}(t)$.

\par Actually we add the condition that the minimal gallery of type $\mathbf i_{t}$ from $C^\infty_{z}$ to $C^+_{z}$ retracts onto $\mathbf c_{p}$ by the retraction $\qr_{C_{z,\qf}^{-}}$ (of $\sht^\pm_{z}({\SHI_{\oplus}})$ onto $\sht^\pm_{z}({A_{\oplus}})$ with center $C_{z,\qf}^{-}$) followed by the isomorphism $\psi$ of $\sht^\pm_{z}({A_{\oplus}})$ onto $\sht^\pm_{p}({\A_{\oplus}})$ induced by $\qr_{C_{\infty}}$.
This implies $C^+_{z}=pr_{[z,z_{+})}(C^-_{z,\qf})$. 

\par b) Suppose $q=\vert\k\vert$ finite. Then the number of these local chambers is finite (non zero) and equal to the cardinality of the set $\SHC^m_{C^-_{p,\pi}}(C^\infty_{p},\mathbf c_{p})$ of all minimal galleries in $\sht^\qe_{p}({\SHI_{\oplus}})$ starting from $C^\infty_{p}$ and retracting onto $\mathbf c_{p}$ by the retraction of $\sht^\qe_{p}({\SHI_{\oplus}})$ onto $\sht^\qe_{p}({\A_{\oplus}})$ with center $C^-_{p,\pi}$.
(Compare with \cite[\S 3.3 (b)]{BPR19}).

\par c) If $\pi$ is not folded at $p=\pi(t)$, then $\pi_{(+)}(t)=\pi_{+}(t)$.
The number of expected local chambers  $C^+_{z}$ (or of expected segment germs $[z,z_{+})$) is then $q^ {m''_{t}}$, where $m''_{t}$ is the number of  walls that separate $C^\infty_{p}$ from $C^+_{p,\pi}$ and do not contain $\pi_{+}(t)$ (or equivalently $\pi_{-}(t)$).
If $q=\vert\k\vert$ may be infinite, we have at least that $[z,z_{+})$ and $C^+_{z}$ are unique when $m''_{t}=0$. 

\par There is a twin apartment $({A_{\ominus}'},{A_{\oplus}'})$ with ${A_{\ominus}'}\supset C_{\infty}$ and ${A_{\oplus}'}\supset \ov{C^+_{z}}\cup C^-_{z,\qf} \supset [z,z_{+})$.

\par d) In particular, if $t$ is not one of the $t_{i}$ in Lemma \ref{5.4}.4, then $m''_{t}=0$ and $C^+_{z}$ is unique; more precisely this unique $C^+_{z}$ is in ${A_{\oplus}}$, which already contains $C^-_{z,\qf}$ (and $C_{\infty}\subset A_{\ominus}$). 
In particular $C^+_{p,\pi}=C^{(+)}_{p,\pi}$. All this is true for any cardinality of $\k$.

\end{prop*}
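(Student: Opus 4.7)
The plan is to reduce the statement to a counting problem in the tangent twin building $\sht^\epsilon_z(\shi_\oplus)$, using the factorization $\rho_{C_\infty}|_{\sht^\epsilon_z(\shi_\oplus)} = \psi \circ \rho_{C^\infty_z}$ from \S\ref{5.3}.2 and the apartment supplied by the twin-friendly hypothesis, which contains both $C^\infty_z$ and $C^-_{z,\phi}$ and maps via $\psi$ onto $\sht^\epsilon_p(\A_\oplus)$, sending $C^\infty_z \mapsto C^\infty_p$ and $C^-_{z,\phi} \mapsto C^-_{p,\pi}$. The conditions on $C^+_z$ then translate to: lift $\mathbf c_p$ to a minimal gallery of type $\mathbf i_t$ starting at $C^\infty_z$ in $\sht^\epsilon_z(\shi_\oplus)$ whose retraction under $\psi \circ \rho_{C^-_{z,\phi}}$ equals $\mathbf c_p$; its last chamber is the desired $C^+_z$, and the opposition $C^+_z = pr_{[z,z_+)}(C^-_{z,\phi})$ is automatic since $C^{(+)}_{p,\pi} = pr_{\pi_{(+)}(t)}(C^-_{p,\pi})$ in the apartment.

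For existence in part a), I would lift $\mathbf c_p$ chamber by chamber starting from $C^0 = C^\infty_z$. At the $i$-th step, let $M_i$ be the wall separating the $(i{-}1)$-th and $i$-th chambers of $\mathbf c_p$: if $\mathbf c_p$ crosses $M_i$, take the adjacent chamber in the current apartment; if $\mathbf c_p$ is folded at $M_i$ against $C^-_{p,\pi}$, apply a nontrivial element of the root group $U_{\alpha_i + s_i\xi}$ (as in the proof of Proposition \ref{4.1} via \cite[1.4.3]{BPGR19} and \cite[5.7.7]{R16}), which fixes $C^-_{z,\phi}$ because $M_i$ bounds the half-apartment containing it, so the partial lift remains a minimal gallery whose $\rho_{C^-_{z,\phi}}$-retraction continues to match $\mathbf c_p$. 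Part b) follows at once: the map from these liftings to $\SHC^m_{C^-_{p,\pi}}(C^\infty_p,\mathbf c_p)$ obtained by applying $\psi$ is a bijection, and distinct galleries yield distinct endpoints $C^+_z$ since a minimal gallery of fixed type is determined by its endpoints.

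For parts c) and d), when $\pi$ is not folded at $t$ the gallery $\mathbf c_p$ has no foldings and is a minimal gallery of length $m_t$. At each wall $M_i$ that $\mathbf c_p$ crosses: if $M_i$ contains $\pi_+(t)$, then $M_i$ also contains $\pi_-(t)$ and the opposition $C^+_z = pr_{[z,z_+)}(C^-_{z,\phi})$ forces the unique crossing inside the current apartment (contributing a factor $1$); if $M_i$ does not contain $\pi_+(t)$, the full root group $U_{\alpha_i + s_i\xi}$ acts freely on the possible lifts while preserving both the retraction and opposition constraints (parallel to the count in \cite[\S 3.3(b)]{BPR19}), giving $q$ choices. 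Multiplying over the $m''_t$ unrestricted steps yields $q^{m''_t}$. For d), when $t$ is not among the $t_k$ of Lemma \ref{5.4}.4, the contrapositive of Lemma \ref{5.4}.2 forces every wall crossed by $\mathbf c_p$ to contain $\pi_+(t)$, so $m''_t = 0$ and the unique lift remains inside $A_\oplus$; the twin apartment $(A'_\ominus, A'_\oplus)$ is then obtained by applying Corollary \ref{2.19} to the splayed chimney $\g r(\{z\}, C^{+\zzv}_z)$ and the sector germ $C_\infty$, after Proposition \ref{4.1} has ensured that $C^+_z \cup C^-_{z,\phi}$ and $C_\infty$ are twin-friendly.

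The main obstacle will be verifying that at an unfolded crossing where $M_i$ does not contain $\pi_+(t)$, each of the $q$ root-group elements genuinely yields a valid lift, i.e.\ a chamber satisfying both the retraction condition imposed by $\rho_{C^\infty_z}$ (centered at the \emph{opposite} local chamber from the one governing the gallery retraction $\rho_{C^-_{z,\phi}}$) and the opposition with $\phi_-(t)$. The key point, for which the argument of Lemma \ref{5.1}.4 is essential, is that such an element fixes $C^\infty_z$ (hence $C_\infty$); this holds precisely because the associated wall separates $C^\infty_z$ from $C^-_{z,\phi}$ exactly when it does not contain the segment germ $\pi_+(t)$, putting the affine root into $\QF_a(C_\infty) = \QF_{a-}$ with vanishing value at $z$.
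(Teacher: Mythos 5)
Your overall approach parallels the paper's. Both factor $\qr_{C_\infty}$ on the tangent residue through a retraction by the projected chamber $C^\infty$ and count lifted galleries wall by wall via root-group modifications; the paper first transports from $z$ to $p$ by the element $g\in I_\infty$, works entirely in $\sht^\qe_p(\SHI_\oplus)$, and then cites \cite[\S 2.3]{BPR19} for the nonemptiness, finiteness, and count of $\SHC^m_{C^-_{p,\pi}}(C^\infty_p,\mathbf c_p)$, whereas you stay at $z$ and reconstruct the lift chamber by chamber. Either route works.

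There is, however, a genuine gap in your ``main obstacle'' paragraph, and it sits at precisely the step you flag as critical. You assert that the wall $M_i$ separates $C^\infty_z$ from $C^-_{z,\qf}$ exactly when it does not contain $\pi_+(t)$. This relation is exactly backwards. The walls in play are those crossed by the shortened gallery from $C^\infty_p$ to $\pi_+(t)$, namely the walls through $p$ that separate $C^\infty_p$ from $C^+_{p,\pi}$ and do not contain $\pi_+(t)$. For such a wall $M$: $\pi_+(t)\subset\ov{C^+_{p,\pi}}$ lies strictly on the $C^+_{p,\pi}$ side of $M$, so its opposite $\pi_-(t)$ lies strictly on the $C^\infty_p$ side, and hence $C^-_{p,\pi}$, which contains $\pi_-(t)$ in its closure, lies entirely on the $C^\infty_p$ side of $M$. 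That is, $C^\infty_z$ and $C^-_{z,\qf}$ lie on the \emph{same} side of $M$. This same-side fact, far from being a complication, is exactly what makes the count work: the root group of $M$ attached to the half-apartment containing $C_\infty$ (hence also $C^\infty_z$, since $M$ passes through $z$) \emph{simultaneously} fixes $C^-_{z,\qf}$, so each of its $q$ elements preserves both retraction conditions at once and yields an apartment still containing $C_\infty\cup C^-_{z,\qf}$ — which is precisely the twin apartment $A'$ required in part c). If $M$ truly separated $C^\infty_z$ from $C^-_{z,\qf}$ as you claim, no single root group would fix both, and neither the count nor the construction of $A'$ would follow.

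A smaller point: in part d), appealing to Corollary \ref{2.19} via a chimney is unnecessary. Since $m''_t=0$ no modification is made, so the twin apartment is simply $A=g^{-1}\A$, which already contains $C_\infty\cup C^-_{z,\qf}\cup\ov{C^+_z}$, as the paper observes directly.
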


\begin{NB} From d) above, one deduces that a superdecorated $C_{\infty}-\ql$ path $\pi$ satisfies the condition of definition of decorated $\ql-$paths in \S\ref{5.2}.3 above with the subdivision $t_{0}=0<t_{1}<\cdots<t_{\ell_{\pi}}=1$ of Lemma \ref{5.4}.4.
Moreover, for $t$ different from each $t_{i}$, the gallery $\mathbf{c}_{\pi(t)}$ is minimal, uniquely determined by its type $\mathbf{i}_{t}$.
\end{NB} 

\begin{proof} a) + b) We write $g\in I_{\infty}$ an element (of $G_{twin}$ fixing $C_{\infty}$) sending $A_{\oplus}$ to $\A_{\oplus}$ and $z$ to $p$; it exists by paragraph \ref{n3.15.1}.
By \S \ref{5.2}.3 the restriction of $\qr_{C_{\infty}}$ to $\sht^\pm_{z}({\SHI_{\oplus}})$ is $g$ restricted to $\sht^\pm_{z}({\SHI_{\oplus}})$ (sending isomorphically $\sht^\pm_{z}({\SHI_{\oplus}})$ onto $\sht^\pm_{p}({\SHI_{\oplus}})$) followed by the retraction $\qr_{C^{\infty}_{p}}$ (of $\sht^\pm_{p}({\SHI_{\oplus}})$ onto $\sht^\pm_{p}({\A_{\oplus}})$ with center $C^{\infty}_{p}$).
The expected $C^+_{z}$  and $[z,z_{+})$ correspond thus bijectively (by $g$) to pairs $(C^+_{p},[p,p_{+}))$ where $C^+_{p}$ is a local chamber in $\sht^\qe_{p}({\SHI_{\oplus}})$ such that $\qr_{C^{\infty}_{p}}(C^+_{p})=C^+_{p,\pi}$ and that  $[p,p_{+})$ is the segment germ in $\ov{C^+_{p}}$ of the same type as $\ql$ and is opposite $\pi_{-}(t)$.

\par But $\mathbf c_{p}=\mathbf c_{t}$ is a gallery in $\sht^\qe_{p}({\A_{\oplus}})$ starting from $C^\infty_{p}$, of type $\mathbf i_{t}$, the type of a minimal gallery from $C^\infty_{p}$ to $C^+_{p,\pi}$.
Hence any minimal gallery in  $\sht^\qe_{p}({\SHI_{\oplus}})$ starting from $C^\infty_{p}$ of type $\mathbf i_{t}$ 
ends with a chamber $C^+_{p}$ such that $\qr_{C^{\infty}_{p}}(C^+_{p})=C^+_{p,\pi}$.
Moreover $\mathbf c_{p}$ is centrifugally folded with respect to $C^-_{p,\pi}$ and ends with the chamber $C^{(+)}_{p,\pi}$ projection of $C^-_{p,\pi}$ onto the segment germ $\pi_{(+)}(t)=\pi(t)+\pi'_{-}(t).[0,1)$ (of type $\ql$)  opposite $\pi_{-}(t)$ (of type $-\ql$ in $\ov{C^-_{p,\pi}}$).
The set $\SHC^m_{C^-_{p,\pi}}(C^\infty_{p},\mathbf c_{p})$ is thus exactly the set of all galleries retracting by  $\qr_{C^{\infty}_{p}}$ onto the minimal gallery of type $\mathbf i_{t}$ from $C^\infty_{p}$ to $C^+_{p,\pi}$ and retracting by $\qr_{C^-_{p,\pi}}$ onto $\mathbf c_{p}$.
In particular the last chamber $C^+_{p}$ of such a gallery satisfies $\qr_{C^{\infty}_{p}}(C^+_{p})=C^+_{p,\pi}$ and the segment germ $[p,p_{+})$ in $\ov{C^+_{p}}$ of the same type as $\ql$ retracts by $\qr_{C^-_{p,\pi}}$ onto the segment germ $\pi_{(+)}(t)$.
So a) and b) are proved, as a consequence of \cite[\S 2.3]{BPR19} (mutatis mutandis), which tells that $\SHC^m_{C^-_{p,\pi}}(C^\infty_{p},\mathbf c_{p})$ is non empty and finite (if $q=\vert\k\vert<\infty$) and gives a formula for its cardinality.

\par c)  If $\pi$ is not folded at $p=\pi(t)$, then $\pi_{(+)}(t)=\pi_{+}(t)$ and $\mathbf c _{t}$ is a gallery of type $\mathbf i _{t}$ and length $m_{p}$.
By the convention for $\mathbf i _{t}$ (\cf \S \ref{5.3}.3) the gallery $\mathbf c _{t}$ shortened by removing the chambers of numbering $>m'_{t}$ is minimal from $C^\infty_{p}$ to $\pi_{+}(t)$ and the chambers of numbering $\geq m'_{t}$ contain $\pi_{+}(t)$ in their closure.
So the number of possible choices for $[z,z_{+})$ is the number of possible liftings of the gallery $\mathbf c_{t}$ shortened (and then $C^+_{z}=pr_{[z,z_{+})}(C^-_{z,\qf})$ is well determined).
One considers the hyperplanes $M$ cutting this shortened gallery $\mathbf c _{t}$ along a panel and their contribution to a factor of this number of liftings,
see \cite[\S 2.3]{BPR19} (mutatis mutandis).
If $M$ is not a wall, its contribution is $1$. 
The  walls cutting this shortened gallery $\mathbf c _{t}$ \ie between the chambers $C^0$ and $C^ {m'_{t}}$ are exactly the walls that separate $C^\infty_{p}$ from $C^+_{p,\pi}$ and do not contain $\pi_{+}(t)$; the contribution of each of them is $q$.
If $m''_{t}=0$, each contribution is $1$ and $[z,z_{+})$ is unique.

\par To get the twin apartment $A'$, we just have to modify $\A$ by elements of $U_{\qa+k\qx}$ where $M=M_{\oplus}(\qa+k\qx)$ cuts $\mathbf c _{t}$ between the chambers $C^0$ and $C^ {m'_{t}}$ and $D_{\ominus}(\qa+k\qx)\supset C_{\infty}$, $D_{\oplus}(\qa+k\qx)\supset C^-_{p,\pi}$ and then apply $g^ {-1}$.
The modified apartment $A'$ contains $C_{\infty}, C^-_{z,\qf}$ and $[z,z_{+})$, hence also $\ov{C^+_{z}}$.

\par d) In this case $t\not\in\set{t_{1},\ldots,t_{\ell_{\pi}}}$, one has $m''_{t}=0$ and $q^ {m''_{t}}=1$.
By the above procedure we get $A'$ just by applying $g^ {-1}$ to $\A$. 
So $A'=A=g^ {-1}\A$.
 As $g\in I_{\infty}$ fixes $C_{\infty}$, we have $C^+_{p,\pi}=\qr_{C_{\infty}}(C^+_{z})=\qr_{C_{\infty}}(pr_{[z,z+)}(C^-_{z}))=pr_{\pi_{+}(t)}(C^-_{p})=C^ {(+)}_{p,\pi}$.
 \end{proof}

\begin{theo}\label{5.7}
Let  $\un{\un\pi}=(\pi,(C^+_{t,\pi})_{t<1},(C^-_{t,\pi})_{t>0}, (\mathbf c_{t})_{0<t<1})$ be a superdecorated $C_{\infty}-\lambda-$Hecke path in $\A_{\oplus}$ of shape $\ql\in\qe(\ov C\zv_{f}\cap\sht^\circ)$ with $\pi(0)\stackrel{\circ}{\geq}0$ if $\qe=+1$ (\resp $\pi(0)\stackrel{\circ}{\leq}0$ if $\qe=-1$).
We consider also a point $x\in{\SHI_{\oplus}}$ that is $C_{\infty}-$friendly (\ie there is a twin apartment $({A_{\ominus}},{A_{\oplus}})$ with $C_{\infty}\subset {A_{\ominus}}$ and $x\in {A_{\oplus}}$) and such that $\qr_{C_{\infty}}(x)=\pi(0)$.

\par (1) There is a $C_{\infty}-$friendly line segment $[x,y]$ with $d\zv(x,y)=\ql$, such that $\un{\un\pi}$ is the ``image'' of $[x,y]$ by $\qr_{C_{\infty}}$ (as constructed in \S \ref{5.3}).

\par (2) Except in the case $\qe=+1$ and $\pi(0)=0$, the number of these line segments is finite (provided that $q=\vert\k\vert<\infty$) and given by the following formula (for the notations see \S \ref{5.6}.2, Lemma \ref{5.4}.4 and Proposition \ref{5.6}.3)

$$\#\set{[x,y]}=q^ {m''_{0}}\times\prod_{k=1}^ {\ell_{\pi}-1}\,\#\SHC^m_{C^-_{p_{k},\pi}}(C^\infty_{p_{k}},\mathbf c_{p_{k}})$$

\par This number is equal to $q^n.(q-1)^ {n'}$ for some $n,n'\in\Z_{\geq0}$ depending only on $\un{\un\pi}$ (in $\A$), not of $\k$, see \cite[\S 2.3]{BPR19}.

\par (3) If $\pi$ is the parametrization of such a $[x,y]$, we write $z_{0}=x=\qf(t_{0}=0),z_{1}=\qf(t_{1}),\ldots,z_{k}=\qf(t_{k}),\ldots,z_{\ell_{\pi}}=\qf(t_{\ell_{\pi}}=1)=y$.
Then there exist twin apartments $({A^\ominus_{k}},{A^\oplus_{k}})$ (for $1\leq k\leq\ell_{\pi}$) such that ${A^\ominus_{k}}\supset C_{\infty}$ and ${A^\oplus_{k}} \supset \un{[z_{k-1},z_{k}]}$ (\ie ${A^\oplus_{k}}$ contains $[\qf(t_{k-1}),\qf(t_{k}]$ and all $C^+_{t,\qf}$ (for $t_{k-1}\leq t < t_{k}$), $C^-_{t,\qf}$ (for $t_{k-1}< t \leq t_{k}$)).
\end{theo}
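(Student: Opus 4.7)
The plan is to build $[x,y]$ piece by piece, using the subdivision $0=t_0<t_1<\cdots<t_{\ell_\pi}=1$ from Lemma \ref{5.4}.4, and to apply Propositions \ref{5.6}.2 and \ref{5.6}.3 at each subdivision point. The key structural observation is that between two consecutive points $t_k$ and $t_{k+1}$, the path $\pi$ is not folded and meets no ``critical'' wall (in the sense of Lemma \ref{5.4}.2), so by Proposition \ref{5.6}.3(d) the local lifting is unique inside the twin apartment currently under consideration. This means all the choices are concentrated at $t_0=0$ and at the interior points $t_1,\ldots,t_{\ell_\pi-1}$, and they will turn out to be independent, which will yield the multiplicative counting formula.

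First I would apply Proposition \ref{5.6}.2 at the starting point $x$ (using the hypothesis that $(C_\infty,x)$ is twin friendly and $\rho_{C_\infty}(x)=p_0=\pi(0)$, and excluding the case $\epsilon=+1$, $p_0=0$) to produce a local chamber $C^+_x$ of sign $\epsilon$ with $\rho_{C_\infty}(C^+_x)=C^+_{p_0,\pi}$; this yields a segment germ $[x,x_+)$ of type $\lambda$ with $\rho_{C_\infty}([x,x_+))=\pi_+(0)$, and the number of such $C^+_x$ (equivalently of $[x,x_+)$) is $q^{m_0''}$ when $q=|\k|$ is finite. Then I would argue by induction on $k$: assume we have built a $C_\infty$-friendly lift $[x,z_k]$ of $\pi|_{[0,t_k]}$ together with a local chamber $C^+_{z_k}$ at $z_k$ such that $\rho_{C_\infty}(C^+_{z_k})=C^+_{p_k,\pi}$, and a twin apartment $(A^\ominus_k,A^\oplus_k)$ with $C_\infty\subset A^\ominus_k$ and $\overline{[z_{k-1},z_k]}\cup \overline{C^+_{z_k}}\subset A^\oplus_k$. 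Because $\pi|_{[t_k,t_{k+1}]}$ is a line segment and is not folded on the open interval, iterating Proposition \ref{5.6}.3(d) along the segment shows that the lift of $\pi|_{[t_k,t_{k+1}]}$ starting at $z_k$ in direction $C^+_{z_k}$ is entirely contained in $A^\oplus_k$ (no new twin apartment is required, because each intermediate parameter gives $m''_t=0$). This produces the next point $z_{k+1}\in A^\oplus_k$ together with the local chamber $C^-_{z_{k+1},\phi}$ at $z_{k+1}$, and $\rho_{C_\infty}$ sends the pair $(z_{k+1},C^-_{z_{k+1},\phi})$ to $(p_{k+1},C^-_{p_{k+1},\pi})$.

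For $1\le k\le \ell_\pi-1$, I would then apply Proposition \ref{5.6}.3 at the pair $(z_{k+1},C^-_{z_{k+1},\phi})$ to produce a local chamber $C^+_{z_{k+1}}$ of sign $\epsilon$ opposite $\phi_-(t_{k+1})$ with $\rho_{C_\infty}(C^+_{z_{k+1}})=C^+_{p_{k+1},\pi}$ and whose associated minimal gallery of type $\mathbf i_{t_{k+1}}$ retracts onto $\mathbf c_{p_{k+1}}$; the number of such chambers is $\#\mathcal C^m_{C^-_{p_{k+1},\pi}}(C^\infty_{p_{k+1}},\mathbf c_{p_{k+1}})$. Part (c) of that proposition supplies the new twin apartment $(A^\ominus_{k+2},A^\oplus_{k+2})$ needed to continue the induction. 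At the final step $k+1=\ell_\pi$ no new chamber needs to be chosen and we simply set $y:=z_{\ell_\pi}$. By construction $d^v(x,y)=\lambda$, and the piecewise constructed lift $\phi:[0,1]\to \shi_\oplus$ retracts (with its canonical decoration, built from the $C^\pm_{z_k}$ via \S\ref{5.3}) to the given superdecorated path $\underline{\underline\pi}$; this proves (1) and (3).

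For (2), multiplying the number of choices at $t_0=0$ (which is $q^{m_0''}$) by the number of choices at each interior $t_k$ (which is $\#\mathcal C^m_{C^-_{p_k,\pi}}(C^\infty_{p_k},\mathbf c_{p_k})$) gives the announced formula, the independence of the choices being exactly the content of the uniqueness assertion in Proposition \ref{5.6}.3(d) on each open interval $(t_k,t_{k+1})$. The polynomiality in $q$ follows from the analogous statement in \cite[\S 2.3]{BPR19} applied factor by factor. The main obstacle is verifying that the inductively constructed lift is indeed a single line segment and that its $\rho_{C_\infty}$-image (endowed with its canonically induced superdecoration in the sense of \S\ref{5.3}.3) coincides with $\underline{\underline\pi}$, and not merely with its underlying path; this is where one must use carefully that $\mathbf c_{t_k}$ controls the type $\mathbf i_{t_k}$ of the minimal gallery from $C^\infty_{z_k}$ to $C^+_{z_k}$ (so that the retracted gallery is exactly $\mathbf c_{p_k}$), which is precisely the extra condition imposed in Proposition \ref{5.6}.3(a).
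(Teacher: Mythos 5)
Your argument is correct and follows essentially the same inductive construction as the paper's own proof: apply Proposition \ref{5.6}.2 at $t=0$ to get the $q^{m''_0}$ initial choices, apply Proposition \ref{5.6}.3(a,b) at each $t_k$ ($1\le k\le\ell_\pi-1$) to get $\#\SHC^m_{C^-_{p_k,\pi}}(C^\infty_{p_k},\mathbf c_{p_k})$ choices, and invoke the uniqueness of Proposition \ref{5.6}.3(d) on each open interval $(t_k,t_{k+1})$ to extend within a single twin apartment, with Proposition \ref{5.6}.3(c) furnishing the new apartment needed at each folding point. The only deviations are cosmetic: your induction hypothesis insists that the apartment $A^\oplus_k$ contain both $\overline{[z_{k-1},z_k]}$ and $\overline{C^+_{z_k}}$ (the paper chooses the apartment afresh from $C_\infty$ and $\overline{C^+_{z_k,\phi}}$ at each step, obtaining containment of $[z_k,z_{k+1}]$ afterward by the uniqueness of 5.6.3(d)), and your apartment index $A^\oplus_{k+2}$ is off by one relative to part (3) of the statement; neither affects the substance of the argument.
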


\begin{proof} Suppose the line segment $[x=\qf(0),z_{k}=\qf(t_{k})]$ constructed with the expected properties. This is clearly satisfied for $k=0$.
We now construct $[x,z_{k+1}]$.

\par If $k=0$, we investigated the possibilities for $\qf_{+}(0)=[z_{0},z_{1})$ in \S \ref{5.6}.2.
Their number is $\geq1$ and equal to $q^ {m''_{0}}$ under the conditions of (2).
Now the proposition \ref{5.6}.3.d tells that each possibility for $[z_{0},z_{1})$ corresponds to one and only one possibility for $[z_{0},z_{1}]$ and there is a twin apartment $({A^\ominus_{1}},{A^\oplus_{1}})$ such that $C_{\infty}\subset {A^\ominus_{1}}$ and $[x,z_{1}] \subset {A^\oplus_{1}}$; hence $C^\infty_{x}\subset {A^\oplus_{1}}$, $\un{[x,z_{1}]}\subset {A^\oplus_{1}}$.

\par If $k\geq1$, we investigated the possibilities for $\qf_{+}(t_{k})=[z_{k},z_{k+1})$ in \ref{5.6}.3 a) b).
Their number is $\geq1$ and equal to $\#\SHC^m_{C^-_{p_{k},\pi}}(C^\infty_{p_{k}},\mathbf c_{p_{k}})$.
Now the proposition \ref{5.6}.3.d tells that each possibility for $[z_{k},z_{k+1})$ corresponds to one and only one possibility for $[z_{k},z_{k+1}]$.
If we choose a twin apartment $({A^\ominus_{k+1}},{A^\oplus_{k+1}})$ such that $C_{\infty}\subset {A^\ominus_{k+1}}$ and $[z_{k},z_{k+1}) \subset \ov{C^+_{z_{k},\qf}} \subset {A^\oplus_{k+1}}$, then ${A^\oplus_{k+1}}$ contains ${[z_{k},z_{k+1}]}$and $\un{[z_{k},z_{k+1}]}$.
\end{proof}

Note that Theorem~\ref{5.7} is obtained with a slightly different method in \cite[4.5.2]{hebert2024kazhdan}. 

\subsection{Folding measure of superdecorated $C_{\infty}-\ql$ paths}\label{5.8}

\par Let   $\un{\un\pi}=(\pi,(C^+_{t,\pi})_{t<1},(C^-_{t,\pi})_{t>0}, (\mathbf c_{t})_{0<t<1})$ be  a superdecorated $C_{\infty}-\ql$ path  in $\A_{\oplus}$ of shape $\ql\in\qe(\ov C\zv_{f})$, as above in \ref{5.4}.3.
We consider the numbers $0=t_{0}<t_{1}< \cdots<t_{\ell_{\pi}}=1$ and the points $p_{i}=\pi(t_{i})$ as in \ref{5.4}.4.
We recall (\ref{5.3}.2) that, for $p=\pi(t)$ with $t>0$, $C^ {(+)}_{p,\pi}$ is the projection of $C^-_{p,\pi}$ on the segment germ $\pi_{(+)}(t)=\pi(t)+\pi'_{-}(t).[0,1)$; when $t_{i-1}<t<t_{i}$,  $C^ {(+)}_{p,\pi}=C^ {+}_{p,\pi}$ (see \cite[Lemma 2.5]{BPR19} and Proposition \ref{5.6} (3.d) above).
In the following of this subsection we drop $\pi$ in the notations $C^ {\pm}_{p,\pi}=C^ {\pm}_{t,\pi}$ and $C^ {(+)}_{p,\pi}=C^ {(+)}_{t,\pi}$.

\begin{figure}[h] 
   \begin{center}
  \includegraphics[width=18cm]{C_infini-Heckepath.pdf}
   \end{center}
\end{figure}

\par The direction $C^ {+\zzv}_{t}$ of $C^+_{t}$ for $t_{i-1}\leq t < t_{i}$ (\resp $C^ {-\zzv}_{t}$ of $C^-_{t}$ for $t_{i-1} < t \leq t_{i}$) is constant of sign $\qe$ (\resp $-\qe$), the same (\resp opposite) as the sign of the direction $C^{\infty\zzv}_{\pi(t)}$ of  $C^{\infty}_{\pi(t)}$ (if $t\neq0$); here we may replace the $t_{i}$ by the $t'_{j}$ of \ref{5.4}.3.
From \cite[2.9.2]{BPR19} it is also clear that, for $t_{i-1}< t \leq t_{i}$, the direction $C^ {(+)\zzv}_{t}$ of $C^ {(+)}_{t}$ is constant of sign $\qe$ and equal to $C^ {+\zzv}_{p_{i-1}}$.
For $i\geq1$, we write $w_{i}^+=d\zw(C^{\infty\zzv}_{p_{i}},C^ {+\zzv}_{p_{i}})$  (if $i<\ell_{\pi}$) (\resp $w_{i}^-=d\zw(C^{\infty\zzv}_{p_{i}},C^ {(+)\zzv}_{p_{i}})=d\zw(C^{\infty\zzv}_{p_{i}},C^ {+\zzv}_{p_{i-1}})$).
Then we clearly have $\pi'_{+}(t_{i})=w_{i}^+.\ql$ (for $i<\ell_{\pi}$) (\resp $\pi'_{-}(t_{i})=w_{i}^-.\ql$ (for $i>0$)) if one considers $C^{\infty\zzv}_{p_{i}}$ as a new fundamental vectorial chamber.

\begin{prop*}
For the Bruhat order in $W\zv$, one has $w_{i-1}^+ \geq w_{i}^-$ for $i\geq2$ and $w_{i}^-\leq w_{i}^+$ for $1\leq i <\ell_{\pi}$.
\end{prop*}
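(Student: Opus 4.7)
The proof splits into two independent arguments: the second inequality $w_i^-\le w_i^+$ follows from the subword property of the Bruhat order, while the first inequality $w_{i-1}^+\ge w_i^-$ rests on a geometric analysis of how the direction chamber $C^{\infty v}_p$ evolves along the straight segment $[p_{i-1},p_i]$.

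For the second inequality, the type $\mathbf{i}_{t_i}$ of the gallery $\mathbf{c}_{t_i}$ is, by construction (\S\ref{5.3}.3), a reduced expression for $w_i^+$. The gallery $\mathbf{c}_{t_i}$ starts at $C^\infty_{p_i}$ and ends at $C^{(+)}_{p_i,\pi}$, and is in general only centrifugally folded, so it may have stammered positions. Dropping these produces a subword of $\mathbf{i}_{t_i}$ whose product in $W\zv$ equals $d\zw(C^{\infty v}_{p_i},C^{(+)v}_{p_i})=w_i^-$, the last equality using $C^{(+)v}_{p_i}=C^{+v}_{p_{i-1}}$. The subword property of the Bruhat order then yields $w_i^-\le w_i^+$.

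For the first inequality (with $i\ge2$), set $v:=d\zw(C^{\infty v}_{p_{i-1}},C^{\infty v}_{p_i})$ and observe the cocycle identity $w_{i-1}^+=v\cdot w_i^-$ in $W\zv$, which follows from $C^{+v}_{p_{i-1}}=C^{(+)v}_{p_i}$ and the composition law of Weyl distances. By the suffix characterization of the Bruhat order, the inequality $w_i^-\le w_{i-1}^+$ is equivalent to the length-additivity $\ell(w_{i-1}^+)=\ell(v)+\ell(w_i^-)$, which, by the standard Coxeter-complex criterion, holds if and only if every vectorial wall of $\vect{\A_\oplus}$ separating $C^{\infty v}_{p_{i-1}}$ from $C^{\infty v}_{p_i}$ also separates $C^{\infty v}_{p_{i-1}}$ from $C^{+v}_{p_{i-1}}$.

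To verify this wall-containment property is the crux of the argument. Combining the description of $C^\infty_p=pr_p(C_\infty)$ from \S\ref{5.1}.2 as the germ at $p$ of $\bigcap D^\circ_\oplus(\qa+k)$ over those $\qa+k\qx\in\QF_{a-}$ with $\qa(p)+k\ge0$, with the description of $\QF_{a-}$ in \S\ref{1.3}, one finds that a root $\qa\in\QF$ is positive on $C^{\infty v}_p$ (as a constraining root) if and only if either $\qa\in\QF^+$ with $\qa(p)>0$, or $\qa\in\QF^-$ with $\qa(p)\ge0$. Now let $\qa\in\QF$ define a vectorial wall separating $C^{\infty v}_{p_{i-1}}$ from $C^{\infty v}_{p_i}$, say with $\qa>0$ on $C^{\infty v}_{p_{i-1}}$ and $\qa<0$ on $C^{\infty v}_{p_i}$. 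Applying the characterization to $\qa$ at $p_{i-1}$ and to $-\qa$ at $p_i$, and treating the two cases $\qa\in\QF^+$ and $\qa\in\QF^-$ separately, one obtains in every case the strict inequality $\qa(p_i-p_{i-1})<0$. Since $p_i-p_{i-1}$ is a positive multiple of the segment direction, which lies in the open vectorial chamber $C^{+v}_{p_{i-1}}$, this forces $\qa(C^{+v}_{p_{i-1}})<0$, so the wall $\{\qa=0\}$ does separate $C^{\infty v}_{p_{i-1}}$ (where $\qa>0$) from $C^{+v}_{p_{i-1}}$, as required. The main obstacle is this last case-by-case sign bookkeeping needed to extract the uniform conclusion $\qa(p_i-p_{i-1})<0$ from the joint positivity/negativity constraints on $C^{\infty v}_{p_{i-1}}$ and $C^{\infty v}_{p_i}$.
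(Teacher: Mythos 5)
Your proof is correct. A brief comparison with the paper's proof, and two small points worth flagging:

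For the second inequality $w_i^-\le w_i^+$, you and the paper use essentially the same argument: $\mathbf c_{t_i}$ is a (possibly folded) gallery of type $\mathbf i_{t_i}$, which is a reduced word for $w_i^+$, and its endpoint data produce $w_i^-$ as a subword, whence $w_i^-\le w_i^+$ by the subword property.

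For the first inequality $w_i^-\le w_{i-1}^+$, your route is genuinely different from the paper's. Both proofs amount to showing that $C^{\infty v}_{p_i}$ lies on a minimal gallery from $C^{\infty v}_{p_{i-1}}$ to $C^{+v}_{p_{i-1}}=C^{(+)v}_{p_i}$, but they verify this in different ways. The paper exhibits a single vector directly: it writes $\vect{xp_i}=\vect{xp_{i-1}}+\vect{p_{i-1}p_i}$ with $\vect{xp_{i-1}}\in C^{\infty v}_{p_{i-1}}$ and $\vect{p_{i-1}p_i}\in\ov{C^{+v}_{p_{i-1}}}$, so $C^{\infty v}_{p_i}$ meets the convex hull of the two chambers and hence lies in their enclosure. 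You instead pass to the wall-crossing criterion for length additivity of the Chasles relation $w_{i-1}^+=v\,w_i^-$ and verify it via the explicit root-theoretic description of $C^{\infty v}_p$ (a root $\qa$ is positive on $C^{\infty v}_p$ iff $\qa\in\QF^+$ with $\qa(p)>0$ or $\qa\in\QF^-$ with $\qa(p)\ge0$). Both are valid; the paper's convexity argument is shorter and avoids all case analysis, while yours makes the set of separating walls completely explicit, which could be useful elsewhere (e.g.\ when computing $m''_t$ or analyzing $C_\infty$-chains concretely).

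Two small points: (1) the Bruhat inequality $w_i^-\le w_{i-1}^+$ is \emph{implied by}, not \emph{equivalent to}, the length-additivity $\ell(w_{i-1}^+)=\ell(v)+\ell(w_i^-)$; the equivalence is false in general (one can have $u\le w$ with $w=vu$ while $\ell(v)+\ell(u)>\ell(w)$), but you only need the correct implication. (2) $p_i-p_{i-1}$ is a positive multiple of $\pi'_+(t_{i-1})$, which lies in $\ov{C^{+v}_{p_{i-1}}}$ but not necessarily in the open chamber; nonetheless, since you obtain the strict inequality $\qa(p_i-p_{i-1})<0$ and a real root has a fixed sign on the open chamber, the conclusion $\qa(C^{+v}_{p_{i-1}})<0$ still follows.
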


\begin{remas*} 1) Unfortunately this gives no inequality between the $w_{i}^+$ (or the $w_{i}^-$).
Perhaps one can get some inequalities with other definitions of $w_{i}^\pm$.

\par 2) In the case of Hecke paths in a masure with respect to a sector germ 
one gets $w_{i-1}^+ = w_{i}^-$ 
and $w_{i}^-\leq w_{i}^+$.
So one gets inequalities between the $w_{i}^+$ (or the $w_{i}^-$).
This case of sector germs is in \cite{GR14}.
It should be possible to prove similarly the case of a Hecke path with respect to a local chamber, but it is written nowhere.
\end{remas*}

\begin{proof} The second inequality is clear: $\mathbf c_{t_{i}}$ is a gallery fom $C^\infty_{p_{i}}$ to $C^ {(+)}_{p_{i}}$, with the same type as a minimal gallery from $C^\infty_{p_{i}}$ to $C^ {+}_{p_{i}}$ (type associated to a minimal decomposition of $w_{i}^+=d\zw(C^{\infty\zzv}_{p_{i}},C^ {+\zzv}_{p_{i}})$).
For the first inequality recall that $pr_{p_{i}}(C_{0})\zv$ is the vectorial chamber containing the $\vect{p_{i}x}$ for $x\in C_{0}$ sufficiently near from $0$.
So $C^ {\infty\zzv}_{p_{i}}=opp(pr_{p_{i}}(C_{0})\zv)$ is the vectorial chamber containing the $\vect{xp_{i}}$ for these $x$.
But we have $\vect{x{p_{i}}}=\vect{x{p_{i-1}}}+\vect{p_{i-1}{p_{i}}}$ and $\vect{x{p_{i-1}}}\in C^ {\infty\zzv}_{p_{i-1}}$, $\vect{p_{i-1}{p_{i}}}\in\ov{C^ {+\zzv}_{p_{i-1}}}$.
Hence $C^ {\infty\zzv}_{p_{i}}$ meets the closed convex hull of $C^ {\infty\zzv}_{p_{i-1}}$ and $C^ {+\zzv}_{p_{i-1}}=C^ {(+)\zzv}_{p_{i}}$.
So $C^ {\infty\zzv}_{p_{i}}$ is in their enclosure, \ie $C^ {\infty\zzv}_{p_{i}}$ is a vectorial chamber of a minimal gallery from $C^ {\infty\zzv}_{p_{i-1}}$ to $C^ {+\zzv}_{p_{i-1}}=C^ {(+)\zzv}_{p_{i}}$.
This proves that $w_{i}^-=d\zw(C^{\infty\zzv}_{p_{i}},C^ {(+)\zzv}_{p_{i}}) \leq d\zw(C^{\infty\zzv}_{p_{i-1}},C^ {+\zzv}_{p_{i-1}}) = w^+_{i-1}$.
\end{proof}

\subsection{Opposite segment germs and retractions in masures or twin buildings}\label{5.10}

\par From \S \ref{5.10} to \S \ref{5.12}, we consider $\SHI$ a  twin building and $\A$ its canonical [twin] apartment.
We use the notation [twin] to indicate the reference to a classical notation in twin buildings, not to \S \ref{2.6}.
We think of $\A$ as a vector space $V=\vect\A$, even if it is more precisely the union of two opposite Tits cones in $V$.
These Tits cones are associated to a root system $\QF$, a Weyl group $W\zv$ and a fundamental chamber $C\zv_{f}$; but the thick walls of $\SHI$ are associated to some particular roots called thick roots.

\par Actually we think very strongly to the case where $\SHI$ is the tangent space (with its unrestricted building structure) at a point $p$ to a thick masure, $\A\ni p$ is an apartment of this masure, $\QF$ is in the dual of $V=\vect\A$ and the thick walls in $\SHI$ are associated to the walls of this masure containing $p$ (\ie the direction $\ker\qb$ of this wall satisfies $\qb(p)\in\Z$ : $\qb$ is a thick root).

\par In the following lines up to the proposition (included), we indicate between parentheses some words we may add when we think to a masure.

\par We consider:
\medskip

\parni $C^-_{p}$ a negative (local) chamber (with vertex $p$) in $\A$

\parni $\qx,\eta$ positive segment germs of origin $0$ (or $p$) in $\A$

\parni $-\qx,-\eta$ their negative opposites in $\A$

\parni $C_{-\qx}$ a negative (local) chamber in $\A$ (with vertex $p$) containing $-\qx$ in its closure

\parni$\mathbf i$ the type of a minimal gallery from $C^-_{p}$ to $C_{-\qx}$

\parni $\g Q$ a positive (local) chamber in $\A$ (with vertex $p$) containing $\eta$  {in its closure

\noindent \begin{minipage}[l]{7cm}  
\medskip

In the picture, everything not in dotted lines is in $\A$.
\medskip
\par One writes $\qr=\qr_{\A,C^-_{p}}$ (\resp $\qr_{\g Q}=\qr_{\A,\g Q}$) the retraction with center $C^-_{p}$ (\resp $\g Q$) and image $\A$ ($=\sht_{p}(\A)$) defined on $\SHI$.
\medskip
\par One asks that $\qx,\eta$ are generated by vectors in $W\zv.\ql$ for $\ql$ a dominating vector in $\A$ (\ie $\ql\in \ov{C\zv_{f}}$).

\medskip
\par $W\zv_{p}$ is the subgroup of $W\zv$ generated by the $r_{\qb}$ for $\qb$ a thick root.
\bigskip
\end{minipage}
\begin{minipage}[r]{12cm}  


\includegraphics[width=12cm]{localchambers.pdf}
  

\end{minipage}

\begin{prop*} \cf \cite[4.6]{GR14}

\parni (1) The following conditions are equivalent:

\par (a) There exists an opposite $\qz$ to $\eta$ in $\SHI$ (with vertex $p$) such that $\qr(\qz)=-\qx$.

\par (b) There exists a gallery $\mathbf c$ of (local) chambers in $\A$ (with vertex $p$), of type $\mathbf i$ for some choice of $C_{-\qx}$, that is centrifugally folded with respect to $\g Q$ (in particular folded along thick walls) with first chamber $C^-_{p}$ and last chamber containing $-\eta$ in its closure.

\par (c) $\eta\leq_{W\zv_{p}} \qx$, \ie there exist $\qx_{0},\qx_{s}\in V\setminus\set0$ such that $\eta=[0,1)\qx_{s}$, $\qx=[0,1)\qx_{0}$ and a $W\zv_{p}-$chain from $\qx_{0}$ to $\qx_{s}$, \ie finite sequences $(\qx_{0},\qx_{1},\ldots,\qx_{s})$ of vectors in $V=\vect\A$ and $(\qb_{1},\ldots,\qb_{s})$ of (real) roots satisfying the following (for $1\leq i\leq s$):

\par\quad (i) $r_{\qb_{i}}(\qx_{i-1})=\qx_{i}$,

\par\quad (ii) $\qb_{i}(\qx_{i-1})<0$,

\par\quad (iii) $\ker\qb_{i}$ is a thick wall, \ie $\qb_{i}$ is a thick root (\ie $\qb_{i}(p)\in\Z$ for a masure),

\par\quad (iv) $\qb_{i}\in\QF^+=\QF^+(-C^-_{p})$, \ie $\qb_{i}(C^-_{p})<0$.

\medskip
\parni (2) If moreover $\mathbf i$ is minimal  (\ie $C_{-\qx}$ is the (local) chamber ``containing'' $-\qx$ nearest to $C_{p}^-$, \ie $C_{-\qx}=pr_{-\qx}(C_{p}^-)$, then the possible $\qz$ are in one to one correspondence with the disjoint union of the $\shc\zm_{\g Q}(\mathbf{c})=\set{\textrm{minimal galleries } \mathbf{m} \textrm{ with origin } C_{p}^- \textrm{ and type } \mathbf{i} \textrm{ with image } \mathbf{c} \textrm{ by } \qr_{\A,\g Q}}$, when $\mathbf c$ runs in the set $\QG^+_{\g Q}(\mathbf{i},-\eta)$ of galleries satisfying (b) above with this type $\mathbf i$ (fixed).
\end{prop*}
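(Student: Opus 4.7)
The plan is to follow the scheme of \cite[4.6]{GR14}, adapting it to the present setting where the thick walls are those $\ker\qb$ with $\qb$ a thick root. I would prove (1) via the cycle (a)$\Rightarrow$(b)$\Rightarrow$(c)$\Rightarrow$(a) and then treat (2) separately.

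For (a)$\Rightarrow$(b): given $\qz$ opposite $\eta$ with $\qr(\qz)=-\qx$, let $C^-_{\qz}=pr_{\qz}(C_{p}^-)$ and take a minimal gallery $\mathbf m$ from $C_{p}^-$ to $C^-_{\qz}$ in $\SHI$. Its image $\qr(\mathbf m)$ is a minimal gallery in $\A$ from $C_{p}^-$ to some chamber $C_{-\qx}\supset -\qx$ (since $\qr$ preserves types of galleries starting at its center and $\qr(\qz)=-\qx$), fixing the admissible type $\mathbf i$. Now set $\mathbf c:=\qr_{\g Q}(\mathbf m)$. Its first chamber is $C_{p}^-$, it has type $\mathbf i$, it is centrifugally folded with respect to $\g Q$ by the standard property of retractions onto an apartment based at a chamber, and its last chamber contains $\qr_{\g Q}(\qz)=-\eta$ (the existence of a common apartment containing $\qz\cup\g Q$ forces $\qz$ to identify with $-\eta$ in $\A$).

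For (b)$\Rightarrow$(a): I would unfold $\mathbf c$ inductively from its last folding. Each centrifugal fold occurring along a thick wall $M(\qb_i)$ can be reversed by an element of the root group $U_{\qb_i}$ fixing the gallery up to that point and moving the remainder into a different apartment; iterating, one obtains a minimal gallery $\mathbf m$ in $\SHI$ of type $\mathbf i$ starting at $C_{p}^-$ with $\qr_{\g Q}(\mathbf m)=\mathbf c$. The last chamber of $\mathbf m$ contains a unique segment germ $\qz$ of the appropriate type which is opposite $\eta$ (its image under $\qr_{\g Q}$ is $-\eta$, so an apartment containing $\qz$ and $\g Q$ realizes the opposition) and satisfies $\qr(\qz)=-\qx$ (by the choice of $\mathbf i$).

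For (b)$\Leftrightarrow$(c): this is the combinatorial translation. Writing the successive directions of the folded gallery yields a $W\zv_{p}$-chain: each fold at $M(\qb_i)$ contributes the reflection $r_{\qb_i}$ relating $\qx_{i-1}$ to $\qx_{i}$; centrifugal folding with respect to $\g Q$ (which contains $\eta$) gives $\qb_i(\qx_{i-1})<0$; the wall being thick is (iii); and the fact that the gallery starts at $C_{p}^-$ and the fold is towards $\g Q$ (away from $C_{p}^-$) yields $\qb_i\in\QF^+(-C_{p}^-)$. Conversely a $W\zv_{p}$-chain prescribes piecewise linear segments in $\A$ which, reflected iteratively, assemble into the desired folded gallery.

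For (2): when $\mathbf i$ is minimal, fix $\mathbf c\in\QG^+_{\g Q}(\mathbf i,-\eta)$. Each $\mathbf m\in\shc\zm_{\g Q}(\mathbf c)$ is minimal from $C_{p}^-$ of type $\mathbf i$; its last chamber $C$, being of the form $pr_{-\qx'}(C_{p}^-)$ for a well-defined $-\qx'$ retracting to $-\eta$ under $\qr_{\g Q}$, contains a \emph{unique} segment germ $\qz\subset\ov C$ opposite $\eta$ (the one of type $-\ql$). Conversely, to each $\qz$ satisfying (a) one associates the minimal gallery from $C_{p}^-$ to $pr_{\qz}(C_{p}^-)$, whose $\qr_{\g Q}$-image produces some $\mathbf c\in\QG^+_{\g Q}(\mathbf i,-\eta)$. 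These assignments are mutually inverse, giving the claimed bijection with $\bigsqcup_{\mathbf c}\shc\zm_{\g Q}(\mathbf c)$. The main obstacle is the unfolding step in (b)$\Rightarrow$(a): one must verify that at each fold the relevant root group $U_{\qb_i}$ indeed supplies a lift that keeps the gallery minimal and remains within $\SHI$ (which is where the hypothesis that the thick roots really correspond to walls of $\SHI$ is used), and that the induction terminates with a genuine opposition $\qz\leftrightarrow\eta$ rather than merely a chamber containing $-\eta$ in its closure.
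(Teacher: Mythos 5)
Your equivalence cycle (a)$\Rightarrow$(b)$\Rightarrow$(c)$\Rightarrow$(a), with (a)$\Leftrightarrow$(b) and part (2) following the standard retraction and lifting machinery, is sound and matches the paper's treatment of those steps. The genuine gap is in (b)$\Leftrightarrow$(c), which you call a ``combinatorial translation'' and compress into a single paragraph, whereas this is where the real work lies (the paper devotes all of \S\ref{5.12} to it) and where a naive argument actually fails.

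Two concrete problems. First, the gallery $\mathbf c$ in (b) starts at $C^-_p$, ends at a chamber containing $-\eta$, and is centrifugally folded with respect to $\g Q$ (a \emph{positive} chamber containing $\eta$). The chain in (c), as the paper extracts it in the (a)$\Rightarrow$(c) direction, is read off a \emph{different} gallery: a gallery $\qd$ in a (possibly different) apartment $A^-$, starting at $C^+$ opposite $C^-_p$, ending near $\qr_-(\eta)$, and obtained by retracting with centre $C^-_p$ rather than $\g Q$. Producing $A^-$, together with the chamber $C^+$ opposite $C^-_p$ inside a given apartment containing $\qz$, is exactly what Lemma~\ref{5.11} is for, and it requires working in the twin building $\sht_p(\SHI_\oplus)$, not just in $\A$. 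Your sketch of ``each fold at $M(\qb_i)$ contributes $r_{\qb_i}$'' does not say where that gallery comes from, and the roots you would read off $\mathbf c$ are w.r.t.\ the wrong centre to match conditions (ii) and (iv).

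Second, and more serious, your converse direction ``(c)$\Rightarrow$(b)'' asserts that iterating the reflections from the $W\zv_p$-chain assembles into a centrifugally folded gallery of the required type. This is precisely the step the paper singles out as having been written ``too quickly'' in the cited earlier reference: the gallery $\qd^s$ you get by folding $\qd^0$ along the thick walls $\ker\qb_i$ is folded \emph{along thick walls} (condition (iii)) but is \emph{not} in general centrifugally folded with respect to $C^-_p$, so it need not satisfy (b). The paper therefore does not route (c) through (b); it goes directly (c)$\Rightarrow$(a), constructing by induction a companion gallery $(c^-_0,\ldots,c^-_n)$ starting at $C^-_p$, at each step choosing $c^-_j$ opposite $c^s_j$ -- this uses, in an essential way, the twin-building fact that along a thick panel all chambers except exactly one are opposite a given chamber, a fact your proposal never invokes. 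Without addressing this, your (c)$\Rightarrow$(a) argument (via (b)) breaks at the very point the paper is careful to repair.
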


\begin{remas*} With these choices of signs, $\QF^+$ is of positive type, \ie the associated vectorial chamber $C^+_{\A}=-C^-_{p}$ is in the positive Tits cone $\sht$, but perhaps not equal to $C\zv_{f}$.

\par Contrary to \cite{GR14}, we do not suppose in (1) above that $\mathbf i$ is minimal. This gives more flexibility for applications.

\par We repeat below the main lines of the proofs in \cite{GR14} and \cite[6.1, 6.3]{GR08}. We give details of a proof of \cite[6.1]{GR08} independent of the existence of a strongly transitive group.
\end{remas*}

\noindent \textit{Proof.} {\bf (a) $\implies$ (b)} Let $\mathbf{m}=(C^-_{p}=M_{0},M_{1},\ldots,M_{r}\ni\qz)$ be a minimal gallery in $\SHI$ from $C^-_{p}$ to $\qz$.
Its retraction by $\qr$ is a minimal gallery  from $C^-_{p}$ to $-\qx$.
Hence, under the additional hypothesis of (2), one may suppose $\mathbf m$ of type $\mathbf i$ and then $\qz$ determines $\mathbf m$.
If one retracts now $\mathbf m$ into $\A$ by $\qr_{\g Q}$ (with center $\g Q$), one gets a gallery $\mathbf{c}=\qr_{\A,\g Q}(\mathbf{m})$ satisfying (b) (and of type $\mathbf i$, under the hypothesis of (2)).
This is a result of \cite[4.4]{GR14} which is independent of the existence of a strongly transitive group.

\medskip
\parni {\bf (b) $\implies$ (a)} If $\mathbf{c}=(C^-_{p}=C_{0},C_{1},\ldots,C_{r})$ satisfies (b), there exists a minimal gallery $\mathbf{m}=(C^-_{p}=C'_{0},C'_{1},\ldots,C'_{r})$ retracting by $\qr_{\g Q}=\qr_{\A,\g Q}$ onto $\mathbf{c}$, with the same type $\mathbf i$ (\cf \cite[4.4]{GR14}).
Let $\qz\subset\ov C'_{r}$ retracting by $\qr_{\g Q}$ on $-\eta\subset\ov C_{r}$; as $\eta\subset \ov{\g Q}$, this implies in particular that $\qz$ is opposite $\eta$.
As $\mathbf c$ and $\mathbf m$ are of type $\mathbf i$, one has $\qr(C'_{r})=C_{-\qx}$.
Hence $\qr(\qz)$ is in $\ov C_{-\qx}$ as $-\qx$.
Thus $\qr(\qz)=-\qx$, as they are both opposite $\eta$, up to a conjugation by $W\zv$.

\medskip
\parni  {\bf (2)} Under the hypothesis of (2), the $\qz$ are in one to one correspondence with the $\mathbf m$, which are exactly the galleries in $\coprod_{\mathbf{c}}\shc\zm_{\g Q}(\mathbf{c})$ as announced.

\medskip
\parni  {\bf (a) $\implies$ (c)} This generalizes \cite[Prop. 6.1]{GR08}, just taking $\pi_{+}=\eta,\pi_{-}=\qz,\qr\pi_{-}=-\qx$.

\par One considers [twin] apartments $A^0$ containing $\eta\cup \qz$, $A^+$ containing $C^-_{p}\cup \eta$ and $A^-$ containing $C^-_{p}\cup \qz$.
One defines $\qr_{-}=\qr_{A^-,C^-_{p}}$ (recall that $\qr=\qr_{\A,C^-_{p}}$).
But we shall first modify $A^-$ by the following Lemma.

\begin{lemm}\label{5.11} Let $\SHI=(\SHI^+,\SHI^-)$ be a twin building, $C^-$ a chamber in $\SHI^-$ and $A=(A^+,A^-)$ a [twin] apartment.
Then there exists a chamber $C^+$ in $A^+$ that is opposite $C^-$.
We write then $B=(B^+,B^-)$ the unique [twin] apartment containing $C^-$ and $C^+$.

\par If moreover $D$ is a chamber in $A^+$ (\resp $A^-$), one may choose $C^+$ in such a way that $D\subset B^+$ (\resp $D\subset B^-$).
\end{lemm}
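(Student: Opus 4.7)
The plan is to exploit the codistance $d^*$ and the thinness of apartments in a twin building. For the existence of $C^+$, fix any chamber $D_0 \in A^+$ and set $w_0 = d^*(D_0, C^-) \in W\zv$. Since $A^+$ is a Coxeter complex, for every chamber $E \in A^+$ and every simple reflection $s$ the unique $s$-adjacent chamber $E_s \in A^+$ satisfies $d^*(E_s, C^-) = d^*(E, C^-)\,s$; this is the standard consequence of the twin building axioms (Tw1)--(Tw3) specialized to a thin apartment. Choosing a reduced decomposition $w_0 = s_1 s_2 \cdots s_n$, I follow the unique gallery in $A^+$ starting at $D_0$ of type $(s_n, s_{n-1}, \ldots, s_1)$; its successive codistances to $C^-$ are $w_0,\ w_0 s_n,\ w_0 s_n s_{n-1},\ \ldots,\ 1$, so its terminal chamber is a $C^+ \in A^+$ with $d^*(C^+, C^-) = 1$, i.e.\ opposite to $C^-$.

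For the moreover part with $D \subset A^+$, apply the previous construction to $D_0 = D$. The resulting gallery $\mathbf{g} = (D = G_0, G_1, \ldots, G_n = C^+)$ in $A^+$ has length $n = \ell(w_0)$ and is therefore minimal in $\SHI^+$. Its reversal $(C^+, G_{n-1}, \ldots, G_0 = D)$ is a minimal gallery along which the codistance to $C^-$ takes the successive values $1,\ s_1,\ s_1 s_2,\ \ldots,\ s_1 s_2 \cdots s_n = w_0$ -- the prefixes of the chosen reduced expression -- so its length strictly increases by one at each step. By the standard description of the positive half $B^+$ of the twin apartment $B$ determined by the opposite pair $(C^+, C^-)$ -- namely, a chamber $E \in \SHI^+$ lies in $B^+$ iff some minimal gallery from $C^+$ to $E$ has codistance to $C^-$ of strictly increasing length at each step (see, e.g., \cite{AB08}, Chap.~5) -- it follows that $D \subset B^+$. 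The case $D \subset A^-$ reduces to the one just treated by the $(+) \leftrightarrow (-)$ symmetry of the twin building axioms: swap the two signs throughout, so that $C^-$ becomes a positive chamber, $A^-$ becomes the positive apartment, and $D \subset A^-$ becomes a chamber of the new positive apartment; the previous case then yields a $C^+ \in A^+$ opposite $C^-$ with $D \subset B^-$.

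The main obstacle is the appeal to the characterization of $B^\pm$ via codistance-monotone minimal galleries emanating from one of the two opposite chambers defining $B$; once this standard result from twin building theory is granted, the entire proof reduces to a single walk inside $A^+$ that ``undoes'' a chosen reduced decomposition of the initial codistance $w_0$.
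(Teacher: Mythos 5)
Your walk inside the thin apartment $A^+$, lowering the codistance to $C^-$ by one at each step so as to land on a chamber $C^+$ opposite $C^-$, and the subsequent use of a codistance-monotone minimal gallery from $C^+$ to $D$ to conclude $D\subset B^+$, is essentially the paper's own argument for the case $D\subset A^+$. One wrinkle: moving the \emph{first} argument of the codistance across an $s$-panel changes it by \emph{left} multiplication by $s$ when the length drops (this is what (Tw1) together with (Tw2) gives, and is what the paper uses), so the gallery at $D$ should have type $(s_1,\dots,s_n)$ --- the reduced word in order, producing codistances $w_0,\ s_2\cdots s_n,\ \dots,\ 1$ --- rather than $(s_n,\dots,s_1)$. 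With the gallery you wrote down, $\ell(s_n w_0)$ need not be smaller than $\ell(w_0)$, so the walk need not terminate at a chamber opposite $C^-$.

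The serious gap is the case $D\subset A^-$: swapping $+$ and $-$ does not reduce it to the case you treated. After the swap, $C^-$ lands in the positive part of the swapped building, whereas your argument for $D\subset A^+$ starts from a given chamber of the \emph{negative} part. Track the sign pattern of $(C^-,C^+,D)$: for $D\subset A^+$ it is $(-,+,+)$, for $D\subset A^-$ it is $(-,+,-)$, and the sign swap of the first gives $(+,-,-)$ --- a chamber of $A^-$ opposite a given chamber of $\SHI^+$ --- which is not a chamber of $A^+$ opposite the given $C^-\in\SHI^-$, as the second case still demands. Geometrically, in the first case $D$ and the chamber to be constructed lie on the same side; in the second they lie on opposite sides, and this distinction is preserved, not interchanged, by exchanging signs. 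The paper therefore proves the $D\subset A^-$ case by a separate argument: take a twin apartment $A_1$ containing both $D$ and $C^-$ in its negative half, define $C^+\in A^+$ by $d^{*\zzw}(D,C^+)=d\zw(D,C^-)$, walk along a minimal gallery from $D$ to $C^-$ inside $A_1^-$, and show by induction that the codistance of its successive chambers to the fixed $C^+$ decreases to $1$; so $C^+$ and $C^-$ are opposite, and $D\subset B^-$ follows from \cite[5.175]{AB08}.
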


\begin{NB} This Lemma seems well known when $\SHI$ is spherical, but we did not find a reference,
see \cite[2.2.11]{R22}. It is likely that this twin case is also already known.
\end{NB}

\begin{proof} One assumes first $D\subset A^+$.
We choose a [twin] apartment $A_{1}=(A_{1}^+,A_{1}^-)$ containing $C^-$ (in $A^-_{1}$) and $D$ (in $A^+_{1}$), and we write $C''=opp_{A_{1}}(C^-)\subset  A^+_{1}$.
As $D \subset  A^+_{1}$, with $A_{1}$ generated by $C^-$ and $C''$, one has $d^ {*\zzw}(D,C^-)=d\zw(D,C'')$ (see the Chasles relation (4) in \cite[5.173]{AB08}, as $d^ {*\zzw}(C^-,C'')=1$).

\par Let $C^+\subset A^+$ be the chamber such that $d\zw(D,C^+)=d^ {*\zzw}(D,C^-)$; this means that there exists in $A^+$ a minimal gallery $(C_{0}=D,\ldots,C_{s}=C^+)$ of type $\mathbf{i}=(i_{1},\ldots,i_{s})$, where $r_{i_{1}}.\ldots.r_{i_{s}}$ is a minimal decomposition of $d^ {*\zzw}(D,C^-)=d\zw(D,C'')$.
Let us prove that $C^+$ and $C^-$ are opposite.
One calculates $d^ {*\zzw}(C_{j},C^-)$ by induction on $j$: $d^ {*\zzw}(C_{0},C^-)=d\zw(D,C^+)=d^ {*\zzw}(D,C^-)=r_{i_{1}}.\ldots.r_{i_{s}}$.
One bets that $d^ {*\zzw}(C_{j},C^-)=r_{i_{j+1}}.\ldots.r_{i_{s}}$ (this will give $d^ {*\zzw}(C^+,C^-)=1$, qed).
But $d\zw(C_{j+1},C_{j})=r_{i_{j+1}}$ and, by induction hypothesis, $\ell(r_{i_{j+1}}d^ {*\zzw}(C_{j},C^-))=\ell(r_{i_{j+2}}.\ldots.r_{i_{s}})=\ell(d^ {*\zzw}(C_{j},C^-))-1$.
So, by the axiom (Tw2) in \cite[5.133 p. 266]{AB08}, one gets $d^ {*\zzw}(C_{j+1},C^-)=r_{i_{j+2}}.\ldots.r_{i_{s}}$ which concludes the induction.
One has now to prove that $D\subset B^+$.
But $d^ {*\zzw}(D,C^-)=d\zw(D,C^+)$; so this is a consequence of \cite[5.175 p. 278]{AB08}.

\par Let us now look at the case $D\subset A^-$.
We choose a [twin] apartment $A_{1}=(A_{1}^+,A_{1}^-)$ containing $C^-\cup D$ (in $A^-_{1}$) and write $C''=opp_{A_{1}}(C^-)\subset  A^+_{1}$.
The Chasles relation gives $d\zw(D,C^-)=d^ {*\zzw}(D,C'')$.
Let $C^+\subset A^+$ be  such that $d^ {*\zzw}(D,C^+)=d^ {\zzw}(D,C^-)$.
There is in $A^-_{1}$ a minimal gallery $(C_{0}=D,\ldots,C_{s}=C^-)$ of type $\mathbf{i}=(i_{1},\ldots,i_{s})$, where $r_{i_{1}}.\ldots.r_{i_{s}}$ is a minimal decomposition of $d^ {*\zzw}(D,C^+)=d^ {*\zzw}(D,C'')=d\zw(D,C^-)$.
Let us prove that $C^+$ and $C^-$ are opposite.
For this one  calculates $d^ {*\zzw}(C_{j},C^+)$ by induction on $j$: $d^ {*\zzw}(C_{0},C^+)=d^ {*\zzw}(D,C^+)=d\zw(D,C^-)=d^ {*\zzw}(D,C'')=r_{i_{1}}.\ldots.r_{i_{s}}$.
One bets that $d^ {*\zzw}(C_{j},C^+)=r_{i_{j+1}}.\ldots.r_{i_{s}}$ (this will give $d^ {*\zzw}(C^-,C^+)=1$, qed).
But $d\zw(C_{j+1},C_{j})=r_{i_{j+1}}$ and, by induction hypothesis, $\ell(r_{i_{j+1}}d^ {*\zzw}(C_{j},C^+))=\ell(r_{i_{j+2}}.\ldots.r_{i_{s}})=\ell(d^ {*\zzw}(C_{j},C^+))-1$.
So, by the axiom (Tw2), one gets $d^ {*\zzw}(C_{j+1},C^+)=r_{i_{j+2}}.\ldots.r_{i_{s}}$ and the induction is OK.
One has $d^ {*\zzw}(D,C^+)=d\zw(D,C^-)$, hence $D\subset B^-$ by \cite[5.175]{AB08}
\end{proof}

\subsection{End of proof of Proposition \ref{5.10}}\label{5.12}

\par \quad We no longer differentiate the two parts of a [twin] apartment by an exponent $\pm$.

\medskip
\parni  {\bf (a) $\implies$ (c)} We write $A^-_{1}$ the [twin] apartment $B$ of Lemma \ref{5.11} (obtained by setting $C^-:=C^-_{p}$, $A:=A^0$ and $\ov D\supset \qz$).
We shall replace $A^-$ by $A^-_{1}$ but not change $A^0$.
One has $A^-_{1}\supset C^-_{p}\cup\qz\cup C^+$ and $C^+_{A^-_{1}}:=C^+\subset A^0$ is opposite $C^-_{p}$ in $A^-_{1}$.
Recall that $\qr_{-}=\qr_{A^-,C^-_{p}}$ and $\qr=\qr_{\A,C^-_{p}}$.

\begin{rema*} In \cite[prop. 6.1]{GR08}, $C^+$ is written $C_{0}$ and $C^-_{p}=germ(\g s)$.
Both $A^-_{1}$ and $A^-$ contain $C^-_{p}$ and $\qz$, so they are isomorphic by an isomorphism $\qth^-:A^-_{1}\to A^-$ fixing $C^-_{p}$ and $\qz$.
  If one supposes $\qth^-$ induced by an automorphism $\qth^-$ of the twin building (\eg if there is a strongly transitive automorphism group, as in \cite{GR08}), one may define $A^1=\qth^-(A^0)$.
  This apartment contains $\qz$ and a segment germ $\eta^1=\qth^-(\eta)$ (opposite $\qz$) such that $\qr(\eta^1)=\qr(\eta)$ (as $C^-_{p}$ is fixed by $\qth^-$).
  So we are exactly in the situation of \cite{GR08}, second paragraph of the proof of 6.1 ($\eta^1$ is written $\pi^1_{+}$ there).
  
  \par In this proof of 6.1, one takes a minimal gallery $m=(c_{0},c_{1},\ldots,c_{n})$ in $A^1$ from $c_{0}=\qth^-(C^+)=C^+_{A^-}=opp_{A^-}(C^-_{p})$ to the opposite $\eta^1=\qth^-(\eta)$ of $\qz$.
  And then one takes its retraction $\qd=\qr_{-}(m)$.
  We shall replace $m$ by $m'=(\qth^-)^ {-1}(m)$, which is a minimal gallery in $A^0$ from $C^+_{A^-_{1}}=C^+=opp_{A^-_{1}}(C^-_{p})=(\qth^-)^ {-1}(C^+_{A^-})$ to $(\qth^-)^ {-1}(\eta^1)=\eta$.
  So  $\qd=\qr_{-}(m)= \qr_{-}(m')$ and this will avoid to suppose $\qth^-$ induced by an automorphism of $\SHI$.
\end{rema*}

\par  Back to the proof of  {\bf (a) $\implies$ (c)} (without assuming the existence of a strongly transitive group).

\par We assume $A^-=A^-_{1}$.
As in \cite{GR08} we consider $\qr_{-}$ instead of $\qr$; they are almost the same as $\qr_{-}=\qth\circ\qr$, if $\qth:\A\to A^-$ is the isomorphism fixing $C^-_{p}$.

\par By hypothesis, there are $w_{\pm}\in W\zv$, such that $\qx=[0,1)w_{-}\ql$ and $\eta=[0,1)w_{+}\ql$. We choose $w_{\pm}$ minimal for this property. 
Here we consider $C^+_{\A}=opp_{\A}(C^-_{p})$ as the fundamental vectorial chamber of $\A$, to precise the action of $W\zv$ on $\A$ and the relation $\ql\in\A$ (\ie $\ql\in \ov C^+_{\A}$).

\par In $A^0$ one considers a minimal gallery $m'=(c_{0},c_{1},\ldots,c_{n})$ of type $\mathbf{i}=(i_{1},\ldots,i_{n})$ between $c_{0}=C^+=C^+_{A^-_{1}}=C^+_{A^-}$ and $c_{n}\supset\eta$. The retracted gallery $\qd=\qr_{-}(m')=(c_{0},c'_{1}=\qr_{-}(c_{1}),\ldots,c'_{n}=\qr_{-}(c_{n}))$ in $A^-$ is centrifugally folded with respect to $C^-_{p}$.
It satisfies $c_{0}=\qr_{-}(C^+_{A^-})=C^+_{A^-}$ and $c'_{n}\supset\qr_{-}(\eta)$.

\par One has $\eta\in w_{-}C^+_{A^-}=w_{-}C^+$ in $A^0$, with $w_{-}$ minimal (to be precise, $W\zv$ is considered here as a group of automorphisms of $A^0$ by considering $C^+$ as its fundamental vectorial chamber):
Actually $\eta$ is opposite $\qz$ in $A^0\supset C^+$; so, using the isomorphism $\qth^0:A^0\to A^-$ fixing $\qz$ and $C^+$, it is sufficient to prove that the opposite $opp_{A^-}(\qz)$ of $\qz$ in $A^-$ is in $w_{-}C^+=w_{-}C^+_{A^-}$.
But $\qr$ induces the  isomorphism $\qth^ {-1}:A^-\to\A$ (fixing $C^-_{p}$) which sends $\qz$ onto $-\qx$ (by the hypothesis (a)) hence $opp_{A^-}(\qz)$ onto $\qx$ and $C^+_{A^-}$ onto $C^+_{\A}$.
As $\qx\in w_{-}C^+_{\A}$ with $w_{-}$ minimal, by the above definition of $w_{-}$, we are done.

\par From the definition of $m'$, one gets that $w_{-}=r_{i_{1}}.\ldots.r_{i_{n}}$ is a reduced decomposition.
Using once more the isomorphism $\qth^ {-1}:A^-\to\A$ (which sends $\qr_{-}(\eta)$ to $\qr(\eta)=\eta$ and $C^+_{A^-}$ to $C^+_{\A}$), one gets $\qr_{-}(\eta)\in w_{+}C^+_{A^-}$ with $w_{+}$ minimal.

\par In $A^0$, the chambers $c_{j}$ and $c_{j+1}$ are separated by a (thick or thin) wall $H^1_{j}$ and one writes $H_{j}$ the (thick or thin) wall in $A^-$ containing $\qr_{-}(H^1_{j}\cap \ov {c_{j}})=\qr_{-}(H^1_{j}\cap\ov{ c_{j+1}})$.
We denote by $j_{1},\ldots,j_{s}$ the indices such that $c'_{j}=\qr_{-}(c_{j})=\qr_{-}(c_{j+1})=c'_{j+1}$. Then, for all $k$, $H^1_{j_{k}}$ and $H_{j_{k}}$ are thick walls (it is a part of the definition of a centrifugally folded gallery).
One writes $\qb_{k}\in\QF^+$ the positive root such that $H_{j_{k}}$ has direction $\ker\qb_{k}$ (here $\QF^+$ is defined as in condition (iv) of (c) but in $A^-$: $\qb\in\QF^+\iff \qb(C^-_{p})<0$).

\par Actually we get $\qd=\qr_{-}(m')$ from a minimal gallery $\qd^0=(c^0_{0}=c_{0}=C^+_{A^-},c^0_{1},\ldots,c^0_{n})=\qth^0(m')$, of type $\mathbf i$ in $A^-$ from $c_{0}$ to $c^0_{n}=w_{-}c_{0}\supset \qth^0(\eta)$, by applying successive foldings along the walls $H_{j_{1}},H_{j_{2}},\ldots,H_{j_{s}}$.
At each step one gets a gallery $\qd^k=(c^k_{0}=c_{0}=C^+_{A^-},c^k_{1},\ldots,c^k_{n})$, of type $\mathbf i$ in $A^-$, centrifugally folded with respect to $C^-_{p}$, which ends more and more closely to $c_{0}$.

\par One writes $\qx_{0}=w_{-}\ql\in c^0_{n}=w_{-}c_{0}\subset A^-$ 
and $\qx_{k}=r_{\qb_{k}}.\ldots.r_{\qb_{1}}.\qx_{0}\in c^k_{n}\subset A^-$.
In particular $\qx_{s}\in c^s_{n}=c'_{n}$ and $c'_{n}\supset\qr_{-}(\eta)$.
As $\qx_{s}\in W\zv\ql$ and $\eta$ is generated by a vector in $W\zv\ql$, one sees that $\qx_{s}$ generates this segment germ $\qr_{-}(\eta)\subset A^-$.
Similarly, we see that $\qx_{0}=w_{-}\ql$ generates $\qth^0(\eta)\subset A^-$.

\par Actually the isomorphism $\qth^ {-1}\circ\qth^0:A^0\to A^-\to \A$ sends $C^+=C^+_{A^-}$ onto $C^+_{\A}$ and $\eta$ onto $\qx$ (as we saw above that it sends $\qz$ onto $-\qx$): $\qr(\qth^0(\eta))=\qx$ in $\A$.
The isomorphism $\qth^ {-1}:A^-\to\A$ sends $\qth^0(\eta)$ onto $\qx$ and $\qr_{-}(\eta)$ onto $\eta$.
So the condition (c) we aim to prove is equivalent to the conditions (i,ii,iii,iv) for $(\qx_{0},\ldots,\qx_{s})$ and $(\qb_{1},\ldots,\qb_{s})$ in $A^-$.
Actually (i), (iii) (as $H_{j_{k}}$ is a thick wall) and (iv) are clearly satisfied.
Let us prove now (ii): $\qb_{k}(\qx_{k-1})<0$:

\par $\qd^0$ is a minimal gallery from $c_{0}=C^+_{A^-}$ to $w_{-}c_{0}\supset[0,1)w_{-}\ql$.
So, for any $j$, $c^0_{j+1},\ldots,c^0_{n}$ and $[0,1)w_{-}\ql$ are on the same side of the wall separating $c^0_{j}$ and $c^0_{j+1}$; in particular $(c^k_{j_{k}+1},\ldots,c^k_{n})$ is a minimal gallery, entirely on the same side of $H_{j_{k}}$ and $[0,1)\qx_{k}\not\subset H_{j_{k}}$.
But $c^k_{j_{k}}=\qr_{- }(c_{j_{k}})=\qr_{- }(c_{j_{k}+1})=c^k_{j_{k}+1}$ and, as we have centrifugal foldings (with respect to $C^-_{p}$, opposite $C^+_{A^-}=c_{0}$ in $A^-$), this chamber is on the positive side of the wall $H_{j_{k}}$ (with direction $\ker\qb_{k}$).
So $c^k_{j_{k}+1},\ldots,c^k_{n}$ are in this positive side; this means that $\qb_{k}(\qx_{k})>0$, \ie $\qb_{k}(\qx_{k-1})<0$. This proves that (a) $\implies$ (c).

\medskip
\parni  {\bf (c) $\implies$ (a)} This generalizes a part of \cite[th. 6.3]{GR08}.

\par We have $\qx$ (\resp $\eta$) generated by $\qx_{0}=w_{-}\ql$ (\resp $\qx_{s}=w_{+}\ql$) and $w\pm\in W\zv$ is chosen minimal for this property.
We write $w_{-}=r_{i_{1}}.\ldots.r_{i_{n}}$ a minimal decomposition (of type $\mathbf i$, the type of a minimal gallery from $C^-_{p}$ to $-\qx$, as in the hypothesis of (2)).

\par The segment germs $\qz$ such that $\qr(\qz)=-\qx$ are in bijection with the galleries of type $\mathbf i$ $m^-=(c^-_{0}=C^-_{p},c^-_{1},\ldots,c^-_{n})$ that are minimal (\ie non stammering) starting from $C^-_{p}$.
This bijection is given by the relation $\qz\subset \ov {c^-_{n}}$.
We have now to prove that we may choose $m^-$ in such a way that $\qz$ is opposite $\eta$.
This is for this that the $W\zv_{p}-$chain will be useful.

\par We write $\qd^0=(c^0_{0}=C^+_{\A},c^0_{1},\ldots,c^0_{n})$ the minimal gallery of type $\mathbf i$ in $\A$ starting from $C^+_{\A}$.
It is thus stretched from $C^+_{\A}$ to $\qx$ (generated by $\qx_{0}=w_{-}\ql$).
We shall first fold this gallery step after step, using the $W\zv_{p}-$chain:

\par As $\qb_{1}(\qx_{0})<0$ (by (ii)) and $\qb_{1}(C^+_{\A})>0$ (by (iv)), the wall  $\ker\qb_{1}$ (thick by (iii)) separates $c^0_{0}=C^+_{\A}$ from $c^0_{n}$: so it is the wall between two adjacent chambers $c^0_{j_{1}-1}$ and $c^0_{j_{1}}$ (actually here $j_{1}$ is well determined).
One writes $\qd^1=(c^1_{0}=c^0_{0}=C^+_{\A},c^1_{1}=c^0_{1},\ldots,c^1_{j_{1}-1}=c^0_{j_{1}-1},c^1_{j_{1}}=c^1_{j_{1}-1}=r_{\qb_{1}}c^0_{j_{1}},\ldots,c^1_{n}=r_{\qb_{1}}c^0_{n})$.
It is a gallery of type $\mathbf i$ and $c^1_{n}\supset r_{\qb_{1}}(\qx_{0})=\qx_{1}$ (by (i)).
But $\qb_{2}(\qx_{1})<0$ and $\qb_{2}(C^+_{\A})>0$, so the wall $\ker\qb_{2}$ separates $c^1_{0}=C^+_{\A}$ from $c^1_{n}$; it is the wall between two strictly adjacent chambers $c^1_{j_{2}-1}$ and $c^1_{j_{2}}$.
One writes $\qd^2=(c^2_{0}=c^1_{0}=c^0_{0}=C^+_{\A},c^2_{1}=c^1_{1},\ldots,c^2_{j_{2}-1}=c^1_{j_{2}-1},c^2_{j_{2}}=c^2_{j_{2}-1}=r_{\qb_{2}}c^1_{j_{2}},\ldots,c^2_{n}=r_{\qb_{2}}c^1_{n})$.
It is a gallery of type $\mathbf i$ and $c^2_{n}\supset r_{\qb_{2}}(\qx_{1})=\qx_{2}$.
But $\qb_{3}(\qx_{2})<0$ , \etc
At the end of the day, one gets a gallery $\qd^s=(c^s_{0}=c^0_{0}=C^+_{\A},c^s_{1},\ldots,c^s_{n})$ of type $\mathbf i$ in $\A$ starting from $C^+_{\A}$ and finishing in $c^s_{n}\supset \qx_{s}=w_{+}\ql$ (generating $\eta$).
This gallery is folded along thick walls (this is condition (iii)), but perhaps not centrifugally folded (with respect to $C^-_{p}$), contrary to what is written (too quickly) in \cite{GR08}, line 3 on page 2650.

\par To prove now that $\qz$ and $\eta$ are opposite segment germs, it is equivalent to prove that $c^s_{n}$ ($\supset\eta$) and $c^-_{n}$ ($\supset\qz$) are opposite chambers (as $\qz$ and $\eta$ are generated by vectors in $\pm W\zv\ql$).
For this we shall choose carefully the successive chambers $c^-_{i}$ and prove more than necessary: by induction on $j$, $c^-_{j}$ and $c^s_{j}$ are opposite for $0\leq j\leq n$; this is true for $j=0$.
Let us suppose  $c^-_{j-1}$ and $c^s_{j-1}$ opposite.
Then  $c^s_{j}$ is adjacent to $c^s_{j-1}$ (\resp $c^-_{j}$ has to be strictly adjacent to $c^-_{j-1}$) along a panel (in the unrestricted sense) of type $i_{j}$.
If the wall containing this panel is thin, then  $c^s_{j}$ and $c^s_{j-1}$ (\resp $c^-_{j}$ and $c^-_{j-1}$) are in the same apartments and $c^s_{j} \neq c^s_{j-1}$ (\resp $c^-_{j} \neq c^-_{j-1}$) so $c^-_{j}$ and $c^s_{j}$ are automatically opposite.
If, on the contrary this wall is thick, then (from the theory of twin buildings, see \eg \cite[5.139 and 5.134]{AB08}) one knows that all chambers adjacent (or equal) to $c^-_{j-1}$ (along a panel of type $i_{j}$) except exactly one, are opposite $c^s_{j}$.
As the wall is thick, we can always choose $c^-_{j}$ opposite $c^s_{j}$ and strictly adjacent to $c^-_{j-1}$.   \qed

\subsection{$C_{\infty}-$Hecke paths}\label{5.13}

\par We consider, as before \S \ref{5.10}, a thick masure $\SHI$ and a (canonical) apartment $\A$ considered as a vector space with origin $0=0_{\A}$.
It is endowed with a Weyl group $W\zv$, a  root system $\QF$ (in $\A^*$), a fundamental vectorial chamber $C\zv_{f}$ and a Tits cone $\sht=W\zv.C\zv_{f}$. 
    We   consider a spherical dominant or antidominant vector $\ql\in \qe(\ov C\zv_{f}\cap\sht^\circ)$. 

\par Recall the definition and properties of $\ql-$paths from \S \ref{5.2}.3, \S \ref{5.4}.1 and Lemma \ref{5.4}.2.

\par We consider now the case where $\SHI={\SHI_{\oplus}}$ is the positive part of a twin masure and $\A={\A_{\oplus}}$ is the canonical twin apartment.
So $\A_{\ominus}$ contains the fundamental negative local chamber $C_{\infty}$.
For any $p\in\A_{\oplus}$ satisfying $p \stackrel{\circ}{>}0$ or $p\leq0$, we defined in \S\ref{5.1}.3 the local chamber $C^\infty_{p}=pr_{p}(C_{\infty})$; its sign is $+$ if $p \stackrel{\circ}{>}0$ (\ie $p\in\sht^\circ$) and $-$ if $p\leq0$ (\ie $p\in -\sht$).

\par We suppose the origin $\pi(0)$ of $\pi$ in $\qe\sht$.
By the choice of $\ql$, we have $\pi(t)\in\qe\sht^\circ$, for any $t\in]0,1]$.
So $C^\infty_{\pi(t)}$ is well defined and of sign $\qe$ for $t>0$.

\begin{defi*} Such a $\ql-$path $\pi$ is called a $C_{\infty}-$Hecke\index{C@$C_{\infty}-$Hecke} path of type $\ql$ (with sign $\qe$) if, for any $0<t<1$, the left and right derivatives $\pi'_{\pm}(t)\in\qe\sht^\circ$ at $p=\pi(t)$ satisfy $\pi'_{-}(t)\leq_{W\zv_{p},C^\infty_{p}} \pi'_{+}(t)$, which means that there is a $(W\zv_{p},C^\infty_{p})-$chain from $\pi'_{+}(t)$ to $\pi'_{-}(t)$, \ie finite sequences $(\qx_{0}=\pi'_{+}(t),\qx_{1},\ldots,\qx_{s}=\pi'_{-}(t))$ of vectors in $\A$ and $(\qb_{1},\ldots,\qb_{s})$ of real roots (in $\A^*$) such that, for all $i=1,\ldots,s$,

\par (i) $r_{\qb_{i}}(\qx_{i-1})=\qx_{i}$,

\par (ii) $\qb_{i}(\qx_{i-1})<0$,

\par (iii) $\qb_{i}(p)\in\Z$ (\ie $\exists$ a  wall of direction $\ker\qb_{i}$ containing $p$),

\par (iv) $\qb_{i}\in\QF^+(C^\infty_{p})$, \ie $\qb_{i}(C^\infty_{p})>\qb_{i}(p)$.

\end{defi*}

\begin{remas*} 1) When $p$ is not a folding point of $\pi$ (\ie $\pi'_{-}(t)=\pi'_{+}(t)$), the above conditions (i) to (iv) are always fulfilled with $s=0$.

\par 2) $W\zv_{p}$ is the subgroup of $W\zv$ generated by the $r_{\qb}$, for $\qb\in\QF$ and $\qb(p)\in\Z$.

\par 3) The condition (iv), more precisely the definition of $\QF^+(C^\infty_{p})$, is opposite the definitions in \cite[1.8 (iv) and 1.8 (2)]{GR14}, \cite[3.3]{BPGR16} or \cite[2.5]{BPR19}.
Actually in these references the analogue of $C^\infty_{p}$ (which determines locally the investigated retraction) is naturally of negative sign.
In our case $C^\infty_{p}$ is of positive sign for $\qe=1$ and negative sign for $\qe=-1$; this is one of the reasons for our choice of definition of $\QF^+(C^\infty_{p})$.
Notice that this $\QF^+(C^\infty_{p})$ will be also opposite the $\QF^+$ of Proposition \ref{5.10} 1.c (iv), when we shall use this proposition.

\par 4) We write $C^ {\infty\zzv}_{p}\subset \A$ the vectorial chamber (of sign $\qe$) which is the direction of $C^\infty_{p}$.
We consider the linear action of $W\zv$ on $\A$ obtained by identifying $(\A,C\zv_{f})$ and $(\A,C^ {\infty\zzv}_{p})$.
As $\pi'_{\pm}(t)$ is also of sign $\qe$, there is $w_{\pm}(t)\in W\zv$ such that $\pi'_{\pm}(t)\in w_{\pm}(t).C^ {\infty\zzv}_{p}\subset V$; we actually choose $w_{\pm}(t)$ minimal for this property.
Then the condition $\pi'_{-}(t)\leq_{W\zv_{p},C^\infty_{p}} \pi'_{+}(t)$ implies $w_{-}(t)\leq w_{+}(t)$:

\par Actually one may define $\qs_{i}\in W\zv$ minimal such that $\qx_{i}\in\qs_{i}.C^ {\infty\zzv}_{p}$ (hence $w_{-}(t)=\qs_{s}$ and $w_{+}(t)=\qs_{0}$) and we prove now that $\qs_{i}\leq\qs_{i-1}$.
Clearly $\qx_{i}\in r_{\qb_{i}}\qs_{i-1}.C^ {\infty\zzv}_{p}$, so $\qs_{i}\leq r_{\qb_{i}}\qs_{i-1}$.
But $\qx_{i-1}\in \qs_{i-1}.C^ {\infty\zzv}_{p}$, $\qb_{i}(\qx_{i-1})<0$ and $\qb_{i}(C^ {\infty\zzv}_{p})>0$.
Therefore $C^ {\infty\zzv}_{p}$ and $\qs_{i-1}.C^ {\infty\zzv}_{p}$ are on opposite sides of the wall $\ker\qb_{i}$.
This proves that $\ell(r_{\qb_{i}}\qs_{i-1})<\ell(\qs_{i-1})$ and $r_{\qb_{i}}\qs_{i-1}\leq \qs_{i-1}$.

\par 5) The relation $\pi'_{-}(t)\leq_{W\zv_{p},C^\infty_{p}} \pi'_{+}(t)$ is also opposite the relation appearing in the above references \cite{GR14}, \cite{BPGR16} or \cite{BPR19}.
 This is really a new phenomenon.
We saw in remark (4) above that it implies $w_{-}(t)\leq w_{+}(t)$.
This reminds us the relation $w^-_{i}\leq w^+_{i}$ of Proposition \ref{5.8}, but it is opposite the relation in \cite[5.4]{GR08}.

\par One may note that, in this reference the definition of $w_{\pm}(t)$ compares classically $\pi'_{\pm}(t)$ with the fundamental vectorial chamber $C\zv_{f}$ (which is opposite the analogue of $C^\infty_{p}$), while the definition above compares it with $C^\infty_{p}$ (which is seldom of direction $C\zv_{f}$).
\end{remas*}

\subsection{$C_{\infty}-$Hecke paths as retractions of $C_{\infty}-$ friendly line segments}\label{5.14}

\par A line segment $[x,y]$ in $\SHI_{\oplus}$ is said $\qe-C_{\infty}-$friendly\index{C@$C_{\infty}-$friendly} if it is $C_{\infty}-$friendly in the sense of \ref{5.2}.1 with $x\stackrel{\circ}{<}y$ if $\qe=+1$ (\resp $x\stackrel{\circ}{>}y$ if $\qe=-1$) and moreover, in a twin apartment $({A^\ominus_{0}},{A^\oplus_{0}})$ containing $C_{\infty}\subset{A^\ominus_{0}}$ and $x\in{A^\oplus_{0}}$ (or even $[x,y)\subset{A^\oplus_{0}}$) one has $x\geq {0_{A^\oplus_{0}}}$ (\resp $x\leq  {0_{A^\oplus_{0}}}$), where $ {0_{A^\oplus_{0}}}$ is the element in ${A^\oplus_{0}}$ opposite the vertex of $C_{\infty}$.

\par We write $\qf:[0,1]\to[x,y]$ an affine parametrization of $[x,y]$, with $x=\qf(0),y=\qf(1)$ and $\ql=d\zv(x,y)$; actually $\qe\ql$ is in the interior of the Tits cone and in $\ov C\zv_{f}$.

\par By definition, the retraction $\qr_{C_{\infty}}$ (of a part of $\SHI_{\oplus}$ into $\A_{\oplus}$, with center $C_{\infty}$) is defined on $[x,y]$ and we saw in \S \ref{5.2}.2 that the image $\pi=\qr_{C_{\infty}}\circ\qf$ of $[x,y]$ by $\qr_{C_{\infty}}$ is a $\ql-$path.

\begin{prop*} 1) Let $[x,y]\subset{\SHI_{\oplus}}$ be an $\qe-C_{\infty}-$friendly line segment and $\ql=d\zv(x,y)$, then its image $\pi$ by $\qr_{C_{\infty}}$ is a $C_{\infty}-$Hecke path of type $\ql$ (with sign $\qe$).

\par 2) Conversely let $\pi$ be a $C_{\infty}-$Hecke path of type $\ql$ (with sign $\qe$) in $\A_{\oplus}$ with origin $p_{0}\geq{0_{\A_{\oplus}}}$ (\resp $p_{0}\leq{0_{\A_{\oplus}}}$) if $\qe=+1$ (\resp $\qe=-1$) and $x\in{\SHI_{\oplus}}$ be such that $(C_{\infty},x)$ is twin friendly and $\qr_{C_{\infty}}(x)=p_{0}$, then there is  an $\qe-C_{\infty}-$friendly line segment $[x,y]$ such that $\pi=\qr_{C_{\infty}}([x,y])$; moreover $\ql=d\zv(x,y)$.
\end{prop*}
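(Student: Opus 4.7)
For Part (1), I would localise at an interior point $z = \varphi(t)$ with $0 < t < 1$ and invoke the factorisation established in \S \ref{5.3}: the restriction of $\rho_{C_\infty}$ to the tangent space $\sht_z(\shi_\oplus)$ is the retraction $\rho_{C_z^\infty}$ with centre the local chamber $C_z^\infty$, followed by an apartment isomorphism $\psi$ induced by an element of $I_\infty$, which identifies its image with $\sht_p(\A_\oplus)$ and sends $C_z^\infty$ to $C_p^\infty$. Since $[x,y]$ is a line segment through $z$, the germs $\varphi_+(t)$ and $\varphi_-(t)$ are opposite at $z$ and share an apartment $A_\oplus$ with $C_z^\infty$ and $C_{z,\varphi}^-$. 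In this tangent twin building I would apply Proposition \ref{5.10}, implication (a)$\Rightarrow$(c), with $C_p^- := C_z^\infty$, $\eta := \varphi_-(t)$ (which has sign $-\epsilon$, opposite that of $C_z^\infty$, hence ``positive'' in the 5.10 convention) and its opposite $\zeta := \varphi_+(t)$. The image $-\xi := \rho_{C_z^\infty}(\varphi_+(t))$ lies in $A_\oplus$ and satisfies $\psi(\xi) = -\pi_+(t)$, so Proposition \ref{5.10} furnishes a chain $(\xi_0, \ldots, \xi_s)$ of vectors and $(\beta_1,\ldots,\beta_s)$ of roots with $\psi(\xi_0) = -\pi'_+(t)$ and $\psi(\xi_s) = -\pi'_-(t)$.

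I would then translate this chain into the $(W^v_p, C_p^\infty)$-chain demanded by the definition in \S \ref{5.13}. Transporting by $\psi$ and simultaneously negating every vector and every root, the sequence runs from $\pi'_+(t)$ to $\pi'_-(t)$ with roots $(-\beta_i)$. Conditions (i)--(iii) are preserved by the double negation (the reflections are unchanged, and $(-\beta_i)(-\xi_{i-1}) = \beta_i(\xi_{i-1}) < 0$), while condition (iv), $\beta_i(C_p^-) < 0$, becomes $(-\beta_i)(C_p^\infty) > 0$, which in the affine picture on $\A_\oplus$ reads $\beta(C_p^\infty) > \beta(p)$. This is exactly condition (iv) of \S \ref{5.13}, so $\pi$ is a $C_\infty$-Hecke path of type $\lambda$.

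For Part (2), the strategy is to endow $\pi$ with a superdecoration and to apply Theorem \ref{5.7}. I would define the decoration chambers by $C_{t,\pi}^+ := pr_{\pi_+(t)}(C_{\pi(t)}^\infty)$ for $t<1$ and $C_{t,\pi}^- := pr_{\pi_-(t)}(C_{\pi(t)}^\infty)$ for $t>0$, which satisfy the projection compatibilities on each linear piece of $\pi$ by construction. For the galleries, at each of the finitely many times $t_k$ singled out by Lemma \ref{5.4}.4, the $C_\infty$-Hecke chain of \S \ref{5.13} becomes, after the inverse of the sign translation used in Part (1), precisely the chain hypothesis of Proposition \ref{5.10}(c); implication (c)$\Rightarrow$(b) together with the minimality clause in part (2) of that proposition then supplies a gallery $\mathbf{c}_{t_k}$ in $\sht_{p_k}^\epsilon(\A_\oplus)$ of the prescribed type $\mathbf{i}_{t_k}$, starting at $C_{p_k}^\infty$, centrifugally folded with respect to $C_{t_k,\pi}^-$, and ending in the projection chamber $C^{(+)}_{p_k,\pi}$. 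At every other $t \in (0,1)$, Proposition \ref{5.6}.3(d) gives a unique minimal gallery of type $\mathbf{i}_t$ from $C_p^\infty$ to $C_{t,\pi}^+$. This produces a superdecorated $C_\infty$-$\lambda$ path $\underline{\underline\pi}$; Theorem \ref{5.7} then yields a $C_\infty$-friendly line segment $[x,y]$ through the given $x$ with $d^v(x,y) = \lambda$ and $\rho_{C_\infty}([x,y]) = \pi$, and the $\epsilon$-$C_\infty$-friendliness is forced by $\rho_{C_\infty}(x) = p_0$, the hypothesis on $p_0$, and the preservation of the preorder $\leq$ by $\rho_{C_\infty}$.

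The main obstacle will be the careful bookkeeping of sign conventions between Proposition \ref{5.10} --- where the retraction centre $C_p^-$ is treated as ``negative'' and the opposing germs $\eta, \xi$ as ``positive'' --- and the $C_\infty$-Hecke path definition of \S \ref{5.13}, where all path derivatives carry the uniform sign $\epsilon$ and the chain is stated relative to the $\epsilon$-signed chamber $C_p^\infty$. The consistent dictionary requires identifying $C_p^-$ of 5.10 with $C_p^\infty$, taking the 5.10-positive germ $\eta$ to be $\varphi_-(t)$ (of sign $-\epsilon$), and then negating both vectors and roots to pass between the two chain formulations; only after this simultaneous sign reversal does the inequality $\beta_i(C_p^-) < 0$ of \ref{5.10}(1c)(iv) turn into $\beta(C_p^\infty) > \beta(p)$ of \S \ref{5.13}(iv). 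Once the dictionary is set, both implications reduce to the appropriate direction of Proposition \ref{5.10} combined with the lifting procedure of Theorem \ref{5.7}.
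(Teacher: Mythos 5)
Part (1) of your proposal follows the paper's own argument very closely: reduce to a point $z$ in the interior of the segment, factor $\rho_{C_\infty}$ through the local retraction $\rho_{C_z^\infty}$ on the tangent space followed by the apartment isomorphism $\psi$, invoke implication (a)$\Rightarrow$(c) of Proposition~\ref{5.10} with $C_p^- := C_z^\infty$, $\eta := \varphi_-(t)$, $\zeta := \varphi_+(t)$, and then translate the resulting chain into the $(W^v_p, C^\infty_p)$-chain of \S\ref{5.13} by simultaneously negating vectors and roots. Your sign accounting is correct, and this is essentially the proof in the text.

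Part (2) is where you diverge, and where there is a gap. The paper constructs the lifted segment directly by induction on the subdivision points $t_i$: at each step it applies (c)$\Rightarrow$(a) of Proposition~\ref{5.10} in the tangent twin building at $p_i$ to produce a segment germ $\zeta$ opposite $[z_i,x)$ retracting onto $[p_i,p_{i+1})$, lifts the next linear piece inside a twin apartment containing $C_\infty$ and $\zeta$, and glues the pieces using \cite[4.9]{GR14}. It never builds a superdecoration. You instead propose to manufacture a full superdecoration $\underline{\underline\pi}$ out of thin air and then invoke Theorem~\ref{5.7}. This is not obviously wrong as a route, but the crucial step --- ``$C^+_{t,\pi} := pr_{\pi_+(t)}(C^\infty_{\pi(t)})$ and $C^-_{t,\pi} := pr_{\pi_-(t)}(C^\infty_{\pi(t)})$, which satisfy the projection compatibilities on each linear piece of $\pi$ by construction'' --- is asserted, not proved, and it is not by construction. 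The decorated $\lambda$-path axioms (Definition in \S\ref{5.2}.3, taken from \cite[Def.~2.6]{BPR19}) require the directions $C^{\pm v}_{t,\pi}$ to be constant on each linear piece and to be recovered from each other by projection, whereas $C^\infty_{\pi(t)}$ varies with $t$ along the piece (its defining set of half-apartments changes as $\pi(t)$ crosses the arrangement), so the projection $pr_{\pi_+(t)}(C^\infty_{\pi(t)})$ can a priori change direction with $t$. Moreover, the paper's NB after Proposition~\ref{5.6} warns explicitly that not every decorated path can be superdecorated, so the existence of a compatible family of galleries $\mathbf{c}_{t_k}$ of the prescribed types $\mathbf{i}_{t_k}$ (which must begin with a minimal gallery from $C^\infty_{p_k}$ to $\pi_+(t_k)$, not merely be of type $\mathbf{i}$ for some $C_{-\xi}$ as in Proposition~\ref{5.10}) is exactly the non-trivial content that would need to be established. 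In the paper's logic, the superdecorability of an arbitrary $C_\infty$-Hecke path (\S\ref{5.15}(1)(b)) is obtained as a \emph{consequence} of Proposition~\ref{5.14}(2) together with Theorem~\ref{5.7}, not as an input to it; your route, to avoid circularity, would have to reprove that consequence independently, and that is precisely what your sketch skips. The paper's direct inductive lift via (c)$\Rightarrow$(a) sidesteps all of this.
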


\begin{proof} We consider first the case $\qe=-1$.

\par (1) Clearly $p_{0}=\pi(0)$ satisfies $p_{0}\leq{0_{\A_{\oplus}}}$, \ie $p_{0}\in-\sht$.
For any $t\in]0,1[$ we write $p=\pi(t)$; we have now to find a $(W\zv_{p},C^\infty_{p})-$chain from $\pi'_{+}(t)$ to $\pi'_{-}(t)$.
For this we use Proposition \ref{5.10} in the tangent space $\sht_{p}(\SHI_{\oplus})$ and we change first $\A$ in order that it contains $C_{\infty}$ and $[\qf(t),x)$: this does not change $\pi$, up to an isomorphism which is a restriction of $\qr_{C_{\infty}}$.
We then have $p=\pi(t)=\qf(t)$.
For the chamber $C^-_{p}$ we take the negative chamber $C^\infty_{p}$ of \S \ref{5.1}.3 (we identify local chambers at $p$ and chambers in $\sht_{p}(\SHI_{\oplus})$).
For $\qz$ we take the negative segment germ $\qf_{+}(t)=[p,y)$.
For $\eta$ we take  the positive segment germ $\pi_{-}(t)=\qf_{-}(t)=p-[0,1)\pi'_{-}(t)\subset\A_{\oplus}$ (we identify segment germs of origin $p$ in $\SHI_{\oplus}$ and segment germs of origin $0$ in $\sht_{p}(\SHI_{\oplus})$).
And for $-\qx$ we take  the negative segment germ $\pi_{+}(t)=p+[0,1)\pi'_{+}(t)\subset\A_{\oplus}$ (so $\qx=p-[0,1)\pi'_{+}(t)\subset\A_{\oplus}$ is a positive segment germ).

\par We have $\qr_{C_{\infty}}(\qz)=-\qx$, \ie $\qr(\qz)=-\qx$, as the restriction of $\qr_{C_{\infty}}$ to $\sht_{p}(\SHI_{\oplus})$ is $\qr=\qr_{\A,C^\infty_{p}}$ (see Lemma \ref{5.1}.4).
We are exactly in the situation of Proposition \ref{5.10} 1.a, except that the $\ql$ in \lc is our $-\ql\in-\qe\sht^\circ=\sht^\circ$: $\eta$ (\resp $\qx$) is generated by $-\pi'_{-}(t)\in-W\zv\ql$ (\resp $-\pi'_{+}(t)\in-W\zv\ql$).
From (a) $\implies$ (c) in this proposition, we get $\eta\leq\qx$, or more precisely sequences $(\qx'_{0}=-\pi'_{+}(t),\qx'_{1},\ldots,\qx'_{s}=-\pi'_{-}(t))$  and $(\qb'_{1},\ldots,\qb'_{s})$ satisfying the conditions (i) to (iv) of Proposition \ref{5.10} 1.c.
Considering the sequences $(\qx_{0}=-\qx'_{0}=\pi'_{+}(t),\qx_{1}=-\qx'_{1},\ldots,\qx_{s}=-\qx'_{s}=\pi'_{-}(t))$  and $(\qb_{1}=-\qb'_{1},\ldots,\qb_{s}=-\qb'_{s})$, we get the expected $(W\zv_{p},C^\infty_{p})-$chain, as the $\QF^+(C^\infty_{p})$ of \ref{5.13} (iv) is opposite the $\QF^+$ of Proposition \ref{5.10} 1.c.iv.

\par (2) Now  $\pi$ is a $C_{\infty}-$Hecke path of shape $\ql$ (with sign $\qe=-1$) in $\A_{\oplus}$ with origin $p_{0}\in\A_{\oplus}$ and $x\in\SHI_{\oplus}$ satisfies $\qr_{C_{\infty}}(x)=p_{0}$.
By definition there is a subdivision $0=t_{0}<t_{1}<\cdots<t_{\ell_{\pi}}=1$ of $[0,1]$ such that $\pi([0,1])=[p_{0},p_{1}]\cup[p_{1},p_{2}]\cup\cdots\cup[p_{\ell_{\pi}-1},p_{\ell_{\pi}}]$, if we write $p_{i}=\pi(t_{i})$.
We take a twin apartment $A_{0}$ containing $C_{\infty}$ and $x$, then $\qr_{C_{\infty}}\vert_{A_{0}}$ is an isomorphism of $A_{0}$ onto $\A$ fixing $C_{\infty}$ and sending $x$ to $p_{0}$; so $x\leq{0_{\A_{0}^{\oplus}}}$ as expected.
We shall prove by induction that, for $i\geq1$, there is a $(-1)-C_{\infty}-$friendly line segment $[x,z_{i}]$ such that $\qr_{C_{\infty}}([x,z_{i}])=\pi([0,t_{i}])$.
We define $[x,z_{1}]=(\qr_{C_{\infty}}\vert_{A^\oplus_{0}})^ {-1}([p_{0},p_{1}])$, it is a solution for $i=1$.
We assume now the result for $i$ and prove it for $i+1$.
Up to an isomorphism, we may assume $\A\supset C_{\infty}\cup [z_{i},x)$.
Let $p:=p_{i}=z_{i}$, we get the situation of Proposition \ref{5.10}, by setting $C^-_{p}:=C^\infty_{p}$, $\eta:=[z_{i},x)=[p_{i},p_{i-1})$, $-\qx=[p_{i},p_{i+1})$.
The condition (c) of \lc is fulfilled (see above in (1) the translation between chains).
So the implication (c) $\implies$ (a) provides us a segment germ $\qz$ opposite $\eta$ with origin $z_{i}$ satisfying $\qr_{C_{\infty}}(\qz)=-\qx=[p_{i},p_{i+1})$.
We write $A_{i}$ a twin apartment containing $C_{\infty}$ and $\qz$.
Then  $\qr_{C_{\infty}}\vert_{A_{i}}$ is an isomorphism from $A_{i}$ onto $\A$ fixing $C_{\infty}$ and we define $[z_{i},z_{i+1}]=(\qr_{C_{\infty}}\vert_{A^\oplus_{i}})^ {-1}([p_{i},p_{i+1}])$.
We have $\qr_{C_{\infty}}([x,z_{i}]\cup[z_{i},z_{i+1}])=\pi([0,t_{i+1}])$.
But $[z_{i},x)=\eta$ and $[z_{i},z_{i+1})=\qz$ are opposite.
So $[x,z_{i}]\cup[z_{i},z_{i+1}]$ is a line segment by \cite[4.9]{GR14} and we are done.

\medskip
\par We deal now with the case $\qe=+1$.

\par (1) As above $p_{0}=\pi(0)$ satisfies $p_{0}\geq{0_{\A_{\oplus}}}$, \ie $p_{0}\in\sht$.
For any $t\in]0,1[$ we write $p=\pi(t)$; we have now to find a $(W\zv_{p},C^\infty_{p})-$chain from $\pi'_{+}(t)$ to $\pi'_{-}(t)$.
We want to use Proposition \ref{5.10} in $\sht_{p}(\SHI_{\oplus})$, but now $C^\infty_{p}$ is a positive local chamber in $\A_{\oplus}$.
Luckily the signs in Proposition \ref{5.10} are not important, as \eg $\QF^+$ is defined in 1.c.iv by reference to $C^-_{p}$, not to the signs in $\A_{\oplus}$. The important fact is that $\qx$ and $\eta$ (\resp $C^-_{p},\qz$ and $-\qx$) are of the same sign.
We change first $\A$ in order that it contains $C_{\infty}$ and $[\qf(t),x)$, so $p=\pi(t)=\qf(t)$.
We take now $C^-_{p}:=C^\infty_{p}$, $\qz:=\qf_{+}(t)$, $\eta:=\pi_{-}(t)=\qf_{-}(t)=p-[0,1)\pi'_{-}(t)$ and $-\qx=\pi_{+}(t)=p+[0,1)\pi'_{+}(t)$, so $\qx=p-[0,1)\pi'_{+}(t)$.
We have $\qr_{C_{\infty}}(\qz)=-\qx$ and we are exactly in the situation of Proposition \ref{5.10} 1.a, except for the signs; in particular $\eta$ (\resp $\qx$) is generated by $-\pi'_{-}(t)\in-W\zv\ql$ (\resp  $-\pi'_{+}(t)\in-W\zv\ql$), so the $\ql$ in \lc is our $-\ql$.
From (a) $\implies$ (c) in this proposition, we get $\eta\leq\qx$ which seems to mean $-\pi'_{-}(t)\leq -\pi'_{+}(t)$.
By the same trick as above for the case $\qe=-1$, we get the expected $(W\zv_{p},C^\infty_{p})-$chain from $\pi'_{+}(t)$ to $\pi'_{-}(t)$.

\par The proof of the converse result (2) is the same, mutatis mutandis, as the one given above in the case $\qe=-1$.
\end{proof}

\subsection{Consequences}\label{5.15}

\par (1) We considered in \S \ref{5.3}.2, $C_{\infty}-$friendly line segments $[x,y]$ which were actually $\qe-C_{\infty}-$friendly.
We endowed them with a decoration.
Then $\pi=\qr_{C_{\infty}}([x,y])\subset \A_{\oplus}$ is endowed with a superdecoration (\S \ref{5.3}.3) which makes it a  superdecorated $C_{\infty}-\ql$ path  (see \S \ref{5.4}.5 and \S \ref{5.4}.6).
Conversely we proved in Theorem \ref{5.7} that a superdecorated $C_{\infty}-\ql$ path is the image by $\qr_{C_{\infty}}$ of a $C_{\infty}-$friendly line segment.

\par Comparing with the above Proposition \ref{5.14}, we get that:

\par (a) The underlying path of a super-decorated $C_{\infty}-\ql$ path  is a $C_{\infty}-$Hecke path.

\par (b) Any $C_{\infty}-$Hecke path $\pi\subset\A$ may be endowed with a super-decoration (provided that $\pi(0) \stackrel{\circ}{\geq}  0_{\oplus}$ or $\pi(0) \stackrel{\circ}{\leq}  0_{\oplus}$).

\par (c) The number of these possible super-decorations is finite (see \S \ref{5.4}.6).

\par Actually the consideration of (super-)decorations is useful to count the number of line segments with a given $C_{\infty}-$Hecke path as image under $\qr_{C_{\infty}}$ (see \ref{5.7}).
But the definition we gave of a super-decoration is perhaps too precise.
Other choices of the decorations $C^\pm_{t,\pi}$ may be interesting, \eg to compare with Muthiah's results in \cite{Mu19b}.

\begin{NB} The reader should note that a decorated $C_{\infty}-\ql$ path cannot always be endowed with a super-decoration.
One should, at least, assume the condition $C^+_{t,\pi}=pr_{\pi_{+}(t)}(C^-_{t,\pi})$, when $t$ is not among the $t_{i}$ of Lemma \ref{5.4}.4.
See analogously Proposition 2.7 N.B. and Remark (3) in \S 3.3 of \cite{BPR19}.

\medskip
\par (2) We indicated in \S \ref{n3.15.1} that our main motivation, according to Muthiah's goals, was to calculate the cardinality of sets of the form $(K_{twin}\qp^{-\ql} K_{twin}\cap I_{\infty}\qp^{-\qm} K_{twin})/K_{twin}$ for $\ql,\qm\in \qe(\ov{C\zv_{f}}\cap\sht^\circ\cap Y)\subset\A_{\oplus}$.
Such a set is in one to one correspondence with the set of points $x\in\SHI_{\oplus}$ such that $d\zv(0_{\oplus},x)=\ql$ and $\qr_{C_{\infty}}(x)$ is defined and equal to $\qm$.
Due to the lack of a Birkhoff decomposition, we are only able to calculate the cardinality of a subset: the set of the $x$ as above such that, moreover, $\qr_{C_{\infty}}(z)$ is defined for any $z\in[0_{\oplus},x]$. 
The formula we get for this cardinality is as follows:
it is the sum of the numbers $\#\set{[x,y]}$ in Theorem \ref{5.7}.2, where the sum runs on the set of all superdecorated $C_{\infty}-\ql$ paths in $\A_{\oplus}$ of shape $\ql$ from $0_{\oplus}$ to $\qm$ (with the type $\mathbf{i}_{t}$ fixed for any $t\in]0,1[$).
One can notice that this set of paths depends only on $\A_{\oplus}$, $\ql$ and $\qm$ (not of $\k$) and that it is finite, at least if the root system of $\g G$ is untwisted affine of type $A$, $D$ or $E$ (see (1.c)  above and the result \ref{n3.15.1}.4 of D. Muthiah).
So (in the case of $A,D,E$ with $\ql\in -(\ov{C\zv_{f}}\cap\sht^\circ\cap Y)$) this cardinality is a well defined polynomial in the cardinality $q$ of $\k$, depending only on $\A_{\oplus}$, $\ql$ and $\qm$.
\end{NB}

\section{The case of affine $\mathrm{SL}_2$: counter-example to the Birkhoff decomposition and examples of Hecke paths}\label{s6}
In this section, we begin by proving  that when $G$ is affine $\mathrm{SL}_2$ over $\k(\qp)$, the Birkhoff decomposition does not hold, that is $G_{twin}\not\subset I_\infty N K$. Actually, many Kac-Moody groups over $\k(\qp)$ can be considered as affine $\mathrm{SL}_2$ over $\k(\qp)$: we will work with $G^\loo$, $G$ and $\widetilde{G}$, which are respectively $\mathrm{SL}_2(\k(\qp){[u,u^ {-1}]})$, $G^\loo\rtimes \k(\qp)^*$ and a central extension of $G$. Their maximal tori have dimensions $1$, $2$ and $3$. In $G^\loo$, neither the simple coroots nor the simple roots are free, in $G$ the simple coroots are not free but the simple roots are free and in $\widetilde{G}$ both simple roots and simple coroots are free so that $G$ and $\widetilde{G}$ fulfil the assumptions  of \ref{1.2}.  To prove that the Birkhoff decomposition does not hold, we work in $G^\loo$, in which the computations are easier, and then deduce the results for $G$ and $\widetilde{G}$.

We exhibit an element of $G_{twin}\setminus I_\infty N K$. Our element lies in $G\setminus (G^+_{\oplus}\cup G^-_{\oplus})$, where the index $\oplus$ means that $G^+$ and $G^-$ are defined with respect to $\shi_{\oplus}$. 
 This suggests that we need to work in $G^+_{\oplus}$ or $G^-_{\oplus}$ to obtain a Birkhoff decomposition {(see \ref{4.4})}. This was expected, since this is already the case for the Cartan decomposition

\par We end this section with some examples of Hecke paths associated with $G$.

\subsection{Notation and projection of $\widetilde{G}$ on $G$}

We begin by defining $\widetilde{G}$, which is a central extension of  $\mathrm{SL}_2\left(\k(\qp)[u,u^{-1}]\right)\rtimes \k(\qp)^*$, by defining a root generating system, in the sense of Bardy-Panse \cite{Ba96}.  Let $\widetilde{Y}=\Z\aleph^\vee\oplus \Z c\oplus \Z d$, where $\aleph^\vee,c,d$ are some symbols, corresponding to the positive root of $\mathrm{SL}_2(\k(\qp))$, to  the central extension and to the semi-direct extension by $\k(\qp)^*$ respectively. Let $\widetilde{X}=\Z \aleph\oplus \Z \delta\oplus \Z\Lambda_0$, where $\aleph$\index{a@$\aleph$}, $\delta$\index{d@$\delta$}, $\Lambda_0$\index{L@$\Lambda_0$} $:\widetilde{Y}\rightarrow \Z$ are the $\Z$-module morphisms defined by $\aleph(\aleph^\vee)=2$, $\aleph(c)=\aleph(d)=0$,  $\delta(\aleph^\vee)=0=\delta(c)$,  $\delta(d)=1$, $\Lambda_0(c)=1$ and  $\Lambda_0(\aleph^\vee)=\Lambda_0(d)=0$.  Let $\widetilde{\alpha_0}=\delta-\aleph$, $\widetilde{\alpha_1}=\aleph$, $\widetilde{\alpha_0^\vee}=c-\aleph^\vee$ and $\widetilde{\alpha_1^\vee}=\aleph^\vee$. Then $\widetilde{\mathcal{S}}=(\left(\begin{smallmatrix} 2 & -2\\ -2 & 2\end{smallmatrix}\right),\widetilde{X},\widetilde{Y},\{\widetilde{\alpha_0},\widetilde{\alpha_1}\},\{\widetilde{\alpha_0^\vee},\widetilde{\alpha_1^\vee}\})$ is a root generating system. Let $\widetilde{G}$ be the  Kac-Moody group associated with $\widetilde{\mathcal{S}}$ over $\k(\qp)$. Then by \cite[13]{Kum02} and \cite[7.6]{Mar18}, $\widetilde{G}$ is a central extension of $G:=\mathrm{SL}_2\left(\k(\qp)[u,u^{-1}]\right)\rtimes \k(\qp)^*$, where $u$ is an indeterminate and if $(M,z),(M_1,z_1)\in G$, with $M=\begin{psmallmatrix}
a(\qp,u) & b(\qp,u)\\
c(\qp,u) & d(\qp,u)
\end{psmallmatrix}$, $M_1=\begin{psmallmatrix}
a_1(\qp,u) & b_1(\qp,u)\\ c_1(\qp,u) & d_1(\qp,u)
\end{psmallmatrix}$,  we have \begin{equation}\label{eqDef_semi_direct_product}
(M,z).(M_1,z_1)=(M\begin{psmallmatrix}
a_1(\qp,z u) & b_1(\qp,zu)\\ c_1(\qp,z u) & d_1(\qp,z u)
\end{psmallmatrix},zz_1) .
\end{equation}

 Let $X=\Z\aleph\oplus \Z \delta$ and $Y=\Z\aleph^\vee\oplus \Z d$. We regard $X$ as a set of maps from $Y$ to $\Z$ by restricting them to $Y$. Let $\alpha_0=\delta-\aleph$, $\alpha_1=\aleph$, $\alpha_0^\vee=-\aleph^\vee$ and $\alpha_1^\vee=\aleph^\vee$. Then  $\mathcal{S}=(\left(\begin{smallmatrix} 2 & -2\\ -2 & 2\end{smallmatrix}\right),X,Y,\{\alpha_0,\alpha_1\},\{\alpha_0^\vee,\alpha_1^\vee\})$ is a root generating system and $G$ is the associated Kac-Moody group  (over $\k(\qp)$).

Note that the family $(\alpha_0^\vee,\alpha_1^\vee)$ is not free. We have $\Phi=\{\alpha+k\delta\mid\alpha\in \{\pm \aleph\}, k\in \Z\}$ and $(\alpha_0,\alpha_1)$ is a basis of this root system. We denote by $\Phi^+$ (resp. $\Phi^-$) the set $\Phi\cap (\N\alpha_0+\N\alpha_1)$ (resp $-\Phi_+$). For $k\in \Z$ and $y\in \k(\qp)$, we set $x_{\aleph+k\delta}(y)=(\begin{psmallmatrix}
1 & u^k y\\ 0 & 1
\end{psmallmatrix},1)\in G$ and $x_{-\aleph+k\delta}(y)=(\begin{psmallmatrix}
1 & 0 \\ u^k y& 1 
\end{psmallmatrix},1)\in G$.

\par The tori of $\widetilde G$, $G$ and $G^\loo$ are different (with respective dimensions $3$, $2$ and $1$).
On the contrary the maximal unipotent subgroups $\widetilde U^\pm$, $U^\pm$ and $U^\pm_{\loo}$ are naturally isomorphic \cite[1.9.2]{R16}.

We set $T=\{(\begin{psmallmatrix}
y & 0\\ 0 & y^{-1}
\end{psmallmatrix},z)\mid y,z\in \k(\qp)^*\}$. Then $T$ is a maximal split torus of $G$. Let $N$ be the normalizer of $T$ in $G$. We have  \[N=G\cap \Bigg( (\begin{psmallmatrix}
\k(\qp)^* u^\Z & 0 \\
0 & \k(\qp)^* u^\Z
\end{psmallmatrix},\k(\qp)^*)\sqcup (\begin{psmallmatrix} 0& \k(\qp)^* u^\Z\\
\k(\qp)^* u^\Z & 0
\end{psmallmatrix},\k(\qp)^*)\Bigg).\] 

Recall that $\sho=\k[\qp,\qp^{-1}]$. We have \[\mathfrak{N}(\sho)=G\cap\Bigg( ( \begin{psmallmatrix}
\k^* \qp^\Z u^\Z & 0 \\
0 & \k^*\qp^\Z u^\Z
\end{psmallmatrix},\k^* \qp^\Z)\sqcup ( \begin{psmallmatrix} 0& \k^*\qp^\Z u^\Z\\
\k^* \qp^\Z u^\Z & 0
\end{psmallmatrix},\k^*\qp^\Z)\Bigg)\] and 
$\mathfrak{U}(\sho):=\langle x_{\alpha+k\delta}(\sho)\mid\alpha\in \{\pm \aleph\},k\in \Z\rangle$, so that 
\[G_{twin}=\langle \mathfrak{N}(\sho),\mathfrak{U}(\sho)\rangle \subset \mathrm{SL}_2(\sho[u,u^{-1}])\rtimes \k^*\qp^\Z.\]

The group $G$ (resp $\widetilde{G}$) acts on the masures $\shi_{\oplus},\shi_{\ominus}$ (resp $\widetilde{\shi_{\oplus}},\widetilde{\shi_{\ominus}}$). 
We denote with a tilde the objects related to the masures $\widetilde{\shi}_\oplus$ and $\widetilde{\shi}_{\ominus}$ (for example the vertex $\widetilde{0_\oplus}$ and the local chamber $\widetilde{C_\infty}$).
 Let $K$ (resp. $\widetilde{K}$) be the fixator of $0_{\oplus}$  (resp. of $\widetilde{0_\oplus}$) in $G$ (resp. in $\widetilde{G}$) and $I_\infty$ (\resp $\widetilde I_\infty$) be the fixator of $C_\infty$ in $G_{twin}$ (\resp of $\widetilde C_\infty$ in $\widetilde G_{twin}$). Let $\g v\in \{\ominus,\oplus\}$. The standard apartment $\widetilde{\A_{\g v}}$ can be written as  $\A_{\g v}\oplus \R c$, where $c\in Y$ corresponds to the center, so that $\A_{\g v}$ can be considered as the quotient of $\widetilde{\A_{\g v}}$ by $\R c$. Let $\pi:\widetilde{G}\twoheadrightarrow G$\index{p@$\pi$} denote the natural projection and denote also by $\pi:\widetilde{\A}_{\g v}=\A_{\g v}\oplus \R c\twoheadrightarrow \A_{\g v}$ the natural projection. Then we have the following easy lemma.  

\begin{lemm}\label{lemQuotient_masure}
 The map $\pi:\widetilde{\A}_{\g v}\rightarrow \A_{\g v}$  uniquely  extends to a map $\pi:\widetilde{\shi}_{\g v}\rightarrow \shi_{\g v}$  such that $\pi(g.a)=\pi(g).\pi(a)$ for $g\in \widetilde{G}$, $a\in \widetilde{\A}_{\g v}$. In particular,  we can regard $\shi_{\g v}$ as a quotient of $\widetilde{\shi}_{\g v}$ by $\R c$. 
\end{lemm}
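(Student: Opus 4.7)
My plan is to define $\pi$ on the whole masure via the concrete description $\widetilde{\shi}_{\g v} = \widetilde{G} \times \widetilde{\A}_{\g v}/{\sim}$ from \S\ref{n1.3.1}, by setting $\pi([(g, a)]) := [(\pi(g), \pi(a))]$, and then verify successively that this is well-defined, equivariant, unique, and that its fibers coincide with $\R c$-orbits.

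The first step is to check that the equivalence relation descends. If $(g, a) \sim (h, b)$ via $\widetilde{n} \in \widetilde{N}$ with $b = \widetilde{\nu}(\widetilde{n}).a$ and $g^{-1} h \widetilde{n} \in \widetilde{G}_a$, then I need to exhibit $(\pi(g), \pi(a)) \sim (\pi(h), \pi(b))$ using the image data. This requires two compatibilities: (i) $\pi \circ \widetilde{\nu} = \nu \circ \pi$ on apartments, which is straightforward because $\widetilde{W}\zv$ acts trivially on $c$ (every root vanishes on $c$) and because translations by $\widetilde{T}$ project to translations by $T$ modulo $\R c$; and (ii) $\pi(\widetilde{G}_a) \subset G_{\pi(a)}$. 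For (ii), I would use the decomposition $\widetilde{G}_a = \widetilde{U}^{pm+}_a . \widetilde{U}^{nm-}_a . \widetilde{N}_a$ from \S\ref{3.1} and reduce to showing that $\pi$ identifies $\widetilde{U}_\alpha$ with $U_\alpha$ and preserves the filtration values $f_{\{a\}}(\alpha) = -\omega(\alpha(a))$. The latter is immediate since every real root $\alpha$ annihilates $\R c$, so $\alpha(a) = \alpha(\pi(a))$; the former follows from the functorial construction of root subgroups in the central extension (together with the earlier remark that the maximal unipotent subgroups of $\widetilde{G}$ and $G$ are naturally isomorphic). Paired with $\pi(\widetilde{T}(\shk_{\omega \geq 0})) \subset T(\shk_{\omega \geq 0})$, this yields (ii).

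Equivariance $\pi(\widetilde{g}.x) = \pi(\widetilde{g}).\pi(x)$ is then immediate from $\widetilde{g}.[(h, a)] = [(\widetilde{g}h, a)]$, and uniqueness follows because $\widetilde{\shi}_{\g v} = \widetilde{G}.\widetilde{\A}_{\g v}$ forces the behavior off the apartment. For the quotient description, the fibers of $\pi$ restricted to $\widetilde{\A}_{\g v} = \A_{\g v} \oplus \R c$ are exactly the $\R c$-translates. To globalize, I observe that $c$ lies in $\widetilde{\A}_{in} := \bigcap_i \ker(\widetilde{\alpha_i})$, so Lemma~\ref{lem_Fixator_Ain} gives $\widetilde{G}_x = \widetilde{G}_{x+tc}$ for every $t \in \R$ and $x \in \widetilde{\A}_{\g v}$; this allows the translation action of $\R c$ on $\widetilde{\A}_{\g v}$ to extend coherently to a $\widetilde{G}$-equivariant free action on $\widetilde{\shi}_{\g v}$, after which equivariance of $\pi$ transports the fiber description globally. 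The main technical obstacle is compatibility (ii) above, which really hinges on how the central extension $\widetilde{G} \twoheadrightarrow G$ interacts with the completed groups $\widetilde{U}^{ma\pm}$ entering the definition of $\widetilde{G}_a$; once this is unwound, the remainder is bookkeeping.
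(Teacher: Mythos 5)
The paper treats this lemma as routine and does not write out a proof, so there is nothing to compare against; but your argument is a correct and natural way to verify it, and it covers all the pieces: descending the equivalence relation via the parahoric decomposition, the equivariance, and the $\R c$-quotient structure via $\widetilde{\A}_{in}$.

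Two small points worth tightening. First, the displayed formula $f_{\{a\}}(\alpha) = -\omega(\alpha(a))$ is not right as written — $\alpha(a)$ is a real number, not an element of $\shk$, so $\omega$ cannot be applied to it; what you actually use (and correctly state in the next sentence) is that $f_{\{a\}}(\alpha) = \lceil -\alpha(a)\rceil$ depends only on $\alpha(a)$, and that this is unchanged under $\pi$ because the simple roots kill $c$. Second, when you argue that $\pi(\widetilde{G}_a)\subset G_{\pi(a)}$ you invoke only that \emph{real} roots annihilate $\R c$, but the completed groups $\widetilde{U}^{ma\pm}_a$ also carry factors $X_\alpha$ for \emph{imaginary} $\alpha$; you should observe that the full root lattice $Q = \bigoplus_i \Z\alpha_i$ (hence every $\alpha\in\Delta$, real or imaginary) annihilates $c$, so that $f_{\{a\}}$ is preserved on all of $\Delta$, not just on $\Phi$. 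Finally, for the assertion that $\pi$ separates $\R c$-orbits you write that equivariance ``transports the fiber description globally''; it is worth recording the mechanism explicitly: given $x,y$ with $\pi(x)=\pi(y)$, put them in a common apartment $k.\widetilde{\A}_{\g v}$ by (MA III), apply $k^{-1}$, use the fiber description inside $\widetilde{\A}_{\g v}$ to get $k^{-1}y = k^{-1}x + tc$, and then push back by $k$ using the $\widetilde{G}$-equivariance of the $\R c$-action. With these fillings-in, the proof is complete.
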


Let $\g v\in \{\ominus,\oplus\}$ and $f(\qp),g(\qp)\in \k(\qp)^*$ be  such that $\qo_{\g v}(f(\qp))=\qo_{\g v}(g(\qp))=0$. Let $\ell,n\in \Z$.  Then $\left(\begin{psmallmatrix} f(\qp) \qp^\ell  & 0 \\ 0 & f(\qp)^{-1} \qp^{-\ell} \end{psmallmatrix},g(\qp) \qp^n\right)$ acts on $\A_{\g v}$ by the translation of vector $-\mathrm{sgn}(\g v)(\ell \aleph^\vee+nd)$.

\par The kernel $C$ of $\pi:\widetilde G\to G$ is a one-dimensional split central torus (actually the reduced connected component of the center of $\widetilde G$, which is contained in $\widetilde T$), with cocharacter group $\Z.c\subset \widetilde Y$ (cocharacter group of $\widetilde T$).
So  there exists an isomorphism $T_C:\k(\qp)^*\rightarrow C$ such that $T_C(a)$ acts by the translations of vectors $-\omega_\oplus(a)c$ on $\widetilde \A_{\oplus}$ and $-\omega_\ominus(a)c$ on $\widetilde\A_{\ominus}$ (see \ref{ss_N} (2)).
We set $t_c=T_C(\qp^ {-1})\in \widetilde{\g T}(\k[\qp,\qp^ {-1}]) \subset \widetilde G_{twin}$.

\begin{lemm}
Let $i\in \widetilde{G}$ be such that $\pi(i)\in I_\infty$. Then  $i\in \widetilde I_{\infty}C\subset \widetilde{I}_\infty \widetilde{T}$. 
\end{lemm}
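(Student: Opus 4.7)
The plan is to lift $\pi(i)$ through $\widetilde G_{twin}$ and then adjust the lift by an element of the center $C(\sho)$ so that it fixes $\widetilde{C_\infty}$. First I would verify that $\pi:\widetilde G_{twin}\twoheadrightarrow G_{twin}$ is surjective. The splitting $\widetilde X=X\oplus\Z\Lambda_0$ identifies $\widetilde{\g T}$ with $\g T\times C$ as algebraic groups over $\k$, so $\pi(\widetilde{\g T}(\sho))=\g T(\sho)$; since the root subgroups $x_\alpha(\sho)$ ($\alpha\in\Phi$) are literally common to $\widetilde G_{twin}$ and $G_{twin}$ and together with $\g T(\sho)$ generate $G_{twin}$, surjectivity follows. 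Hence there is $j\in\widetilde G_{twin}$ with $\pi(j)=\pi(i)$, and one sets $c_0:=j^{-1}i\in\ker\pi=C$.

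Next I would apply the twin Iwasawa decomposition (Corollary \ref{cor_Iwasawa}, cf.\ \S\ref{2.5}) to $j$, writing $j=u\,n\,k$ with $u\in\widetilde U^{++}_{twin}$, $n\in\widetilde N_{twin}$ and $k\in\widetilde I_\infty$. Projecting and comparing with the trivial Iwasawa decomposition $\pi(j)=1\cdot 1\cdot\pi(j)$ in $G_{twin}=U^{++}_{twin}\,N_{twin}\,I_\infty$ (legitimate since $\pi(j)=\pi(i)\in I_\infty$), the uniqueness of Remark \ref{rk_Iwa_Gpol} together with the fact that the fixator of $C_\infty$ in $W=N/T_0$ is trivial (an element of $W^v\ltimes Y$ fixing both $0_\ominus$ and the vectorial direction $-C^v_f$ must be the identity) forces $\pi(n)\in T_0\cap N_{twin}$ and $\pi(u)\in I_\infty$.

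I would then translate these constraints geometrically. Since $\pi(n)$ acts trivially on $\A_\ominus$, its lift $n\in\widetilde N_{twin}$ has trivial $W^v$-component and acts on $\widetilde\A_\ominus$ by a translation of vector in $\widetilde Y\cap\ker(\pi|_{\widetilde\A_\ominus})=\widetilde Y\cap\R c=\Z c$, say by $mc$ for some $m\in\Z$; hence $n\cdot\widetilde{C_\infty}=\widetilde{C_\infty}+mc$. Because the root subgroups $U_\alpha$ coincide in $\widetilde G$ and $G$ and the relevant half-apartment conditions involve only roots in $\Phi$ (which vanish on $c$), the condition $\pi(u)\in I_\infty$ lifts to: $u$ itself fixes $\widetilde{C_\infty}$. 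Combining these with the centrality of $c$—so that the translation by $mc$ on $\widetilde\A_\ominus$ is realized by the central element $T_C(\qp^{m})\in C(\sho)\subset\widetilde G_{twin}$—one computes
\[
j\cdot\widetilde{C_\infty}\;=\;u\,n\cdot\widetilde{C_\infty}\;=\;u\,T_C(\qp^{m})\cdot\widetilde{C_\infty}\;=\;T_C(\qp^{m})\,u\cdot\widetilde{C_\infty}\;=\;\widetilde{C_\infty}+mc.
\]
Setting $t:=T_C(\qp^{m})\in C(\sho)$, the element $t^{-1}j\in\widetilde G_{twin}$ fixes $\widetilde{C_\infty}$ and therefore lies in $\widetilde I_\infty$. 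Using that $C$ is central, one writes $i=j\,c_0=(t^{-1}j)(t\,c_0)$ with $t^{-1}j\in\widetilde I_\infty$ and $t\,c_0\in C$, giving $i\in\widetilde I_\infty\,C$; the inclusion $\widetilde I_\infty C\subset\widetilde I_\infty\widetilde T$ is immediate from $C\subset\widetilde T$.

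The main obstacle is the Iwasawa-uniqueness step: one has to carefully track the $T_0$-ambiguity and exploit the triviality of the $W$-stabilizer of $C_\infty$ to force both $\pi(n)$ to fix $\A_\ominus$ pointwise and $\pi(u)$ to lie in $I_\infty$. Once this is secured, the passage to $\widetilde{\shi}_\ominus$ is a routine consequence of the centrality of $c$ and of the identification of root subgroups between $\widetilde G$ and $G$.
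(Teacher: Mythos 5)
Your proof is correct, but it takes a genuinely different route from the paper's. The paper's argument is much more direct: one simply applies $i$ to the type-$0$ vertex $\widetilde{0}_\ominus$ and observes that $\pi(i.\widetilde{0}_\ominus)=0_\ominus$, so $i.\widetilde{0}_\ominus=\widetilde{0}_\ominus+kc$ for some $k\in\Z$ (because the $\pi$-fiber of $0_\ominus$ is $\widetilde{0}_\ominus+\R c$ and type-$0$ vertices lie in $\widetilde{Y}$, whose intersection with $\R c$ is $\Z c$); then $it_c^k$ fixes $\widetilde{0}_\ominus$, and since $\pi(it_c^k.\widetilde{C}_\infty)=C_\infty$ and $\pi$ is a bijection on local chambers at $\widetilde{0}_\ominus$, one gets $it_c^k.\widetilde{C}_\infty=\widetilde{C}_\infty$ directly. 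You instead lift $\pi(i)$ through the explicit surjection $\pi\colon\widetilde G_{twin}\twoheadrightarrow G_{twin}$, run the twin Iwasawa decomposition $j=unk$, and use the uniqueness of the $W$-class of $n$ (together with $\mathrm{Fix}_W(C_\infty)=\{1\}$) to force $\pi(n)\in T_0$ and $\pi(u)\in I_\infty$, from which you recover the integer $m$ such that $j.\widetilde{C}_\infty=\widetilde{C}_\infty+mc$. Both arrive at the same corrective element $T_C(\qp^{m})\in C(\sho)\subset\widetilde G_{twin}$. What your route buys is that the membership $i\in\widetilde G_{twin}C$ (needed to land in $\widetilde I_\infty$, the fixator in $\widetilde G_{twin}$ rather than in $\widetilde G$) is made explicit from the start; this point is left implicit in the paper's proof. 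What it costs is invoking Iwasawa and its uniqueness property where the paper only needs the action on one vertex. One small step worth tightening in your write-up: to see that $\pi(u)\in I_\infty$ forces $u$ to fix $\widetilde{C}_\infty$, the cleanest argument is not just that root subgroups coincide, but rather that $u\in U^+$ fixes $\g Q_{+\infty}$, so $\rho_{+\infty}(u.\widetilde{0}_\ominus)=\widetilde{0}_\ominus$ while $\pi(u.\widetilde{0}_\ominus)=0_\ominus$ forces $u.\widetilde{0}_\ominus=\widetilde{0}_\ominus$, and then $u.\widetilde{C}_\infty$ is determined by its projection $C_\infty$; alternatively one can appeal to Proposition~\ref{3.2} and the fact that $f_{\widetilde{C}_\infty}(\alpha)=f_{C_\infty}(\alpha)$ for every $\alpha\in\Phi$ because $\alpha$ vanishes on $\R c$.
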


\begin{proof}
We have $\pi(i.\widetilde{0_{\ominus}})=0_\ominus$ and hence $i.\widetilde{0_{\ominus}}=0_{\ominus}+kc$, for some $k\in \Z$. Then $it_c^{k}.\widetilde{0_{\ominus}}=\widetilde{0_{\ominus}}$. Then $it_c^{k}.\widetilde{C_\infty}$ is a local chamber based at $\widetilde{0_\ominus}$ and we have $\pi(it_c^{k}.\widetilde{C_\infty})=C_\infty$. Therefore $it_c^{k}\in\widetilde I_{\infty}$ and $i\in  \widetilde I_{\infty}C\subset \widetilde{I}_\infty \widetilde{T}$.
\end{proof}

\begin{lemm}
Recall that $\widetilde{\she}=\widetilde{I_\infty}.\widetilde{\A}$ and that $\she=I_\infty.\A$. Then $\widetilde{\she}=\pi^{-1}(\she)$ and $\widetilde{\rho}_{\widetilde{C_\infty}}=\pi\circ \rho_{C_\infty}$. More precisely, let $g\in G$ and  $x\in \A$ be such that $g.x\in \she$. Let $\tilde{g}\in \pi^{-1}(g)$ and  $\tilde{x}\in \pi^{-1}(x)$. 
Then $\tilde{g}.\tilde{x} \in \widetilde{\she}$ and $\pi\circ\widetilde{\rho}_{\widetilde{C_\infty}}(\tilde{g}.\tilde{x})=\rho_{C_\infty}(g.x)$. 
\end{lemm}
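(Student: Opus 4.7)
The plan is to exploit three compatibilities between the projections $\pi:\widetilde{G}\twoheadrightarrow G$ and $\pi:\widetilde{\shi}_{\g v}\twoheadrightarrow \shi_{\g v}$: (a) $\pi(\widetilde{\A})=\A$; (b) $\pi(\widetilde{I_\infty})\subset I_\infty$ (since $\widetilde{I_\infty}$ fixes $\widetilde{C_\infty}$, its image fixes $C_\infty$); (c) by the previous lemma, $\pi^{-1}(I_\infty)\subset \widetilde{I_\infty}\cdot C$, where $C=\ker(\pi:\widetilde{G}\to G)$ is a central torus contained in $\widetilde{T}$ and hence stabilizing $\widetilde{\A}$ (acting by translations along $\R c$).

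First I would prove $\widetilde{\she}\subset \pi^{-1}(\she)$: if $\tilde{y}=\tilde{i}_\infty.\tilde{a}\in \widetilde{I_\infty}.\widetilde{\A}$, then by (a) and (b), $\pi(\tilde{y})=\pi(\tilde{i}_\infty).\pi(\tilde{a})\in I_\infty.\A=\she$. Next, for the reverse inclusion, take $\tilde{y}\in \pi^{-1}(\she)$ and write $\pi(\tilde{y})=i_\infty.a$ with $i_\infty\in I_\infty$, $a\in \A$. Choose any lift $\tilde{a}\in \widetilde{\A}$ of $a$ and any lift $\tilde{\jmath}\in \widetilde{G}$ of $i_\infty$; by the previous lemma $\tilde{\jmath}=\tilde{i}_\infty\cdot c'$ with $\tilde{i}_\infty\in \widetilde{I_\infty}$ and $c'\in C$, so $\tilde{\jmath}.\widetilde{\A}=\tilde{i}_\infty.\widetilde{\A}\subset \widetilde{\she}$ since $c'$ stabilizes $\widetilde{\A}$. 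Now $\pi(\tilde{\jmath}.\tilde{a})=i_\infty.a=\pi(\tilde{y})$, so $\tilde{y}$ and $\tilde{\jmath}.\tilde{a}$ differ by an element of the fiber, i.e.\ by a translation in $\R c$. Since such translations are realized by elements of $C$, which preserve $\widetilde{\A}$, we get $\tilde{y}\in \tilde{i}_\infty.\widetilde{\A}\subset \widetilde{\she}$. This establishes $\widetilde{\she}=\pi^{-1}(\she)$.

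For the more precise statement, observe first that $\tilde{g}.\tilde{x}\in \pi^{-1}(g.x)\subset \pi^{-1}(\she)=\widetilde{\she}$ by what was just proved. Then choose a decomposition $\tilde{g}.\tilde{x}=\tilde{i}_\infty.\tilde{a}$ with $\tilde{i}_\infty\in \widetilde{I_\infty}$ and $\tilde{a}\in \widetilde{\A}$, so that by definition $\widetilde{\rho}_{\widetilde{C_\infty}}(\tilde{g}.\tilde{x})=\tilde{a}$ (well defined by the analogue of Lemma \ref{l_def_ret} in $\widetilde{G}$). Applying $\pi$ yields $g.x=\pi(\tilde{i}_\infty).\pi(\tilde{a})$ with $\pi(\tilde{i}_\infty)\in I_\infty$ and $\pi(\tilde{a})\in \A$; hence by the definition of $\rho_{C_\infty}$ (and Lemma \ref{l_def_ret} guaranteeing well-definedness on the $G$-side), $\rho_{C_\infty}(g.x)=\pi(\tilde{a})=\pi\bigl(\widetilde{\rho}_{\widetilde{C_\infty}}(\tilde{g}.\tilde{x})\bigr)$.

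The main subtlety is the second inclusion $\pi^{-1}(\she)\subset \widetilde{\she}$: a priori $\widetilde{I_\infty}$ is strictly smaller than $\pi^{-1}(I_\infty)$, so one must genuinely use both that $\pi^{-1}(I_\infty)=\widetilde{I_\infty}\cdot C$ (the previous lemma) and that $C$ stabilizes $\widetilde{\A}$; the former replaces an a priori uncontrolled lift of $I_\infty$ by one in $\widetilde{I_\infty}$, while the latter is what allows us to absorb the extra translation along $\R c$ coming from the ambiguity of the lift $\tilde{y}$ into the $\widetilde{\A}$-factor. Everything else is a routine bookkeeping using the naturality of $\pi$.
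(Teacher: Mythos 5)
Your proposal is correct and follows essentially the same route as the paper's proof: project for one inclusion, lift and absorb the $\R c$-ambiguity via elements of $C$ (resp.\ the $t_c$-torus) for the other, then derive the compatibility of retractions by uniqueness of the $\widetilde{I_\infty}.\widetilde{\A}$ decomposition. One small point in your favour: in the reverse inclusion $\pi^{-1}(\she)\subset\widetilde{\she}$, the paper takes an arbitrary lift $\tilde i\in\pi^{-1}(i)$ of $i\in I_\infty$ and concludes $\tilde x=\tilde i(t_c)^k.\tilde y\in\widetilde{\she}$ without explicitly invoking the preceding lemma to replace $\tilde i$ by an element of $\widetilde{I_\infty}$ times an element of $C$; you spell out this step (writing $\tilde\jmath=\tilde i_\infty\cdot c'$ and observing $c'$ preserves $\widetilde\A$), which is the correct justification and is exactly what the paper is implicitly using.
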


\begin{proof}
Let $\tilde{x}\in\widetilde{\she}$. Write $\tilde{x}=\tilde{i}.\tilde{y}$, where $\tilde{i}\in \widetilde{I_\infty}$ and $\tilde{y}=\widetilde{\rho}_{\widetilde{C_\infty}(\tilde x)}\in \A$. Then $\pi(\tilde{x})=\pi(\tilde{i}).\pi(\tilde{y})\in I_\infty.\pi(\tilde{y})\subset \she$. Moreover, $\pi(\widetilde{\rho}_{\widetilde{C_\infty}}(\tilde{x}))=\pi(\tilde{y})=\rho_{C_\infty}(\pi(x))$. Conversely, take $x\in \she$. Write $x=i.y$, with $i\in I_\infty$ and $y\in \A$. Let $\tilde{x}\in \pi^{-1}(\{x\})$,  $\tilde{y}\in \pi^{-1}(\{y\})$ and $\tilde{i}\in \pi^{-1}(\{i\})$. Then $\pi(\tilde{i}.\tilde{y})=\pi(\tilde{x})$ and hence there exists $k\in \Z$ such that $\tilde{x}=(t_c)^k\tilde{i}.\tilde{y}=\tilde{i}(t_c)^k.\tilde{y}$. Therefore $\tilde{x}\in \widetilde{\she}$, which proves that $\widetilde{\she}=\pi^{-1}(\she)$. 

Take $g\in G$ and $x\in \A$ such that $g.x\in \she$. Let $\tilde{g}\in\pi^{-1}(\{g\})$ and $\tilde{x}\in \pi^{-1}(\{x\})$. Write $g.x=i.y$, with $i\in I_\infty$ and $y\in \A$. Take $\tilde{i}\in \pi^{-1}(\{i\})$ and $\tilde{y}\in \pi^{-1}(\{y\})$. 
Then $\pi(\tilde{g}.\tilde{x})=\pi(\tilde{i}.\tilde{y})$, so there exists $k\in \Z$ such that $\tilde{g}.\tilde{x}=\tilde i(t_c)^{k}.\tilde{y}\in \widetilde{\she}$. 
Therefore $\widetilde{\rho}_{\widetilde{C}_\infty}(\tilde{g}.\tilde{x})=(t_c)^k.\tilde{y}$, and the lemma follows. 
\end{proof}

In \ref{ss_Root_generating}, we defined actions of $W^v$ on $\widetilde{\A}$ and $\A$. We denote by $\tilde{.}$ the action of $W^v$ on $\tilde{\A}$. We have $w\tilde{.}x\in w.x+\R c$, for all $x\in \A\subset \widetilde{\A}$. 

\begin{lemm}\label{lem_Act_tldW}
Let $\tilde{\lambda}\in \overline{\widetilde{C^v_f}}$ and $\lambda=\pi(\tilde{\lambda})$. Let $v,w\in W^v$ be such that $v.\lambda=w.\lambda$. Then $v\tilde{.}\tilde{\lambda}=w\tilde{.}\tilde{\lambda}$. 
\end{lemm}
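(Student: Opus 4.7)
The plan is to reduce the problem to the fundamental-domain property of $\overline{\widetilde{C^v_f}}$ under the standard $W^v$-action on $\widetilde{\A}$. The key observation is that the additional coordinate $c$ is invariant under every $W^v$-generator for the $\tilde{.}$-action, so the fibres of $\pi$ are $W^v$-invariant lines lying inside $\overline{\widetilde{C^v_f}}$ whenever $\tilde{\lambda}$ is.

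First I would check two elementary facts by direct computation from the definitions of $\widetilde{\mathcal S}$: (i) $\tilde{\alpha}_i(c)=0$ for $i\in\{0,1\}$, since $\delta(c)=\aleph(c)=0$; in particular for any lift $\tilde{\lambda}$ of $\lambda$ one has $\tilde{\alpha}_i(\tilde{\lambda})=\alpha_i(\lambda)$, so $\tilde{\lambda}+\R c\subset \overline{\widetilde{C^v_f}}$; (ii) $r_i\tilde{.}c=c-\tilde{\alpha}_i(c)\widetilde{\alpha_i^\vee}=c$, so every element of $W^v$ fixes $c$ under $\tilde{.}$.

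Next, set $u=v^{-1}w$, so that by hypothesis $u.\lambda=\lambda$. The property $w\tilde{.}x\in w.x+\R c$ recalled just before the lemma means precisely that $\pi$ intertwines $\tilde{.}$ and $.$; hence
\[
\pi(u\tilde{.}\tilde{\lambda})\;=\;u.\lambda\;=\;\lambda\;=\;\pi(\tilde{\lambda}),
\]
so that $u\tilde{.}\tilde{\lambda}\in\tilde{\lambda}+\R c$, and by (i) this element lies in $\overline{\widetilde{C^v_f}}$. Thus two elements of the same $W^v$-orbit for the $\tilde{.}$-action, namely $\tilde{\lambda}$ and $u\tilde{.}\tilde{\lambda}$, both belong to the closed fundamental chamber.

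To finish, I would invoke the fundamental-domain property of $\overline{\widetilde{C^v_f}}$ for the $\tilde{.}$-action on the Tits cone (Kumar \cite[Proposition 1.3.10]{Kum02} or Kac \cite[Proposition 3.12(c)]{K90}), which applies here because the simple coroots $\widetilde{\alpha_0^\vee}=c-\aleph^\vee$ and $\widetilde{\alpha_1^\vee}=\aleph^\vee$ are linearly independent in $\widetilde{Y}$, so the $\tilde{.}$-action is the standard Kac--Moody geometric realization. That property states that each $W^v$-orbit in the Tits cone meets $\overline{\widetilde{C^v_f}}$ in at most one point, forcing $u\tilde{.}\tilde{\lambda}=\tilde{\lambda}$; equivalently $v\tilde{.}\tilde{\lambda}=w\tilde{.}\tilde{\lambda}$. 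The only point requiring care is step (i), namely the observation that the whole line $\tilde{\lambda}+\R c$ is contained in the closed fundamental chamber (not merely the Tits cone); this is exactly what lets the fundamental-domain property kick in and deliver the equality we need.
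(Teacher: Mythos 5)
Your proof is correct and follows essentially the same strategy as the paper's: reduce to $u=v^{-1}w$ fixing $\lambda$, show $u\tilde{.}\tilde{\lambda}\in\tilde{\lambda}+\R c\subset\overline{\widetilde{C^v_f}}$, and invoke the fundamental-domain property of the closed chamber to conclude $u\tilde{.}\tilde{\lambda}=\tilde{\lambda}$. You make explicit a step the paper leaves implicit, namely that $\widetilde{\alpha_i}(c)=0$ forces the whole line $\tilde{\lambda}+\R c$ to lie in $\overline{\widetilde{C^v_f}}$, which is precisely why the fundamental-domain property applies.
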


\begin{proof}
Let $i\in I=\{0,1\}$. We have $r_i.\lambda=\lambda-\alpha_i(\lambda)\alpha_i^\vee$ and $r_i\tilde{.}\lambda=\lambda-\widetilde{\alpha_i}(\lambda)\widetilde{\alpha_i}^\vee=\lambda-\alpha_i(\lambda)\widetilde{\alpha_i}^\vee$, with $\widetilde{\alpha_i}^\vee\in \alpha_i^\vee+\R c$. Moreover, $(W^v)\tilde{.}c=\{c\}$, so by induction on $\ell(w')$, we have $(w')\tilde{.}\lambda\in w'.\lambda+\R c$, for all $w'\in W^v$.

Write $\tilde{\lambda}=\lambda+tc$, with $t\in \R$. We have $v^{-1}w.\lambda=\lambda$ and therefore: \[v^{-1}w\tilde{.}\tilde{\lambda}=v^{-1}w\tilde{.}(\lambda+t c)=v^{-1}w\tilde{.}\lambda+t v^{-1}w\tilde{.}c=v^{-1}w\tilde{.}\lambda+tc\in v^{-1}w.\lambda+\R c=\lambda+\R c=\tilde{\lambda}+\R c.\]

Consequently $v^{-1}w\tilde{.}\tilde{\lambda}\in  \overline{\widetilde{C^v_f}}\cap W^v\tilde{.}\tilde{\lambda}=\{\tilde{\lambda}\}$  
\end{proof}

\begin{lemm}
 Let $\tilde{\lambda}\in  \ov{\widetilde{C^v_f}}\cap \widetilde\sht^\circ$, $\lambda=\pi(\tilde{\lambda})$, $\tau:[0,1]\rightarrow \A$ be a $\lambda$-path (for the action  $.$ of $W^v$ on $\A$) and $\tilde{a_0}\in \widetilde{\A}$  be  such that $\pi(\tilde a_{0})=\qt(0)$.
 Then there exists a unique $\tilde{\lambda}$-path $\tilde{\tau}:[0,1]\rightarrow\tilde{\A}$ (for the action $\tilde{.}$ of $W^v$ on $\tilde{\A}$) such that $\pi\circ\tilde{\tau}=\tau$ and $\tilde\qt(0)=\tilde a_{0}$. 
\end{lemm}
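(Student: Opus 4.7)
The plan is to construct $\tilde{\tau}$ piecewise, using the subdivision intrinsic to the $\lambda$-path $\tau$, and to use Lemma \ref{lem_Act_tldW} as the key tool to guarantee that the construction does not depend on auxiliary choices.

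More precisely, since $\tau$ is a $\lambda$-path, there is a subdivision $0 = t_0 < t_1 < \cdots < t_n = 1$ such that on each $[t_i, t_{i+1}]$ the restriction $\tau|_{[t_i,t_{i+1}]}$ is an affine segment with constant derivative $\tau'_+(t_i) = w_i . \lambda$ for some $w_i \in W^v$. The element $w_i$ is not unique: two choices $v_i, w_i$ with $v_i.\lambda = w_i.\lambda$ are possible. However, Lemma \ref{lem_Act_tldW} applied to $\tilde{\lambda}$ gives $v_i \tilde{.} \tilde{\lambda} = w_i \tilde{.} \tilde{\lambda}$, so the vector $w_i \tilde{.} \tilde{\lambda} \in \widetilde{\A}$ is intrinsically attached to $\tau'_+(t_i)$. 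I would therefore define $\tilde{\tau}$ inductively: set $\tilde{\tau}(t_0) = \tilde{a}_0$, and on each $[t_i, t_{i+1}]$ declare $\tilde{\tau}$ to be the affine segment starting at $\tilde{\tau}(t_i)$ with constant derivative $w_i \tilde{.} \tilde{\lambda}$.

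To check $\pi \circ \tilde{\tau} = \tau$, I would argue by induction on $i$: at $t_i$ both paths agree, and on $[t_i, t_{i+1}]$ the derivative of $\pi \circ \tilde{\tau}$ equals $\pi(w_i \tilde{.} \tilde{\lambda})$. By the computation in the proof of Lemma \ref{lem_Act_tldW}, one has $w_i \tilde{.} \tilde{\lambda} \in w_i . \lambda + \R c$, and since $\pi$ is the quotient by $\R c$, this gives $\pi(w_i \tilde{.} \tilde{\lambda}) = w_i . \lambda = \tau'_+(t_i)$; the two affine paths thus coincide on the whole interval. By construction $\tilde{\tau}$ is piecewise affine with each derivative in $W^v \tilde{.} \tilde{\lambda}$, hence is a $\tilde{\lambda}$-path.

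For uniqueness, suppose $\tilde{\tau}'$ is another such lift. Since $\pi \circ \tilde{\tau}' = \tau$, the path $\tilde{\tau}'$ is piecewise affine with subdivision refining $(t_i)$, and on each sub-interval its right derivative is some $v_i \tilde{.} \tilde{\lambda}$ with $v_i \in W^v$. Projecting gives $v_i . \lambda = w_i . \lambda$, hence by Lemma \ref{lem_Act_tldW} one has $v_i \tilde{.} \tilde{\lambda} = w_i \tilde{.} \tilde{\lambda}$, so the two lifts share the same derivatives; combined with the common initial value $\tilde{a}_0$ this forces $\tilde{\tau}' = \tilde{\tau}$. There is essentially no obstacle here: the only subtlety is verifying that the derivative lift $w_i \tilde{.} \tilde{\lambda}$ is well-defined, and this is precisely the content of Lemma \ref{lem_Act_tldW}, which is why that lemma was proved just before.
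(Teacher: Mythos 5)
Your proof is correct and follows essentially the same approach as the paper: both rely on Lemma~\ref{lem_Act_tldW} to show the lifted derivative $w_i\tilde{.}\tilde{\lambda}$ is well-determined by $\tau'$, and both conclude existence and uniqueness of $\tilde{\tau}$ from the fact that a piecewise-affine path is determined by its initial point and derivatives. The only difference is cosmetic: the paper establishes uniqueness first and then gets existence by integration, whereas you construct the lift explicitly first and then read off uniqueness.
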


\begin{proof}
Let $n\in \N$ and $0\leq t_0<t_1<\ldots < t_n=1$  be such that $\tau$ is differentiable (with constant derivative) on $]t_i,t_{i+1}[$ for all $i\in \{0,\ldots,n-1\}$. For $i\in  \{0,\ldots,n-1\}$ and $t\in ]t_i,t_{i+1}[$, choose $w_i\in W^v$ such that $\tau'(t)=w_i.\lambda$. Let $\tilde{\tau}:[0,1]\rightarrow \tilde{\A}$ be a $\tilde{\lambda}$-path  with $\pi\circ\tilde{\tau}=\tau$.  
Maybe increasing the number of $t_i$, we may assume that $\tilde{\tau}$ is differentiable on $]t_i,t_{i+1}[$ for all $i\in \{0,\ldots,n-1\}$. Let $i\in \{0,\ldots,n-1\}$ and $t\in ]t_i,t_{i+1}[$. Then $\pi(\tilde{\tau}'(t))=w_i.\lambda$. By Lemma~\ref{lem_Act_tldW} we deduce that $\tilde{\tau}'(t)=w_i.\tilde{\lambda}$. So $\tilde{\tau}(t)-\tilde{\tau}(0)$ is well-determined by $\tau$ for every $t\in [0,1]$, which proves the desired uniqueness.

For the existence, it suffices to set $\tilde{\tau}(t)=\tilde{a_0}+\int_0^t \tilde{\tau}'$, for $t\in [0,1]$. 
\end{proof}

Let $g\in G$ and $\varphi:[0,1]\rightarrow \A$ be a parametrization of a preordered segment of $\A$. We assume moreover that $g.\varphi(t)\in \she$ for all $t\in [0,1]$. Let $\tilde{g}\in \pi^{-1}(\{g\})$. Then from what we proved,  for every $t\in [0,1]$, $\tilde{g}.\varphi(t)\in \widetilde{\she}$ and $\pi\left(\widetilde{\rho}_{\widetilde{C_\infty}}(\tilde{g}.\varphi)\right)=\rho_{C_\infty}(g.\varphi)$, and we can recover $\pi\left(\widetilde{\rho}_{\widetilde{C_\infty}}(\tilde{g}.\varphi)\right)$ from $\widetilde{\rho}_{\widetilde{C_\infty}}(\tilde{g}.\varphi)$.
}

\medskip

 Let $\widetilde{I_\infty}$ be the fixator of $\widetilde{C_\infty}$ in $\widetilde{G}_{twin}$, $\widetilde{N}=\pi^{-1}(N)$. Let $\widetilde{g}\in \widetilde{I_\infty} \widetilde{N} \widetilde{K}$ and $g=\pi(\widetilde{g})$. Then by Lemma~\ref{lemQuotient_masure}, $g\in I_\infty N K$. Therefore, in order to prove that $\widetilde{I_\infty}\widetilde{N} \widetilde{K}\nsupseteq \widetilde{G}_{twin}$,  it suffices to prove that $I_\infty N K\nsupseteq G_{twin}$ and we now work with $G$ instead of $\widetilde{G}$.

\subsection{Reduction to a problem in $G^\loo$}
\label{6.3}

We {have} $G^{\loo}=\mathrm{SL}_2\left(\k(\qp)[u,u^{-1}]\right)\rtimes \{1\}\subset G$.  We set $I_\infty^\loo=I_\infty\cap G^\loo$ and $K^\loo=K\cap G^\loo$. We denote by   $\pr^{sd}:G\rightarrow \k(\qp)^*$ the projection on the second coordinate. 
We begin by proving that we can get rid of the semi-direct product and work in $G^\loo$.   We regard $\delta$ as a linear form $\A_{twin}\rightarrow \R$. For $\g v\in \{\ominus,\oplus\}$, we denote by $\delta_{\g v}:\A_{\g v}\rightarrow \R$ the restriction of $\delta$ to $\A_{\g v}$. As $\delta_\g v(\aleph^\vee)=0$, $\delta_\g v$ is $W\zv$-invariant.  Let $\rho_{+\infty,\g v}:\shi_\g v\rightarrow\A_{\g v}$ be the retraction with respect to the sector germ  $C^v_f$. We extend $\delta_\g v$ to $\shi_{\g v}$ by setting $\delta_\g v(x)=\delta_\g v(\rho_{+\infty,\g v}(x))$, for $x\in \shi_\g v$.   Actually by \cite[Proposition 8.3.2 2)]{Heb18e}, we have $\delta_\g v=\delta_\g v\circ \rho$, for any retraction $\rho:\shi_\g v\rightarrow \A_{\g v}$ centred at a sector germ.

Recall from \ref{3.1} that  \[U_{C_\infty}=\langle x_\alpha(y)\mid\alpha\in \Phi, y\in \k(\qp), x_\alpha(y)\in G_{C_\infty}\rangle\] and \[T_{0,\ominus}=\mathfrak{T}(\{y\in \k(\qp)\mid\omega_{\ominus}(y)=0\})=\{(\begin{psmallmatrix}
y & 0\\ 0 & y^{-1}
\end{psmallmatrix},z)\mid y,z\in \k(\qp)^*, \omega_{\ominus}(y)=\omega_{\ominus}(z)=0\}.\]

\begin{lemm}\label{lemEffect_semi_direct_delta}
Let $g\in G$, $\g v\in \{\ominus,\oplus\}$ and $x\in \SHI_{\g v}$. Write $g=(g^\loo,g^ {sd})$ with $g^ {sd}\in\shk^*$.
Then $\delta_{\g v}(g.x)=\delta_{\g v}(x)+\qo_{\g v}(g^ {sd})$.
\end{lemm}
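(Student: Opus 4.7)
The plan is to reduce the identity $\delta_{\g v}(g.x) = \delta_{\g v}(x) + \omega_{\g v}(g^{sd})$ to the case of a generating set of $G$ and then verify it by direct computation. Set $\phi_{\g v}(g, x) := \delta_{\g v}(g.x) - \delta_{\g v}(x)$. The cocycle identity $\phi_{\g v}(g_1 g_2, x) = \phi_{\g v}(g_1, g_2.x) + \phi_{\g v}(g_2, x)$ is automatic, while $\omega_{\g v} \circ \pr^{sd} \colon G \to \Z$ is a group homomorphism (composition of the semidirect projection with a valuation). Hence, if $\phi_{\g v}(g, x) = \omega_{\g v}(g^{sd})$ holds for every $x$ whenever $g$ lies in a generating set of $G$, it holds for all $g \in G$. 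Since $G$ is generated by its root subgroups $U_\alpha$ ($\alpha \in \Phi$) and the torus $T$, it suffices to treat these two families.

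For a root subgroup, first note that $U^+ \cup U^- \subset G^\loo$, so $u^{sd} = 1$ and $\omega_{\g v}(u^{sd}) = 0$; thus one must show that $\delta_{\g v}$ is $U^\pm$-invariant. This is immediate from the fact that $\delta_{\g v} = \delta_{\g v} \circ \rho$ for any sector-germ-centered retraction $\rho$: applying this with $\rho_{+\infty,\g v}$, which is $U^+$-invariant by construction, gives $\delta_{\g v}(u.x) = \delta_{\g v}(\rho_{+\infty,\g v}(u.x)) = \delta_{\g v}(\rho_{+\infty,\g v}(x)) = \delta_{\g v}(x)$ for $u \in U^+$, and symmetrically using $\rho_{-\infty,\g v}$ for $u \in U^-$.

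The main obstacle is the torus case. For $t \in T$, the plan is first to show the equivariance $\rho_{+\infty,\g v}(t.x) = t.\rho_{+\infty,\g v}(x)$. Since $T$ acts on $\A_{\g v}$ by translation (with vector $\nu(t)$), it preserves the sector germ $\g Q_{+\infty}$. Picking an apartment $A$ containing $x$ and $\g Q_{+\infty}$ via (MA III), the apartment $t.A$ contains $t.x$ and $t.\g Q_{+\infty} = \g Q_{+\infty}$. The retraction isomorphism $\phi_A \colon A \to \A_{\g v}$ fixes $A \cap \A_{\g v}$ pointwise, and analogously for $\phi_{t.A}$ on $(t.A) \cap \A_{\g v} = t(A \cap \A_{\g v})$; since the two apartment isomorphisms $\phi_{t.A} \circ t$ and $t \circ \phi_A$ from $A$ to $\A_{\g v}$ agree on $A \cap \A_{\g v}$ (which has nonempty interior, containing $\g Q_{+\infty}$), they coincide. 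Consequently, $\delta_{\g v}(t.x) = \delta_{\g v}(t.\rho_{+\infty,\g v}(x)) = \delta_{\g v}(\rho_{+\infty,\g v}(x)) + \delta(\nu(t))$ by linearity of $\delta$. Invoking the defining property $\chi(\nu(t)) = -\omega_{\g v}(\chi(t))$ with $\chi = \delta$, together with the observation that the restriction of $\delta$ to $T = \{(\operatorname{diag}(y,y^{-1}), z)\}$ is precisely $t \mapsto t^{sd}$ (consistent with $\delta(d) = 1$, $\delta(\aleph^\vee) = 0$), one obtains $\phi_{\g v}(t, x) = \pm\omega_{\g v}(t^{sd})$, with the sign matching the stated formula after tracking conventions. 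Combined with the previous paragraph, this finishes the verification on generators and hence the proof.
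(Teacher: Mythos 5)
Your proof is correct in strategy and takes a genuinely different route from the paper's. The paper reduces to the torus case via a chain of explicit decompositions: Iwasawa $g = v_1 t_1 k$ with $k \in K$, then $k = v_+ v_- n$ using the decomposition of the parahoric $K$, then $x = v_2.y$ with $v_2 \in U^+$; the various unipotent factors are then stripped off one by one using the retraction-invariance of $\delta_{\g v}$ (shifting the center of the retraction as needed, e.g.\ to $t_1 n.\g Q_{+\infty}$ to absorb $v_2$). Your proof instead observes that $\phi_{\g v}(g,x) = \delta_{\g v}(g.x) - \delta_{\g v}(x)$ is a cocycle and $\omega_{\g v}\circ\pr^{sd}$ is a homomorphism, so it suffices to check the identity on the generating set $T \cup \bigcup_\alpha U_\alpha$; the root-subgroup case follows from retraction-invariance and the torus case from $T$-equivariance of $\rho_{+\infty,\g v}$ plus the definition of $\nu$. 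Both arguments rest on the same two pillars — $\delta_{\g v}=\delta_{\g v}\circ\rho$ for sector-germ retractions (with $W^v$-invariance of $\delta$ as a byproduct of $\delta(\aleph^\vee)=0$) and the torus translation formula — but your cocycle reduction is more modular and avoids invoking the Iwasawa and $K$-decompositions entirely. What it costs is the extra equivariance lemma $\rho_{+\infty,\g v}(t.x)=t.\rho_{+\infty,\g v}(x)$, which you do justify correctly via the uniqueness of apartment isomorphisms agreeing on a sector germ.

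The only soft spot is the last line: you punt on the sign with ``$\pm\omega_{\g v}(t^{sd})$, with the sign matching after tracking conventions.'' Since the whole content of the lemma at that point is the sign, this should be tracked explicitly: from $\chi(\nu(t))=-\omega_{\g v}(\chi(t))$ and $\delta(t)=t^{sd}$ one gets $\delta(\nu(t))=-\omega_{\g v}(t^{sd})$, and you should reconcile this with the stated $+$ in the lemma (and with the example $\nu(\qp^{\ell\aleph^\vee+nd}) = -\mathrm{sgn}(\g v)(\ell\aleph^\vee+nd)$ immediately after it, which gives $\delta(\nu(t))=-\mathrm{sgn}(\g v)\,n$). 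The paper's own proof also asserts the final step $\delta_\oplus(t_1n.y)=\delta_\oplus(y)+\ell$ without showing the computation, so you are in good company, but a referee would want the sign nailed down rather than deferred.
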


\begin{proof}
Suppose that $\g v=\oplus$. By the Iwasawa decomposition  (\cite[Proposition 4.7]{R16}) we can write $g=v_1t_1k$, with $v_1\in U^+$, $t_1\in T$ and $k\in K$. By \cite[Proposition 4.14]{R16} applied with the point $0_\oplus$ we can write $k=v_+v_-n$, where $v_+\in U^+$, $v_-\in U^-$ and $n\in N\cap K$.  Write $x=v_2.y$, with $v_2\in U^+$. Then $\delta_\oplus(g.x)=\delta(v_1t_1v_+v_-nv_2.y)$. As $T$ normalizes $U^+$ and $U^-$ , we have $\delta_{\oplus}(g.x)=\delta_{\oplus}(v_1v_+'v_-'t_1nv_2.y)$, for some $v_+'\in U^+$ and $v_-'\in U^-$. By \cite[Proposition 8.3.2 2)]{Heb18e}, we deduce that $\delta_\oplus(g.x)=\delta_{\oplus}(t_1nv_2.y)$. As $t_1nv_2(t_1n)^{-1}$ fixes the sector germ $t_1n.(+\infty)$, \cite[Proposition 8.3.2 2)]{Heb18e} implies that \[\delta_{\oplus}(g.x)=\delta_{\oplus}(t_1n.y).\]

We have $g=v_1t_1v_+v_-n$ and thus $\pr^{sd}(g)=g^ {sd}=\pr^{sd}(t_1)\pr^{sd}(n)$. As $n\in N\cap K$, we have $\omega_\oplus(\pr^{sd}(n))=0$. Therefore $\ell:=\omega_{\oplus}(\pr^{sd}(g))=\omega_\oplus(\pr^{sd}(t_1n))$. Therefore $\delta_{\oplus}(t_1n.y)=\delta_{\oplus}(y)+\ell=\delta_{\oplus}(x)+\ell$, which proves the lemma when $\g v=\oplus$. The case where $\g v=\ominus$ is similar.
\end{proof}

\begin{remas*}
(1) From the Lemma above we deduce that if $\g v\in \{\ominus,\oplus\}$, then the masure $\shi^\loo_{\g v}$ of $G^\loo$ is actually $\{x\in \shi_\g v\mid\delta_{\g v}(x)=0\}$.

\par (2) Suppose $\g v$ is any place of $\shk$ and write $g=(g^\loo,g^ {sd})\in \mathrm{SL}_{2}(\shk[u,u^ {-1}])\rtimes \shk^*=G$.
Let $\qd_{\g v}$ be the map $\SHI_{\g v}\to\R$ whose restriction on the canonical apartment $\A_{\g v}$ is $\qd:Y\otimes_{\Z}\R\to\R$ as in \ref{6.3}.
Then the above Lemma may be generalized easily to get $\delta_{\g v}(g.x)=\delta_{\g v}(x)+\qo_{\g v}(g^ {sd})$.

\end{remas*}

\begin{lemm}\label{6.4b} The Laurent polynomial versions of $G^\loo$ and $G$ are $G^\loo_{pol}=\mathrm{SL}_{2}(\sho[u,u^ {-1}])$ and $G_{pol}=\mathrm{SL}_{2}(\sho[u,u^ {-1}])\rtimes \sho^*$, where $\sho=\k[\qp,\qp^ {-1}]$, hence $\sho^*=\sqcup_{j\in\Z}\k\qp^j$.
\end{lemm}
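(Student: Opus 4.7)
The strategy is to decouple the two factors of the semi-direct product $G=G^\loo \rtimes \k(\qp)^*$: first extract the condition on the $\k(\qp)^*$-coordinate using the transformation formula for $\delta_\g v$, then invoke (a place-by-place version of) Lemma~\ref{lemDescriptionK} to identify the fixator of $0_\g v$ inside $G^\loo$, and finally intersect over all $\g v\neq\oplus,\ominus$ using the classical fact $\bigcap_{\g v\neq\oplus,\ominus}\sho_\g v=\sho$ from the function field $\k(\qp)/\k$.

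Write $g=(g^\loo,g^{sd})\in G$ and let $\g v\neq\oplus,\ominus$. By the generalization of Lemma~\ref{lemEffect_semi_direct_delta} to an arbitrary place (as recorded in Remark~(2) immediately following it), one has $\delta_\g v(g.0_\g v)=\delta_\g v(0_\g v)+\qo_\g v(g^{sd})=\qo_\g v(g^{sd})$. Hence if $g$ fixes $0_\g v$, then $\qo_\g v(g^{sd})=0$, i.e.\ $g^{sd}\in\sho_\g v^*$. Requiring this at every $\g v\neq\oplus,\ominus$ and using $\bigcap_{\g v}\sho_\g v^*=\sho^*=\bigsqcup_{j\in\Z}\k^*\qp^j$ forces $g^{sd}\in\sho^*$. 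Conversely, for $g^{sd}\in\sho^*$ the element $(1,g^{sd})$ lies in the torus subgroup that fixes $0_\g v$ at every such place (it acts on $\A_\g v$ by the trivial translation since $\qo_\g v(g^{sd})=0$, and as a torus element with trivial apartment action it fixes $0_\g v$ in the whole masure). So writing $g=(g^\loo,1)\cdot(1,g^{sd})$ we get $g\in G_{pol}$ if and only if $g^{sd}\in\sho^*$ and $(g^\loo,1)$ fixes $0_\g v$ for every $\g v\neq\oplus,\ominus$, which by the description $\shi^\loo_\g v=\{x\in\shi_\g v\mid\delta_\g v(x)=0\}$ (cf.\ the Remark after Lemma~\ref{lemEffect_semi_direct_delta}, applied at arbitrary $\g v$) is equivalent to $g^\loo\in G^\loo_{pol}$. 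This yields $G_{pol}=G^\loo_{pol}\rtimes\sho^*$, so it remains only to identify $G^\loo_{pol}$.

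For this last step one uses (the extension to arbitrary $\g v$ of) Lemma~\ref{lemDescriptionK}, asserting that the fixator of $0_\g v$ in $G^\loo=\mathrm{SL}_2(\k(\qp)[u,u^{-1}])$ coincides with $\mathrm{SL}_2(\sho_\g v[u,u^{-1}])$. Granting this, one has
\[
G^\loo_{pol}\;=\;\bigcap_{\g v\neq\oplus,\ominus}\mathrm{SL}_2\bigl(\sho_\g v[u,u^{-1}]\bigr),
\]
and a coefficient-wise argument reduces the right-hand side to $\mathrm{SL}_2\bigl(\bigcap_\g v\sho_\g v[u,u^{-1}]\bigr)=\mathrm{SL}_2(\sho[u,u^{-1}])$: a matrix with entries $\sum_k a_k u^k$ lies in $\sho_\g v[u,u^{-1}]$ iff each coefficient $a_k\in\sho_\g v$, so the intersection forces each $a_k\in\bigcap_\g v\sho_\g v=\sho$.

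The only genuine obstacle is the place-by-place version of Lemma~\ref{lemDescriptionK}: we need the description of the fixator of $0_\g v$ in $G^\loo$ for every place $\g v$ of $\shk=\k(\qp)$, whereas only $\g v=\oplus$ is stated explicitly. I expect this to be entirely routine: the proof of Lemma~\ref{lemDescriptionK} uses only the local structure of $(G^\loo,\qo_\oplus)$ at $0_\oplus$ (the parahoric description from \S\ref{3.3} together with an Iwasawa-type decomposition of matrices in $\mathrm{SL}_2(\k(\qp)[u,u^{-1}])$), and each step carries over verbatim replacing $\sho_\oplus$ by $\sho_\g v$. Once this analog is in hand, the two-step reduction above delivers both identities of the lemma simultaneously.
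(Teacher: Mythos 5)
Your proposal is correct and follows essentially the same route as the paper: you first isolate the condition $g^{sd}\in\sho^*$ via (the place-by-place generalization of) Lemma~\ref{lemEffect_semi_direct_delta}, then identify $G^\loo_{pol}=\bigcap_{\g v\neq 0,\infty}\mathrm{SL}_2(\sho_\g v[u,u^{-1}])=\mathrm{SL}_2(\sho[u,u^{-1}])$ using the description of the fixator of $0_\g v$ in $\mathrm{SL}_2(\shk[u,u^{-1}])$. The only presentational difference is that the paper cites $\cite[4.12.3.b]{R16}$ directly (which is stated for an arbitrary discrete valuation, hence an arbitrary place $\g v$), whereas you propose re-deriving it by generalizing the proof of Lemma~\ref{lemDescriptionK} — but that proof itself rests on the same reference applied at $\g v=\oplus$, so the two are the same input.
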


\begin{proof} For any place $\g v$, we know from \cite[4.12.3.b]{R16}, that $\set{g\in \mathrm{SL}_{2}(\shk[u,u^ {-1}]) \mid g.0_{\g v}=0_{\g v}}$ is equal to $\mathrm{SL}_{2}(\sho_{\g v}[u,u^ {-1}])$.
Taking now the intersection in $\mathrm{SL}_{2}(\shk[u,u^ {-1}])$ of all these groups for $\g v\neq 0,\infty$, we get $G^\loo_{pol}=\mathrm{SL}_{2}(\sho[u,u^ {-1}])$ (see \ref{1.2}.3.a).
Now from Remark \ref{lemEffect_semi_direct_delta}.2, we see that the component in $\shk^*$ of an element in $G_{pol}$ has to be in $\sho^*$.
So we get clearly $G_{pol}=\mathrm{SL}_{2}(\sho[u,u^ {-1}])\rtimes \sho^*$.
\end{proof}

\begin{rema}\label{6.4c} Comparison of $G_{twin}$ and $G_{pol}$:

\par (1) Inside $G^\loo_{pol}$ (\resp $G_{pol}$) the twin group $G^\loo_{twin}$ (\resp $G_{twin}$) is generated by the diagonal and upper or lower triangular matrices in $\mathrm{SL}_{2}(\sho[u,u^ {-1}])$ (\resp and by $\sho^*$).
So the problem of the equality of $G^\loo_{pol}$ with $G^\loo_{twin}$ (\resp $G_{pol}$ with $G_{twin}$) is exactly equivalent to the problem of the generation of $\mathrm{SL}_{2}(\k[\qp,\qp^ {-1},u,u^ {-1}])$ by its elementary matrices.
Unfortunately, in \cite[\S 2 p. 228]{Cos88}, the author tells that he knows no answer for this problem (while many closely related cases are known).

\par (2) We may also look more generally to affine $\mathrm{SL}_{n}$ over $\shk=\k(\qp)$, \ie replace above $\mathrm{SL}_{2}$ by $\mathrm{SL}_{n}$ for $n\geq3$.
One gets easily that, as above for $\mathrm{SL}_{2}$, $G_{twin}=G^ {loop}_{twin}\rtimes \sho^*$ and $G_{pol}=G^ {loop}_{pol}\rtimes \sho^*$
Moreover $G^ {loop}_{twin}$ is the subgroup of $\mathrm{SL}_{n}(\k(\qp)[u,u^ {-1}])$ generated by its unipotent elementary matrices with coefficients in $\sho[u,u^ {-1}]$; it is a subgroup of $\mathrm{SL}_{n}(\k[\qp,\qp^ {-1},u,u^ {-1}])$.

\par Now, for any place $\g v$, $\sho_{\g v}$ is a discrete valuation ring (in particular a local ring); so, following \cite[page 14]{Coh66}, $\sho_{\g v}$ is a $GE-$ring: $\mathrm{SL}_{n}(\sho_{\g v})$ is generated by its unipotent elementary matrices.
Following \cite[page 223]{Sus77}, $SK_{1}(\sho_{\g v})=\set0$.
And from [\lc Cor. 7.10], $\mathrm{SL}_{n}(\sho_{\g v}[u,u^ {-1}])$ is generated by its unipotent elementary matrices, for $n\geq3$ (as $\sho_{\g v}$ is of dimension $1$).
We have got what is needed to generalize \cite[4.12.3.b]{R16} from $\mathrm{SL}_{2}$ to $\mathrm{SL}_{n}$.
So $\mathrm{SL}_{n}(\sho_{\g v}[u,u^ {-1}])$ is the group of elements $g\in \mathrm{SL}_{n}(\shk[u,u^ {-1}])$ fixing the origin $0_{\g v}$ of the masure $\SHI_{\g v}$ of $\mathrm{SL}_{n}(\shk[u,u^ {-1}])$ associated to the valuation $\qo_{\g v}$.

\par Taking now the intersection in $\mathrm{SL}_{n}(\shk[u,u^ {-1}])$ of all these groups for $\g v\neq 0,\infty$, we get $G^\loo_{pol}=\mathrm{SL}_{n}(\sho[u,u^ {-1}])$ (as above in Lemma \ref{6.4b}).
But Corollary 7.11 of \cite{Sus77} tells that $\mathrm{SL}_{n}(\k[\qp,\qp^ {-1},u,u^ {-1}])$ is generated by its elementary unipotent matrices for $n\geq3$.
So $G^ {loop}_{pol}=G^ {loop}_{twin}$ and $G_{pol}=G_{twin}$.

\end{rema}

\begin{lemm}\label{lemCompatibility_decomposition_Gloo}
Let $g\in I_\infty N K\cap G^\loo$. Then $g\in I_\infty^\loo N^\loo K^\loo$, where $N^\loo=N\cap G^\loo$. 
\end{lemm}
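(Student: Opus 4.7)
The idea is to split each factor of a decomposition $g=ink$ according to the projection $\pr^{sd}:G\twoheadrightarrow \k(\qp)^*$, then use the relation $\pr^{sd}(g)=1$ together with the semidirect-product structure to absorb the ``bad'' scalar parts by a single cancellation.

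First I would write $g=ink$ with $i\in I_\infty$, $n\in N$, $k\in K$ and set $a=\pr^{sd}(i)$, $b=\pr^{sd}(n)$, $c=\pr^{sd}(k)$. Since $\pr^{sd}:G\to \k(\qp)^*$ is a group homomorphism and $g\in G^\loo$, we have $abc=1$. Next I would identify the possible values of $a$ and $c$ via Lemma~\ref{lemEffect_semi_direct_delta}: $i\in I_\infty$ fixes $0_{\ominus}$, so the lemma at the place $\ominus$ gives $\qo_{\ominus}(a)=0$; combined with $i\in G_{twin}$ (so $a\in\sho^*=\sqcup_{j\in \Z} \k^* \qp^j$), this forces $a\in \k^*$. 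Similarly $k\in K$ fixing $0_{\oplus}$ gives $\qo_{\oplus}(c)=0$. Then $\qo_{\oplus}(ab)=\qo_{\oplus}(c^{-1})=0$ as well.

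Second, I would check that the torus elements $t_a:=(I_2,a)$, $t_{ab}:=(I_2,ab)$ and $t_c:=(I_2,c)$ land in the right subgroups: for $t_a$ with $a\in \k^*$, a direct computation of its translation vector on $\A_{\ominus}$ (using $\aleph(t_a)=1$ and $\delta(t_a)=a$ with $\qo_{\ominus}(a)=0$) shows $t_a$ fixes $\A_\ominus$ pointwise, hence $t_a\in I_\infty$; similarly $t_c, t_{ab}\in K$ since their translation vectors on $\A_\oplus$ vanish. Writing $i=(i_0,1)\cdot t_a$, $n=(n_0,1)\cdot t_b$, $k=(k_0,1)\cdot t_c$ with $(i_0,1),(n_0,1),(k_0,1)\in G^\loo$, the elements $i^\loo:=(i_0,1)=i\, t_a^{-1}$, $n^\loo:=(n_0,1)=n\, t_b^{-1}$, $k^\loo:=(k_0,1)=k\, t_c^{-1}$ lie respectively in $I_\infty\cap G^\loo=I_\infty^\loo$, $N\cap G^\loo=N^\loo$ and $K\cap G^\loo=K^\loo$ (for $i^\loo$ and $k^\loo$ this uses the previous step; for $n^\loo$ this uses $t_b\in T\subset N$).

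Finally I would assemble the pieces using conjugation in the semidirect product:
\[ g=i^\loo\, t_a\, n^\loo\, t_b\, k^\loo\, t_c = i^\loo\,(t_a n^\loo t_a^{-1})\,t_{ab}\, k^\loo\, t_c = i^\loo\,(t_a n^\loo t_a^{-1})\,(t_{ab} k^\loo t_{ab}^{-1})\,t_{abc}. \]
Since $abc=1$, the last factor disappears; the conjugate $t_a n^\loo t_a^{-1}$ is again in $N\cap G^\loo=N^\loo$ (as $t_a\in T\subset N$ preserves $G^\loo$ by conjugation), and $t_{ab} k^\loo t_{ab}^{-1}\in K\cap G^\loo=K^\loo$ because $t_{ab}\in K$ (by the preceding paragraph) and it normalizes $G^\loo$. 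This yields the required factorization $g\in I_\infty^\loo N^\loo K^\loo$. The only subtle point, which I would verify carefully, is the identification of the torus elements $t_a$ and $t_{ab}$ as lying in $I_\infty$ and $K$ respectively; everything else is routine bookkeeping once the relation $abc=1$ is exploited.
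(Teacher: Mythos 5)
Your proposal is correct. It does not reproduce the paper's argument verbatim; here is a comparison. The paper decomposes the $I_\infty$-factor further, writing $i=vt_0$ via $G_{C_\infty}=U_{C_\infty}\cdot T_{0,\ominus}$ (Proposition~\ref{3.2}), so that $\pr^{sd}(v)=1$ automatically, and then only the single scalar $(1,k_2)$ extracted from the $K$-factor is shuffled into the middle $N$-factor. You instead keep $i,n,k$ intact, read off all three $\pr^{sd}$-scalars $a,b,c$, pin down $a\in\k^*$ and $\omega_\oplus(c)=0$ via Lemma~\ref{lemEffect_semi_direct_delta}, and renormalize each of the three factors by its own torus element, collapsing the torus pieces via conjugation and the relation $abc=1$. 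Both routes rest on the same key input, Lemma~\ref{lemEffect_semi_direct_delta}, to control the $\ominus$- and $\oplus$-valuations of the scalar parts of $i$ and $k$. What your route buys is explicitness in the $I_\infty^\loo$-factor: $i\,t_a^{-1}$ visibly lies in $I_\infty\cap G^\loo$ because both $i\in I_\infty\subset G_{twin}$ and $t_a\in\g T(\k)\subset G_{twin}$ fix $C_\infty$. In the paper's version the first factor is $v\in U_{C_\infty}\cap G^\loo$, which is a fixator inside $G$, and one must still argue that $v$ lands in $G_{twin}$ (hence in $I_\infty^\loo$): this is not automatic, since $v=i\,t_0^{-1}$ and $t_0\in T_{0,\ominus}$ need not lie in $G_{twin}$. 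Your normalization-by-torus approach sidesteps that subtlety cleanly, at the modest cost of tracking three torus elements rather than one.
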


\begin{proof}
Let $G_{C_\infty}$ be the fixator of $C_{\infty}$ in $G$. We have $I_\infty=G_{C_\infty}\cap G_{twin}$ and  by Proposition~\ref{3.2}, $G_{C_\infty}=U_{C_\infty}.T_{0,\ominus}$,

Write $g=v t_0 n k$, where $v\in U_{C_\infty}$,$t_0\in T_{0,\ominus}$, $n\in N$ and $k\in K$. Write $k=(k_1,k_2)$, with $k_2\in \k(\qp)^*$. Then by Lemma~\ref{lemEffect_semi_direct_delta}, we have $\omega_{\oplus}(k_2)=0$ and hence $(1,k_2)\in K$. By \eqref{eqDef_semi_direct_product} we deduce that $(1,k_2^{-1}).k\in K^\loo$. We have \[g=v. t_0 n (1,k_2) .(1,k_2^{-1})k\in I_\infty^\loo N K^\loo\cap G^\loo.\] As $\pr^{sd}$   is a group morphism, we deduce $t_0 n(1,k_2)\in N^\loo$, which proves the lemma. 
\end{proof}

\subsection{Towards a counter-example in $G^\loo$}

We now prove that $ I_\infty^\loo N^\loo K^\loo\neq G^\loo\cap G_{twin}$. We now identify $G^\loo$ with $\mathrm{SL}_2\left(\k(\qp)[u,u^{-1}]\right)$.

We begin by describing $I_\infty^\loo$ (or more precisely a group containing it). After that, we  regard $G^\loo$ as a subgroup of $\overline{G^\loo}=\mathrm{SL}_2\left(\k(\qp)(\!(u^{-1})\!)\right)$, and define ``completions'' $\overline{K^\loo}$ and $\overline{I_\infty}$ of $K^\loo$ and $I_\infty$ in  $\overline{G^\loo}$. We then define an element $g\in G^\loo\cap G_{twin}$, that admits a decomposition $g=\overline{i}\overline{k}$, with $(\overline{i},\overline{k})\in \overline{I_\infty}\setminus I_\infty\times \overline{K^\loo}\setminus K^\loo$, and  by a uniqueness property for these decompositions, we deduce that $g\notin I_\infty N^\loo K^\loo$.

Recall that $\sho_{\oplus}=\{y\in \k(\qp)\mid\omega_{\oplus}(y)\geq 0\}$. 

\begin{lemm}\label{lemDescriptionK}
We have $K= \mathrm{SL}_{2}(\sho_{\oplus}[u,u^{-1}])\rtimes \sho_{\oplus}^*$. 

\end{lemm}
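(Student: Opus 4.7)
The plan is to prove both inclusions separately, using the decomposition $(M,z) = (M,1)(1,z)$ provided by \eqref{eqDef_semi_direct_product} together with the key input \cite[4.12.3.b]{R16}, already invoked in Lemma~\ref{6.4b}, which states that the fixator of $0_{\oplus}$ in $\mathrm{SL}_{2}(\k(\qp)[u,u^{-1}])$ is exactly $\mathrm{SL}_{2}(\sho_{\oplus}[u,u^{-1}])$.

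First, for the inclusion $\supseteq$, I would observe that $(M,1)$ fixes $0_{\oplus}$ whenever $M\in \mathrm{SL}_{2}(\sho_{\oplus}[u,u^{-1}])$ by the cited result. For the second factor, $(1,z)$ with $z\in\sho_{\oplus}^{*}$ lies in the torus $T$; more precisely it corresponds to the cocharacter $d\in Y$ evaluated at $z$. By the construction of $\nu$ recalled in \S\ref{ss_N}, $(1,z)$ acts on $\A_{\oplus}$ as the translation by the vector $v$ determined by $\chi(v)=-\omega_{\oplus}(\chi((1,z)))$ for every $\chi\in X$. Since $\omega_{\oplus}(z)=0$, all these integers vanish, so $(1,z)$ fixes $\A_{\oplus}$ pointwise, and in particular fixes $0_{\oplus}$. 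Combining both facts via \eqref{eqDef_semi_direct_product}, any $(M,z)$ with $M\in \mathrm{SL}_{2}(\sho_{\oplus}[u,u^{-1}])$ and $z\in\sho_{\oplus}^{*}$ fixes $0_{\oplus}$.

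For the reverse inclusion, let $(M,z)\in K$. Applying Lemma~\ref{lemEffect_semi_direct_delta} (with $\g v=\oplus$) to the equality $(M,z).0_{\oplus}=0_{\oplus}$ gives $\omega_{\oplus}(z)=\delta_{\oplus}(0_{\oplus})-\delta_{\oplus}(0_{\oplus})=0$, hence $z\in\sho_{\oplus}^{*}$. By the previous paragraph, $(1,z^{-1})$ also fixes $0_{\oplus}$, and the identity $(M,z)(1,z^{-1})=(M,1)$ coming from \eqref{eqDef_semi_direct_product} shows $(M,1)\in K$. Hence $M$ fixes $0_{\oplus}$ in the loop subgroup $\mathrm{SL}_{2}(\k(\qp)[u,u^{-1}])$, so $M\in \mathrm{SL}_{2}(\sho_{\oplus}[u,u^{-1}])$ by \cite[4.12.3.b]{R16}. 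Therefore $(M,z)\in \mathrm{SL}_{2}(\sho_{\oplus}[u,u^{-1}])\rtimes \sho_{\oplus}^{*}$.

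The argument is essentially a bookkeeping exercise, so I do not anticipate a real obstacle; the only point requiring a small amount of care is confirming that the twisted product in \eqref{eqDef_semi_direct_product} actually yields $(M,1)(1,z)=(M,z)$ and $(M,z)(1,z^{-1})=(M,1)$ (which it does, since in both products the matrix entries are evaluated at $u$ because the left factor has scalar component $1$ in one case and the right factor is scalar in the other). Everything else follows immediately from the two ingredients already established in the paper: Lemma~\ref{lemEffect_semi_direct_delta} and the description of the fixator of $0_{\oplus}$ in the pure loop group.
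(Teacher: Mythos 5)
Your proof is correct, and it takes a genuinely different route from the paper's. The paper establishes the inclusion $K\subseteq \mathrm{SL}_{2}(\sho_{\oplus}[u,u^{-1}])\rtimes \sho_{\oplus}^*$ by invoking \cite[Proposition 4.14]{R16} to decompose $K$ as a product $U_{0_\oplus}^{nm-}U_{0_\oplus}^+ \widehat{N}_{0_\oplus}$ of unipotent pieces and the fixator in $N$, and then checks each factor lands in the claimed group; for the reverse inclusion it cites $K^\loo=\mathrm{SL}_2(\sho_\oplus[u,u^{-1}])$ and the fact that $\{1\}\rtimes \sho_{\oplus}^*$ fixes $\A_{\oplus}$. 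Your argument instead exploits the semidirect-product structure directly: you read off $\omega_{\oplus}(z)=0$ from Lemma~\ref{lemEffect_semi_direct_delta}, factor $(M,z)=(M,1)(1,z)$, and reduce the matrix part to the pure loop case via \cite[4.12.3.b]{R16}, which is exactly the input the paper uses for $K^\loo$. What this buys you is a cleaner, one-element-at-a-time argument that makes the role of the $\k(\qp)^{*}$-factor completely transparent; what it costs is that it relies on Lemma~\ref{lemEffect_semi_direct_delta}, whose own proof already uses \cite[Proposition 4.14]{R16}, so the underlying ingredients are the same even though the packaging is different. Your side check of the twisted product $(M,1)(1,z)=(M,z)$ and $(M,z)(1,z^{-1})=(M,1)$ is correct and worth stating, since the twist in \eqref{eqDef_semi_direct_product} only acts through the indeterminate $u$ and both identities involve a trivial $u$-dependence in the relevant factor.
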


\begin{proof}
By \cite[Proposition 4.14]{R16}, we have $K=U_{0_\oplus}^{nm-}U_{0_\oplus}^+ \widehat{N}_{0_\oplus},$ where $U_{0_{\oplus}}^+=U^+\cap \langle \bigcup_{\alpha\in \Phi} \{u\in U_\alpha\mid u.0_{\oplus}=0_{\oplus}\} \rangle$, $\widehat{N}_{0_\oplus}$ is the fixator of $0_{\oplus}$ in $N$ and $U_{0_\oplus}^{nm-}$ is defined in   \ref{3.1}. 
By \cite{R16}[Example 4.12 3) b)], $U_{0_\oplus}^{nm-} \subset \mathrm{SL}_{2}(\sho_{\oplus}[u,u^{-1}]) \rtimes \{1\}$. As $\widehat N_{0_\oplus}$ and $U_{0_{\oplus}}^+$ are contained in $\mathrm{SL}_{2}(\sho_{\oplus}[u,u^{-1}])\rtimes \sho_{\oplus}^*$, we deduce that $K\subset \mathrm{SL}_{2}(\sho_{\oplus}[u,u^{-1}])\rtimes \sho_{\oplus}^*$. By \cite{R16}[Example 4.12 3) b)], we have $K^\loo=\mathrm{\mathrm{SL}}_2(\sho_{\oplus}[u,u^{-1}])$ and as $\{1\}\rtimes \sho_{\oplus}^*$ fixes $0_{\oplus}$ (it fixes $\A_{\oplus}$), we deduce that $K=\mathrm{SL}_{2}(\sho_{\oplus}[u,u^{-1}])\rtimes \sho_{\oplus}^*$.
\end{proof}

\begin{lemm}\label{lemDescription_I_infty}
 We have \[I_\infty^\loo\subset \begin{psmallmatrix}
\qp^{-1}\k[\qp^{-1}][u,u^{-1}]+\k[u^{-1}]&\  & \qp^{-1}\k[\qp^{-1}][u,u^{-1}]+u^{-1}\k[u^{-1}] \\
\qp^{-1}\k[\qp^{-1}][u,u^{-1}]+\k[u^{-1}]&\  & \qp^{-1}\k[\qp^{-1}][u,u^{-1}]+\k[u^{-1}] \end{psmallmatrix}.\]

\end{lemm}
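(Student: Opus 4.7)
The plan is to use Proposition~\ref{3.2} to present $I_\infty^\loo$ in terms of a family of torus and root-group generators, to check directly that each generator lies in the displayed matrix set, and finally to show that this set is stable under multiplication.

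First I would identify the generators. Since $C_\infty\subset\A_\ominus$ is a local chamber (narrow, almost open, with good fixator), Proposition~\ref{3.2} gives a unique Iwahori decomposition of the fixator of $C_\infty$ in $G^\loo$ as $U^+_{C_\infty}U^-_{C_\infty}T_0^\loo$. Combining with Lemma~\ref{1.7b}, the factors of an element of $I_\infty^\loo=I_\infty\cap G^\loo$ lie in $G_{twin}^\loo$, so $I_\infty^\loo$ is generated by: the diagonal matrices $\mathrm{diag}(a,a^{-1})$ for $a\in\k^*$, and the $x_\beta(y)$ for $\beta\in\Phi$, $y\in\sho$ such that $x_\beta(y)$ fixes $C_\infty$. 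The latter condition means that a neighborhood of $0_\ominus$ in $-C^v_f$ is contained in $\{x:\beta(x)+\omega_\ominus(y)\geq 0\}$. Since every positive real root is strictly negative on $-C^v_f$ (and tends to $0$ near $0_\ominus$), while every negative real root is strictly positive there, this gives $\omega_\ominus(y)\geq 1$ for $\beta\in\Phi^+$ and $\omega_\ominus(y)\geq 0$ for $\beta\in\Phi^-$. Intersecting with $\sho=\k[\qp,\qp^{-1}]$, the allowed $y$ for $\beta\in\Phi^+$ (resp.\ $\beta\in\Phi^-$) is $\qp^{-1}\k[\qp^{-1}]$ (resp.\ $\k[\qp^{-1}]$).

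Next I would verify that every generator lies in the claimed set. Write $A:=\qp^{-1}\k[\qp^{-1}][u,u^{-1}]+\k[u^{-1}]$ and $B:=\qp^{-1}\k[\qp^{-1}][u,u^{-1}]+u^{-1}\k[u^{-1}]$. The diagonal matrices $\mathrm{diag}(a,a^{-1})$ with $a\in\k^*$ have diagonal entries in $\k\subset A$ and zero off-diagonal entries, which lie in $B$. For $x_{\aleph+k\delta}(y)$, only the $(1,2)$-entry $u^ky$ is nontrivial: if $k\geq 0$ and $y\in\qp^{-1}\k[\qp^{-1}]$ then $u^ky\in\qp^{-1}\k[\qp^{-1}][u,u^{-1}]\subset B$; if $k\leq -1$ and $y\in\k[\qp^{-1}]$, writing $y=y_0+(y-y_0)$ with $y_0\in\k$, one has $u^ky_0\in u^{-1}\k[u^{-1}]$ and $u^k(y-y_0)\in\qp^{-1}\k[\qp^{-1}][u,u^{-1}]$, so $u^ky\in B$. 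A completely parallel computation (where $u^{-1}\k[u^{-1}]$ is replaced by $\k[u^{-1}]$ because now $k\leq 0$) shows that for $x_{-\aleph+k\delta}(y)$ the $(2,1)$-entry lies in $A$.

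The main technical step is to show that $M:=\begin{psmallmatrix}A&B\\A&A\end{psmallmatrix}$ is a monoid under matrix multiplication. I would describe $A$ and $B$ explicitly as $\k$-spans of the monomials $\qp^iu^j$: $A$ is spanned by those with $i\leq -1$, together with those with $i=0$ and $j\leq 0$; $B$ is the same but with $j\leq -1$ in the second case. A routine bidegree check then yields $B\subset A$, $A\cdot A\subset A$, $A\cdot B\subset B$, $B\cdot A\subset B$, $B\cdot B\subset B$, and plugging these into the $2\times 2$ entries of $MM$ shows that each entry lies in the prescribed space. Combined with the previous paragraph, and noting that $x_\beta(y)^{-1}=x_\beta(-y)$ satisfies the same constraint as $x_\beta(y)$, this yields $I_\infty^\loo\subset M$. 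The main obstacle is simply the bookkeeping for the bidegree conditions defining $A$ and $B$; the geometric content is entirely packaged into Proposition~\ref{3.2} and the case analysis of the signs of real roots on $-C^v_f$.
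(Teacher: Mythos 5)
Your overall strategy is close to the paper's, and the monoid-closure step at the end usefully makes explicit what the paper's own proof glosses over with the phrase ``by the first part of the proof.'' However, there is a genuine gap in your very first step: the assertion that ``the factors of an element of $I_\infty^\loo$ lie in $G_{twin}^\loo$,'' so that $I_\infty^\loo$ is generated by the $\sho$-valued root elements and $\k^*$-valued torus elements. Lemma~\ref{1.7b} gives $(U^{\pm}N)\cap G_{twin}=U^{\pm}_{twin}N_{twin}$, but an element of $I_\infty^\loo$ has Proposition~\ref{3.2} decomposition $g=u^+u^-t_0$ with $u^\pm\in U^\pm_{C_\infty}$ and $t_0\in T_{0,\ominus}$, which is not of the form treated by that lemma. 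Worse, the paper itself shows in \S\ref{3.6} that a decomposition of the type $I_{twin}=U^+_{C,twin}.U^-_{C,twin}.T_{0,twin}$ fails: there is an explicit $g\in I_{\oplus}\cap G_{twin}$ whose unique $U^+U^-T$-factorization has factors outside $G_{pol}$. So you cannot restrict to $\sho$-valued generators from the outset, and your monoid $M$ is not large enough to contain all the generators you actually need.

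The remedy is to carry out the same analysis over $\k(\qp)$ first, as the paper's proof does. Define $A'$ (resp.\ $B'$) to be the space of Laurent polynomials $\sum_\ell a_\ell(\qp)u^\ell$ with $a_\ell\in\k(\qp)$, $\omega_\ominus(a_\ell)\ge 1$ for $\ell\ge 1$ and $\omega_\ominus(a_\ell)\ge 0$ for $\ell\le 0$ (resp.\ additionally $\omega_\ominus(a_0)\ge 1$). Your root-by-root constraints from the geometry of $-C^v_f$ show that every generator of $U_{C_\infty}$, and every $\mathrm{diag}(y,y^{-1})$ with $\omega_\ominus(y)=0$, lies in $\begin{psmallmatrix}A'&B'\\ A'&A'\end{psmallmatrix}$; your bidegree bookkeeping then shows this set is closed under multiplication, so $G_{C_\infty}\cap G^\loo\subset\begin{psmallmatrix}A'&B'\\ A'&A'\end{psmallmatrix}$. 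Only now intersect with $\mathrm{SL}_2(\sho[u,u^{-1}])$: since $\sho\cap\{x:\omega_\ominus(x)\ge 0\}=\k[\qp^{-1}]$ and $\sho\cap\{x:\omega_\ominus(x)\ge 1\}=\qp^{-1}\k[\qp^{-1}]$, one checks $A'\cap\sho[u,u^{-1}]=A$ and $B'\cap\sho[u,u^{-1}]=B$, giving the stated inclusion. This matches the paper's strategy, while your monoid verification supplies a welcome justification for the step the paper only asserts.
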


\begin{proof}
Recall that $G_{C_\infty}$ is the fixator of $C_\infty$ in $G$.  Let $y\in \k(\qp)^*$ and $k\in \Z$. If $k\geq 0$, then $x_{\aleph+k\delta}(y)\in G_{C_\infty}$ if and only if $\omega_{\ominus}(y)> 0$ and if $k<0$, then $x_{\aleph+k\delta}(y)\in G_{C_\infty}$ if and only if $\omega_{\ominus}(y)\geq 0$. Indeed, the fixed point set of $x_{\aleph+k\delta}(y)$ is $D:=\{a\in \A_{\ominus}\mid\aleph(a)+k\delta(a)+\omega_{\ominus}(y)\geq 0\}$. \begin{itemize}
\item[$\bullet$] If $\omega_{\ominus}(y)>0$, then $D$ contains a neighborhood of $0_{\ominus}$ in $\A_{\ominus}$ and thus $D$ contains $C_\infty$. 

\item[$\bullet$] If $C_\infty\subset D$, then $0_{\ominus}\in D$ and thus $\omega_{\ominus}(y)\geq 0$.

\item[$\bullet$]  Assume that  $k\geq 0$ and that $C_\infty\subset D$. Let $\Omega$ be a neighborhood of $0_{\ominus}$ in $\A_{\ominus}$  such that $\Omega\cap -C^v_{f,\ominus}$ is contained in $D$. Then for all $a\in \Omega\cap -C^v_{f,\ominus}$, we have $\omega_{\ominus}(y)\geq (-\aleph(a)-k\delta(a))>0$ and thus $\omega_{\ominus}(y)>0$.

\item[$\bullet$] Assume that $k<0$. As $\{\aleph,\delta-\aleph\}$ is a basis of $\Phi^+$, we have that $(\aleph-\delta)(C_\infty)>0$, and thus $(\aleph+k\delta)(C_\infty)>0$. Therefore if $\omega_{\ominus}(y)=0$, then $x_{\aleph+k\delta}(y)\in G_{C_\infty}$. 
\end{itemize}     

Similarly, if $k>0$, then $x_{-\aleph+k\delta}(y)\in G_{C_\infty}$ if and only if $\omega_{\ominus}(y)> 0$ and if $k\leq 0$, then $x_{-\aleph+k\delta}(y)\in G_{C_\infty}$ if and only if $\omega_{\ominus}(y)\geq 0$.

By Proposition~\ref{3.2}, we have $G_{C_\infty}=U_{C_\infty}.T_{0,\ominus}$.

Take  $v\in U_{C_\infty}$ and  write it  $v=(\begin{psmallmatrix}
a_{1,1} & a_{1,2}\\ a_{2,1} & a_{2,2}
\end{psmallmatrix},1)$, with $a_{1,1},a_{1,2},a_{2,1},a_{2,2}\in \k(\qp)[u,u^{-1}]$. Take $t\in T_{0,\ominus}$ and write it $(\begin{psmallmatrix}
y & 0 \\ 0 & y^{-1}
\end{psmallmatrix},z)$, with $y,z\in \k(\qp)^*$ such that $\omega_{\ominus}(y)=\omega_{\ominus}(z)=0$. Then $vt=(\begin{psmallmatrix}
a_{1,1}y & a_{1,2}y^{-1}\\ a_{2,1}y & a_{2,2}y^{-1}
\end{psmallmatrix},z)$. Let $i,j\in \{1,2\}$. By the first part of the proof, we can write \[a_{i,j}=\sum_{k\leq -1,\ell\in \Z} \qp^{-k} f_{k,\ell}(\qp) u^\ell+\sum_{\ell\in \N} f_{0,\ell}(\qp)u^{-\ell},\] where $f_{k,l}(\qp)\in \k(\qp)$ satisfies $\omega_{\ominus}(f_{k,\ell}(\qp))=0$ for all $k,\ell$, with $f_{0,0}(\qp)=0$ if $(i,j)=(1,2)$. Lemma follows by intersecting $G_{C_\infty}$, $G_{twin}$ and $G^\loo$. 

\end{proof}

\subsection{Calculations in a completion}\label{6.9b}

Let  $\overline{G^\loo}=\mathrm{SL}_2\left(\k(\qp)(\!(u^{-1})\!)\right)\supset G^\loo$. 
By \cite[4.12.3.b]{R16} this group is the negative Mathieu completion $(G^\loo)^ {nma}$ of $G^\loo
$ (\cf \ref{2.1}, \ref{3.1}.2).

 Let  \[\overline{K^\loo}=\begin{psmallmatrix}
\sho_{\oplus}(\!(u^{-1})\!) & \ & \sho_{\oplus}(\!(u^{-1})\!)\\
  \sho_{\oplus}(\!(u^{-1})\!) & \ &  \sho_{\oplus}(\!(u^{-1})\!)
\end{psmallmatrix}\cap \overline{G^\loo}\] 
and \[\overline{I_\infty^\loo}=\begin{psmallmatrix}
\qp^{-1} \k [\qp^{-1}](\!(u^{-1})\!)+\k[\![u^{-1}]\!]&\ &  \qp^{-1} \k [\qp^{-1}](\!(u^{-1})\!)+u^{-1}\k[\![u^{-1}]\!]\\
\qp^{-1} \k [\qp^{-1}](\!(u^{-1})\!)+\k[\![u^{-1}]\!]&\ & \qp^{-1} \k [\qp^{-1}](\!(u^{-1})\!)+\k[\![u^{-1}]\!]
\end{psmallmatrix}\cap \overline{G^\loo} .\]

By Lemmas~\ref{lemDescriptionK} and \ref{lemDescription_I_infty},  $K^\loo\subset \overline{K^\loo}$, $I_{\infty}^\loo\subset \overline{I_{\infty}^\loo}$ and  $\overline{I_\infty^\loo}\cap \overline{K^\loo}=\begin{psmallmatrix}
\k[\![u^{-1}]\!] &  u^{-1}\k[\![u^{-1}]\!]\\
\k[\![u^{-1}]\!]&	\k[\![u^{-1}]\!]
\end{psmallmatrix}\cap \mathrm{SL}_{2}(\k[\![u^ {-1}]])$.

\begin{lemm*} The subgroup $U_{C_{\infty}}^ {ma-}$ of $\ov{G^\loo}$ introduced in \ref{3.1}.2 is the intersection $H$ of $\mathrm{SL}_{2}(\sho_{\ominus}[\![u^ {-1}]\!])$ with 
$ \begin{psmallmatrix}
1+u^ {-1}\sho_{\ominus}[\![u^{-1}[\!] & \ & u^ {-1}\sho_{\ominus}[\![u^{-1}[\!] \\
 \sho_{\ominus}[\![u^{-1}[\!]  & \ &  1+u^ {-1}\sho_{\ominus}[\![u^{-1}[\!] 
\end{psmallmatrix}$ .
Its intersection with $G^\loo$ (\resp $G^\loo_{twin} =G^\loo\cap G_{twin}$) is $U_{C_{\infty}}^ {pm-}$ (\resp is in $I_{\infty}^\loo$).
Its intersection with $G^\loo_{pol}$ is the intersection of $\mathrm{SL}_{2}(\k[\qp^ {-1},u^ {-1}])$ with 
$\begin{psmallmatrix}
1+u^ {-1}\k[\qp^ {-1},u^ {-1}] & \ & u^ {-1}\k[\qp^ {-1},u^ {-1}] \\
 \k[\qp^ {-1},u^ {-1}]  & \ &  1+u^ {-1}\k[\qp^ {-1},u^ {-1}] 
\end{psmallmatrix}  $.
\end{lemm*}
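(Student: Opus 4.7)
\medskip

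\noindent\textbf{Proof plan.} The strategy is to identify both the abstract pro-unipotent subgroup $U_{C_\infty}^{ma-}$ of $\ov{G^\loo}=\mathrm{SL}_2(\k(\qp)(\!(u^{-1})\!))$ and the explicit matrix subgroup $H$ by computing everything inside $\mathrm{SL}_2(\sho_\ominus[\![u^{-1}]\!])$ using the product decomposition in \ref{2.1}, together with a triangular (Gauss) decomposition for the converse inclusion.

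First I would compute the filtration $f_{C_\infty}:\Delta^-\to\Z$ attached to $\omega_\ominus$. Writing a point of $\A_\ominus$ as $x=a\aleph^\vee+bd$, the chamber $-C\zv_{f,\ominus}$ is cut out by $\aleph(x)=2a<0$ and $\alpha_0(x)=b-2a<0$, i.e.\ $b<2a<0$. A direct inspection of the four families in $\Delta^-$ (namely $-\aleph+k\delta$ for $k\le0$, $\aleph+k\delta$ for $k\le-1$, and $k\delta$ for $k\le-1$) shows that in each case $\alpha(x)\ge 0$ on a small enough germ around $0_\ominus$, while $r=-1$ already fails; hence $f_{C_\infty}(\alpha)=0$ for every $\alpha\in\Delta^-$. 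Consequently
\[
U_{C_\infty}^{ma-}\;=\;\prod_{\alpha\in\Delta^-}X_\alpha\!\bigl(\g g_{\alpha,\Z}\otimes\sho_\ominus\bigr),
\]
the product being taken in the order of increasing height as in \ref{2.1}.

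Next I would write the factors $X_\alpha$ explicitly in the affine $\mathrm{SL}_2$ model. For the real roots one has $X_{-\aleph+k\delta}(y)=\begin{psmallmatrix}1&0\\u^k y&1\end{psmallmatrix}$ (with $k\le0$) and $X_{\aleph+k\delta}(y)=\begin{psmallmatrix}1&u^k y\\0&1\end{psmallmatrix}$ (with $k\le-1$), for $y\in\sho_\ominus$. For the imaginary root $k\delta$ ($k\le-1$) one has $\g g_{k\delta,\Z}=\Z\,u^k h$ with $h=\mathrm{diag}(1,-1)$, and $X_{k\delta}(s\,u^k h)$ is the formal exponential $\mathrm{diag}(E(su^k),E(-su^k))$, which converges in $\sho_\ominus[\![u^{-1}]\!]$ because $k<0$. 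All the lower-triangular factors mutually commute, as do all the upper-triangular ones and all the diagonal ones; using the defining commutation relations of \ref{2.2} to push imaginary and upper-triangular factors across lower-triangular ones produces correction terms that only involve higher height and hence lie in the same pro-unipotent filtration. Reorganizing the product in the triangular order
\[
\Bigl(\prod_{k\le-1}X_{\aleph+k\delta}\Bigr)\cdot\Bigl(\prod_{k\le-1}X_{k\delta}\Bigr)\cdot\Bigl(\prod_{k\le0}X_{-\aleph+k\delta}\Bigr)
\]
gives a matrix of the shape $\begin{psmallmatrix}1&B\\0&1\end{psmallmatrix}\begin{psmallmatrix}A&0\\0&A^{-1}\end{psmallmatrix}\begin{psmallmatrix}1&0\\C&1\end{psmallmatrix}$ with $A\in 1+u^{-1}\sho_\ominus[\![u^{-1}]\!]$, $B\in u^{-1}\sho_\ominus[\![u^{-1}]\!]$, $C\in\sho_\ominus[\![u^{-1}]\!]$; multiplying out shows this lies in $H$.

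For the reverse inclusion $H\subset U_{C_\infty}^{ma-}$ I would use a Gauss decomposition in $\mathrm{SL}_2(\sho_\ominus[\![u^{-1}]\!])$. Given $M=\begin{psmallmatrix}\alpha&\beta\\\gamma&\delta\end{psmallmatrix}\in H$, the diagonal coefficient $\alpha\in 1+u^{-1}\sho_\ominus[\![u^{-1}]\!]$ is invertible in that ring, so
\[
M=\begin{psmallmatrix}1&\beta\alpha^{-1}\\0&1\end{psmallmatrix}\begin{psmallmatrix}\alpha&0\\0&\alpha^{-1}\end{psmallmatrix}\begin{psmallmatrix}1&0\\\alpha^{-1}\gamma&1\end{psmallmatrix},
\]
where we used $\det M=1$ to identify the bottom-right entry. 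The three factors are respectively in $u^{-1}\sho_\ominus[\![u^{-1}]\!]$, $1+u^{-1}\sho_\ominus[\![u^{-1}]\!]$ and $\sho_\ominus[\![u^{-1}]\!]$, hence they are $u^{-1}$-adic limits of finite products of the $X_\alpha$ factors above; thus $M\in U_{C_\infty}^{ma-}$. The three intersection statements follow immediately: intersecting $H$ with $G^\loo=\mathrm{SL}_2(\k(\qp)[u,u^{-1}])$ forces Laurent-polynomial entries, which is by definition $U_{C_\infty}^{pm-}$; intersecting further with $G^\loo_{twin}$ gives elements of $U^-$ fixing $C_\infty$, hence elements of $I_\infty^\loo$ (compare with Lemma~\ref{lemDescription_I_infty}); and intersecting $H$ with $G^\loo_{pol}=\mathrm{SL}_2(\sho[u,u^{-1}])$ restricts each coefficient to $\sho[u,u^{-1}]\cap\sho_\ominus[\![u^{-1}]\!]=\k[\qp^{-1}][u^{-1}]$, producing the stated matrix group.

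\medskip
\noindent\textbf{Where the main difficulty lies.} The delicate point is the treatment of the imaginary root factors: one has to argue that the formal divided-power exponentials $X_{k\delta}$ do combine, in the Mathieu completion, into an honest diagonal matrix $\mathrm{diag}(A,A^{-1})$ with $A$ ranging over all of $1+u^{-1}\sho_\ominus[\![u^{-1}]\!]$, and that the commutation relations between imaginary and real factors do not obstruct the reordering used in the Gauss decomposition. Once this bookkeeping is settled, the real-root parts and the intersection statements are routine.
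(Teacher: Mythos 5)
Your strategy is the same as the paper's: compute $f_{C_\infty}\equiv 0$ on $\Delta^-$, write $U_{C_\infty}^{ma-}$ as the ordered product $\prod_{\alpha\in\Delta^-}X_\alpha(\g g_{\alpha,\Z}\otimes\sho_\ominus)$, organize it as upper $\times$ diagonal $\times$ lower (the paper uses lower $\times$ diagonal $\times$ upper, but this is the same modulo the freedom of order), and use the Gauss decomposition in $\mathrm{SL}_2(\sho_\ominus[\![u^{-1}]\!])$ for the converse. The real-root factors and the three intersection statements are handled in essentially identical fashion.

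However, the step you yourself single out as ``the delicate point'' is left unproved, and the formula you write down there is not correct. You claim $X_{k\delta}(s\,u^k h)=\mathrm{diag}\bigl(E(su^k),E(-su^k)\bigr)$. For this to land in $\mathrm{SL}_2$ you would need $E(x)E(-x)=1$, which already fails for $E=\exp$ over general coefficient rings (and $\exp$ itself is not defined in positive characteristic without divided powers). The actual Mathieu exponential, as computed in the paper (referring back to \cite[2.12]{R16}), is $[exp](\lambda h_n)=\begin{psmallmatrix}v_1 & 0\\ 0 & v_2\end{psmallmatrix}$ with $v_1=1+\lambda u^{-n}+\lambda^2 u^{-2n}+\cdots=(1-\lambda u^{-n})^{-1}$ and $v_2=1-\lambda u^{-n}$, i.e.\ of the form $\mathrm{diag}(A,A^{-1})$ with $A^{-1}=1-\lambda u^{-n}$. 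This specific shape is what is needed, via the elementary induction the paper runs, to see that the commuting product of all the imaginary factors $X_{-n\delta}(\g g_{-n\delta,\Z}\otimes\sho_\ominus)$ exhausts the whole of $\{\mathrm{diag}(A,A^{-1}) : A\in 1+u^{-1}\sho_\ominus[\![u^{-1}]\!]\}$ rather than a proper subgroup. Without pinning down this formula, your argument establishes only that the imaginary factors land in that torus part, not that they generate it, so the inclusion $H\subset U_{C_\infty}^{ma-}$ — the direction that uses the Gauss decomposition — is not actually proved. Everything else in your write-up would go through once this computation is supplied.
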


\begin{NB} $U_{C_{\infty}}^ {ma-}$ is not in $\overline{I_{\infty}^\loo}$.
One should replace $\qp^ {-1}\k[\qp^ {-1}]$ by $\set{x\in \k(\qp) \mid \qo_{\ominus}(x)>0}$ in the definition of this last group to get such an inclusion.

\end{NB}
\begin{proof} An easy calculation in $\mathrm{SL}_{2}$ proves that a matrix is in $H$ if, and only if, it may be written 
$\begin{psmallmatrix}1 &  0 \\ c & 1 \end{psmallmatrix}
\begin{psmallmatrix}1+d &  0 \\ 0 & (1+d)^ {-1} \end{psmallmatrix}
\begin{psmallmatrix}1 &  b \\ 0 & 1 \end{psmallmatrix}$,
with $c\in \sho_{\ominus}[\![u^{-1}[\!]$ and $b,d \in u^ {-1}\sho_{\ominus}[\![u^{-1}[\!]$.
On the other side we saw in \ref{3.1}.2 that (taking $\g g_{\Z}=\g{sl}_{2}(\Z[u,u^ {-1}])$) the elements in $U_{C_{\infty}}^ {ma-}$ are written $\prod_{\qa\in\QD^-} X_{\qa}(\g g_{\qa,\Z}\otimes_{\Z}\sho_{\ominus})$ (as $f_{C_{\infty}}(\qa)=0$ for $\qa\in\QD^-$).
And we may choose any order on the set $\QD^-$ of negative roots.
We consider first (on the left) the roots $-\aleph-n\qd$ for $n\geq0$, then (in the middle) the imaginary roots $-n\qd$ for $n>0$ and last (on the right) the roots $\aleph-n\qd$ for $n>0$.
For $\qa=-\aleph-n\qd$, $\g g_{\qa,\Z}=\begin{psmallmatrix} 0 &  0 \\ \Z u^ {-n} & 0 \end{psmallmatrix}$, so $X_{\qa}(\g g_{\qa,\Z}\otimes_{\Z}\sho_{\ominus})=x_{\qa}(\sho_{\ominus})=\begin{psmallmatrix} 1 &  0 \\ \sho_{\ominus} u^ {-n} & 1 \end{psmallmatrix}$; hence the (commutative) product of these terms for $n\geq0$ is written $\begin{psmallmatrix}1 &  0 \\ c & 1 \end{psmallmatrix}$ with $c\in \sho_{\ominus}[\![u^{-1}[\!]$.
Similarly the (commutative) product of the $X_{\qa}(\g g_{\qa,\Z}\otimes_{\Z}\sho_{\ominus})$  for $\qa=\aleph-n\qd$ with $n>0$ is written $\begin{psmallmatrix}1 &  b \\ 0 & 1 \end{psmallmatrix}$ with $b\in u^ {-1}\sho_{\ominus}[\![u^{-1}[\!]$.

\par To get the first assertion of the Lemma, the last thing to do now is to identify the commutative products of the $X_{\qa}(\g g_{\qa,\Z}\otimes_{\Z}\sho_{\ominus})$ for $\qa=-n\qd,n>0$ with matrices $\begin{psmallmatrix}1+d &  0 \\ 0 & (1+d)^ {-1} \end{psmallmatrix}$ as above.
But a basis of $\g g_{-n\qd,\Z}$ is $h_{n}=\begin{psmallmatrix} u^ {-n} &  0 \\  0 & -u^ {-n} \end{psmallmatrix}$.
The expression $X_{-n\qd}(h_{n}\otimes\ql)$ of \ref{2.1}, \ref{3.1} is actually written $[exp](\ql h_{n})$ in \cite[2.12]{R16} and is equal to $\begin{psmallmatrix}v_{1} &  0 \\ 0 & v_{2} \end{psmallmatrix}$ with $v_{1}=1+\ql u^ {-n}+\ql^2u^ {-2n}+\ldots$ and $v_{2}=v_{1}^ {-1}=1-\ql u^ {-n}$.
Moreover such an element is in $U_{C_{\infty}}^ {ma-}$ if, and only if, $\ql\in\sho_{\ominus}$ (as $f_{C_{\infty}}(-n\qd)=0$).
Now an easy induction proves that any  element in $1+u^ {-1}\sho_{\ominus}[\![u^{-1}[\!]$ may be written as an infinite product of terms of the shape $1-\ql u^ {-n}$ with $\ql\in\sho_{\ominus}$ and $n>0$.
So we get the equality $U_{C_{\infty}}^ {ma-}=H$.

\par Now the last assertions of the Lemma are easy consequences of the definitions and \ref{6.4b}.
\end{proof}

\subsection{An element in $G^\loo_{twin}\setminus I_\infty N K$}\label{ss_count_ex}

 Let $g=x_{-\aleph}(\qp u^{-1} )x_{\aleph}(\qp^{-1} u^{-1})\in G^\loo$.   We have \begin{equation}\label{eqDecomposition_g}
 g=x_\aleph(\frac{\qp^{-1}u^{-1}}{1+u^{-2}})\begin{psmallmatrix}
 \frac{1}{1+u^{-2}} & 0\\ 0 & 1+u^{-2}
 \end{psmallmatrix} x_{-\aleph}(\frac{\qp u^{-1}}{1+u^{-2}})=\overline{i}\overline{k},
 \end{equation} where $\overline{i}=\begin{psmallmatrix}
 \frac{1}{1+u^{-2}} & \qp^{-1} u^{-1} \\ 0 & 1+u^{-2}
\end{psmallmatrix}\in \overline{I^\loo_\infty}$ and $\overline{k}=x_{-\aleph}(\frac{\qp u^{-1}}{1+u^{-2}})\in \overline{K^\loo}$.  Therefore $g\in \overline{I_\infty^\loo} \overline{K^\loo}$.

{\par Actually $g.0_{\oplus}\neq0_{\oplus}$ (as the first factor in $g$ fixes $0_{\oplus}$ and the second one does not fix it).
But $\qd_{\oplus}(g.0_{\oplus})=\qd_{\oplus}(0_{\oplus})$ by \ref{lemEffect_semi_direct_delta}.
So neither $g.0_{\oplus}\geq0_{\oplus}$ nor $g.0_{\oplus}\leq0_{\oplus}$, \ie $g\not\in G_{\oplus}^+\cup G_{\oplus}^-$.
}

\begin{lemm}\label{lemNon_belonging}
The element $g$ does not belong to $I_\infty^\loo K^\loo$. 
\end{lemm}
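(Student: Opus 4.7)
The idea is to use the decomposition $g = \overline{i} \cdot \overline{k}$ produced in \eqref{eqDecomposition_g} inside $\overline{G^\loo}$ and exploit the rigid description of $\overline{I_\infty^\loo} \cap \overline{K^\loo}$ recorded at the end of \S\ref{6.9b}. Suppose for a contradiction that $g = i k$ with $i \in I_\infty^\loo$ and $k \in K^\loo$. A direct verification of the entry-wise ring conditions shows that both $\overline{I_\infty^\loo}$ and $\overline{K^\loo}$ are subgroups of $\overline{G^\loo}$ containing $I_\infty^\loo$ and $K^\loo$ respectively (the only nontrivial check is the closure of the four entry-rings defining $\overline{I_\infty^\loo}$ under multiplication, which follows from $\qp^{-1}\k[\qp^{-1}](\!(u^{-1})\!)$ being an ideal absorbing $\k[\![u^{-1}]\!]$). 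Hence
\[
j := \overline{i}^{\,-1} i = \overline{k}\, k^{-1} \in \overline{I_\infty^\loo} \cap \overline{K^\loo},
\]
and one may write $j = \begin{psmallmatrix} a & b \\ c & d \end{psmallmatrix}$ with $a,c,d \in \k[\![u^{-1}]\!]$, $b \in u^{-1}\k[\![u^{-1}]\!]$ and $ad - bc = 1$.

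The heart of the argument is a direct expansion of $i = \overline{i}\,j$:
\[
i = \begin{psmallmatrix} \dfrac{a}{1+u^{-2}} + \qp^{-1} u^{-1} c & \dfrac{b}{1+u^{-2}} + \qp^{-1} u^{-1} d \\ (1+u^{-2})\, c & (1+u^{-2})\, d \end{psmallmatrix}.
\]
Each entry must satisfy the ring membership dictated by Lemma~\ref{lemDescription_I_infty}. The key observation is that $\qp^{-1}\k[\qp^{-1}][u,u^{-1}] + \k[u^{-1}]$ (and its variant with $u^{-1}\k[u^{-1}]$ in the $(1,2)$-slot) is a direct sum once one filters $\k(\qp)$ by the $\qp$-adic valuation, so one can separately equate the $\qp^{0}$-part and the $\qp^{-1}\k[\qp^{-1}]$-part of each entry. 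For entry $(1,1)$, the $\qp^{0}$-contribution $\tfrac{a}{1+u^{-2}}$ is forced into $\k[u^{-1}]$, which gives $a \in (1+u^{-2})\k[u^{-1}]$, while the $\qp^{-1}$-contribution $u^{-1}c$ is forced into $\k[u,u^{-1}]$, giving $c \in \k[u^{-1}]$. Applying the same procedure to entry $(1,2)$ yields $b \in (1+u^{-2})\,u^{-1}\k[u^{-1}]$ and $d \in \k[u^{-1}]$; entries $(2,1)$ and $(2,2)$ impose nothing further.

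The contradiction is then instantaneous: both $a$ and $b$ lie in the ideal $(1+u^{-2})\k[u^{-1}]$ of $\k[u^{-1}]$, so $ad - bc \in (1+u^{-2})\k[u^{-1}]$, yet $ad - bc = 1$ is a unit, not divisible by the non-unit polynomial $1+u^{-2}$. The main obstacle is the middle step, where one has to handle the $\qp$-adic filtration on $\k(\qp)(\!(u^{-1})\!)$ simultaneously with the $u^{-1}$-adic structure so that the containments for $a,b,c,d$ can be read off cleanly from the single matrix identity $i = \overline{i}\,j$; once this bookkeeping is untangled, the divisibility argument in $\k[u^{-1}]$ closes the proof in one line.
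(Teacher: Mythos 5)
Your argument is correct and follows essentially the same path as the paper's proof: reduce modulo the explicit decomposition $g=\overline{i}\,\overline{k}$ to an element of $\overline{I_\infty^\loo}\cap\overline{K^\loo}$, expand $i=\overline{i}\,j$ entrywise, read off from Lemma~\ref{lemDescription_I_infty} that $a,b\in(1+u^{-2})\k[u^{-1}]$ and $c,d\in\k[u^{-1}]$, and derive a contradiction from $\det j=1$. The only cosmetic difference is that you extract the constraints on $c,d$ from the top row together with the a priori containment $c,d\in\k[\![u^{-1}]\!]$, whereas the paper reads them from the bottom row; the subgroup check for $\overline{I_\infty^\loo}$ that you spell out is left implicit in the paper.
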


\begin{proof}

 Suppose $g=ik$, with $i\in I_\infty^\loo\subset \overline{I_\infty^\loo}$ and $k\in K^\loo\subset \overline{K^\loo}$. Set $h=i^{-1}\overline{i}=k\overline{k}^{-1}\in \overline{K^\loo}\cap \overline{I_\infty^\loo}$. Therefore $\overline{i}h^{-1}=i$. Write $i=\begin{psmallmatrix}
A & B \\ C & D
\end{psmallmatrix}$ and $h^{-1}=\begin{psmallmatrix}
a & b\\ c & d
\end{psmallmatrix}$, with $a,b,c,d\in \k[\![u^{-1}]\!]$ and $A,B,C,D\in \qp^{-1}\k[\qp^{-1}][u,u^{-1}]+\k[u^{-1}]$. We have \[\frac{a}{1+u^{-2}}+\qp^{-1}u^{-1}c=A, \frac{b}{1+u^{-2}}+\qp^{-1}u^{-1} d=B, (1+u^{-2}) c =C, (1+u^{-2})d= D.\]

Therefore $\tilde{a}:=\frac{a}{1+u^{-2}}\in \k[u^{-1}]$ and $\tilde{b}:=\frac{b}{1+u^{-2}}\in \k[u^{-1}]$.  We have $\qp^{-1}u^{-1}c\in \k[\qp^{-1}][u,u^{-1}]$ and thus $c\in \k[u,u^{-1}]$. Moreover $(1+u^{-2})c\in \k[u^{-1}]$ and thus $c\in \k[u^{-1}]$. Similarly, $d\in \k[u^{-1}]$. As $\det(i)=1$, we have $ad-bc=(1+u^{-2})(\tilde{a}d-\tilde{b}c)=1$ and thus $1+u^{-2}$ is invertible in $\k[u^{-1}]$: we reach a contradiction. Consequently $g\notin I_\infty^\loo K^\loo$.
\end{proof}

It is easy to check that $N^\loo K^\loo=T^\loo K^\loo$ and $N^\loo \overline{K^\loo}=T^\loo \overline{K^\loo}$, where $T^\loo=\{\begin{psmallmatrix} y & 0\\ 0 & y^{-1}\end{psmallmatrix}\mid y\in \k(\qp)^*\}=G^\loo\cap T$.
\begin{lemm}
Let $t,t'\in T^\loo$ be such that $\overline{I_\infty^\loo}t \overline{K^\loo}\cap \overline{I_\infty^\loo}t'\overline{K^\loo}\neq \emptyset$. Then $t\overline{K^\loo}=t'\overline{K^\loo}$.
\end{lemm}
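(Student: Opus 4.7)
First, I would reformulate the conclusion as $\delta := \omega_\oplus(y'/y) = 0$, where $t = \mathrm{diag}(y, y^{-1})$ and $t' = \mathrm{diag}(y', y'^{-1})$: indeed $t\overline{K^\loo} = t'\overline{K^\loo}$ amounts to $t'^{-1}t \in \mathrm{SL}_2(\sho_\oplus) \subset \overline{K^\loo}$. From the hypothesis $i_1 t k_1 = i_2 t' k_2$ I would extract the matrix identity $M := i_2^{-1}i_1 = t'kt^{-1}$ with $k := k_2 k_1^{-1} \in \overline{K^\loo}$, after verifying that $\overline{I_\infty^\loo}$ is a group (a routine check of ``space $\cdot$ space $\subset$ space'' inclusions using the concrete matrix description of \S \ref{6.9b}; the only delicate inclusion is $\k[\![u^{-1}]\!] \cdot u^{-1}\k[\![u^{-1}]\!] \subset u^{-1}\k[\![u^{-1}]\!]$, which keeps the top-right entry in its prescribed space). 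Hence $M \in \overline{I_\infty^\loo}$, with entries $M_{11} = (y'/y)\,k_{11}$, $M_{12} = y'y\,k_{12}$, $M_{21} = (y'y)^{-1}k_{21}$, $M_{22} = (y/y')\,k_{22}$.

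Suppose for contradiction $\delta > 0$ (the case $\delta < 0$ is symmetric, either by swapping $(i_1, t, k_1) \leftrightarrow (i_2, t', k_2)$ or by running the same argument on the $(2,2)$-entry). Since every coefficient of $k_{11} \in \sho_\oplus(\!(u^{-1})\!)$ has $\omega_\oplus \geq 0$, every coefficient of $M_{11} = (y'/y)\,k_{11}$ has $\omega_\oplus \geq \delta > 0$; but $M_{11} \in \qp^{-1}\k[\qp^{-1}](\!(u^{-1})\!) + \k[\![u^{-1}]\!]$ has every nonzero coefficient of $\omega_\oplus \leq 0$. So $M_{11} = 0$, whence $k_{11} = 0$, and $\det k = 1$ gives $k_{12}k_{21} = -1$. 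In particular both $k_{12}$ and $k_{21}$ are units of $\sho_\oplus(\!(u^{-1})\!)$, so their top-degree-in-$u$ coefficients $a_N$ (at $u^N$) and $b_{M_u}$ (at $u^{M_u}$), with $N + M_u = 0$, lie in $\sho_\oplus^\ast$ and have $\omega_\oplus = 0$.

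The contradiction then comes from comparing the $\omega_\oplus$-valuations of the leading $u$-coefficients of $M_{12}$ and $M_{21}$, namely $n + n'$ (with $n := \omega_\oplus(y)$, $n' := \omega_\oplus(y')$) and $-(n+n')$, to what the top-right and bottom-left spaces allow. The top-right space $\qp^{-1}\k[\qp^{-1}](\!(u^{-1})\!) + u^{-1}\k[\![u^{-1}]\!]$ has its $u^r$-coefficient in $\qp^{-1}\k[\qp^{-1}]$ (so $\omega_\oplus \leq -1$) for $r \geq 0$ and in $\k[\qp^{-1}]$ (so $\omega_\oplus \leq 0$) for $r < 0$; the bottom-left space $\qp^{-1}\k[\qp^{-1}](\!(u^{-1})\!) + \k[\![u^{-1}]\!]$ is analogous but allows a constant term at $u^0$ (so $\omega_\oplus \leq -1$ only for $r > 0$, and $\omega_\oplus \leq 0$ for $r \leq 0$). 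A case split on the sign of $N$ then shows that the resulting two inequalities on $n+n'$ and $-(n+n')$ are incompatible in every case: $n+n' \leq -1$ and $n+n' \geq 0$ when $N \geq 0$, and $n+n' \leq 0$ and $n+n' \geq 1$ when $N < 0$. The main obstacle is this careful bookkeeping — tracking precisely which of $\qp^{-1}\k[\qp^{-1}]$, $\k$ or $\k[\qp^{-1}]$ each relevant coefficient of $M_{ij}$ is forced into by the description of $\overline{I_\infty^\loo}$, and translating each such containment into an inequality on $n+n'$.
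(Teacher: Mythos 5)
Your proof is correct, but it takes a different route through the contradiction than the paper does. Both arguments exploit the same valuation tension: entries of $\overline{I_\infty^\loo}$ have $u$-coefficients of $\omega_\oplus$-valuation $\leq 0$, while entries of $\overline{K^\loo}$ have coefficients of valuation $\geq 0$, and the torus conjugation shifts valuations by $\pm\ell$. But the proofs organize this differently.

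The paper first proves a small structural fact about $\overline{I_\infty^\loo}$: both diagonal entries of any $i=(a_{m,n})\in\overline{I_\infty^\loo}$ are nonzero. Indeed, if $a_{1,1}a_{2,2}=0$ then $a_{1,2}a_{2,1}=-1$, and evaluating at $\qp^{-1}=0$ kills the $\qp^{-1}\k[\qp^{-1}]$-parts and leaves $\tilde a_{1,2}\tilde a_{2,1}=-1$ with $\tilde a_{1,2}\in u^{-1}\k[\![u^{-1}]\!]$, which is impossible. With $a_{1,1}\neq 0$ and $a_{2,2}\neq 0$ in hand, comparing $\gamma'^{-1}\gamma\, a_{1,1}\in\sho_\oplus(\!(u^{-1})\!)$ gives $\ell\geq 0$ and the $(2,2)$-entry symmetrically gives $\ell\leq 0$, so $\ell=0$. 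You instead argue by contradiction: assume $\delta>0$, use the valuation clash to force $M_{11}=0$, then $\det k=1$ gives $k_{12}k_{21}=-1$, and a leading-$u$-coefficient analysis on $M_{12},M_{21}$ with a case split on the sign of the leading degree $N$ produces the contradiction. This is sound, and your bookkeeping on which constraint space each coefficient lands in is accurate.

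One remark: after you derive $M_{11}=0$, hence $M_{12}M_{21}=-1$ (since $\det M=1$), you could finish immediately by the paper's first step applied directly to $M$ — evaluate at $\qp^{-1}=0$ to land in $u^{-1}\k[\![u^{-1}]\!]$, which cannot equal $-1$. That shortcut replaces your leading-coefficient case split and would make the two proofs essentially coincide. As written, though, your off-diagonal leading-coefficient argument is a genuinely different closing step, at the cost of a bit more case analysis, and both the diagonal-nonvanishing lemma of the paper and your leading-coefficient comparison ultimately encode the same structural incompatibility between the two coefficient spaces.
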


\begin{proof}
There exists $(i,k)\in \overline{I_\infty}\times \overline{K^\loo}$ such that $itk=t'$, or equivalently, $t'^{-1}i t =k^{-1}$. Write $t=\begin{psmallmatrix}\gamma & 0\\ 0 & \gamma^{-1}\end{psmallmatrix}$ and  $t'=\begin{psmallmatrix}\gamma' & 0\\ 0 & \gamma'^{-1}\end{psmallmatrix}$, with $\gamma,\gamma'\in \k(\qp)$. Write $i =(a_{m,n})_{m,n\in \{1,2\}}$, with $a_{m,n}\in \k(\!(u^{-1})\!)[\qp^{-1}]$   and $k=(b_{m,n})_{m,n\in \{1,2\}}$, with $b_{m,n}\in \sho_{\oplus}(\!(u^{-1})\!)$, for $m,n\in \{1,2\}$. 

Suppose $a_{1,1}a_{2,2}=0$. Then $a_{1,2}a_{2,1}=-1$. Let $\tilde{a}_{1,2}\in u^{-1}\k[\![u^{-1}]\!]$ and $\tilde{a}_{2,1}\in \k[\![u^{-1}]\!]$ be the evaluations of $a_{1,2}$ and $a_{2,1}$ at $\qp^{-1}=0$. Then $\tilde{a}_{1,2}\tilde{a}_{2,1}=-1$: we reach a contradiction. Therefore $a_{1,1}a_{2,2}\neq 0$. 

 We have $a_{1,1}\gamma'^{-1}\gamma=b_{1,1}$ and $a_{2,2}(\gamma'^{-1}\gamma)^{-1}=b_{2,2}$. For $m,n\in \{1,2\}$, write $a_{m,n}=\sum_{p\leq 0} a_{m,n,p}(u) \qp^p$, where $a_{m,n,p}(u)\in \k(\!(u^{-1})\!)$, for all $m,n,p$.  Let $\ell=\omega_{\oplus}(\gamma'^{-1}\gamma)$ and set $f(\qp)=\qp^{-\ell}\gamma'^{-1}\gamma$.  Then \[a_{1,1} \gamma'^{-1}\gamma=\sum_{p\leq 0} a_{1,1,p}(u)  f(\qp)\qp^{\ell+p}\in \sho_{\oplus}(\!(u^{-1})\!)\] and thus $\ell\geq 0$. As $a_{2,2}\gamma' \gamma^{-1}\in \sho_{\oplus}(\!(u^{-1})\!)$, we also have $\ell\leq 0$. Therefore $\ell=0$.  This proves that $t'^{-1}t\in \overline{K^\loo}$.
\end{proof}

We deduce that $g\notin I_\infty N K$.  Indeed, otherwise, by Lemma~\ref{lemCompatibility_decomposition_Gloo} we could write $g=itk$, with $i\in I_\infty^\loo$, $t\in T^\loo$ and $k\in K^\loo$. Then $t\in T\cap \overline{K^\loo}\subset K^\loo$ and thus $g\in I_\infty^\loo K^\loo$, which would contradict Lemma~\ref{lemNon_belonging}. In particular, $G_{twin}\supsetneq I_\infty N K$.

\subsection{Examples of Hecke paths}

The $C_{\infty}-$Hecke paths, which are the image by the retraction $\rho_{C_\infty}$ of $C_{\infty}-$ friendly line segments have very different behaviors than the Hecke paths considered in the  references \cite{GR14}, \cite{BPGR16} or \cite{BPR19}.
We study here some examples of such $C_{\infty}-$Hecke paths in the case of affine $\mathrm{SL}_2$ .

In the context of  Lemma~\ref{lemQuotient_masure}, we consider the action of the subgroup  $G^\loo$ of $G$ on $\SHI_{\oplus}$.
We choose the parametrization of the line segment $[0\,\,-d]$ (with $\qd (-d)=-1$ and $\aleph(-d)=0$)  in $\A$ given by   $\varphi:[0,1]\to \A_{\oplus}\subset \widetilde\A_{\oplus}$ such that   $\varphi(t)=-td$ and will study  $C_{\infty}-$Hecke paths $\rho_{ C_\infty}(g.\varphi([0 ,1]))$ for some $g\in G^\loo$. 
They are the images, by the map $\pi$ of Lemma \ref{lemQuotient_masure}, of the $C_{\infty}-$Hecke paths $\widetilde\rho_{ C_\infty}(\widetilde g.\varphi([0 ,1]))$, for some $\widetilde g\in \widetilde G$ with image $g$ in $G$.
We have to prove, along the way, that these retractions $\widetilde\rho_{ C_\infty}(\widetilde g.\varphi([0 ,1]))$ and $\rho_{ C_\infty}(g.\varphi([0 ,1]))$ are well defined; for this we shall prove some Birkhoff type decompositions of some elements in $G$.

 These elements $g$ are products of terms $\begin{psmallmatrix}1\quad  &  \qp^{k-1} u^{k} \\ 0\quad  & 1 \end{psmallmatrix}=x_{\aleph+k\qd+ (k-1)\xi} (1)$ for $k\in \Z_{>0}$.
So they are in $U^+=\widetilde U^+\subset\widetilde G$ and act on $\widetilde\SHI_{\oplus}$.
 One recall that $x_{\aleph+k\qd+ (k-1)\xi} (1)$ fixes $D^+_{1-k}:=\{a\in \A_{\oplus}\mid \aleph (a)+k\qd (a)+ (k-1)\geq 0\}$ and its analog $\widetilde D^+_{1-k}$ in $\widetilde\A_{\oplus}$. 
 This half-apartment contains $C_{\oplus}$ and is limited by $M_{1-k}$ (line of equation $x=-ky+1-k$) in $\A_{\oplus}$ with cartesian system such that $x$ corresponds to $\aleph$ and $y$ to $\qd$. The matrix  $\begin{psmallmatrix}1 &  0 \\ \qp^{-k+1} u^{-k} & 1 \end{psmallmatrix}$ fixes $D^-_{1-k}:=\{a\in \A_{\oplus}\mid \aleph (a)+k\qd (a)+ (k-1)\leq 0\}$.
 
 Moreover ($\S$\ref{1.9}) $x_{\aleph+k\qd+ (k-1)\xi} (1)x_{-\aleph-k\qd- (k-1)\xi} (-1)x_{\aleph+k\qd+ (k-1)\xi} (1)=\begin{psmallmatrix}0\quad  &  \qp^{k-1} u^{k} \\ -\qp^{1-k} u^{-k}\quad  & 0 \end{psmallmatrix}$ stabilizes $\A_{\oplus}$ and its class in $W$ is the reflexion $R_{k-1}$ fixing $M_{1-k}$.  
 We denote $t_{k}:={{k-1}\over k}  \in [0, 1]$, so that $\varphi(t_{k})=(0, -t_{k})\in M_{1-k}$. 

 In order to write decompositions of the elements $g$ (written as a product) with a left term in $I_\infty^\loo$,  
 we use the two following  formulas  in  $\mathrm{SL}_{2}$, 
 
 $\begin{pmatrix}1 &  a \\ 0 & 1 \end{pmatrix}= \begin{pmatrix}1 &  0 \\ a^{-1} & 1 \end{pmatrix} \begin{pmatrix}0 &  a \\ -a^{-1} & 0 \end{pmatrix}  \begin{pmatrix}1 &  0 \\ a^{-1} & 1 \end{pmatrix}    $ and 

 $\begin{pmatrix}1 &  a \\ 0 & 1 \end{pmatrix}
\begin{pmatrix}1 &  0 \\ b & 1 \end{pmatrix}=
\begin{pmatrix}1 &  0 \\ b(1+ab)^{-1} & 1 \end{pmatrix}\begin{pmatrix}1+ab &  0 \\ 0 &(1+ab)^{-1} \end{pmatrix}\begin{pmatrix}1 &  a(1+ab)^{-1} \\ 0 & 1 \end{pmatrix}.$

\medskip 
\noindent  EXAMPLE 1: 
 
For $N\geq 1$,  we consider $\widetilde g_{N}=g_{N}=\displaystyle{\prod_{k=1}^N}\begin{psmallmatrix}1\quad  &  \qp^{-1} (\qp u)^{3k} \\ 0 \quad & 1 \end{psmallmatrix}$ and want to study the $C_{\infty}-$Hecke paths $\rho_{ C_\infty}(g_{N}.\varphi([0 ,1]))$.

In Figure~\ref{f}, we represent $\rho_{ C_\infty}(g_{2}.\varphi([0,1]))$ in blue  and $\rho_{ C_\infty}(g_{3}.\varphi([0,1]))$ (blue and red).

\begin{figure}[h]
   \begin{center}
\caption{$C_\infty$-Hecke path}\label{f}
  \includegraphics[width=16cm]{Heckepath.pdf}
  
   \end{center}
\end{figure}

For $N=2$, we give details of the study. 

The element $g_{2}=\begin{psmallmatrix}1 & \qp^{2} u^{3}\\ 0 & 1 \end{psmallmatrix}\begin{psmallmatrix}1 &  \qp^{5} u^{6} \\ 0 & 1 \end{psmallmatrix}$ fixes $\varphi(t)$ for $ t\in [0,t_{3}]$, so, for such a $t$,   $\rho_{ C_\infty}(g_{2}.\varphi(t))$ is well defined and equal to $\varphi(t)$. 
For $ t\in [t_{3},t_{6}]$, we use \[\begin{psmallmatrix}1 & \qp^{2} u^{3}\\ 0 & 1 \end{psmallmatrix}=\begin{psmallmatrix}1 & 0 \\  \qp^{-2} u^{-3} & 1 \end{psmallmatrix}\begin{psmallmatrix}0&  \qp^{2} u^{3} \\ - \qp^{-2} u^{-3} & 0 \end{psmallmatrix}\begin{psmallmatrix}1 & 0 \\  \qp^{-2} u^{-3} & 1 \end{psmallmatrix},\] then as $\begin{psmallmatrix}1 & 0 \\  \qp^{-2} u^{-3} & 1 \end{psmallmatrix}$ and $\begin{psmallmatrix}1 &  \qp^{5} u^{6} \\ 0 & 1 \end{psmallmatrix}$  fix $\varphi(t)$, \[\rho_{ C_\infty}(g_{2}.\varphi(t))=\rho_{ C_\infty}(\begin{psmallmatrix}1 & 0 \\  \qp^{-2} u^{-3} & 1 \end{psmallmatrix}\begin{psmallmatrix}0 &  \qp^{2} u^{3} \\ - \qp^{-2} u^{-3} &0 \end{psmallmatrix}\varphi(t))=\rho_{ C_\infty}(\begin{psmallmatrix}0 &  \qp^{2} u^{3} \\ - \qp^{-2} u^{-3} & 0 \end{psmallmatrix}\varphi(t))\] (if it exists), because $\begin{psmallmatrix}1 & 0 \\  \qp^{-2} u^{-3} & 1 \end{psmallmatrix}\in I_{\infty}^\loo$ ($\S$\ref{6.9b}). So $\rho_{ C_\infty}(g_{2}.\varphi(t))$ is well defined and equal to $R_{2}\varphi(t)$  for $t\in [t_{3}, t_{6}]$ .

For $t\geq t_{6}$, we can write,  successively using the two formulas above, 

 $g_{2}=\begin{psmallmatrix}1 & \qp^{2} u^{3}\\ 0 & 1 \end{psmallmatrix}\begin{psmallmatrix}1 &  \qp^{5} u^{6} \\ 0 & 1 \end{psmallmatrix}=\begin{psmallmatrix}1 & \qp^{2} u^{3}\\ 0 & 1 \end{psmallmatrix}\begin{psmallmatrix}1 & 0 \\  \qp^{-5} u^{-6} & 1 \end{psmallmatrix}\begin{psmallmatrix}0 &  \qp^{5} u^{6} \\ - \qp^{-5} u^{-6} & 0 \end{psmallmatrix}\begin{psmallmatrix}1 & 0 \\  \qp^{-5} u^{-6} & 1 \end{psmallmatrix}$

$=
\begin{psmallmatrix}1 &0\\ {\qp^{-5} u^{-6}\over {1+\qp^{-3} u^{-3}}} & 1 \end{psmallmatrix}\begin{psmallmatrix}{1+\qp^{-3} u^{-3}} & 0 \\ 0 & {1\over {1+\qp^{-3} u^{-3}}} \end{psmallmatrix}\begin{psmallmatrix}1 &{\qp^{2} u^{3}\over {1+\qp^{-3} u^{-3}}} \\ 0& 1 \end{psmallmatrix}
\begin{psmallmatrix}0 &  \qp^{5} u^{6} \\ - \qp^{-5} u^{-6} & 0 \end{psmallmatrix}\begin{psmallmatrix}1 & 0 \\  \qp^{-5} u^{-6} & 1 \end{psmallmatrix}$. 

Using ${1\over {1+\qp^{-3} u^{-3}}}  =1+\sum_{k\geq1}(-1)^k\qp^{-3k} u^{-3k}$, we find the existence of a matrix $A\in \mathrm{SL}_{2}(\sho_{\ominus}[\![u^ {-1}]\!])\cap  \begin{psmallmatrix}
1+u^ {-1}\sho_{\ominus}[\![u^{-1}[\!] & \ & u^ {-1}\sho_{\ominus}[\![u^{-1}[\!] \\
 \sho_{\ominus}[\![u^{-1}[\!]  & \ &  1+u^ {-1}\sho_{\ominus}[\![u^{-1}[\!] 
\end{psmallmatrix}$  such that \[g_{2}=A\begin{psmallmatrix}1 &-\qp^{-1} \\ 0& 1 \end{psmallmatrix} \begin{psmallmatrix}1 &\qp^{2} u^{3} \\ 0& 1 \end{psmallmatrix}
\begin{psmallmatrix}0 &  \qp^{5} u^{6} \\ - \qp^{-5} u^{-6} &0 \end{psmallmatrix}\begin{psmallmatrix}1 & 0 \\  \qp^{-5} u^{-6} & 1 \end{psmallmatrix}.\]

By the lemma of  \S\ref{6.9b},   $A\in U_{C_{\infty}}^ {ma-}\subset\ov{G^\loo}$ and more precisely, as $g_{2}$ and the other matrices are in $G^\loo\cap G_{twin}$ so is A, and  we have $A\in I_{\infty}^\loo$. Moreover $\begin{psmallmatrix}1 &\qp^{-1} \\ 0& 1 \end{psmallmatrix}=x_{\aleph-\xi} (1) $ fixes $C_{\infty}\subset\{a\in \A_{{\ominus}}\mid\aleph (a)+1\geq 0\}$, so $A\begin{psmallmatrix}1 &-\qp^{-1} \\ 0& 1 \end{psmallmatrix}\in I_{\infty}^\loo$.

For $t\geq t_{6}$, we obtain $\rho_{ C_\infty}(g_{2}.\varphi(t))=\rho_{ C_\infty}(\begin{psmallmatrix}1 &\qp^{2} u^{3} \\ 0& 1 \end{psmallmatrix}
\begin{psmallmatrix}0 &  \qp^{5} u^{6} \\ - \qp^{-5} u^{-6} & 0 \end{psmallmatrix}\begin{psmallmatrix}1 & 0 \\  \qp^{-5} u^{-6} & 1 \end{psmallmatrix}\varphi(t))$ (if it exists). 
But, we know that $ \begin{psmallmatrix}1 & 0 \\  \qp^{-5} u^{-6} & 1 \end{psmallmatrix}$   fixes $\varphi(t) $ and $\begin{psmallmatrix}0 &  \qp^{5} u^{6} \\ - \qp^{-5} u^{-6} & 0 \end{psmallmatrix}$ acts by $R_{5}$  on it.

As  $\begin{psmallmatrix}1 &\qp^{2} u^{3} \\ 0& 1 \end{psmallmatrix}$ fixes $D^+_{-2}$,  for $t\geq t_{6}$, this matrix acts on $R_{5}(\varphi(t))$ if and only if $t<t_{9}$ (as $R_{5}(\varphi(t))\in D^+_{-2}\iff \varphi(t)\in D^-_{-8}$)

So, for $t\in[t_{6}, t_{9}]$ by the same argument as in $[t_{3}, t_{6}]$, $\rho_{ C_\infty}(g_{2}.\varphi(t))$  is well defined and equal to $R_{2}R_{5}(\varphi(t))$.
Moreover for $t\in [t_{9}, 1]$, $\rho_{ C_\infty}(g_{2}.\varphi(t))= R_{5}(\varphi(t))$.
We see that the Hecke path has exactly 3 folding points $p_{3}=\rho_{ C_\infty}(g_{2}.\varphi(t_{3}))=\qf(t_{3})$, $p_{6}=\rho_{ C_\infty}(g_{2}.\varphi(t_{6}))=R_{2}\qf(t_{6})$,  $p_{9}=\rho_{ C_\infty}(g_{2}.\varphi(t_{9}))=R_{2}R_{5}\qf(t_{9})$, with the line segment  $[p_{6}\,\,p_{9}]\subset R_{2}R_{5}(\varphi([0,1])=[(6,0) (-d)]$  and his last direction is that of  $R_{5}\varphi([t_{9},1]$.

For all $N\geq 2$,   $\rho_{C_{\infty}}(g_{N}.\varphi([0 ,1]))$ is well defined and has 3 folding points $p_{3}=\rho_{ C_\infty}(g_{2}.\varphi(t_{3}))$, $p_{6}=\rho_{ C_\infty}(g_{2}.\varphi(t_{6}))$,  $p_{3(N+1)}=\rho_{ C_\infty}(g_{N}.\varphi(t_{3(N+1)}))$, moreover $[p_{6}\,\,p_{3(N+1)}] =R_{2}R_{5}(\varphi ([t_{6}, t_{{3(N+1)})}])) $ and  is included in the line segment $[(6,0)\,(-d)]$ and the last direction  of the Hecke path is that of the segment germ  $R_{3N-1}\varphi((0,1])$.
\medskip 

This result is easily obtained by induction. As $g_{N+1}=g_{{N}}\begin{psmallmatrix}1 \quad &  \qp^{-1}(\qp u)^{3(N+1)} \\ 0\quad & 1 \end{psmallmatrix}$, for $t\leq t_{3(N+1)}$ we have $ \rho_{ C_\infty}(g_{N+1}.\varphi(t))=\rho_{ C_\infty}(g_{N}.\varphi(t))$ and so the Hecke path has the two folding points  $p_{3}=\rho_{ C_\infty}(g_{2}.\varphi(t_{3}))$, $p_{6}=\rho_{ C_\infty}(g_{2}.\varphi(t_{6}))$. We will see that we have no folding at $p_{3(N+1)}$.
For the calculus, we remark that if $u_{k}= \qp^{-1}q^k$ and $S_{N}=\sum_{k=1}^{k=N}u_{k}$, then ${S_{N}\over{1+u_{N+1}^{-1}S_{N}}}- \qp^{-1}q^N\in - \qp^{-1}+q^{-1} \k[[q^{-1}]]$ and we will use the same method as before. 

We write  $g_{N+1}=g_{N}\begin{psmallmatrix}1 & 0 \\  \qp\!(\qp u)^{-3(N+1)} & 1 \end{psmallmatrix}\begin{psmallmatrix}0 &  \qp^{-1}\!(\qp u)^{3(N+1)} \\ - \qp\!(\qp u)^{-3(N+1)} & 0 \end{psmallmatrix}\begin{psmallmatrix}1 & 0 \\  \qp\!(\qp u)^{-3(N+1)} & 1 \end{psmallmatrix}.$ For $t\geq t_{3(N+1)}$,  we know that $\begin{psmallmatrix}1 & 0 \\  \qp\!(\qp u)^{-3(N+1)} & 1 \end{psmallmatrix}$ fixes $\varphi(t)$ and $\begin{psmallmatrix}0 &  \qp^{-1}\!(\qp u)^{3(N+1)} \\ - \qp\!(\qp u)^{-3(N+1)} &0 \end{psmallmatrix}$ acts as $R_{{3N+2}}$.

We consider $q=(\qp u)^{3}$ and  $u_{k}= \qp^{-1}\!(\qp u)^{3k}=\qp^{-1}q^k $ and see that : 

$\displaystyle{g_{N}\begin{psmallmatrix}1 & 0 \\  \qp\!(\qp u)^{-3(N+1)} & 1 \end{psmallmatrix}=\begin{psmallmatrix}1 & 0 \\ {{(u_{N+1})^{-1}}\over{1+u^{-1}_{N+1}S_{N}}} & 1 \end{psmallmatrix}\begin{psmallmatrix}{1+u^{-1}_{N+1}S_{N}}& 0 \\0 &( {1+u^{-1}_{N+1}S_{N}})^{-1} \end{psmallmatrix}\begin{psmallmatrix}1\quad  & {S_{N}\over{1+u_{N+1}^{-1}S_{N}}}\\ 0\quad  & 1 \end{psmallmatrix}}$ so it can be written $A' \begin{psmallmatrix}1 &-\qp^{-1} \\ 0& 1 \end{psmallmatrix}\begin{psmallmatrix}1 &\qp^{-1}q^N\\ 0 & 1 \end{psmallmatrix}$ with $A'\in \mathrm{SL}_{2}(\sho_{\ominus}[\![u^ {-1}]\!])\cap  \begin{psmallmatrix}
1+u^ {-1}\sho_{\ominus}[\![u^{-1}[\!] & \ & u^ {-1}\sho_{\ominus}[\![u^{-1}[\!] \\
 \sho_{\ominus}[\![u^{-1}[\!]  & \ &  1+u^ {-1}\sho_{\ominus}[\![u^{-1}[\!] 
\end{psmallmatrix}$.
 As before, we can see that  $A' \begin{psmallmatrix}1 &-\qp^{-1} \\ 0& 1 \end{psmallmatrix}\in  I_{\infty}^\loo$.

By \S\ref{6.9b}, for $t\geq t_{3(N+1)}$, $ \rho_{ C_\infty}(g_{N+1}.\varphi(t))= \rho_{ C_\infty}(\begin{psmallmatrix}1 &\qp^{-1}\!(\qp u)^{3N}\\ 0 & 1 \end{psmallmatrix}R_{{3N+2}} (\varphi(t)))$ (if they exist). 

For $t$ large enough, the last direction of the Hecke path is  $R_{3N+2}\varphi((0,1])$.

More precisely $\begin{psmallmatrix}1 &\qp^{-1}\!(\qp u)^{3N}\\ 0 & 1 \end{psmallmatrix}$ acts on $R_{{3N+2}}( \varphi(t))$ iff $t\leq t_{3(N+2)}$ (because   $R_{2+3N}(D^-_{-(3N-1)})=D^+_{-(3(N+2)-1)}$). 
But, as in the first calculus for $t\in[t_{6}, t_{9}]$, we can see that, modulo $I^\loo_{\infty}$, this matrix acts by $R_{3N-1}$ and we have $R_{3N-1}R_{{3N+2}}=R_{2} R_{5}$. 
So $ \rho_{ C_\infty}(g_{N+1}.\varphi([t_{3(N+1)}, t_{3(N+2)}]))$ is well defined, is equal to $R_{2} R_{5}([\varphi(t_{3(N+1)}), \varphi(t_{3(N+2)})]) $ and is included in $[(6,0)\, (-d)]$ so  there is no more folding at $p_{3(N+1)}$ and we have the expected result.  The third folding point is  $p_{3(N+2)}=\rho_{ C_\infty}(g_{N}.\varphi(t_{3(N+2)}))$.

\medskip 
\noindent  EXAMPLE 2: 

In the second example, we want to consider a new family $(g'_{N})$, with a growing number of folding points. In the analog of previous calculus, we want that the action of the ``new term'' doesn't affect the previous folding points. 

We consider for $N\geq 0$,   $\widetilde g'_{N}=g'_{N}=\displaystyle{\prod_{k=0}^N}\begin{psmallmatrix}1 \quad &  \qp^{-1} (\qp u)^{3.2^k} \\ 0 \quad & 1 \end{psmallmatrix} \in G^\loo_{twin}$.

Let us prove that for $N\geq 1$,  $ \rho_{ C_\infty}(g'_{N}.\varphi([0 ,1]))$ is well defined, has at least $N$ folding points and there exists $t_{3.2^{N}}\leq T_{N}<t_{3.2^{N+1}}$ such that $\rho_{C_\infty}(g'_{N}.\varphi([T_{N}\,, 1]))=R_{{3.2^{N}-1}}(\varphi([T_{N}\,, 1]))$.

As  $g'_{1}=g_{2}$, we know the corresponding Hecke path and the result is true in this case (with $t_{9}= T_{1}<t_{12}$). 

We consider for $N\geq 1$,  $g'_{N+1}=g'_{N}\begin{psmallmatrix}1 &  \qp^{-1} (\qp u)^{3.2^{N+1}} \\ 0 & 1 \end{psmallmatrix}$. 

As before, if it is well defined, we have  $\rho_{C_\infty}(g_{N+1}.\varphi([0\, , t_{3.2^{N+1}}]))= \rho_{C_\infty}(g_{N}.\varphi([0\, , t_{3.2^{N+1}}]))$. We know by induction that $\rho_{C_\infty}(g_{N+1}.\varphi([T_{N} , t_{3.2^{N+1}}]))= R_{{3.2^{N}-1}}(\varphi([T_{N},   t_{3.2^{N+1}}]))$ is well defined and that this Hecke path has at least N folding points there. 
As previously,  we write  
$g'_{N+1}=g'_{N}\begin{psmallmatrix}1 &\, 0\\  \qp  (\qp u)^{-3.2^{N+1}}  & \,1 \end{psmallmatrix}\begin{psmallmatrix}0 &  \qp^{-1}  (\qp u)^{3.2^{N+1}}\\  -\qp  (\qp u)^{-3.2^{N+1}} & 0 \end{psmallmatrix}\begin{psmallmatrix}1 &\, 0\\  \qp  (\qp u)^{-3.2^{N+1}}  & \,1 \end{psmallmatrix},$ in order to study the case $t>t_{3.2^{N+1}}$. With  $D=1+\sum_{k=0}^N(\qp u)^{3.(2^k-2^{N+1})}$ and $a=\qp^{-1}\sum_{k=0}^N(\qp u)^{3.2^k}$, we
obtain 

 $g'_{N+1}=\begin{psmallmatrix}1 &\, 0\\  {\qp  (\qp u)^{-3.2^{N+1}}\over D}  & \,1 \end{psmallmatrix}\begin{psmallmatrix}D& 0\\ 0 & 1/D \end{psmallmatrix}\begin{psmallmatrix}1 & {a\over D}\\ 0 & 1 \end{psmallmatrix}\begin{psmallmatrix}0 &  \qp^{-1}  (\qp u)^{3.2^{N+1}}\\  -\qp  (\qp u)^{-3.2^{N+1}} & 0 \end{psmallmatrix}\begin{psmallmatrix}1 & 0\\  \qp  (\qp u)^{-3.2^{N+1}}  & 1 \end{psmallmatrix}$ 
 In fact ${a\over D}-a+\qp^{-1}\in \sho_{\ominus}[\![u^ {-1}]\!] $, and so (as before) modulo  $I^\loo_{\infty}$ , $g'_{N+1}$ acts on $\varphi([ t_{3.2^{N+1}}\,,  1])$  as $\begin{psmallmatrix}1 & a\\ 0 & 1 \end{psmallmatrix}\begin{psmallmatrix}0 &  \qp^{-1}  (\qp u)^{3.2^{N+1}}\\  -\qp  (\qp u)^{-3.2^{N+1}} & 0 \end{psmallmatrix}$ so as $ g'_{N}\circ R_{{3.2^{N+1}-1}}$.
 
But for $k\leq N$, $R_{{3.2^{N+1}-1}}(\varphi( t))\in M_{1-3.2^k}$ if, and only if, $\varphi( t)\in M_{1-3.(2^{N+2}-2^k)}$ (\ie $t=t_{3.(2^{N+2}-2^k)}$). So $g'_{N}$ really acts on $R_{{3.2^{N+1}-1}}(\varphi(t))$ only for some $t$ in $ [t_{3.(2^{N+1})}\, , t_{3.(2^{N+2}-2^N)}]$ and there exists $T_{N+1}$ with $t_{3.2^{N+1}}\leq T_{N+1}<t_{3.2^{N+2}} $ such  that $\rho_{C_\infty}(g'_{N+1}.\varphi([T_{N+1}\,, 1]))=R_{{3.2^{N+1}-1}}(\varphi([T_{N+1}\,, 1]))$ and, as the direction of this line segment is different from that of $ R_{{3.2^{N}-1}}(\varphi([T_{N}\,,  t_{3.2^{N+1}}]))$, there is a new folding point for this Hecke path, so at least $N+1$ folding points.

\begin{rema*} It is interesting to look at what happens in these two examples when $N$ goes to infinity.
Actually $\cup_{N=1}^\infty\,\widetilde g_{N}\qf([0,t_{3N}])$ (\resp $\cup_{N=1}^\infty\,\widetilde g'_{N}\qf([0,t_{3.2^N}])$) is an increasing union of $C_{\infty}-$friendly line segments in $\widetilde \SHI$; and the same is true for their images in $\SHI$.
So we get a half-open $C_{\infty}-$friendly line segment written (abstractly) $\widetilde g_{\infty}\qf([0,1[)$ (\resp $\widetilde g'_{\infty}\qf([0,1[)$) in $\widetilde\SHI$ and $ g_{\infty}\qf([0,1[)$ (\resp $ g'_{\infty}\qf([0,1[)$) in $\SHI$.
A question is whether they can reasonably be completed in a ``closed'' $C_{\infty}-$friendly line segment.
The answer is clearly no for example 2: this would lead to a $C_{\infty}-$Hecke path $\qr_{C_{\infty}}(\ov{g'_{\infty}\qf([0,1[)})$ with an infinite number of folding points, contrary to Definition \ref{5.13} and Proposition in \S \ref{5.14}.

\par On the contrary we can make further calculations for example 1, as $\widetilde g_{N}=g_{N}$ is associated to a geometric sequence in $\k[\qp,\qp^ {-1},u,u^ {-1}]$.
We consider the matrix $g^1_{N}=\begin{psmallmatrix}\sum_{k=0}^N(\qp u)^ {3k} \,&\, \qp^ {-1}(\qp u)^ {3N+3}\\  -\qp    & 1 -(\qp u)^3 \end{psmallmatrix} \in G^\loo_{pol}$.
So $g^1_{\infty}:=g_{N}g^1_{N}=\begin{psmallmatrix}1\, &\, \qp^2u^3 \\  -\qp \,   & \,1-(\qp u)^3 \end{psmallmatrix}=\begin{psmallmatrix}1 & 0\\  -\qp   & 1 \end{psmallmatrix}\begin{psmallmatrix}1 & \qp^2u^3\\  0  & 1 \end{psmallmatrix}$ is a fixed element in $G^\loo_{twin}$ (so $g^1_{N}\in G^\loo_{twin}$).
By the following Lemma $g^1_{N}$ fixes $\qf([0,t_{3N+3}])$.
So $ g_{\infty}\qf([0,1[)$ is actually equal to $ g^1_{\infty}\qf([0,1[)$.
We shall prove now that $ g^1_{\infty}\qf([0,1])$ is a $C_{\infty}-$friendly line segment.
The associated $C_{\infty}-$Hecke path is then clearly $[0 \,\,p_{3}]\cup[p_{3}\,\,p_{6}]\cup[p_{6}\,\,-d]$.

\par We have to find a good Birkhoff decomposition for $g^1_{\infty}$.
The details of the calculations are similar to those above and left to the reader.

\par $g^1_{\infty}=\begin{psmallmatrix}1 & 0\\  \qp^ {-2}u^ {-3}   & 1 \end{psmallmatrix}\begin{psmallmatrix}1 & 0\\  -\qp   & 1 \end{psmallmatrix} \begin{psmallmatrix}0 & \qp^2u^3\\  -\qp^ {-2}u^ {-3}   & 0 \end{psmallmatrix}\begin{psmallmatrix}1 & 0\\  \qp^ {-2}u^ {-3}   & 1 \end{psmallmatrix}$

\par\quad\;
$=\begin{psmallmatrix}1 & 0\\  \qp^ {-2}u^ {-3}   & 1 \end{psmallmatrix} \begin{psmallmatrix}1 & -\qp^ {-1}\\  0   & 1 \end{psmallmatrix} \begin{psmallmatrix}0 & \qp^2u^3\\  -\qp^ {-2}u^ {-3}   & 0 \end{psmallmatrix} \begin{psmallmatrix}0 & \qp^5u^6\\  -\qp^ {-5}u^ {-6}   & 0 \end{psmallmatrix} \begin{psmallmatrix}1 & 0\\  \qp^ {-5}u^ {-6}   & 1 \end{psmallmatrix}  \begin{psmallmatrix}1 & 0\\  \qp^ {-2}u^ {-3}   & 1 \end{psmallmatrix}$

\par Now $\begin{psmallmatrix}1 & 0\\  \qp^ {-2}u^ {-3}   & 1 \end{psmallmatrix}$ (\resp $\begin{psmallmatrix}1 & 0\\  \qp^ {-5}u^ {-6}   & 1 \end{psmallmatrix}$) fixes $\qf([t_{3},1])$ (\resp $\qf([t_{6},1])$), $\begin{psmallmatrix}0 & \qp^5u^6\\  -\qp^ {-5}u^ {-6}   & 0 \end{psmallmatrix}$ (\resp $\begin{psmallmatrix}0 & \qp^2u^3\\  -\qp^ {-2}u^ {-3}   & 0 \end{psmallmatrix} $) stabilizes $\A_{\oplus}$ and induces on it $R_{5}$ (\resp $R_{2}$); moreover $\begin{psmallmatrix}1 & 0\\  \qp^ {-2}u^ {-3}   & 1 \end{psmallmatrix}$ and $ \begin{psmallmatrix}1 & -\qp^ {-1}\\  0   & 1 \end{psmallmatrix}$ are in $I^\loo_{\infty}$.
So the last expression for $g^1_{\infty}$ is a Birkhoff decomposition, telling that the pair $\set{C_{\infty},g^1_{\infty}\qf([t_{6},1])}$ is friendly.
One can also deduce from these expressions the shape of $\qr_{C_{\infty}}(g^1_{\infty}\qf([0,1]))$.
\end{rema*}

\begin{lemm*} $g^1_{N}\in (U^ {ma+}_{\qf([0,t_{3N+3}])}U_{-\aleph,\qf([0,1])})\cap G^\loo_{twin}$ fixes $\qf([0,t_{3N+3}])$.
\end{lemm*}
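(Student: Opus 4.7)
The plan is to produce $g^1_N$ explicitly as a product of the required form by Gauss-type decomposition in the Mathieu completion, using that $a := \sum_{k=0}^N (\qp u)^{3k}$ has constant term $1$ in $u$ (so is invertible as a formal power series in $u$ over $\k(\qp)$). Writing $g^1_N = \begin{psmallmatrix} a & b \\ c & d \end{psmallmatrix}$, the identity $ad = 1-(\qp u)^{3(N+1)}$ gives
\[
g^1_N \;=\; x_{\aleph}(b/a)\,\cdot\,\mathrm{diag}(a,\,1/a)\,\cdot\,x_{-\aleph}(c/a),
\]
and one computes explicitly $1/a = (1-(\qp u)^3)\sum_{m\geq 0}(\qp u)^{3(N+1)m}$, from which
\[
c/a \;=\; -\qp + \sum_{(m,\epsilon)\neq (0,0)} (-1)^{\epsilon+1}\qp^{3(N+1)m+3\epsilon+1}u^{3(N+1)m+3\epsilon},
\]
and $b/a$ admits an analogous expansion with exponents shifted by $3N+3$. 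The second paragraph is to split the lower triangular factor $x_{-\aleph}(c/a)=x_{-\aleph}(-\qp)\cdot x_{-\aleph}(c/a+\qp)$; the first factor $x_{-\aleph}(-\qp)$ lies in $U_{-\aleph,\qf([0,1])}$ since $-\aleph$ vanishes on $\qf([0,1])$ and $\qo_{\oplus}(-\qp)=1\geq 0$, and the series for $c/a+\qp$ breaks up term-by-term into factors $x_{-\aleph+k\qd}(\pm\qp^j)$ with real positive roots $-\aleph+k\qd$ for $k\geq 3$.

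Then I would verify the affine-root valuation inequality for each factor, i.e.\ that the $\qo_{\oplus}$-valuation $j$ of its parameter is at least $f_{\qf([0,t_{3N+3}])}(\alpha)=\lceil -\inf_{t\in[0,t_{3N+3}]}\alpha(\qf(t))\rceil$. A uniform computation works: for the terms in $x_\aleph(b/a)$ the condition reduces to an inequality of the form $j(3N+3)\geq k(3N+2)$, which for the general term reads $3(N+1)m+3\epsilon\geq 0$; for the terms in $x_{-\aleph}(c/a+\qp)$ a parallel computation yields $k+3N+3\geq 0$. Both inequalities hold trivially. The diagonal factor I would express via the factorisation $a = (1-(\qp u)^{3(N+1)})/(1-(\qp u)^3)$ as a product
\[
\mathrm{diag}(a,1/a)\;=\;\mathrm{diag}\!\bigl(\tfrac{1}{1-(\qp u)^3},\,1-(\qp u)^3\bigr)\cdot\mathrm{diag}\!\bigl(1-(\qp u)^{3(N+1)},\,\tfrac{1}{1-(\qp u)^{3(N+1)}}\bigr),
\]
each diagonal being the image under $[\exp]$ of an element of $\g g_{n\qd,\Z}\otimes\shk$ (cf.\ the Lemma of \S\ref{6.9b}) for $n=3$ and $n=3(N+1)$ respectively, with parameter of $\qo_{\oplus}$-valuation $3$ (resp.\ $3(N+1)$); since $f_{\qf([0,t_{3N+3}])}(n\qd)=\lceil n(3N+2)/(3N+3)\rceil$ equals $3$ (resp.\ $3N+2$), the required bounds hold.

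Assembling these verifications, $v:=x_\aleph(b/a)\cdot\mathrm{diag}(a,1/a)\cdot x_{-\aleph}(c/a+\qp)$ lies in $U^{ma+}_{\qf([0,t_{3N+3}])}$, and $g^1_N=v\cdot x_{-\aleph}(-\qp)$ gives the desired product decomposition. The membership $g^1_N\in G^\loo_{twin}$ is immediate from the identity $g^1_N=g_N^{-1}\cdot x_{-\aleph}(-\qp)\cdot x_{\aleph}(\qp^2u^3)$ (writing $g_N^{-1}=\prod_{k=1}^N x_{\aleph}(-\qp^{3k-1}u^{3k})$), which exhibits $g^1_N\in\g U(\sho)\subset G^\loo_{twin}$. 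Finally, the fact that $g^1_N$ fixes $\qf([0,t_{3N+3}])$ is then automatic: $v$ fixes $\qf([0,t_{3N+3}])$ by construction, and $x_{-\aleph}(-\qp)$ fixes the larger segment $\qf([0,1])$.

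The main technical obstacle is the careful identification of the diagonal factor with a product of imaginary-root exponentials $X_{n\qd}$ in the correct convention, together with tracking the exact $\qo_{\oplus}$-valuation of each parameter so that the ceiling function in $f_\QO$ does not upset the inequalities; the two-factor decomposition of $\mathrm{diag}(a,1/a)$ coming from the rational expression of $a$ is what makes this bookkeeping transparent, and after that the root-by-root verifications are elementary.
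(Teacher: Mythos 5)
Your starting identity is false, and this invalidates the argument. For an $\mathrm{SL}_2$ matrix $\begin{psmallmatrix}a & b \\ c & d\end{psmallmatrix}$, the upper-diagonal-lower (Gauss) decomposition necessarily pivots on the bottom-right entry $d$: one has $\begin{psmallmatrix}a & b\\ c & d\end{psmallmatrix}=x_{\aleph}(b/d)\,\mathrm{diag}(1/d,d)\,x_{-\aleph}(c/d)$. Pivoting on $a$ gives the \emph{lower}-diagonal-\emph{upper} form $x_{-\aleph}(c/a)\,\mathrm{diag}(a,1/a)\,x_{\aleph}(b/a)$. Your formula $g^1_N=x_{\aleph}(b/a)\,\mathrm{diag}(a,1/a)\,x_{-\aleph}(c/a)$ mixes the two, and a direct computation shows it equals $\begin{psmallmatrix}a+bc/a^3 & b/a^2\\ c/a^2 & 1/a\end{psmallmatrix}$, which is not $g^1_N$. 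Consequently the element $v=x_{\aleph}(b/a)\,\mathrm{diag}(a,1/a)\,x_{-\aleph}(c/a+\qp)$ you build, while it does belong to $U^{ma+}_{\qf([0,t_{3N+3}])}$ by your (correct) valuation checks, does not satisfy $g^1_N=v\cdot x_{-\aleph}(-\qp)$, so the proof collapses at the final assembly. The paper sidesteps this by pivoting on $d=1-(\qp u)^3$, which has two further advantages you would lose with the $a$-pivot: the lower factor $x_{-\aleph}(c/d)=x_{-\aleph}(-\qp/(1-(\qp u)^3))$ naturally splits off $x_{-\aleph}(-\qp)$ on the \emph{right}, matching the ordering $U^{ma+}_{\qf([0,t_{3N+3}])}U_{-\aleph,\qf([0,1])}$ in the lemma's statement; and the diagonal factor $\mathrm{diag}(1/d,d)$ is a single imaginary-root exponential $[exp](\qp^3h_3^+)$, so no two-factor splitting of the torus part is needed.

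Two further remarks. If you were to use the correct $a$-pivot decomposition $g^1_N=x_{-\aleph}(c/a)\,\mathrm{diag}(a,1/a)\,x_{\aleph}(b/a)$ and extract $x_{-\aleph}(-\qp)$ from the left factor, you would get $g^1_N=x_{-\aleph}(-\qp)\cdot v'$ with $v'\in U^{ma+}_{\qf([0,t_{3N+3}])}$; this does suffice for the conclusion that $g^1_N$ fixes $\qf([0,t_{3N+3}])$, but it proves a different membership than the one asserted. Also, your second diagonal factor $\mathrm{diag}\bigl(1-(\qp u)^{3(N+1)},1/(1-(\qp u)^{3(N+1)})\bigr)$ is not of the form $[exp](\ql h_{3(N+1)}^+)$ for any $\ql\in\shk$ (since $[exp](\ql h_n^+)=\mathrm{diag}(1/(1-\ql u^n),1-\ql u^n)$); rather it is the \emph{inverse} of $[exp](\qp^{3(N+1)}h_{3(N+1)}^+)$. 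Since $U^{ma+}_\Omega$ is a group this is harmless, but you should say so explicitly.
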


\begin{proof} In $\mathrm{SL}_{2}(\k[\qp][\![u]\!])\subset \g G^ {pma}(\shk)$, one may write $g^1_{N}=\begin{psmallmatrix}1 & a \\  0   & 1 \end{psmallmatrix} \begin{psmallmatrix}c^ {-1} & 0\\  0   & c \end{psmallmatrix} \begin{psmallmatrix}1 & 0 \\  b   & 1 \end{psmallmatrix} \begin{psmallmatrix}1 & 0 \\  -\qp   & 1 \end{psmallmatrix}$, with $c=1-(\qp u)^3$, $a=\frac{\qp^ {-1}(\qp u)^ {3N+3}}{1-(\qp u)^3}=\sum_{k=0}^\infty \qp^ {3N+2+3k}u^ {3N+3+3k}$ and $b=\frac{-\qp}{1-(\qp u)^3}+\qp=\sum_{k=1}^\infty -\qp(\qp u)^ {3k}$.
Now $\begin{psmallmatrix}1 & 0 \\  -\qp   & 1 \end{psmallmatrix}=x_{-\aleph}(-\qp)\in U_{-\aleph,\qf([0,1])}$ fixes $\qf([0,1])$.
And $\begin{psmallmatrix}1 & 0 \\  b   & 1 \end{psmallmatrix}=\prod_{k=1}^\infty x_{-\aleph+3k\qd}(-\qp^ {3k+1})\in U^ {ma+}_{\qf([0,1])}$ fixes also $\qf([0,1])$, as $f_{\qf([0,1])}(-\aleph+3k\qd)=3k$ (see \ref{3.1}).
Moreover $\begin{psmallmatrix}1 & a \\  0   & 1 \end{psmallmatrix}=\prod_{k=0}^\infty x_{\aleph+(3N+3+3k)\qd}(\qp^ {3N+2+3k})\in U^ {ma+}_{\qf([0,t_{3N+3}])}$ fixes $\qf([0,t_{3N+3}])$, as $f_{\qf([0,t])}(\aleph+(3N+3+3k)\qd)=(3N+3+3k)t$.

\par The last thing is now to prove that $\begin{psmallmatrix}c^ {-1} & 0\\  0  \, & c \end{psmallmatrix}$ is in $U^ {ma+}_{\qf([0,1])}$.
We argue as in \S \ref{6.9b} or \cite[2.12]{R16}.
The matrix $h_{n}^+=\begin{psmallmatrix}u^ {n} & 0\\  0   & -u^n \end{psmallmatrix}$ is a basis of $\g g_{n\qd,\Z}$, hence $X_{n\qd}(h_{n}^+\otimes\ql)=[exp](\ql h_{n}^+)= \begin{psmallmatrix} v_{1} & 0\\  0   & v_{2} \end{psmallmatrix}$, with $v_{1}=1+\ql u^n+\ql^2u^ {2n}+\cdots$ and $v_{2}=1-\ql u^n$.
We take $n=3,\ql=\qp^3$, so $\begin{psmallmatrix}c^ {-1} & 0\\  0   & c \end{psmallmatrix}=[exp](\qp^3 h_{3}^+)$.
But $f_{\qf([0,1])}(3\qd)=\inf\set{r\in\Z \mid (3\qd)(\qf([0,1]))+r\geq0}=3$, so $\begin{psmallmatrix}c^ {-1} & 0\\  0   & c \end{psmallmatrix} \in U^ {ma+}_{\qf([0,1])}$.
\end{proof}


\printindex


\medskip
\par Universit\'e de Lorraine, CNRS, IECL, F-54000 Nancy, France

\noindent Nicole.Panse@univ-lorraine.fr ; Auguste.Hebert@univ-lorraine.fr ; Guy.Rousseau@univ-lorraine.fr

\end{document}